\theoremstyle{definition}
\newtheorem{definition}{Definition}
\newtheorem{theorem}{Theorem}
\newtheorem{corollary}{Corollary}
\newtheorem{proposition}{Proposition}
\newtheorem{lemma}{Lemma}
\theoremstyle{remark}
\newtheorem{remark}{Remark}
\renewcommand{\qedsymbol}{$\blacksquare$}
\begin{document}	
\title{Idealistic Flowers in the Reduction of Singularities}
\author{Felipe Cano}
\author{Beatriz Molina-Samper}
\address{Dpto. \'Algebra, An\'alisis Matem\'atico, Geometr\'ia y Topolog\'ia and Instituto de Matem\'aticas de la Universidad de Valladolid, Facultad de Ciencias, Universidad de Valladolid, Paseo de Bel\'en, 7, 47011 Valladolid, Spain}
\email{fcano@uva.es, beatriz.molina@uva.es}		
\date{\today}
\begin{abstract}
We present here a proof of the classical reduction of singularities based on the idea of ``idealistic flowers''. We follow the general ideas of Maximal Contact Theory, presented in a recent book of Aroca, Hironaka and Vicente, that recovers three old publications of Jorge Juan Institute. The concept of idealistic flowers deals with the globalization problems arising from the local nature of the maximal contact.
\end{abstract}
\maketitle
\setcounter{tocdepth}{2}
\tableofcontents

\section{Introduction}
One of the major difficulties in the reduction of singularities of complex analytic spaces, following Maximal Contact Theory, is the fact that not all the local permissible centers of blowing-up globalize. In the ori\-ginal proof \cite{Aro-H-V1,Aro-H-V2, Aro-H-V,Hir7}, the gluing of local centers is done with the help of the so called Hironaka's gardening, based on the groves and polygroves. In more recent proofs, the globalization is a consequence of functorial proper\-ties of the process.

Here we present another way to assure the globalization and we write a complete proof following it. The idea is to consider a class of objects, that we call Hironaka's flowers, or idealistic flowers, such that the usual induction on the dimension is well defined in this class.

Our starting objects are the idealistic spaces. They are composed of an ambient space and
a finite list of principal ideals with assigned multiplicity; the ambient space is a non-singular complex analytic space endowed of a normal crossings divisor. The singular locus is the set of points where the Hironaka's order is greater than or equal to one.
 The reduction of singularities of idealistic spaces implies the reduction of singularities of complex analytic spaces \cite{Hir4}. Thus, the objective is to show the existence of reduction of singularities for idealistic spaces.

 We say that two idealistic spaces over the same ambient space are equivalent when their transforms under permissible blowing-ups or the inverse transforms under open projections produce the same singular locus. More precisely, they have the same permissible test systems. In particular, a reduction of singularities of one of them gives a reduction of singularities for the other one.  Then, the problem of reduction of singularities makes sense for the classes under this equivalence.

 Once we fix an ambient space $(M,E)$, we can consider idealistic spaces over open subsets $(U,U\cap E)$ of the ambient space. We call them idealistic charts. Two idealistic charts are compatible when their restriction to the intersection of the domains gives two equivalent idealistic spaces. In this way, we can consider idealistic atlases for the ambient space $(M,E)$. The notion of singular locus and permissible centers for idealistic atlases makes sense and hence the problem of existence of reduction of singularities has also sense for idealistic atlases. Two idealistic atlases are compatible, or equivalent, when their union is also an idealistic atlas. The classes of compatibility of idealistic atlases are the idealistic exponents. Note that there is only one non-singular idealistic exponent over the ambient space $(M,E)$. The problem of existence of reduction of singularities for idealistic exponents is the same one as the problem of existence of reduction of singularities for idealistic atlases. A particular case of idealistic atlases is the one given by a single idealistic space; hence if the idealistic atlases admit reduction of singularities, then  we also have reduction of singularities for idealistic spaces.

 The Maximal Contact Theory intends to reduce the dimension of the ambient space, in the case of idealistic exponents, by projecting the problem of reduction of singularities over a special type of hypersurfaces, called maximal contact hypersurfaces. Such hypersurfaces will appear in the so-called adjusted and reduced case, but unfortunately they have not a global nature. The objects we naturally produce when we perform the projection are the idealistic flowers of dimension $e$, or idealistic $e$-flowers. In this way, we detect the class of objects where the classical induction on the dimension can be implemented, as well as other intermediate, but necessary, reductions of the problem. These objects are the idealistic $e$-flowers.

 Let us give the definition of idealistic $e$-flowers over an ambient space $(M,E)$. A transverse ambient $e$-subspace $(M,E,N)$ of $(M,E)$ is  given by a non-singular closed analytic subset $N\subset M$ having normal crossings with $E$ and being transverse to the components of the divisor $E$; it induces an ambient space $(N,E\vert_N)$. An immersed idealistic $e$-space over $(M,E)$ is the data of a transverse ambient $e$-subspace $(M,E,N)$ and an idealistic space $\mathcal N$ over $(N,E\vert_N)$. These objects have a well defined singular locus, the one of $\mathcal N$, and the permissible centers are subsets of $N$, and a fortiori subsets of $M$.  The transformations will be always done by restriction of transformations of the ambient space. Then, we can state  also the problem of existence of reduction of singularities for immersed idealistic $e$-spaces over $(M,E)$; note that when $e=\dim M$, we get the original situation of idealistic spaces. We extend the definition of equivalence of idealistic spaces to immersed idealistic $e$-spaces, by saying that two immersed idealistic $e$-spaces are equivalent when they have the same permissible test systems.

 Let us note that two equivalent immersed idealistic $e$-spaces over $(M,E)$ need not to be supported by the same transverse ambient $e$-subspace. Only the compatibility of the singular locus will be assured.

 An immersed idealistic $e$-chart  for $(M,E)$ is an immersed idealistic $e$-space over an open subset $(U,E\cap U)$ of $(M,E)$.  Two such charts are compatible when their restrictions to the intersection of the domains are equivalent. In this way, we can define immersed idealistic $e$-atlases. An idealistic $e$-flower over the ambient space $(M,E)$ is a class of compatibility of immersed idealistic $e$-atlases. Of course, the notion of singular locus is well defined, the idea of permissible center and the transforms under permissible blowing-ups. Thus, the problem of reduction of singularities for $e$-flowers makes sense and it extends naturally the problem of  reduction of singularities for idealistic exponents, hence for idealistic spaces.  Now, the basic induction statement is the following one
 \begin{equation*}
 \left.
 \begin{array}{c}
 \text{Reduction of singularities}\\\text{ for $(e-1)$-flowers}
 \end{array}
 \right.
 \Rightarrow
 \left.
 \begin{array}{c}
 \text{Reduction of singularities}\\\text{ for $e$-flowers}
 \end{array}
 \right.
 \end{equation*}

 Let us see how we organize the proof of this statement. First we reduce the problem to the adjusted and reduced case. ``Adjusted'' means that the Hironaka order at the singular points is exactly equal to one. The Hironaka order is well defined for $e$-flowers, but it is not stable for equivalent $e$-flowers of different dimensions. ``Reduced'' means that the singular locus does not contain hypersurfaces of the ambient subspaces in the corresponding immersed idealistic $e$-atlases. This reduction passes naturally through a combinatorial process of desingularization expressed in terms of polyhedra.

 Once we are dealing with an adjusted and reduced idealistic $e$-flower $\mathcal F$, we look for an idealistic $(e-1)$-flower $\mathcal H$, over the same ambient space having maximal contact with $\mathcal F$. That is, we ask  $\mathcal F$ and $\mathcal H$ to be equivalent in the sense that they have the same permissible test systems. For this, we develop a procedure of projection  of adjusted and reduced idealistic exponents over hypersurfaces, by using a tool, called projecting axis, inspired in some works of Panazzolo \cite{Pan}. Finally, we get locally and with an empty divisor the maximal contact hypersurface, just by the classical Tschirnhaus transformations. Once we put away from the singular locus the old components of the divisor $E$, we get the desired $(e-1)$-flower having maximal contact, whose reduction of singularities induces a reduction of singularities of $\mathcal F$.

The process we present here only looks for the existence of reduction of singularities, without asking additional properties of functoriality.  We have written this paper for introducing the idea of $e$-flower hoping that it would be helpful for the possible reduction of singularities for more complicate objects, as foliations, vector fields and other differential objects in characteristic zero.

A complete review of the problem of reduction of singularities is out of the scope of this paper. The old introduction to the problem of J. Lipman \cite{Lip} is a good reference and in the paper \cite{Spi1} the reader may find an extensive bibliography. Anyway, let us cite the papers \cite{Cos3,Cos-P2} for the positive characteristic case, the book \cite{Cut} on monomialization of morphisms,  the papers \cite{Can3, Can-R-S, McQ-P, Pan} in the case of differential objects, the papers \cite{Abr-J, Abra-T-W, Mar-M} for the use of weighted blowing-ups. The approaches through Zariski Local Uniformization \cite{Zar} and Hironaka's Voûte Étoilée \cite{ Cut2, Hir6} are also very important and the open questions in positive characteristic and the differential cases could maybe approached through these ideas, see for instance 
\cite{Can-F, Nov-S, Pil}.

We are focusing the classical reduction of singularities of complex spaces in the Maximal Contact approach.  There are many references to this approach, we find specially interesting the introduction of B. Teissier to the book \cite{Aro-H-V},  the reader may also look at 
\cite{Enc-H,Gir2,Hau,Kol, Mus,Spi1,Wlo2}
 and others.
 Let us note that in \cite{Bie-M} and \cite{Vil} appeared for the first time a procedure with strong additional properties of constructibility and functoriality. Recently, the globalization of the maximal contact has been based in an extensive use of Giraud's approximation to maximal contact \cite{Gir1}, as well as in  additional properties of uniqueness for the obtained desingularization process. We follow another way, by using the idealistic $e$-flowers, where the globalization is given thanks to the omnipresence of a global ambient space.

{\em Acknowledgements:} Both authors, we are very grateful to J.M. Aroca that has transmitted to us the ideas of reduction of singularities along a whole professional life. The first author is also very grateful to Jean Giraud, Mark Spivakovsky, Bernard  Teissier and Vincent Cossart for many fruitful conversations on the subject.

\part{Objects and Statements}
\label{Objects and Statements}
We present here the main concepts and statements in this work.
\section{Ambient Spaces}

Our {\em ambient spaces} are pairs $((M,K),(E,E\cap K))$, where the space $(M,K)$ is the germ of a non-singular complex analytic space $M$ over a compact subset $K\subset M$ and $(E,E\cap K)$ is the germ over $E\cap K$ of a normal crossings divisor $E\subset M$. If there is no possible confusion, we simply denote $(M,E)$ the ambient space.

An {\em open subset} of $(M,K)$ is an open immersion of germs
$$
(U,L)\subset (M,K),
$$
where $L\subset K$ is a compact. We simply denote $U\subset M$. Let us remark that we always have that $(U,L)=(M,L)$, viewed as germified spaces.

%
%

An {\em open covering} of $(M,E)$ is a family $\{(M_\alpha,E_\alpha)\}_{\alpha \in \Lambda}$ of open ambient spaces, where the $M_\alpha$ are open subsets of $M$ such that $E_\alpha=E\cap M_\alpha$ and the compacts $K_\alpha$ of germification for $M_\alpha$ satisfy that $K=\cup_{\alpha\in \Lambda} K_\alpha$.

A {\em closed analytic subset of} $(M,K)$ is a closed immersion of germs $(N,K\cap N) \subset (M,K)$, where $N\subset M$ is a closed analytic subset.

An {\em $e$-dimensional closed ambient subspace $(M,E,N)$} of an ambient space $(M, E)$ is given by a purely $e$-dimensional non-singular closed analytic subset $N$ of $M$ having normal crossings with $E$. In this way, we obtain an $e$-dimensional ambient space $(N,E\vert_N)$, where $E|_N$ denotes the
union of the intersections with $N$ of the irreducible components of $E$ not containing $N$ (locally). When $N$ is not locally contained in any of the irreducible components of $E$, we say that $(M,E,N)$ is a {\em transverse closed ambient subspace}. In this case we have that $E|_N=E \cap N$.

 A  {\em hypersurface $(M,E,H)$ of the ambient space $(M,E)$} is just a closed ambient subspace of $(M,E)$ such that
$\dim H=\dim M-1$. In this case, we have $E\vert_H=E^*\cap H$, where $E^*$ is the union of the irreducible components of $E$ not contained in $H$; the hypersurface is transverse if and only if $E^*=E$.

\begin{remark}
Let $(N,K\cap N)$ be a closed analytic subset of $(M,K)$. Sometimes we refer to the complement  $M\setminus N$. The complement does not have the nature of a germ of analytic space over a compact subset; its interpretation must be done in terms of appropriate representatives. Sometimes we consider the restriction to such complements of sheaves and other objects; the interpretation must be done in terms of points ``close enough'' to the compacts of germification.
We hope that these notations will not produce confusion and, on the other hand, they will contribute to simplify the exposition.
\end{remark}
\strut

\section{Idealistic Spaces}
A {\em marked principal ideal} $\mathcal I$ over an ambient space $(M,E)$ is a pair $\mathcal I=(I,d)$, where $I\subset \mathcal O_M$ is a sheaf of principal ideals and $d$ is a positive integer number that we call the {\em assigned multiplicity of $\mathcal I$}. A point $P\in M$ is a {\em singular point for} $\mathcal I$ if $\nu_PI\geq d$, where $\nu_PI$ stands for the multiplicity of $I$ at $P$. The {\em singular locus} $\operatorname{Sing}\mathcal I$ is the closed analytic subset of $M$ given by the singular points. Since $I_P\ne 0$ for any point $P$, the dimension of the singular locus is strictly lower than the dimension of $M$.

An  {\em idealistic space $\mathcal M$ over $(M,E)$} is a triple $\mathcal M=(M,E,\mathcal L)$, where $\mathcal L$ is a finite list of marked principal ideals
$$
\mathcal L=\{\mathcal I_j=(I_j,d_j)\}_{j=1}^k.
$$
The {\em singular locus $\operatorname{Sing}\mathcal L$} is defined by $\textstyle
\operatorname{Sing}\mathcal L=\bigcap_{j=1}^k\operatorname{Sing}\mathcal I_j$. We also say that  $\operatorname{Sing}\mathcal L$ is the  {\em singular locus of  $\mathcal M$} and we write $\operatorname{Sing}\mathcal M=\operatorname{Sing}\mathcal L$.
\begin{remark} For practical reasons, we can admit zero ideals in the list $\mathcal L$, but we always ask that at least one of the marked ideals is non-null. More precisely, a list with some zero ideals represents by definition the same idealistic space as the list obtained by skipping all the zero ideals.
\end{remark}
\subsection{Transformations of Idealistic Spaces}
We consider two types of transformations $\sigma:(M',E')\rightarrow (M,E)$
of the ambient space $(M,E)$: open projections and blowing-ups with non-singular centers having normal crossings with $E$  (the transformation by isomorphisms is evident and we will not insist on that). When we apply a blowing-up to an idealistic space $\mathcal M$, we ask the center $Y$ to be {\em permissible} in the sense that it is contained in the singular locus of $\mathcal M$.

Let $\mathcal M=(M,E,\mathcal L=\{(I_j,d_j)\}_{j=1}^k)$ be an idealistic space and let us denote
$
\sigma:(M',E')\rightarrow (M,E),
$
one of that morphisms. {\em The transform
$$
\mathcal M'=(M',E',\mathcal L'=\{(I'_j,d_j)\}_{j=1}^k)
$$
of $\mathcal M$ by $\sigma$} is given as follows, depending on the nature of $\sigma$:

If $\sigma$ is an open inclusion the {\em restriction  $\mathcal M'=\mathcal M\vert_{M'}$ of $\mathcal M$ to $M'$} is defined in an evident way. Assume that $\sigma$ is a projection over the first factor
$$
\sigma: (M',E')=(M\times (\mathbb C^m,0), E\times (\mathbb C^m,0))\rightarrow (M,E).
$$
We take $I'_j=\sigma^{-1}I_j$, for $j=1,2,\ldots,k$. The singular locus satisfies that
$$
\operatorname{Sing}{\mathcal M'}=
(\operatorname{Sing}{\mathcal M})\times (\mathbb C^m,0).
$$
Note that the projection contains, as a datum, the functions
$$
\omega_i: M'\rightarrow (\mathbb C,0),\quad i=1,2,\ldots,m,
$$ obtained from the natural projections $({\mathbb C^m,0})\rightarrow(\mathbb C,0)$.
We say that $\sigma$ is an {\em open projection} if it is composition of an open inclusion and a projection over the first factor. We define the transform $\mathcal M'$ by making first the transform by the open inclusion and secondly the transform by the projection.

Assume that $\sigma$ is the blowing-up of $(M,E)$ with a permissible center $Y\subset M$. That is, the morphism $\sigma$ is given by the blowing-up $M'\rightarrow M$ with center $Y$, where we take $E'=\sigma^{-1}(E\cup Y)$. Each $I'_j$ is
{\em the controlled transform of $I_j$ with assigned multiplicity $d_j$}, for $j=1,2,\ldots,k$. This means that
$$
I'_j= \mathcal J_{D'}^{-d_j}\pi^{-1}(I_j),\quad D'=\pi^{-1}(Y),
$$
where $\mathcal J_{D'}\subset \mathcal O_{M'}$ is the ideal sheaf of the exceptional divisor $D'$.

 Note that in all cases $\mathcal L'$ has the same assigned multiplicities as $\mathcal L$.
\subsection{Reduction of Singularities of Idealistic Spaces}
In the context of idealistic spaces, the classical Hironaka's reduction of singularities may be stated as follows:
\begin{theorem} \label{teo:Ingl redsing}
Let $\mathcal M$ be an idealistic space over the ambient $(M,E)$. There is a morphism $\sigma:(M',E')\rightarrow (M,E)$ that is composition of a finite sequence of blowing-ups with permissible centers, such that
$$
\operatorname{Sing}\mathcal M'=\emptyset,
$$
where $\mathcal M'$ is the transform of $\mathcal M$ by $\sigma$.
\end{theorem}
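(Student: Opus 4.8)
The plan is to deduce this theorem from the stronger statement of reduction of singularities for idealistic $e$-flowers. As recalled in the introduction, an idealistic space over $(M,E)$ is a particular case of an idealistic atlas, which determines an idealistic exponent; and since a transverse ambient $e$-subspace with $e=\dim M$ is just $N=M$, an idealistic exponent is precisely an idealistic $e$-flower with $e=\dim M$. Hence it suffices to establish reduction of singularities for idealistic $e$-flowers over $(M,E)$ for every $e$, and I would carry this out by induction on the dimension $e$.

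The base of the induction is $e=0$, which is trivial: when $N$ is a point, the only non-null marked ideal has assigned multiplicity at least one while its order at the point is zero, so the singular locus is empty and no blowing-up is needed. The content of the argument lies entirely in the inductive step, namely the implication that reduction of singularities for $(e-1)$-flowers yields reduction of singularities for $e$-flowers. I would organize this step in three stages, following the scheme sketched in the introduction.

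First, I reduce an arbitrary idealistic $e$-flower $\mathcal F$ to the \emph{adjusted and reduced} case. Through a combinatorial desingularization expressed in terms of Hironaka polyhedra, one arranges that the Hironaka order along the singular locus is exactly one (adjusted) and that no hypersurface of the supporting ambient subspaces is contained in $\operatorname{Sing}\mathcal F$ (reduced). Second, to an adjusted and reduced $\mathcal F$ I attach an idealistic $(e-1)$-flower $\mathcal H$ over the same ambient space having \emph{maximal contact} with $\mathcal F$, in the sense that $\mathcal F$ and $\mathcal H$ are equivalent, i.e. they possess the same permissible test systems. Locally, $\mathcal H$ is produced by the classical Tschirnhaus transformation, which in the adjusted and reduced situation yields a maximal contact hypersurface with empty induced divisor; globally, the pieces are assembled through the procedure of projection over hypersurfaces using the projecting-axis tool inspired by Panazzolo, and then one puts away from the singular locus the old components of $E$. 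Third, since $\mathcal H$ is an $(e-1)$-flower it admits reduction of singularities by the inductive hypothesis; as $\mathcal F$ and $\mathcal H$ share the same permissible test systems, the very same finite sequence of permissible blowing-ups makes $\operatorname{Sing}\mathcal F$ empty, which in the original case $e=\dim M$ gives the desired morphism $\sigma$.

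I expect the main obstacle to be the \emph{globalization} of the maximal contact. The maximal contact hypersurface produced by Tschirnhaus is an intrinsically local object with no global existence in general, so the delicate point is to verify that the locally constructed immersed idealistic $(e-1)$-charts are mutually compatible, that is, equivalent on the overlaps of their domains. This is exactly what the flower formalism is designed to resolve: because all charts are immersed in one fixed global ambient space $(M,E)$, compatibility is tested through equality of permissible test systems rather than through an actual globally defined hypersurface. Showing that the projection along the projecting axis preserves this equivalence, and that the resulting collection of $(e-1)$-charts is a well-defined immersed idealistic $(e-1)$-atlas up to flower equivalence, is the technical heart of the proof.
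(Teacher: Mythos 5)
Your proposal follows essentially the same route as the paper: Theorem \ref{teo:Ingl redsing} is deduced from reduction of singularities for idealistic $e$-flowers (Theorem \ref{teo:ingl redsingflowers}), proved by induction on $e$ through the same three stages --- reduction to the adjusted and reduced case, construction of a maximal contact $(e-1)$-flower via Tschirnhaus transformations, projecting axes and separation of the old divisor components, and conclusion by the induction hypothesis using equality of permissible test systems. The only cosmetic differences are your base case $e=0$ instead of the paper's $e=1$ and your compression of the paper's intermediate steps (monomial case, adjustment by logarithmic factors, reduction to the reduced case) into a single combinatorial stage, neither of which changes the substance of the argument.
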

In other words, there is a {\em reduction of singularities} for any idealistic space $\mathcal M$. This result will be a consequence of the existence of reduction of singularities for more general objects that we call  {\em idealistic flowers}.

\section{Test Systems and Equivalence of Idealistic Spaces}
Consider an ambient space $(M,E)$. A {\em test system $\mathcal S$ over $(M,E)$ of length $k\geq 1$} is a family  $\mathcal S=\{\left(Y_{j-1}, \sigma_j\right)\}_{j=1}^k$ with
$$
\sigma_j:(M_j,E_j)\rightarrow (M_{j-1},E_{j-1}),
$$
where $(M_0,E_0)=(M,E)$, each  $Y_{j-1}$ is the empty set or a non-singular closed analytic subset $Y_{j-1}\subset M_{j-1}$ having normal crossings with $E_{j-1}$.
If  $Y_{j-1}=\emptyset$, then $\sigma_j$ is an open projection. If $Y_{j-1}\ne \emptyset$, then $\sigma_j$ is the blowing-up with center $Y_{j-1}$. A {\em test system of length $0$} is just the identity $(M,E)\rightarrow (M,E)$, understood as an open projection.

For each  $0\leq \ell\leq k$, we define the {\em truncation $\mathcal S^{\ell}$} to be obtained from $\mathcal S$, by taking just the indices $j\leq\ell$.

\subsection{Restriction of a Test System to an Open Subset}
Let $\mathcal S$ be a test system over $(M,E)$ of length $k$. Consider a non-empty open set $U\subset M$. Let us define the {\em restriction $\mathcal S_U$ of $\mathcal S$ a $U$}. It is a test system over $(U,U\cap E)$ of length  $k'\leq k$, that we write
$$
\mathcal S_U=\{( Y'_{j-1}, \sigma'_j)\}_{j=1}^{k'},\quad
\sigma'_j:(M'_{j},E'_j)\rightarrow (M'_{j-1},E'_{j-1}),
$$
such that the following properties hold:
\begin{enumerate}
\item $(M'_0,E'_0)=(U,U\cap E)$.
\item If $j<k'$, then  $M'_{j+1}=\sigma_{j}^{-1}(M'_{j})\ne\emptyset$.
\item If $k'<k$, then $\sigma_{k'+1}^{-1}(M'_{k'})=\emptyset$.
\item We have that $Y'_{j-1}=M'_{j-1}\cap Y_{j-1}$, for any  $1\leq j\leq k'$.
\begin{itemize}
	\item [-] If $Y'_{j-1}\ne\emptyset$, then $\sigma'_{j}$ is the blowing-up of $(M'_{j-1},E'_{j-1})$ with center $Y'_{j-1}$.
	\item [-] If $Y'_{j-1}=\emptyset$, then $\sigma'_{j}$ is the restriction $(M'_{j},E'_{j})\rightarrow (M'_{j-1},E'_{j-1})$ of $\sigma_j$, understood as an open projection.
\end{itemize}
\end{enumerate}
Let us remark that  $M'_j\subset M_j$ is an open set and $\sigma'_j$ is a restriction of  $\sigma_j$.

\begin{remark}
The length  $k'$ of $\mathcal S_U$ is smaller or equal than $k$.
For practical reasons, we introduce the truncation $\mathcal S^\ell$ with respect to $\ell\geq k$ to be given by $\mathcal S^\ell=\mathcal S$. In this situation, we have that
\begin{equation} \label{eq:restringirtruncar}
	(\mathcal S_U)^{k-1}=(\mathcal S^{k-1})_U=\mathcal S^{k-1}_U.
\end{equation}
\end{remark}

\subsection{Permissible Test Systems}
\label{Permissible Test Systems}

Let us consider an idealistic space  $\mathcal M$ and a test system $\mathcal S$ of length $k\geq 0$ over $(M,E)$. We define, by induction on the length $k$, the concept of test systems that are {\em permissible  for $\mathcal M$}, or {\em $\mathcal M$-permissible test systems}, and the concept of {\em transform of $\mathcal M$ by $\mathcal M$-permissible test systems}.

If $k=0$, the test system is permissible and the transform of $\mathcal M$ is $\mathcal M$ itself. Assume that  $k\geq 1$. We say that $\mathcal S$ is permissible for $\mathcal M$ if $\mathcal S^{k-1}$ is permissible for $\mathcal M$ and the following hold: either $Y_{k-1}=\emptyset$, or $Y_{k-1}\ne\emptyset$ and it is permissible for the transform $\mathcal M_{k-1}$ of $\mathcal M$ by $\mathcal S^{k-1}$. In both cases, we define the transform $\mathcal M_k$ to be the transform of $\mathcal M_{k-1}$ by $\sigma_k$, where we consider $\sigma_k$ as an open projection if $Y_{k-1}=\emptyset$ or as a blowing-up, otherwise.

The concept of permissible center is of a local nature. In the same way, the concept of permissible test system is also of a local nature, as it is stated in the next proposition, that is a direct consequence of the local character of permissible blowing-up centers.

\begin{proposition} \label{prop:ingl proplocalsisprueba}
Consider an idealistic space $\mathcal M$ and a test system  $\mathcal S$ over $(M,E)$. Let  $\{(M_\alpha,E_\alpha)\}_{\alpha\in \Lambda}$ be an open covering of $(M,E)$. The test system  $\mathcal S$ is permissible for $\mathcal M$ if and only if the restriction  $\mathcal S_{M_\alpha}$ is permissible for   $\mathcal M\vert_{M_\alpha}$, for each $\alpha\in \Lambda$.
\end{proposition}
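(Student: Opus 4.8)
The plan is to proceed by induction on the length $k$ of the test system $\mathcal{S}$, resting on two commutation principles. The first is that truncation commutes with restriction, already recorded in equation \eqref{eq:restringirtruncar}. The second is that the formation of transforms commutes with restriction to open subsets: the chain of ambient subspaces and transformed idealistic spaces produced by $\mathcal{S}_{M_\alpha}$ is exactly the restriction, to the transformed charts, of the chain produced by $\mathcal{S}$. With these in hand, the statement becomes a formal consequence of the local character of permissible centers, which we are entitled to assume.

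For the base case $k=0$ there is nothing to prove, since $\mathcal{S}$ and each $\mathcal{S}_{M_\alpha}$ are the identity, declared permissible by definition, and all transforms are the given spaces. For the inductive step, assume $k\geq 1$ and that the proposition holds for every test system of length strictly less than $k$. By definition, $\mathcal{S}$ is permissible for $\mathcal{M}$ precisely when $\mathcal{S}^{k-1}$ is permissible for $\mathcal{M}$ and, in addition, the center condition at the last step holds: either $Y_{k-1}=\emptyset$, or $Y_{k-1}\neq\emptyset$ and $Y_{k-1}\subset\operatorname{Sing}\mathcal{M}_{k-1}$, where $\mathcal{M}_{k-1}$ is the transform of $\mathcal{M}$ by $\mathcal{S}^{k-1}$. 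I would treat these two conditions in turn. Since $\mathcal{S}^{k-1}$ has length $k-1<k$, the induction hypothesis gives that $\mathcal{S}^{k-1}$ is permissible for $\mathcal{M}$ if and only if $(\mathcal{S}^{k-1})_{M_\alpha}$ is permissible for $\mathcal{M}\vert_{M_\alpha}$ for every $\alpha$; by \eqref{eq:restringirtruncar} this is the same as $(\mathcal{S}_{M_\alpha})^{k-1}$ being permissible, which is exactly the truncated permissibility requirement for $\mathcal{S}_{M_\alpha}$.

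It then remains to match the center condition. The transformed charts $M'_{k-1,\alpha}$ of $M_\alpha$ under $\mathcal{S}^{k-1}$ form an open covering of $M_{k-1}$, because the preimage of an open covering under a blowing-up or an open projection is again an open covering, and the compacts of germification are preserved under these proper, respectively product, maps. Because restriction commutes with transform, $\mathcal{M}_{k-1}\vert_{M'_{k-1,\alpha}}$ is the transform of $\mathcal{M}\vert_{M_\alpha}$ by $(\mathcal{S}_{M_\alpha})^{k-1}$, while $Y_{k-1}\cap M'_{k-1,\alpha}=Y'_{k-1,\alpha}$ is the center prescribed at the last step of $\mathcal{S}_{M_\alpha}$. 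By the local character of permissible centers, $Y_{k-1}\subset\operatorname{Sing}\mathcal{M}_{k-1}$ holds if and only if $Y'_{k-1,\alpha}\subset\operatorname{Sing}(\mathcal{M}_{k-1}\vert_{M'_{k-1,\alpha}})$ for every $\alpha$. Combining this with the previous paragraph yields the two-sided implication in both directions, noting that for the backward implication the permissibility of each $\mathcal{S}_{M_\alpha}$ already contains the permissibility of its own truncation.

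The main obstacle I anticipate is bookkeeping rather than geometry. The restriction $\mathcal{S}_{M_\alpha}$ may have length $k'_\alpha<k$, so that a chart ``dies out'' before the final step; one must check that such a chart imposes no spurious constraint and is consistent with the truncation conventions of the preceding Remark, in particular that $(\mathcal{S}_{M_\alpha})^{k-1}=\mathcal{S}_{M_\alpha}$ when $k'_\alpha<k$. The substantive points to pin down carefully are the commutation of transform with restriction and the preservation of coverings at the level of germs over compacts, so that the local criterion for $\operatorname{Sing}\mathcal{M}_{k-1}$ genuinely applies over the charts that survive to level $k-1$; once these are secured, the equivalence follows formally.
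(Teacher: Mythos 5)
Your proof is correct and follows exactly the route the paper intends: the paper offers no written proof, asserting the proposition is ``a direct consequence of the local character of permissible blowing-up centers,'' and your induction on the length $k$, using the truncation--restriction commutation \eqref{eq:restringirtruncar}, the commutation of transforms with restriction, and the covering property of the transformed charts, is precisely the natural formalization of that remark. The bookkeeping issues you flag (charts dying out at open-projection steps, coverings in the sense of germs over compacts) are handled correctly by your conventions and do not hide any gap.
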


\subsection{Equivalent Idealistic Spaces}
Two idealistic spaces $\mathcal M_1$ and $\mathcal M_2$ over the same ambient space $(M,E)$ are called to be {\em equivalent} if they have exactly the same permissible test systems.

A direct consequence of this definition is that two equivalent idealistic spaces have the same singular locus and that their transforms by a permissible test system are equivalent idealistic spaces. In particular, if $\mathcal M_1$ and $\mathcal M_2$ are equivalent, a reduction of singularities of $\mathcal M_1$ is also a reduction of singularities of $\mathcal M_2$, and conversely.

Next statement concerns to the local character of the equivalence between idealistic spaces:

\begin{proposition}
\label{prop:ingl caracterlocalequivalencia}
Consider two idealistic spaces 	$\mathcal M_1$ and $\mathcal M_2$ over $(M,E)$ and an open covering $\{(M_\alpha,E_\alpha)\}_{\alpha \in \Lambda}$ of $(M,E)$. We have that $\mathcal M_1$ and $\mathcal M_2$ are equivalent if and only if the restrictions $\mathcal M_1|_{M_\alpha}$ and $\mathcal M_2|_{M_\alpha}$ are equivalent, for all $\alpha\in \Lambda$.
\end{proposition}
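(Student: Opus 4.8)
The plan is to prove the two implications separately, in both cases reducing the comparison to a single test system and exploiting that equivalence means, by definition, having exactly the same permissible test systems. The implication ``$\Leftarrow$'' is the direct one: I would take an arbitrary test system $\mathcal S$ over $(M,E)$ and chain equivalences through Proposition \ref{prop:ingl proplocalsisprueba}. Applied to $\mathcal M_1$, that proposition gives that $\mathcal S$ is $\mathcal M_1$-permissible if and only if $\mathcal S_{M_\alpha}$ is $\mathcal M_1|_{M_\alpha}$-permissible for every $\alpha$. Since each $\mathcal S_{M_\alpha}$ is a test system over $(M_\alpha, E_\alpha)$ and $\mathcal M_1|_{M_\alpha}$ is equivalent to $\mathcal M_2|_{M_\alpha}$ by hypothesis, this is in turn equivalent to $\mathcal S_{M_\alpha}$ being $\mathcal M_2|_{M_\alpha}$-permissible for every $\alpha$, which by Proposition \ref{prop:ingl proplocalsisprueba} applied to $\mathcal M_2$ means that $\mathcal S$ is $\mathcal M_2$-permissible. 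Hence $\mathcal M_1$ and $\mathcal M_2$ have the same permissible test systems.

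For the implication ``$\Rightarrow$'', the complication is that a test system over the open piece $(M_\alpha, E_\alpha)$ need not be the restriction of any test system over $(M,E)$, since a local permissible center may fail to extend to a global non-singular center with normal crossings; thus one cannot simply invoke Proposition \ref{prop:ingl proplocalsisprueba} in reverse. The observation that bypasses this is that an open inclusion is itself an open projection (the degenerate projection whose fiber $(\mathbb C^m,0)$ is a point, $m=0$), and therefore a legitimate morphism attached to a step with empty center. Fixing $\alpha$ and a test system $\mathcal T$ over $(M_\alpha, E_\alpha)$, I would build a test system $\widetilde{\mathcal T}$ over $(M,E)$ by \emph{prepending} one step with empty center and morphism the open inclusion $(M_\alpha, E_\alpha)\hookrightarrow (M,E)$, followed verbatim by the steps of $\mathcal T$. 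The transform of $\mathcal M_i$ by this first step is precisely $\mathcal M_i|_{M_\alpha}$, so a straightforward induction on length, following the inductive definition of permissibility, shows that $\widetilde{\mathcal T}$ is $\mathcal M_i$-permissible if and only if $\mathcal T$ is $\mathcal M_i|_{M_\alpha}$-permissible, for $i=1,2$. Since the hypothesis $\mathcal M_1\sim\mathcal M_2$ gives that $\widetilde{\mathcal T}$ is $\mathcal M_1$-permissible exactly when it is $\mathcal M_2$-permissible, we conclude that $\mathcal T$ is $\mathcal M_1|_{M_\alpha}$-permissible exactly when it is $\mathcal M_2|_{M_\alpha}$-permissible, that is, $\mathcal M_1|_{M_\alpha}\sim\mathcal M_2|_{M_\alpha}$.

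I expect the main obstacle to be conceptual rather than computational: it lies in the ``$\Rightarrow$'' direction, where the non-extendability of local test systems seems to block a naive descent, and the point is to realize that prepending the open inclusion embeds any local test system into a global one without altering permissibility. Once this is in place, both implications are formal consequences of Proposition \ref{prop:ingl proplocalsisprueba} and the definition of equivalence. The only routine bookkeeping I would check is that $\widetilde{\mathcal T}$ is a bona fide test system over $(M,E)$: the open inclusion $(M_\alpha, K_\alpha)\subset (M,K)$ is admissible since $K_\alpha\subseteq K$, and the remaining steps live over $K_\alpha$ exactly as in $\mathcal T$, so the germification data are consistent throughout.
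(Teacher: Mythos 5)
Your proposal is correct and follows essentially the same route as the paper: the ``$\Leftarrow$'' direction is exactly the chain of equivalences through Proposition \ref{prop:ingl proplocalsisprueba}, and for ``$\Rightarrow$'' the paper likewise prepends the open inclusion $M_\alpha\subset M$ (viewed as an open projection, which is legitimate by the paper's definition) as the first element of the test system, so that permissibility of the augmented system for $\mathcal M_i$ matches permissibility of the original system for $\mathcal M_i|_{M_\alpha}$. Your identification of the non-extendability of local centers as the conceptual obstacle, and of the prepending trick as the way around it, is precisely the point of the paper's argument.
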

\begin{proof}
Assume that $M_1|_{M_\alpha}$ and $\mathcal M_2|_{M_\alpha}$ are equivalent, for all $\alpha\in \Lambda$. Take an $\mathcal M_1$-permissible test system $\mathcal S$. By Proposition \ref{prop:ingl proplocalsisprueba} we have that $\mathcal S_{M_\alpha}$ is permissible for $\mathcal M_1|_{M_\alpha}$ and hence for $\mathcal M_2|_{M_\alpha}$, thus $\mathcal S$ is permissible for $\mathcal M_2$.

Suppose that $\mathcal M_1$ and $\mathcal M_2$ are equivalent. Take $\alpha \in \Lambda$ and a test system $\mathcal S$ that is permissible  for $\mathcal M_1|_{M_\alpha}$. Let $\mathcal S^*$ be the test system over $(M,E)$ obtained by adding to $\mathcal S$ the inclusion $M_\alpha \subset M$ as the first element. Since the transform of $\mathcal M_1$ by the inclusion $M_\alpha \subset M$ is exactly $\mathcal M_1|_{M_\alpha}$, we get that $\mathcal S^*$ is permissible for $\mathcal M_1$ and hence for $\mathcal M_2$. The fact that $\mathcal S^*$ is permissible for $\mathcal M_2$ implies that $\mathcal S$ is permissible for $\mathcal M_2|_{M_\alpha}$.
\end{proof}

\subsection{Examples of Equivalent Idealistic Spaces}
\label{ingl Ejemplos de espacios idealisticos equivalentes}
Here we consider some useful examples of equivalent idealistic spaces.

\paragraph{ \em Normalization of an idealistic space.}
We say that an idealistic space is {\em normalized} when all the assigned multiplicities are the same ones. Let $\mathcal M=(M, E, \mathcal L)$ be an idealistic space, where $\mathcal L=\{(I_j,d_j)\}_{j=1}^k$ and take a common multiple $d$ of the  $d_j$. The idealistic space
$$
\mathcal M'=(M,E,\{(I'_j,d)\}_{j=1}^k), \quad I'_j=(I_j)^{d/d_j}
$$
is normalized and it is equivalent to $\mathcal M$.

\paragraph{\em Redundant marked ideals.}
Let $\mathcal M=(M, E, \mathcal L)$ and $\mathcal M'=(M,E,\mathcal L')$ be two idealistic spaces. Assume that $\mathcal L=\{(I_j,d_j)\}_{j=1}^k$ and $\mathcal L'=\cup_{j=1}^k\mathcal L_{j}$, where
$$
\mathcal L_j=\{(I_j,d_j)\}\cup \{(I_{js},d_j)\}_{s=1}^{k_j},
$$
with $I_{js}\subset I_j$, for all $s=1,2,\ldots,k_j$. Then $\mathcal M$ and $\mathcal M'$ are equivalent idealistic spaces.

\subsection{Infinite Test Systems}
An {\em infinite test system  $\mathcal S^\infty$ over $(M,E)$} is an infinite sequence
$$
\mathcal S^\infty=\{(Y_{j-1}, \sigma_j)\}_{j=1}^\infty
$$
such that the truncation $\mathcal S^k=\{(Y_{j-1}, \sigma_j)\}_{j=1}^k$ of $\mathcal S^\infty$ is a test system over $(M,E)$ of length $k$, for any $k\geq 0$. We say that $\mathcal S^\infty$ is {\em permissible for an idealistic space $\mathcal M$} if $\mathcal S^k$ is permissible for $\mathcal M$, for any $k\geq 0$.

\section{Idealistic Atlases and Idealistic Exponents} \label{ingl Atlasidealisticos}
In this section we introduce the concept of idealistic exponent. It is defined in terms of equivalence classes of idealistic atlases, in a parallel way to the classical language of Differential Geometry. Anyway, the idealistic atlases have their own interest and sometimes we will work with specific types of idealistic atlases belonging to a given idealistic exponent.
\subsection{Idealistic Atlases}
Let us consider an ambient space $(M,E)$. An {\em idealistic atlas $\mathcal A$ over $(M,E)$} is a finite family $\mathcal A=\{\mathcal M_\alpha\}_{\alpha\in \Lambda}$ such that:
\begin{enumerate}
	\item The $\mathcal M_\alpha$ are idealistic spaces over open sets $(M_\alpha,E_\alpha)\subset (M,E)$, where $\{(M_\alpha,E_\alpha)\}_{\alpha \in \Lambda}$ is an open covering of $(M,E)$.
	\item ({\em Compatibility property}) For any pair of indices $\alpha,\beta\in \Lambda$, the idealistic spaces
	$$
	\mathcal M_{\alpha\beta}=\mathcal M_\alpha\vert_{M_{\alpha\beta}}, \quad
	\mathcal M_{\beta\alpha}=\mathcal M_\beta\vert_{M_{\beta\alpha}}
	$$
	are equivalent, where $M_{\alpha\beta}=M_{\beta\alpha}=M_\alpha\cap M_\beta$.
\end{enumerate}
Each idealistic space $\mathcal M_\alpha$ will be called an {\em idealistic chart of $\mathcal A$}. More generally,  an {\em idealistic chart over $(M,E)$} is any idealistic space of the form $(U,U\cap E, \mathcal L)$, where $U$ is an open subset of $M$.

Assume that $\mathcal A$ is an idealistic atlas over $(M,E)$ and denote by $S_\alpha$ the singular locus of $\mathcal M_\alpha$. Denote also $S_{\alpha\beta}=S_\alpha\cap M_{\alpha\beta}$, we know that
$S_{\alpha\beta}=\operatorname{Sing}(\mathcal M_{\alpha\beta})$. Since $\mathcal M_{\alpha\beta}$ is equivalent to $\mathcal M_{\beta\alpha}$, we have that
$S_{\beta\alpha}=S_{\alpha\beta}$. This allows us to gluing together the singular loci $S_\alpha$ in a closed analytic subset $S\subset M$ such that $S\cap M_\alpha=S_\alpha$, for all $\alpha\in \Lambda$. We say that $S$ is the {\em singular locus of $\mathcal A$} and we denote it as $S=\operatorname{Sing}\mathcal A$.

The transformations of $\mathcal A$ by restriction to an open set of $M$ and by a projection over the first factor are directly defined from the case of idealistic spaces.

A {\em permissible center for $\mathcal A$} is a non-singular closed analytic subset of $\operatorname{Sing}\mathcal A$ having normal crossings with $E$. The local character of the permissible centers, expressed in next Proposition \ref{prop:caracterlocalcentrosatlas}, is a consequence of the local nature of the equivalence between idealistic spaces given in Proposition \ref{prop:ingl caracterlocalequivalencia}.
\begin{proposition}
\label{prop:caracterlocalcentrosatlas}
A closed analytic subset $Y$ of $M$ is a permissible center for $\mathcal A$ if and only if the following equivalent properties hold:
\begin{enumerate}
	\item For any point $P\in M$, there is an open subset $U\subset M$, with $P\in U$, such that $Y\cap U$ is a permissible center for $\mathcal A\vert_U$.
	\item For any open subset $U\subset M$, the intersection $Y\cap U$ is a permissible center for $\mathcal A\vert_U$.
	\item Given $\alpha \in \Lambda$, the intersection $Y\cap M_\alpha$ is permissible for $\mathcal M_\alpha$.
\end{enumerate}
\end{proposition}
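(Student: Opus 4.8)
The plan is to establish the chain of implications $(3)\Rightarrow(2)\Rightarrow(1)\Rightarrow(3)$, together with the equivalence of each of these conditions with the bare statement that $Y$ is a permissible center for $\mathcal A$. The key observation that drives everything is that permissibility of a center decomposes into two independent requirements: first, the geometric condition that $Y$ be a non-singular closed analytic subset having normal crossings with $E$; and second, the containment $Y\subset\operatorname{Sing}\mathcal A$. The geometric condition is manifestly local, so the whole difficulty is concentrated in handling the containment in the singular locus, and for that we lean on the gluing property $S\cap M_\alpha=S_\alpha=\operatorname{Sing}\mathcal M_\alpha$ that was established when defining $\operatorname{Sing}\mathcal A$.

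First I would note that $(2)\Rightarrow(1)$ is immediate, since given any point $P$ one simply takes $U$ to be any open neighbourhood and invokes $(2)$; the content is in the two less trivial arrows. For $(1)\Rightarrow(3)$, fix $\alpha\in\Lambda$ and a point $P\in M_\alpha$. By $(1)$ there is an open $U\ni P$ with $Y\cap U$ permissible for $\mathcal A|_U$; restricting further to $U\cap M_\alpha$ and using that $\mathcal A|_{U\cap M_\alpha}$ contains the chart $\mathcal M_\alpha|_{U\cap M_\alpha}$, I would read off that $Y\cap M_\alpha$ is locally permissible for $\mathcal M_\alpha$ near $P$. Since permissibility for a single idealistic space is itself local (it amounts to the pointwise conditions $\nu_QI_j\ge d_j$ together with the local geometric conditions), letting $P$ range over $M_\alpha$ yields that $Y\cap M_\alpha\subset\operatorname{Sing}\mathcal M_\alpha$ and is non-singular with normal crossings with $E_\alpha$, i.e. permissible for $\mathcal M_\alpha$.

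For $(3)\Rightarrow(2)$, suppose $Y\cap M_\alpha$ is permissible for $\mathcal M_\alpha$ for every $\alpha$. Given an arbitrary open $U\subset M$, the family $\{\mathcal M_\alpha|_{U\cap M_\alpha}\}$ is an idealistic atlas for $\mathcal A|_U$ over the covering $\{U\cap M_\alpha\}$ of $(U,U\cap E)$; by hypothesis each $Y\cap U\cap M_\alpha$ sits in $\operatorname{Sing}\mathcal M_\alpha$, so by the gluing identity $S\cap M_\alpha=S_\alpha$ these pieces assemble to give $Y\cap U\subset\operatorname{Sing}(\mathcal A|_U)$, while the non-singularity and normal crossings of $Y\cap U$ with $E\cap U$ follow from the corresponding local properties. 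Hence $Y\cap U$ is permissible for $\mathcal A|_U$. Finally, taking $U=M$ in $(2)$ gives exactly that $Y$ is permissible for $\mathcal A$, and conversely permissibility of $Y$ for $\mathcal A$ restricts to each open $U$ by the same gluing argument, closing the loop.

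I expect the main obstacle to be bookkeeping rather than depth: one must be careful that restricting an atlas to an open set genuinely produces an atlas (the compatibility property survives restriction because equivalence is preserved under restriction, as follows from Proposition \ref{prop:ingl caracterlocalequivalencia}), and that the singular locus of the restricted atlas $\mathcal A|_U$ is $\operatorname{Sing}\mathcal A\cap U$, so that the phrase ``permissible for $\mathcal A|_U$'' unwinds correctly. Once these compatibilities are in hand, each implication is a routine transfer between the global object $\operatorname{Sing}\mathcal A$ and its local pieces $\operatorname{Sing}\mathcal M_\alpha$ via the defining identity $S\cap M_\alpha=S_\alpha$.
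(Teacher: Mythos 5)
Your proposal is correct and takes essentially the approach the paper intends: the paper gives no written proof, stating only that the proposition is a consequence of the local nature of equivalence (Proposition \ref{prop:ingl caracterlocalequivalencia}), which enters precisely through the gluing identity $\operatorname{Sing}\mathcal A\cap M_\alpha=\operatorname{Sing}\mathcal M_\alpha$ that you use. Your writeup is the natural unwinding of this remark—splitting permissibility into the manifestly local geometric conditions plus containment in the singular locus, and transferring the latter between the global and chart-level loci via that identity.
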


Consider a blowing-up
$\sigma:(M',E')\rightarrow (M,E)$
with permissible center $Y\subset M$. For each $\alpha\in \Lambda$, the restriction of $\sigma$ is the blowing-up
$$
\sigma_\alpha:(\sigma^{-1}(M_\alpha), E'_\alpha=E'\cap \sigma^{-1}(M_\alpha) )\rightarrow
(M_\alpha,E\cap M_\alpha)
$$
of $(M_\alpha,E\cap M_\alpha)$ with center $Y_\alpha=Y\cap M_\alpha$ (the identity when $Y_\alpha=\emptyset$). Note that $Y_\alpha$ is a permissible center for $\mathcal M_\alpha$. Let $\mathcal M'_{\alpha}$ be the transform of $\mathcal M_\alpha$ by $\sigma_\alpha$. Given two indices $\alpha,\beta\in \Lambda$, we have an induced blowing-up of $M_{\alpha\beta}$ with center $Y_{\alpha\beta}$, this implies that $\mathcal M'_{\alpha\beta}$ and $\mathcal M'_{\beta\alpha}$ are equivalent. In this way, we define the {\em  transform $\mathcal A'$ of $\mathcal A$ by $\sigma$} to be the idealistic atlas over $(M',E')$ given by the family of the $\mathcal M_\alpha'$.

\subsection{Idealistic Exponents}
Consider an idealistic atlas $\mathcal A$ over the ambient space $(M,E)$. In the same way as for the case of idealistic spaces in Subsection \ref{Permissible Test Systems}, we can define the concept of test system that is {\em permissible for $\mathcal A$}, or {\em $\mathcal A$-permissible}, and the {\em transform of $\mathcal A$} by an $\mathcal A$-permissible test system.

As in Proposition \ref{prop:ingl proplocalsisprueba}, the property of being permissible a test system for an idealistic atlas has local nature as follows:

\begin{proposition}
\label{prop:ingl sispruebapermitidos}
Consider an idealistic atlas $\mathcal A$ over the ambient space $(M,E)$. A test system $\mathcal S$ over $(M,E)$ is permissible for $\mathcal A$ if and only if the following equivalent statements hold:
\begin{enumerate}
	\item For any point $P\in M$, there is an open subset $U\subset M$, with $P\in U$, such that $\mathcal S_U$ is a permissible test system for $\mathcal A\vert_U$.
	\item For any open subset $U\subset M$, the restriction $\mathcal S_U$ of $\mathcal S$ to $U$ is permissible for $\mathcal A\vert_U$.
	\item For any idealistic chart $\mathcal M_\alpha=(M_\alpha,E_\alpha,\mathcal L_\alpha)$ of $\mathcal A$, the restricted test system $\mathcal S_{M_\alpha}$ is permissible for $\mathcal M_\alpha$.
	\end{enumerate}
\end{proposition}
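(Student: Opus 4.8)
The plan is to reduce everything to the single equivalence between permissibility of $\mathcal S$ for $\mathcal A$ and statement (3), and then to obtain (1) and (2) as formal consequences. The two external tools are Proposition \ref{prop:caracterlocalcentrosatlas}, which governs when a center is permissible for an atlas in terms of its charts, and Proposition \ref{prop:ingl proplocalsisprueba}, the analogous local character already available for a single idealistic space. The bridge between them is the observation, implicit in the definition of the transform of an atlas, that the transform $\mathcal A_{\ell}$ of $\mathcal A$ by $\mathcal S^{\ell}$ is again an idealistic atlas whose chart over the transformed domain of $M_\alpha$ is exactly the transform of $\mathcal M_\alpha$ by the restricted system $\mathcal S^{\ell}_{M_\alpha}$; this identity of ``transform of the atlas'' with ``atlas of the transforms'' has to be carried along as part of the induction.

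First I would prove, by induction on the length $k$ of $\mathcal S$, that $\mathcal S$ is permissible for $\mathcal A$ if and only if $\mathcal S_{M_\alpha}$ is permissible for $\mathcal M_\alpha$ for every chart $\mathcal M_\alpha$, that is, statement (3). The case $k=0$ is immediate. For the inductive step I would split the definition of permissibility: $\mathcal S$ is permissible for $\mathcal A$ exactly when $\mathcal S^{k-1}$ is permissible for $\mathcal A$ and, when $Y_{k-1}\neq\emptyset$, the center $Y_{k-1}$ is permissible for the transform $\mathcal A_{k-1}$. The induction hypothesis rewrites the first condition as permissibility of $\mathcal S^{k-1}_{M_\alpha}=(\mathcal S_{M_\alpha})^{k-1}$ for each $\mathcal M_\alpha$, the identity of truncation and restriction being \eqref{eq:restringirtruncar}. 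The center condition is then translated, through Proposition \ref{prop:caracterlocalcentrosatlas}, into the permissibility of $Y_{k-1}\cap (M_\alpha)_{k-1}$ for each chart $(\mathcal M_\alpha)_{k-1}$ of $\mathcal A_{k-1}$; using the bridge above, this is precisely the last-step condition defining permissibility of $\mathcal S_{M_\alpha}$ for $\mathcal M_\alpha$. Assembling the two halves over all $\alpha$ yields the equivalence with (3).

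Finally, I would deduce the equivalence of (1), (2) and (3). Since $\mathcal A\vert_U$ is itself an idealistic atlas, with charts $\mathcal M_\alpha\vert_{M_\alpha\cap U}$, the equivalence just proved applies to it and characterises permissibility of $\mathcal S_U$ for $\mathcal A\vert_U$ by the permissibility of $(\mathcal S_U)_{M_\alpha\cap U}=\mathcal S_{M_\alpha\cap U}$ for the charts $\mathcal M_\alpha\vert_{M_\alpha\cap U}$; combining this with the transitivity of restrictions of test systems and with the local character of permissibility for a single idealistic space (Proposition \ref{prop:ingl proplocalsisprueba}) gives (3)$\Rightarrow$(2). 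The implication (2)$\Rightarrow$(1) is immediate by choosing for each $P$ any open $U\ni P$, and (1)$\Rightarrow$(3) follows by covering each $M_\alpha$ with the neighbourhoods provided by (1) and applying Proposition \ref{prop:ingl proplocalsisprueba}, after which (3) returns permissibility for $\mathcal A$ by the main equivalence. The step I expect to be delicate is the bookkeeping in the inductive argument: one must keep track of the fact that $\mathcal S_{M_\alpha}$ may have strictly smaller length than $\mathcal S$, because the successive preimages of $M_\alpha$ can become empty, and check that in that situation the truncated definitions still match on both sides, so that no permissibility condition is lost or spuriously added.
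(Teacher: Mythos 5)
Your proof is correct and follows essentially the same route as the paper: the paper's entire proof is the one line ``Follows from Proposition \ref{prop:caracterlocalcentrosatlas}'', and your argument is precisely the routine induction on the length of $\mathcal S$ (using the truncation--restriction identity \eqref{eq:restringirtruncar} and the fact that the transform of an atlas is the atlas of the transforms of its charts) that this one-liner leaves implicit. Your flagged bookkeeping point about $\mathcal S_{M_\alpha}$ possibly having shorter length is handled correctly, since an empty intersection with the center makes the corresponding step an open projection and the condition vacuous.
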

\begin{proof}
Follows from Proposition \ref{prop:caracterlocalcentrosatlas}.
\end{proof}

An idealistic chart $\mathcal C$ over $(M,E)$ is {\em compatible} with the idealistic atlas $\mathcal A$ if $\mathcal A\cup \{\mathcal C\}$ is again an idealistic atlas.

\begin{proposition} \label{prop:ingl equivalencia de atlas}
Given two idealistic atlases $\mathcal A_1$ and $\mathcal A_2$ over $(M,E)$, the following properties are equivalent:
\begin{enumerate}
	\item Any idealistic chart of $\mathcal A_2$ is compatible with $\mathcal A_1$.
	\item Any idealistic chart of $\mathcal A_1$ is compatible with $\mathcal A_2$.
	\item The union $\mathcal A_1\cup \mathcal A_2$ is an idealistic atlas.
	\item $\mathcal A_1$ and $\mathcal A_2$ have the same permissible test systems.
	\end{enumerate}
\end{proposition}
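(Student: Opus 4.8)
The plan is to establish the cycle of implications $(1)\Rightarrow(3)\Rightarrow(4)\Rightarrow(2)\Rightarrow(1)$, exploiting the symmetry between $\mathcal A_1$ and $\mathcal A_2$ so that $(2)\Rightarrow(1)$ is merely $(1)\Rightarrow(2)$ with the roles reversed. The conceptual core is that \emph{compatibility, being an atlas, and having the same permissible test systems} are all governed by the local equivalence of idealistic spaces on overlaps, which we already control through Propositions \ref{prop:ingl caracterlocalequivalencia} and \ref{prop:ingl sispruebapermitidos}. The key bookkeeping is to keep track of which open covering we are working over: if $\mathcal A_1=\{\mathcal M_\alpha\}_{\alpha\in\Lambda_1}$ over $\{M_\alpha\}$ and $\mathcal A_2=\{\mathcal N_\beta\}_{\beta\in\Lambda_2}$ over $\{N_\beta\}$, then $\mathcal A_1\cup\mathcal A_2$ lives over the covering $\{M_\alpha\}\cup\{N_\beta\}$, and its compatibility property must be verified on all three types of overlaps $M_\alpha\cap M_{\alpha'}$, $N_\beta\cap N_{\beta'}$, and $M_\alpha\cap N_\beta$.

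For $(1)\Rightarrow(3)$: to see that $\mathcal A_1\cup\mathcal A_2$ is an atlas, the overlaps internal to $\mathcal A_1$ and internal to $\mathcal A_2$ are already equivalent because each is an atlas. The only new overlaps are the mixed ones $M_\alpha\cap N_\beta$, where we must check that $\mathcal M_\alpha\vert_{M_\alpha\cap N_\beta}$ and $\mathcal N_\beta\vert_{M_\alpha\cap N_\beta}$ are equivalent. This is exactly the assertion that $\mathcal N_\beta$ is compatible with $\mathcal A_1$, which is hypothesis $(1)$, restricted to the chart $\mathcal M_\alpha$; the compatibility property for $\mathcal A_1\cup\{\mathcal N_\beta\}$ gives precisely this equivalence on $M_\alpha\cap N_\beta$. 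The implication $(3)\Rightarrow(2)$ is immediate: if the union is an atlas, then in particular each chart $\mathcal M_\alpha$ of $\mathcal A_1$ satisfies the compatibility property against every chart of $\mathcal A_2$, so $\mathcal A_2\cup\{\mathcal M_\alpha\}$ is an atlas, i.e.\ $\mathcal M_\alpha$ is compatible with $\mathcal A_2$. This already closes the loop $(1)\Rightarrow(3)\Rightarrow(2)\Rightarrow(1)$ once we observe, by symmetry, that $(3)$ also yields $(1)$.

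For the equivalence with $(4)$, I would use Proposition \ref{prop:ingl sispruebapermitidos}, which characterizes $\mathcal A$-permissibility of a test system $\mathcal S$ chart-by-chart: $\mathcal S$ is permissible for $\mathcal A$ if and only if $\mathcal S_{M_\alpha}$ is permissible for $\mathcal M_\alpha$ for every chart. Assuming $(3)$, a test system $\mathcal S$ is $\mathcal A_1$-permissible iff its restriction is permissible on each chart of $\mathcal A_1$, and $\mathcal A_2$-permissible iff its restriction is permissible on each chart of $\mathcal A_2$; since all these charts together form the single atlas $\mathcal A_1\cup\mathcal A_2$, each of the two conditions is equivalent to $\mathcal S$ being permissible for $\mathcal A_1\cup\mathcal A_2$, and hence to each other, giving $(4)$. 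For the converse $(4)\Rightarrow(1)$, given an arbitrary chart $\mathcal N_\beta$ of $\mathcal A_2$ I would test its compatibility with $\mathcal A_1$ by reducing, via Proposition \ref{prop:ingl caracterlocalequivalencia}, to checking equivalence of $\mathcal N_\beta\vert_{M_\alpha\cap N_\beta}$ and $\mathcal M_\alpha\vert_{M_\alpha\cap N_\beta}$ on each overlap, and then comparing permissible test systems supported on that overlap using the prolongation-by-open-inclusion trick from the proof of Proposition \ref{prop:ingl caracterlocalequivalencia}, translating the hypothesis that $\mathcal A_1$ and $\mathcal A_2$ share all permissible test systems into local equivalence.

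The main obstacle I anticipate is the last step, $(4)\Rightarrow(1)$: passing from the global statement ``same permissible test systems'' back to a purely local equivalence on a single overlap $M_\alpha\cap N_\beta$. A test system permissible for the chart $\mathcal N_\beta$ restricted to the overlap need not \emph{a priori} extend to a globally $\mathcal A_2$-permissible test system over all of $(M,E)$, so one cannot directly invoke hypothesis $(4)$. The remedy is to prepend the open inclusion of the overlap as the first morphism of the test system, exactly as in the second paragraph of the proof of Proposition \ref{prop:ingl caracterlocalequivalencia}, and then use the local characterization of Proposition \ref{prop:ingl sispruebapermitidos}(1) to localize permissibility at each chart; the bookkeeping of which restrictions land in which chart is where the care is needed, but it is routine once the inclusion trick is in place.
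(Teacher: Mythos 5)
Your proposal is correct and takes essentially the same route as the paper: the paper's entire proof of Proposition \ref{prop:ingl equivalencia de atlas} is the single remark that it ``comes from Proposition \ref{prop:ingl sispruebapermitidos}'', and your argument is exactly a fleshed-out version of this, deducing all the equivalences from the chart-by-chart characterization of permissible test systems together with the open-inclusion prolongation trick taken from the proof of Proposition \ref{prop:ingl caracterlocalequivalencia}. The one step you gloss over---the non-trivial direction of ``$\mathcal S$ is $\mathcal A_1$-permissible iff $\mathcal S$ is $(\mathcal A_1\cup\mathcal A_2)$-permissible'' inside $(3)\Rightarrow(4)$---is settled by the same localization technique you make explicit for $(4)\Rightarrow(1)$, so it is a routine detail rather than a gap.
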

\begin{proof}
The statement comes from Proposition \ref{prop:ingl sispruebapermitidos}.
\end{proof}

\begin{definition}
Two idealistic atlases $\mathcal A_1$ and $\mathcal A_2$ over $(M,E)$ are called to be {\em equivalent} if $\mathcal A_1$ and $\mathcal A_2$ have the same permissible test systems (and hence the equivalent properties in Proposition \ref{prop:ingl equivalencia de atlas} hold).
\end{definition}

\begin{definition}
The equivalence classes of idealistic atlases over $(M,E)$ are called  {\em idealistic exponents over $(M,E)$}.
\end{definition}

The properties and concepts invariant by the equivalence relation of the idealistic atlases define properties and concepts concerning idealistic exponents $\mathcal E$ over $(M,E)$. Thus, we have well defined:
\begin{itemize}
	\item Singular locus $\operatorname{Sing}\mathcal E$.
	\item Concept of permissible centers.
	\item Transformations by open projections.
	\item Transformations by blowing-ups of permissible centers.
	\item Concept of $\mathcal E$-permissible test systems.
	\item Transformations by $\mathcal E$-permissible test systems.
	\item Existence of reduction of singularities.
\end{itemize}

\begin{remark}
In next parts we will introduce other concepts concerning idealistic exponents. The most relevant among them will be the ``order'' and the property of ``being monomial''.
\end{remark}
Let us note that there is only one non-singular idealistic exponent over a given ambient space.	So, the problem of the existence of reduction of singularities for idealistic exponents consists on finding a finite sequence of permissible blowing-ups to obtain the non-singular idealistic exponent from a given one. Note also, that the existence of reduction of singularities for idealistic exponents implies Theorem \ref{teo:Ingl redsing}.

\section{Immersed Idealistic Spaces}
\label{Immersed Idealistic Spaces}
Let $(M,E)$  be an ambient space. An {\em immersed idealistic $e$-space $\mathcal V$ over $(M,E)$} is a datum
$$
\mathcal V=(M,E,N,\mathcal L),
$$
where $(M,E,N)$ is a transverse $e$-dimensional closed ambient subspace of $(M,E)$ and $\mathcal N=(N,E\vert_N,\mathcal L)$ is an idealistic space over $(N,E\vert_N)$. We also say that $\mathcal V$ is an {\em immersed idealistic space of dimension $e$}.

The {\em singular locus $\operatorname{Sing}\mathcal V$} is, by definition, the singular locus of $\mathcal N$. It is a closed analytic subset of $N$ and hence it is also a closed analytic subset of $M$. The permissible centers of $\mathcal V$ are, by definition, the permissible centers of $\mathcal N$; they have normal crossings with $E$ in $M$.

We need to include the possibility that  $N=\emptyset$. In this case, we postulate the existence of a unique immersed idealistic $e$-space
$$
\mathcal V_\emptyset=(M,E,\emptyset,\mathcal L_\emptyset),
$$
where $\mathcal L_\emptyset$ is empty. The singular locus of $\mathcal V_\emptyset$ is also the empty set.

The transformations of an immersed idealistic $e$-space $\mathcal V$ are defined from morphisms of the ambient space $(M,E)$. Let us precise them.

The {\em restriction $\mathcal V\vert_U$}  to an open subset $U\subset M$ is given by
$$
\mathcal V\vert_U=(U,E\cap U,N\cap U,\mathcal L\vert_{N\cap U}).
$$
A projection over the first factor $\sigma: M\times(\mathbb C^m,0)\rightarrow M$ defines a projection over the first factor $\bar{\sigma}:N\times(\mathbb C^m,0)\rightarrow N$. In this way, we define the {\em transform $\mathcal V'=(M',E',N',\mathcal L')$ of $\mathcal V$ by $\sigma$}, by putting
$$
M'=M\times(\mathbb C^m,0),\quad  E'=E\times(\mathbb C^m,0), \quad N'=N\times(\mathbb C^m,0)
$$
and we take $\mathcal L'$ to be the transform of $\mathcal L$ by $\bar{\sigma}$.

Let $Y$ be a permissible center for  $\mathcal V$. Recall that we have $Y\subset N\subset M$.
Consider the blowing-up $\pi:(M',E')\rightarrow (M,E)$ with center $Y$ and denote by $N'$ the strict transform of $N$. Note that $(M',E',N')$ is a transverse closed ambient subspace of $(M',E')$.
The restriction of $\pi$ induces the blowing-up
$$
\bar\pi:(N',E'\vert_{N'})\rightarrow (N,E\vert_{N})
$$
of $(N,E\vert_N)$ with center $Y$. We define the {\em transform $\mathcal V'$ de $\mathcal V$} by $\pi$, by putting
$\mathcal V'=(M',E',N',\mathcal L')$, where $\mathcal L'$ is the transform of $\mathcal L$ by $\bar\pi$.

Proceeding as in the non-immersed case, we establish when a test system $\mathcal S$ over $(M,E)$ is {\em $\mathcal V$-permissible} and what is the {\em transform of $\mathcal V$ by a $\mathcal V$-permissible test system}.

We extend the definition of equivalence for idealistic spaces to the immersed case as follows:

\begin{definition}
Consider two immersed idealistic spaces $\mathcal V_\alpha$ and $\mathcal V_\beta$ over $(M,E)$ of respective dimensions $e_\alpha$ and $e_\beta$. We say that $\mathcal V_\alpha$ and $\mathcal V_\beta$ are {\em equivalent} if they have the same permissible test systems.
\end{definition}

\begin{remark}
Two equivalent immersed idealistic spaces
$$
\mathcal V_ \alpha=(M,E,N_\alpha, \mathcal L_\alpha), \quad \mathcal V_ \beta=(M,E,N_\beta, \mathcal L_\beta)
$$
have the same singular locus $S=\operatorname{Sing}\mathcal V_\alpha=\operatorname{Sing}\mathcal V_\beta$.
The subspaces $N_\alpha$ and $N_\beta$ are not necessarily equal, they can even have different dimensions, but $S\subset N_\alpha \cap N_\beta$. In the case when $N=N_\alpha=N_\beta$, we have that $\mathcal V_\alpha$ is equivalent to $\mathcal V_\beta$ if and only if the (non-immersed) idealistic spaces
$(N,E\vert_{N},\mathcal L_\alpha)$ and $(N,E\vert_{N},\mathcal L_\beta)$
are equivalent.
\end{remark}

\section{Idealistic Flowers}

Let us consider an ambient space $(M,E)$. An {\em immersed idealistic atlas $\mathcal P$ over $(M,E)$} is a finite family $\mathcal P=\{\mathcal V_\alpha\}_{\alpha\in \Lambda}$ such that:
\begin{enumerate}
\item The $\mathcal V_\alpha$ are immersed idealistic spaces over open sets $(M_\alpha,E_\alpha)$ of $(M,E)$, where $\{(M_\alpha,E_\alpha)\}_{\alpha \in \Lambda}$ is an open covering of $(M,E)$.
\item ({\em Compatibility property}) For any pair of indices $\alpha,\beta\in \Lambda$, the immersed idealistic spaces
$$
\mathcal V_{\alpha\beta}=\mathcal V_\alpha\vert_{M_{\alpha\beta}}, \quad
\mathcal V_{\beta\alpha}=\mathcal V_\beta\vert_{M_{\beta\alpha}}
$$
are equivalent, where $M_{\alpha\beta}=M_{\beta\alpha}=M_\alpha\cap M_\beta$.
\end{enumerate}
Each immersed idealistic space $\mathcal V_\alpha$ will be called an {\em immersed idealistic chart of $\mathcal P$}. When all the immersed idealistic charts $\mathcal V_\alpha$ of $\mathcal P$ have the same dimension $e$, we say that $\mathcal P$ is an {\em immersed idealistic $e$-atlas}.

An {\em immersed idealistic $e$-chart over $(M,E)$} is any $e$-dimensional immersed idealistic space $\mathcal C$ of the form $\mathcal C=(U,U\cap E, N,\mathcal L)$, where $U$ is an open subset of $\mathcal M$. We say that an immersed idealistic chart $\mathcal C$ is {\em compatible} with an immersed idealistic atlas $\mathcal P$ if $\mathcal P\cup \{\mathcal C\}$ is again an immersed idealistic atlas.

Let  $\mathcal P=\{\mathcal V_\alpha\}_{\alpha\in \Lambda}$ be an immersed idealistic atlas, where $$\mathcal V_\alpha=(M_\alpha,E_\alpha,N_\alpha,\mathcal L_\alpha).$$
Proceeding as in the non-immersed case (see Section \ref{ingl Atlasidealisticos}), we can define in a coherent way the following notions concerning $\mathcal P$:
\begin{itemize}
\item The singular locus $\operatorname{Sing}\mathcal P=\bigcup_{\alpha\in \Lambda} \operatorname{Sing}\mathcal V_\alpha$. It is a closed analytic subset of $M$, such that $M_\alpha\cap \operatorname{Sing}\mathcal P=\operatorname{Sing}\mathcal V_\alpha$.
\item The permissible centers $Y$ for $\mathcal P$. They are locally defined by the property that $Y\cap M_\alpha$ is permissible for $\mathcal V_\alpha$.
\item The $\mathcal P$-permissible test systems over $(M,E)$ and the transforms of $\mathcal P$ by $\mathcal P$-permissible test systems.
\end{itemize}

In the same way as for (non-immersed) idealistic atlases, we have the following result:

\begin{proposition} \label{prop:ingl equivalencia de atlas sumergidos}
Given two immersed idealistic atlases $\mathcal P_1$ and $\mathcal P_2$ over $(M,E)$, the following properties are equivalent:
\begin{enumerate}
		\item Any immersed idealistic chart of $\mathcal P_2$ is compatible with $\mathcal P_1$.
		\item Any immersed idealistic chart of $\mathcal P_1$ is compatible with $\mathcal P_2$.
		\item The union $\mathcal P_1\cup \mathcal P_2$ is an immersed idealistic atlas.
		\item $\mathcal P_1$ and $\mathcal P_2$ have the same permissible test systems.
	\end{enumerate}
\end{proposition}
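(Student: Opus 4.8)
The plan is to mirror exactly the proof scheme of Proposition \ref{prop:ingl equivalencia de atlas}, now in the immersed setting. The cornerstone I would use is the immersed analogue of Proposition \ref{prop:ingl sispruebapermitidos}: a test system $\mathcal{S}$ over $(M,E)$ is $\mathcal{P}$-permissible if and only if each restriction $\mathcal{S}_{M_\alpha}$ is permissible for the immersed chart $\mathcal{V}_\alpha$. This local characterization follows from the immersed version of Proposition \ref{prop:ingl caracterlocalequivalencia} (local character of the equivalence of immersed idealistic spaces) together with the local nature of permissible centers, and I would establish it first by the same induction on the length of the test system, since these notions are only said to be defined ``proceeding as in the non-immersed case''.

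Next I would dispatch the equivalences (1) $\Leftrightarrow$ (2) $\Leftrightarrow$ (3), which are formal and do not use (4). By definition, an immersed chart $\mathcal{C}$ is compatible with an atlas $\mathcal{Q}$ precisely when $\mathcal{Q}\cup\{\mathcal{C}\}$ is an immersed idealistic atlas, that is, when $\mathcal{C}$ is equivalent to every chart of $\mathcal{Q}$ on the overlaps. Thus (3) $\Rightarrow$ (1) and (3) $\Rightarrow$ (2) are immediate, since the full union being an atlas forces every cross-overlap compatibility. For (1) $\Rightarrow$ (3) I would verify the compatibility property for $\mathcal{P}_1\cup\mathcal{P}_2$: the overlaps internal to $\mathcal{P}_1$ and to $\mathcal{P}_2$ are fine because each is already an atlas, while the mixed overlaps are exactly the compatibilities of the charts of $\mathcal{P}_2$ with $\mathcal{P}_1$ granted by (1). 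The implication (2) $\Rightarrow$ (3) is symmetric.

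The substantive part is the equivalence of these with (4). For (3) $\Rightarrow$ (4) I would apply the local characterization to $\mathcal{P}_1\cup\mathcal{P}_2$: a test system is permissible for the union if and only if it is permissible for every chart of $\mathcal{P}_1$ and of $\mathcal{P}_2$, and using the compatibility of all charts this collapses to the statement that $\mathcal{P}_1$ and $\mathcal{P}_2$ share the same permissible test systems. The delicate direction, which I expect to be the main obstacle, is (4) $\Rightarrow$ (1). Here I would adapt the localization trick from the proof of Proposition \ref{prop:ingl caracterlocalequivalencia}: given a chart $\mathcal{C}$ of $\mathcal{P}_2$ over $U$ and a chart $\mathcal{V}_\alpha$ of $\mathcal{P}_1$ over $M_\alpha$, I would show that $\mathcal{C}\vert_{U\cap M_\alpha}$ and $\mathcal{V}_\alpha\vert_{U\cap M_\alpha}$ are equivalent by transporting test systems. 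Given a test system $\mathcal{T}$ over $U\cap M_\alpha$ permissible for $\mathcal{C}\vert_{U\cap M_\alpha}$, I prepend the open inclusion $U\cap M_\alpha\subset M$ to form $\mathcal{T}^{*}$; the local characterization upgrades permissibility of $\mathcal{T}$ for the single chart to permissibility of $\mathcal{T}^{*}$ for the whole atlas $\mathcal{P}_2$, hypothesis (4) passes it to $\mathcal{P}_1$, and the local characterization read backwards yields permissibility of $\mathcal{T}$ for $\mathcal{V}_\alpha\vert_{U\cap M_\alpha}$. Running the argument in both directions gives the equivalence of the restrictions, hence the compatibility of $\mathcal{C}$ with $\mathcal{P}_1$, which is (1). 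The main care needed is the bookkeeping of the restrictions of $\mathcal{T}^{*}$, so that the first open-inclusion step reproduces $\mathcal{P}_i\vert_{U\cap M_\alpha}$ and so that the single chart $\mathcal{C}\vert_{U\cap M_\alpha}$, which covers $U\cap M_\alpha$, controls permissibility for the entire restricted atlas through the compatibility of charts.
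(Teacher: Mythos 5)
Your proposal is correct and takes essentially the same approach as the paper: the paper proves the immersed statement ``in the same way'' as the non-immersed Proposition \ref{prop:ingl equivalencia de atlas}, i.e.\ by deriving it from the local characterization of permissible test systems (the immersed analogue of Proposition \ref{prop:ingl sispruebapermitidos}), which is exactly the cornerstone you set up first. Your detailed handling of the direction (4) $\Rightarrow$ (1) by prepending the open inclusion is precisely the localization trick the paper employs in Proposition \ref{prop:ingl caracterlocalequivalencia}, transported to the immersed setting.
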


\begin{definition}
Two immersed idealistic atlases $\mathcal P_1$ and $\mathcal P_2$ over $(M,E)$ are called to be {\em equivalent} if they have the same permissible test systems (and hence the equivalent properties in Proposition \ref{prop:ingl equivalencia de atlas sumergidos} hold).
\end{definition}

\begin{definition}
The equivalence classes of immersed idealistic atlases over $(M,E)$ are called  {\em idealistic flowers over $(M,E)$}.
\end{definition}

The properties and concepts invariant by the equivalence relation of immersed idealistic atlases define properties and concepts concerning idealistic flowers $\mathcal F$ over $(M,E)$. Thus, we have well defined:
\begin{itemize}
	\item Singular locus $\operatorname{Sing}\mathcal F$.
	\item Concept of permissible centers.
	\item Transformations by open projections.
	\item Transformations by blowing-ups of permissible centers.
	\item Concept of $\mathcal F$-permissible test systems.
	\item Transformations by $\mathcal F$-permissible test systems.
	\item Existence of reduction of singularities.
\end{itemize}

\subsection{Idealistic $e$-Flowers} The immersed idealistic atlases belonging to a given idealistic flower do not have necessarily a fixed dimension. Anyway, in order to prove the reduction of singularities by induction on the dimension, we need also to consider idealistic flowers of a fixed dimension. Thus, we take the following definition:

\begin{definition} Let $(M,E)$ be an ambient space of dimension $n$. Consider an integer number $0\leq e\leq n$. An {\em idealistic $e$-flower over $(M,E)$} is an equivalence class of
immersed idealistic $e$-atlases over $(M,E)$.
\end{definition}
Let us remark that the equivalence relation of immersed idealistic atlases is the same one in both cases, but it concerns two different sets:
$$
\left\{
\begin{array}{c}
\text{immersed idealistic $e$-atlases}\\
\text{over } (M,E)
\end{array}
\right\}
\subset
\left\{
\begin{array}{c}
\text{immersed idealistic atlases}\\
\text{over }(M,E)
\end{array}
\right\}.
$$

Let $\mathcal F_1$ and $\mathcal F_2$ be idealistic flowers of respective dimensions $e_1$ and $e_2$ over the same ambient space $(M,E)$. We say that $\mathcal F_1$ and $\mathcal F_2$  are {\em equivalent} if they are contained in an common idealistic flower $\mathcal F$. This is the same to say that any atlas of $\mathcal F_1$ is equivalent to any other of $\mathcal F_2$. It is also the same that asking $\mathcal F_1$ and $\mathcal F_2$ to have the same permissible test systems.  Note that, when $e_1=e_2$, then $\mathcal F_1$ and $\mathcal F_2$ are equal or disjoint. In the particular case that $e_2=e_1-1$, we say that $\mathcal F_2$ has {\em maximal contact} with $\mathcal F_1$.

\subsection{Description in Terms of Idealistic Exponents} Let $(M,E)$ be an ambient space. An {\em immersed idealistic exponent} over $(M,E)$ is a data
$$
\mathcal {W}=(M,E,N,\mathcal E),
$$
where $(M,E,N)$ is a transverse closed ambient subspace of $(M,E)$ and $\mathcal E$ is an idealistic exponent over $(N,E\vert_N)$. The {\em singular locus $\operatorname{Sing}\mathcal {W}$} is $\operatorname{Sing}\mathcal {W}=\operatorname{Sing}\mathcal {E}$. The {\em dimension} of $\mathcal W$ is the dimension of $N$.

The immersed idealistic exponents are well-transformed by restriction to open sets, projections over the first factor and permissible blowing-ups, where the permissible centers are, by definition, the permissible centers for $\mathcal E$. Thus, the permissible test systems and the corresponding transforms are also defined. Two immersed idealistic exponents are {\em equivalent} if they have the same permissible test systems.

\begin{remark}
Let us consider two immersed idealistic exponents
 $$
\mathcal {W}_\alpha=(M,E,N_\alpha,\mathcal E_\alpha)
,\quad
\mathcal {W}_\beta=(M,E,N_\beta,\mathcal E_\beta).
$$
If $\mathcal {W}_\alpha$ and $\mathcal {W}_\beta$ are equivalent and $N_\alpha=N_\beta$, then $\mathcal E_\alpha=\mathcal E_\beta$. Nevertheless, we can have that $\mathcal{W}_\alpha$ and  $\mathcal {W}_\beta$ are equivalent with $N_\alpha\ne N_\beta$. In this case, we can assure that
$\operatorname{Sing}(\mathcal{W}_\alpha)=\operatorname{Sing}(\mathcal{W}_\beta)\subset N_\alpha\cap N_\beta$.
\end{remark}

An {\em immersed exp-idealistic atlas} is a finite family
$$
\mathcal Q=\{\mathcal {W}_\alpha\}_{\alpha\in \Lambda}, \quad \mathcal {W}_\alpha=(M_\alpha,E_\alpha,N_\alpha,\mathcal E_\alpha),
$$
where the $\mathcal {W}_\alpha$ are {\em immersed idealistic exponents} over $(M_\alpha,E_\alpha)$, the family $\{(M_\alpha,E_\alpha)\}_{\alpha \in \Lambda}$ is an open covering of $(M,E)$ and we have the usual compatibility condition among the $\mathcal {W}_\alpha$. When all the charts $\mathcal W_\alpha$ have dimension equal to $e$, we say that $\mathcal Q$ {\em has dimension $e$}; we also say that
 $\mathcal Q$ is an {\em immersed exp-idealistic $e$-atlas}.

The singular locus, permissible centers, permissible test systems and transforms by a permissible test systems are defined as usual. Two  immersed exp-idealistic atlases over $(M,E)$ are {\em equivalent} if they have the same permissible test systems.

\begin{proposition}
\label{porp:ingl apilamientosenterminosdeexponentes}
  Let $(M,E)$ be an ambient space. There is a natural bijection between equivalence classes of immersed exp-idealistic atlases over $(M,E)$ and idealistic flowers over $(M,E)$. In the same way, there is a natural bijection between equivalence classes of immersed exp-idealistic $e$-atlases over $(M,E)$ and idealistic $e$-flowers over $(M,E)$.
\end{proposition}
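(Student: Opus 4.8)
The plan is to construct explicit maps in both directions between immersed idealistic atlases and immersed exp-idealistic atlases, show that each descends to the level of equivalence classes, and check that the two descended maps are mutually inverse. The unifying principle throughout is that \emph{every} equivalence appearing in the statement --- of idealistic atlases, of idealistic exponents, of immersed idealistic atlases, and of immersed exp-idealistic atlases --- is defined by the single condition of having the same permissible test systems; so it suffices to verify at each step that the assignment preserves the family of permissible test systems.

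First I would treat the direction from immersed idealistic atlases to immersed exp-idealistic atlases, which is the immediate one. Given an immersed idealistic atlas $\mathcal P=\{\mathcal V_\alpha\}$, with $\mathcal V_\alpha=(M_\alpha,E_\alpha,N_\alpha,\mathcal L_\alpha)$, I replace the idealistic space $\mathcal N_\alpha=(N_\alpha,E\vert_{N_\alpha},\mathcal L_\alpha)$ by the idealistic exponent $\mathcal E_\alpha$ it generates, namely the class of the one-chart idealistic atlas $\{\mathcal N_\alpha\}$ over $(N_\alpha,E\vert_{N_\alpha})$. This produces immersed idealistic exponents $\mathcal W_\alpha=(M_\alpha,E_\alpha,N_\alpha,\mathcal E_\alpha)$. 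Since $\{\mathcal N_\alpha\}$ has exactly the permissible test systems of $\mathcal N_\alpha$, the immersed idealistic exponent $\mathcal W_\alpha$ has the same permissible test systems as $\mathcal V_\alpha$; hence the compatibility of the $\mathcal V_\alpha$ (equivalence on the overlaps $M_{\alpha\beta}$) transfers verbatim to the $\mathcal W_\alpha$, and $\mathcal Q=\{\mathcal W_\alpha\}$ is an immersed exp-idealistic atlas. Equivalent atlases $\mathcal P$ go to equivalent atlases $\mathcal Q$ by the same test-system argument, so this assignment descends to classes.

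The substantive direction is the reverse one, from immersed exp-idealistic atlases to immersed idealistic atlases, because here each exponent $\mathcal E_\alpha$ is itself an equivalence class of idealistic atlases over $N_\alpha$, so an exp-idealistic atlas carries a \emph{nested} atlas structure that must be flattened. Starting from $\mathcal Q=\{\mathcal W_\alpha\}$ with $\mathcal W_\alpha=(M_\alpha,E_\alpha,N_\alpha,\mathcal E_\alpha)$, I would choose for each $\alpha$ a representative idealistic atlas $\mathcal A_\alpha=\{\mathcal N_{\alpha i}\}_i$ of $\mathcal E_\alpha$, where $\mathcal N_{\alpha i}$ is an idealistic space over an open subset $V_{\alpha i}\subset N_\alpha$. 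Because $N_\alpha$ is a closed analytic germ in $M_\alpha$ over a compact, each $V_{\alpha i}$ (a germ over a subcompact of $K_\alpha\cap N_\alpha$) can be realized as $U_{\alpha i}\cap N_\alpha$ for a suitable open subset $U_{\alpha i}\subset M_\alpha$ along which $N_\alpha$ stays a transverse ambient subspace; this is the one genuinely geometric step. The resulting immersed idealistic spaces $\mathcal V_{\alpha i}=(U_{\alpha i},E\cap U_{\alpha i},N_\alpha\cap U_{\alpha i},\mathcal L_{\alpha i})$, with $\mathcal L_{\alpha i}$ the list of $\mathcal N_{\alpha i}$, then form a single family $\mathcal P=\{\mathcal V_{\alpha i}\}_{\alpha,i}$ over a refinement of the covering. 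Compatibility of $\mathcal P$ splits into the intra-chart overlaps, controlled by the compatibility of the representative atlas $\mathcal A_\alpha$, and the inter-chart overlaps, controlled by the equivalence of $\mathcal W_\alpha$ and $\mathcal W_\beta$; both are reduced to local statements by Propositions \ref{prop:caracterlocalcentrosatlas} and \ref{prop:ingl sispruebapermitidos}, using that the permissible centers and test systems of an immersed object live on $N$ and are unaffected by the ambient thickening.

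Finally I would check well-definedness and invertibility. Independence of the class of $\mathcal P$ from the chosen representatives $\mathcal A_\alpha$ follows because two representatives of $\mathcal E_\alpha$ are equivalent idealistic atlases and thus yield immersed idealistic atlases with identical permissible test systems; independence from the liftings $U_{\alpha i}$ follows because the permissible test systems of $\mathcal V_{\alpha i}$ depend only on the idealistic data on $N_\alpha$. Equivalent exp-atlases then map to equivalent idealistic atlases, so the assignment descends to classes. The two descended maps are mutually inverse: composing atlas $\to$ exp $\to$ atlas returns, for the one-chart atlas $\{\mathcal N_\alpha\}$ chosen as representative of $\mathcal E_\alpha=[\{\mathcal N_\alpha\}]$, the original atlas up to equivalence; composing exp $\to$ atlas $\to$ exp returns $\mathcal E_\alpha$ as the class generated by any of its representatives. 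Since every construction preserves the common dimension of the charts, the whole argument restricts word for word to the $e$-dimensional case, giving the second bijection. I expect the main obstacle to be precisely the flattening of the nested atlas-of-exponents in the reverse direction: one must keep track simultaneously of the cocycle-type compatibility inside each $\mathcal E_\alpha$ and across different $\mathcal W_\alpha$ whose ambient subspaces $N_\alpha$, $N_\beta$ need not agree off the common singular locus, and reconcile the two only through the invariant notion of permissible test system.
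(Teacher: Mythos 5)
Your strategy coincides with the paper's own proof: the paper defines exactly your forward map $\Psi$ (each immersed chart $\mathcal V_\alpha=(M_\alpha,E_\alpha,N_\alpha,\mathcal L_\alpha)$ is sent to $\mathcal W_\alpha=(M_\alpha,E_\alpha,N_\alpha,\mathcal E_\alpha)$, with $\mathcal E_\alpha$ the exponent generated by $(N_\alpha,E_\alpha\vert_{N_\alpha},\mathcal L_\alpha)$) and then asserts, without further detail, that $\Psi$ is well defined and induces a bijection up to equivalence. Your treatment of the forward map, of the descent to classes, and of injectivity---all via preservation of permissible test systems---is correct and is what the paper intends. The reverse (``flattening'') direction is where you supply the detail the paper omits, and it is there that your argument has a genuine gap.

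The gap is the covering condition of the germ-over-compact formalism. An immersed idealistic atlas over $(M,E)$ requires its ambient opens to form an open covering, i.e.\ their compacts of germification must union to $K$. The charts $\mathcal N_{\alpha i}$ of a representative atlas $\mathcal A_\alpha$ of $\mathcal E_\alpha$ live over opens $V_{\alpha i}\subset N_\alpha$ whose compacts $L_{\alpha i}$ cover only $K_\alpha\cap N_\alpha$, and the requirement $U_{\alpha i}\cap N_\alpha=V_{\alpha i}$ forces the compact $C_{\alpha i}$ of $U_{\alpha i}$ to satisfy $C_{\alpha i}\cap N_\alpha=L_{\alpha i}$. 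With the minimal choice $C_{\alpha i}=L_{\alpha i}$ (the one your phrasing suggests), the union $\bigcup_{\alpha,i}C_{\alpha i}=\bigcup_\alpha (K_\alpha\cap N_\alpha)$ is in general a proper subset of $K$ (for $e<\dim M$ the $N_\alpha$ are nowhere dense), so the family $\{\mathcal V_{\alpha i}\}$ is \emph{not} an atlas: your claim that these charts ``form a single family over a refinement of the covering'' is unjustified, and false for the minimal lifts. The repair is to choose $C_{\alpha i}$ with $\bigcup_i C_{\alpha i}=K_\alpha$ while keeping $C_{\alpha i}\cap N_\alpha=L_{\alpha i}$, for instance the Voronoi-type compacts $C_{\alpha i}=\{x\in K_\alpha:\ d(x,L_{\alpha i})\le d(x,L_{\alpha j})\text{ for all }j\}$ for an auxiliary metric $d$, taking $U_{\alpha i}$ to be the germ of $M_\alpha$ over $C_{\alpha i}$. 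This enlargement creates a second case in the compatibility check that your proposal does not treat: overlap regions away from $\operatorname{Sing}\mathcal Q$. There both restricted charts have empty singular locus (any point of $U_{\alpha i}\cap U_{\beta j}\cap \operatorname{Sing}\mathcal Q$ lies in $\operatorname{Sing}\mathcal W_\alpha\subset N_\alpha$, hence in $C_{\alpha i}\cap N_\alpha=L_{\alpha i}$, and likewise in $L_{\beta j}$), so their permissible test systems are in both cases exactly the sequences of open projections and they are trivially equivalent; at points of $\operatorname{Sing}\mathcal Q$ your argument (compatibility of $\mathcal W_\alpha,\mathcal W_\beta$ plus the change of codimension of Subsection \ref{subsec: change of codimension}) applies, and the local character of equivalence (Proposition \ref{prop:ingl caracterlocalequivalencia}) glues the two cases. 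With this amendment the rest of your proof, including the verification that the two maps are mutually inverse and the restriction to fixed dimension $e$, goes through.
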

\begin{proof}
Let us consider the map
$$
\Psi: \left\{
\begin{array}{c}
\text{immersed idealistic }\\
\text{atlases over } (M,E)
\end{array}
\right\}
\to
\left\{
\begin{array}{c}
\text{immersed exp-idealistic }\\
\text{atlases over }(M,E)
\end{array}
\right\},
$$
defined as follows. Take an immersed idealistic atlas $\mathcal P=\{\mathcal V_\alpha\}_{\alpha \in \Lambda}$ over $(M,E)$, where
$$
\mathcal V_\alpha=(M_\alpha,E_\alpha,N_\alpha,\mathcal L_\alpha).
$$
The idealistic space $(N_\alpha,E_\alpha|_{N_\alpha},\mathcal L_\alpha)$ defines an idealistic exponent $\mathcal E_\alpha$ over $(N_\alpha,E_\alpha|_{N_\alpha})$. We take $\Psi(\mathcal P)=\{\mathcal W_\alpha\}_{\alpha \in \Lambda}$, where
$$
\mathcal W_\alpha=(M_\alpha,E_\alpha,N_\alpha,\mathcal E_\alpha).
$$
The map $\Psi$ is well-defined and it induces a bijection up to equivalence. The case when the dimension is fixed in done in the same way.
\end{proof}
\subsection{Change of Codimension}
\label{subsec: change of codimension}
Let $(M,E)$ be an ambient space and $(M,E,N)$ a closed transverse subspace. Consider two immersed idealistic charts
$$
\mathcal V_A=(N,E|_N,A,\mathcal L_A), \quad \mathcal V_B=(N,E|_N,B,\mathcal L_B)
$$
over the ambient space $(N,E|_N)$. We can consider two new immersed idealistic charts
$$
\mathcal V^*_A=(M,E,A,\mathcal L_A), \quad \mathcal V^*_B=(M,E,B,\mathcal L_B)
$$
over the ambient space $(M,E)$. We have the following property:
\begin{proposition}
The immersed idealistic charts $\mathcal V_A$ and $\mathcal V_B$ are equivalent if and only if $\mathcal V^*_A$ and $\mathcal V^*_B$ are also equivalent.
\end{proposition}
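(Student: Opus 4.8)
The plan is to reduce the statement to a clean correspondence between test systems over $(M,E)$ and test systems over $(N,E\vert_N)$ that is governed only by the inclusion $N\subset M$, and not by the inner subspaces $A$ or $B$. The first observation I would record is that the singular locus and the transforms of $\mathcal V^*_A$ are computed entirely inside $A$: we have $\operatorname{Sing}\mathcal V^*_A=\operatorname{Sing}\mathcal V_A=\operatorname{Sing}\mathcal L_A$, and under a permissible blowing-up the strict transform of $A$ is the intrinsic blow-up $\operatorname{Bl}_Y A$, which sits inside $\operatorname{Bl}_Y N\subset \operatorname{Bl}_Y M$, while the controlled transform of $\mathcal L_A$ is computed on $A$ alone. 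Consequently every permissible center of a $\mathcal V^*_A$-permissible test system is forced to lie inside the successive strict transforms of $N$. The key geometric point to check here is that the divisor seen by $A$ does not depend on the ambient space, namely $E\vert_A=(E\vert_N)\vert_A$, which holds because $N$ is transverse, so $E\vert_N=E\cap N$ and a component of $E$ contains $A$ if and only if its trace on $N$ does; together with the fact, already used in the definition of the immersed blowing-up, that the strict transform of a transverse subspace stays transverse.

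With this in hand, the core step is a per-system biconditional, proved by induction on the length. For a test system $\mathcal S$ over $(M,E)$ all of whose blow-up centers lie in the successive strict transforms of $N$, the restriction $\mathcal S\vert^N$ (blow up $N$ instead of $M$ with the same centers, and restrict the projections and open inclusions to $N$) is a test system over $(N,E\vert_N)$, and $\mathcal S$ is $\mathcal V^*_X$-permissible if and only if $\mathcal S\vert^N$ is $\mathcal V_X$-permissible, for $X\in\{A,B\}$; moreover the transform of $\mathcal V^*_X$ by $\mathcal S$ is the ambient extension to the transform of $M$ of the transform of $\mathcal V_X$ by $\mathcal S\vert^N$. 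The inductive step uses exactly the intrinsic nature of $\operatorname{Bl}_Y A$ and of the controlled transform, together with transversality, which in suitable local coordinates realizes $E$ as $(E\vert_N)\times \mathbb C^c$ and hence shows that a center $Y\subset N$ has normal crossings with $E$ in $M$ precisely when it has normal crossings with $E\vert_N$ in $N$. Combining this with the previous paragraph yields, writing $P^*_X$ for the set of $\mathcal V^*_X$-permissible test systems, the formula
$$
P^*_X=\{\,\mathcal S:\ \mathcal S\ \text{has all centers in the strict transforms of } N\ \text{and}\ \mathcal S\vert^N\ \text{is}\ \mathcal V_X\text{-permissible}\,\},
$$
whose right-hand side depends on $X$ only through the set of $\mathcal V_X$-permissible test systems over $(N,E\vert_N)$.

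From this formula both implications are immediate. If $\mathcal V_A$ and $\mathcal V_B$ are equivalent they have the same permissible test systems over $(N,E\vert_N)$, so the formula gives $P^*_A=P^*_B$, that is, $\mathcal V^*_A$ and $\mathcal V^*_B$ are equivalent. Conversely, to pass from $P^*_A=P^*_B$ back to the equality of the permissible systems over $N$, I would lift a given $\mathcal V_A$-permissible test system $\mathcal T$ over $(N,E\vert_N)$ to a test system $\mathcal S$ over $(M,E)$ with $\mathcal S\vert^N=\mathcal T$: blow-ups and projections lift directly, and an open inclusion $U_N\subset N_{j-1}$ is lifted by $U_M=M_{j-1}\setminus(N_{j-1}\setminus U_N)$, which is open in $M_{j-1}$ and meets $N_{j-1}$ exactly in $U_N$. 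Then $\mathcal S\vert^N=\mathcal T$ is $\mathcal V_A$-permissible, so $\mathcal S\in P^*_A=P^*_B$, whence $\mathcal T$ is $\mathcal V_B$-permissible; by symmetry the two families of permissible systems over $N$ coincide. The main obstacle, and the only place where genuine geometry enters, is the faithful transfer of the normal-crossings and divisor data between the ambient spaces $N$ and $M$, that is the identity $E\vert_A=(E\vert_N)\vert_A$ and its stability under the transforms; once this is secured, the remainder is the bookkeeping of the induction.
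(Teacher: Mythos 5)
You should first be aware that the paper offers no proof of this proposition: in the subsection on change of codimension it is stated as a property and the text moves on, so there is no argument of the authors to compare yours against; what can be assessed is correctness, and your proof is correct. Its skeleton is the natural one. Since the inner idealistic data of $\mathcal V^*_X$ and of $\mathcal V_X$ live on the same subspace $X\in\{A,B\}$, and since transversality gives the identity $E\vert_A=(E\vert_N)\vert_A$ (stable under permissible blowing-ups and open projections, because strict transforms of transverse subspaces stay transverse and restriction of the divisor commutes with the transforms), the conditions ``permissible center for the transform of $\mathcal V^*_X$'' and ``permissible center for the transform of $\mathcal V_X$'' are literally the same condition on the same subsets of the transform of $X$. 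The correspondence $\mathcal S\mapsto\mathcal S\vert^N$ on ambient test systems whose centers lie in the transforms of $N$, together with your lift of test systems from $(N,E\vert_N)$ to $(M,E)$, then transports the equality of the sets of permissible test systems in both directions, exactly as you organize it.

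Two points deserve sharper wording, though neither is a genuine gap. First, your claim that a center $Y\subset N$ has normal crossings with $E$ in $M$ \emph{precisely when} it has normal crossings with $E\vert_N$ in $N$: the direction your argument actually needs everywhere is the upward one (normal crossings at the level of $A$, hence of $N$, implies normal crossings in the larger ambient space), and that direction is indeed immediate from the local product structure $(M,E)\simeq(N,E\vert_N)\times(\mathbb C^c,\emptyset)$; it is also the fact the paper itself invokes when asserting that permissible centers of immersed idealistic spaces have normal crossings with $E$ in $M$. The downward direction is true as well, but it does not follow by merely restricting the adapted coordinates, since those need not be adapted to $N$; it requires a short extra argument on linear parts. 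You can avoid it entirely by deriving the $N$-level normal crossings of the centers from their permissibility on the transform of $A$ (upward direction for $A\subset N$) rather than from the ambient test-system condition. Second, $\mathcal S\vert^N$ is only defined up to the paper's truncation convention when an open-projection step has domain missing the transform of $N$; in that case the transform of $\mathcal V^*_X$ is the empty immersed space, all later permissible steps are forced to be open projections, and your formula for $P^*_X$ remains valid, but the convention should be stated. With these two readings, your induction and both implications are sound.
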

We have the same result if we consider immersed exp-idealistic charts
$$
\mathcal W_A=(N,E|_N,A,\mathcal E_A), \quad \mathcal W_B=(N,E|_N,B,\mathcal E_B)
$$
and
$\mathcal W^*_A=(M,E,A,\mathcal E_A)$, $\mathcal W^*_B=(M,E,B,\mathcal E_B)$.

\section{Reduction of Singularities} Consider an ambient space $(M,E)$ and let $\mathcal O$ be an object over $(M,E)$   belonging to one of the following classes:
\begin{itemize}
  \item Idealistic Spaces over $(M,E)$.
  \item Idealistic Atlases over $(M,E)$.
  \item Idealistic Exponents over $(M,E)$.
  \item Immersed Idealistic $e$-Spaces over $(M,E)$.
  \item Immersed Idealistic $e$-Atlases over $(M,E)$.
  \item Idealistic $e$-Flowers over $(M,E)$.
\end{itemize}
We say that there is a {\em reduction of singularities for  $\mathcal O$} if and only if there is a morphism $\pi:(M',E')\rightarrow (M,E)$ that is a finite composition of permissible blowing-ups such that the transform $\mathcal O'$ of $\mathcal O$ has empty singular locus. In view of the description of the objects and the transformations, the object $\mathcal O$ defines in a proper way an idealistic $e$-flower $\mathcal F_\mathcal O$ and any reduction of singularities of $\mathcal F_\mathcal O$ induces a reduction of singularities of $\mathcal O$ in its own class.

Then, the main result of this notes is the following one:
\begin{theorem}
\label{teo:ingl redsingflowers}
  Let $\mathcal F$ be an idealistic $e$-flower over an ambient space $(M,E)$. Then, there is a reduction of singularities for $\mathcal F$.
\end{theorem}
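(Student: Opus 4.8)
The plan is to proceed by induction on the dimension $e$ of the flower, realizing the implication displayed in the introduction: reduction of singularities for $(e-1)$-flowers implies reduction of singularities for $e$-flowers. The base case $e=0$ is immediate. A $0$-flower is represented by an immersed idealistic $0$-atlas whose charts are supported on $0$-dimensional ambient subspaces $N_\alpha$; since the singular locus of a marked principal ideal has dimension strictly smaller than that of its ambient space, each $\operatorname{Sing}\mathcal V_\alpha$ is already empty, and hence $\operatorname{Sing}\mathcal F=\emptyset$ with no blowing-up needed. All the work is therefore in the inductive step, which I would organize in two phases.

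In the first phase I would reduce the given $e$-flower $\mathcal F$ to the \emph{adjusted and reduced} case by a combinatorial desingularization. The Hironaka order is well defined on $\mathcal F$, and away from order exactly one the obstruction to lowering it is, after the reduced normalization, carried by the monomial part relative to $E$; I would encode this by Newton-polyhedra-type data attached to the charts and run a finite sequence of permissible combinatorial blowing-ups, each permissible for $\mathcal F$ by Proposition \ref{prop:caracterlocalcentrosatlas} applied chartwise. Termination of this phase is a well-foundedness statement for a decreasing polyhedral invariant, which I would verify by the usual lexicographic descent, so that after finitely many steps the transform of $\mathcal F$ is adjusted (order identically one on the singular locus) and reduced (the singular locus contains no hypersurface of the ambient subspaces in the atlas).

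In the second phase, working with an adjusted and reduced $e$-flower, I would construct an $(e-1)$-flower $\mathcal H$ having maximal contact with it, i.e. equivalent to it in the sense of possessing the same permissible test systems. Passing to the description in terms of immersed exp-idealistic atlases via Proposition \ref{porp:ingl apilamientosenterminosdeexponentes}, I would project the adjusted and reduced idealistic exponent of each chart onto a hypersurface using the projecting-axis construction, and then, order being exactly one, apply Tschirnhaus transformations to produce, locally and with an empty divisor, a maximal contact hypersurface inside each ambient subspace. Restricting the exponents to these hypersurfaces, and using the change-of-codimension result of Subsection \ref{subsec: change of codimension} to read the resulting charts either in $(N_\alpha,E|_{N_\alpha})$ or in $(M,E)$, yields a family of immersed idealistic $(e-1)$-charts; after removing the old components of $E$ from the singular locus I would check pairwise compatibility on overlaps, so that this family assembles into a genuine immersed idealistic $(e-1)$-atlas whose class is the desired $\mathcal H$. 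By the inductive hypothesis $\mathcal H$ admits a reduction of singularities, i.e. a finite sequence of permissible blowing-ups with empty transform singular locus; since $\mathcal H$ and $\mathcal F$ share the same permissible test systems, the same sequence is permissible for $\mathcal F$ and, equivalent flowers having the same singular locus with transforms again equivalent, it produces $\operatorname{Sing}\mathcal F'=\emptyset$. Composing with the first-phase blowing-ups gives the reduction of singularities of the original $\mathcal F$.

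The main obstacle I expect is precisely the globalization of maximal contact in the second phase, which is the raison d'être of the flower formalism: the maximal contact hypersurfaces produced by Tschirnhaus are only local and in general do not glue into a global hypersurface, so the genuinely delicate point is to prove that the locally constructed $(e-1)$-charts are pairwise equivalent on the intersections of their domains and that the resulting $(e-1)$-atlas is equivalent, as a flower, to $\mathcal F$. I would establish this through the local character of equivalence (Propositions \ref{prop:ingl caracterlocalequivalencia} and \ref{prop:ingl sispruebapermitidos}) together with the independence of the projecting-axis construction from the local choices up to equivalence; the omnipresence of the fixed ambient space $(M,E)$ is what allows these compatibilities to be checked. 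A secondary, more technical obstacle is ensuring termination of the combinatorial first phase, which requires a careful choice of polyhedral invariant that is simultaneously well defined across charts and strictly decreasing under the combinatorial blowing-ups.
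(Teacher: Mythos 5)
Your second phase is essentially the paper's own treatment of the adjusted-reduced case (local Tschirnhaus hypersurfaces with empty divisor, separation of the old components of $E$, projections along projecting axes, gluing into an immersed idealistic $(e-1)$-atlas of maximal contact), and your identification of the globalization of maximal contact as the crux is exactly right. The genuine gap is your first phase: the reduction of a general $e$-flower to the adjusted and reduced case is \emph{not} a combinatorial preprocessing step, and no polyhedral descent can accomplish it. Take $(M,E)=((\mathbb C^2,0),\emptyset)$ and the flower given by the single chart with $\mathcal L=\{((y^2-x^3)\mathcal O_{\mathbb C^2,0},\,1)\}$: the singular locus is the cusp itself, so the flower is not reduced, and the order at the origin is $2$, so it is not adjusted; yet $E=\emptyset$, so there are no divisor components for your ``permissible combinatorial blowing-ups encoded by Newton-polyhedra-type data'' to act on. Reaching the adjusted-reduced case from this configuration is precisely resolving the cusp, i.e.\ it already has the full strength of the theorem in dimension $e$. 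Your claim that away from order one the obstruction is ``carried by the monomial part relative to $E$'' is simply false when $E$ is empty or too small.

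The paper's logical structure is the reverse of yours, and this is not cosmetic. First one proves RedSing($e$, adjusted-reduced) from RedSing($e-1$) by maximal contact (your phase two). Then RedSing($e$, adjusted) follows from RedSing($e-1$) together with RedSing($e$, adjusted-reduced): the $(e-1)$-dimensional components $L_i$ of the singular locus are non-singular \emph{because} the flower is adjusted (equimultiplicity forces $I_{j_0}=\mathcal J_{L_1\cap U}^{d_{j_0}}$ locally, Proposition \ref{prop: aislar L}), one uses RedSing($e-1$) to put them in normal crossings with $E$ (Proposition \ref{prop: Normal crossings for a non-singular closed subspace}) and then blows them up. Finally, RedSing($e$) follows from RedSing($e$, adjusted) together with the monomial case: one fixes a logarithmic factor $Z$ of a $d$-normalized atlas $\mathcal P$ and descends on the co-factorial order $\mu_Z\mathcal P$, at each step applying RedSing($e$, adjusted) to the auxiliary adjusted atlas $\mathcal P^{Z,m}$ with $m=\mu_Z\mathcal P$ (Proposition \ref{prop:ingl elajusteesajustado} and Remark \ref{rk:commutativityexplosiones}); the combinatorial input (Hironaka's game, via Spivakovsky, Proposition \ref{prop:redsinglogspaces}) enters only at the endpoint $\mu_Z\mathcal P=0$, where the atlas becomes monomial, i.e.\ equivalent to a logarithmic idealistic space supported by $E$. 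So the adjusted case is an input \emph{consumed} in order to reach monomiality, not the output of a combinatorial phase; a plan like yours, which defers all use of maximal contact until after the reduction to the adjusted-reduced case, cannot close the induction.
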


We will provide a proof of this result, working essentially by induction on the dimension $e$ of the idealistic flower $\mathcal F$.

\vfill
\pagebreak
\part{Hironaka's Order}
\label{The order of Idealistic Exponents and Equidimensional Flowers}
In this part we develop the definition and properties of the {\em order} of an idealistic $e$-flower. We do that for idealistic spaces, idealistic atlases, immersed idealistic $e$-atlases and finally for idealistic $e$-flowers.

The concept of ``order'' is important to define the so-called ``adjusted'' idealistic $e$-flowers. The most important step in the proof of Theorem \ref{teo:ingl redsingflowers} consists in proving the existence of reduction of singularities for adjusted (and reduced) $e$-flowers under the induction hypothesis that there is reduction of singularities for any $(e-1)$-flower. This process is founded in the Maximal Contact Theory \cite{Aro-H-V}, as we shall see in next parts.

\section{Order of an Idealistic Space}

Let $\mathcal M=(M,E, \mathcal L)$ be an idealistic space, where $\mathcal L=\{(I_j,d_j)\}_{i=1}^k$. Consider a point $P\in M$. Following Hironaka \cite{Aro-H-V}, the {\em order $\delta_P\mathcal L$} is defined by
$$
\delta_P\mathcal L=\min\left\{\frac{\nu_PI_j}{d_j};\quad j=1,2,\ldots,k\right\}\in \mathbb Q_{\geq 0}.
$$
Note that $P$ is a singular point if and only if $\delta_P\mathcal L\geq 1$. We also use the notation $\delta_P\mathcal M=\delta_P\mathcal L$, although the divisor $E$ has no relevance in the definition of the order. In a similar way, we define the {\em generic order  $\delta_Y\mathcal L$} along an irreducible closed subspace $Y$ of $M$.

\begin{proposition}
\label{prop:ingl explosionexpcto}
Consider an idealistic space $\mathcal M=(M,E,\mathcal L)$ and a permissible irreducible center $Y\subset M$. Let $\pi:(M',E')\rightarrow (M,E)$ be the blowing-up with center $Y$ and let $\mathcal M'$ be the transform of $\mathcal M$ by $\pi$. We have that
 	\begin{equation} \label{eq:ingl expctodivexcep}
 		\delta_{D'}\mathcal M'=\delta_Y\mathcal M-1,
 	\end{equation}
 	where $D'=\pi^{-1}(Y)$ is the exceptional divisor of $\pi$.
 \end{proposition}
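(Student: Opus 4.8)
The plan is to reduce the statement, via the definition of the order as a minimum over the marked ideals, to a per-index computation of the generic order of the controlled transform along $D'$. Concretely, I would first record that
$$
\delta_{D'}\mathcal M'=\min_{1\le j\le k}\frac{\nu_{D'}I'_j}{d_j},\qquad \delta_Y\mathcal M=\min_{1\le j\le k}\frac{\nu_YI_j}{d_j},
$$
so it suffices to prove that $\nu_{D'}I'_j=\nu_YI_j-d_j$ for each index $j$; dividing by $d_j$ and extracting the common summand $-1$ from the minimum then yields \eqref{eq:ingl expctodivexcep}.

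Next I would unwind the definition of the controlled transform. Since $I'_j=\mathcal J_{D'}^{-d_j}\pi^{-1}(I_j)$ and the exceptional divisor $D'$ is irreducible and reduced, so that $\nu_{D'}\mathcal J_{D'}=1$, the generic order along $D'$ is additive under this factorization:
$$
\nu_{D'}I'_j=\nu_{D'}\!\left(\pi^{-1}(I_j)\right)-d_j.
$$
Here I would note that the controlled transform is a genuine ideal sheaf precisely because $Y$ is permissible: permissibility gives $\delta_Y\mathcal M\ge 1$, hence $\nu_YI_j\ge d_j$ for every $j$, which (via the core claim below) guarantees $\nu_{D'}(\pi^{-1}(I_j))\ge d_j$. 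Thus the whole proposition is reduced to the single claim that the generic order of the total transform along the exceptional divisor equals the generic order along the center:
$$
\nu_{D'}\!\left(\pi^{-1}(I_j)\right)=\nu_YI_j.
$$

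The core claim I would prove by a local computation near the generic point of $Y$. Working in local coordinates $(x_1,\dots,x_r,x_{r+1},\dots,x_n)$ for which $Y=\{x_1=\cdots=x_r=0\}$, with $r=\operatorname{codim}Y$, I write $I_j=(f_j)$ (using that $I_j$ is principal) and expand $f_j=\sum_\alpha c_\alpha(x_{r+1},\dots)\,x_1^{\alpha_1}\cdots x_r^{\alpha_r}$, where $m:=\nu_YI_j$ is the least value of $|\alpha|=\alpha_1+\cdots+\alpha_r$ occurring with a generically nonzero coefficient. In the affine chart of the blow-up given by $x_i=x_1x'_i$ for $2\le i\le r$ (the variable $x_1$ being a local equation of $D'$), each monomial $x_1^{\alpha_1}\cdots x_r^{\alpha_r}$ becomes $x_1^{|\alpha|}(x'_2)^{\alpha_2}\cdots(x'_r)^{\alpha_r}$, so $\pi^{-1}(I_j)$ is generated by $x_1^{m}\,g_j$ with $g_j=\sum_{|\alpha|=m}c_\alpha (x'_2)^{\alpha_2}\cdots(x'_r)^{\alpha_r}+x_1(\cdots)$. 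The point to verify is that $g_j$ is not divisible by $x_1$, i.e. that its restriction to $D'$ does not vanish identically; this holds because the degree-$m$ initial form of $f_j$ is a nonzero homogeneous polynomial in $x_1,\dots,x_r$, whose dehomogenization in the $x'_i$ is a nonzero function on $D'$. This gives $\nu_{D'}(\pi^{-1}(I_j))=m=\nu_YI_j$.

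The step I expect to be the main obstacle is this last local computation, and in particular the nonvanishing of $g_j$ along $D'$: one must ensure the initial form does not degenerate in the chosen chart, which requires selecting the chart appropriately (or checking that the generic point of $D'$ is reached by some chart where the argument applies) and using that $m$ is computed at the generic point $\eta_Y$ of $Y$, so that the relevant coefficients are units in $\mathcal O_{M,\eta_Y}$. Once the core claim is established, assembling the three displayed identities and taking the minimum over $j$ finishes the proof.
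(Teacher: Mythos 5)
Your proof is correct and follows essentially the same route as the paper's: both reduce the statement to the per-index identity $\nu_{D'}(I'_j)=\nu_Y(I_j)-d_j$, obtained from the factorization $\pi^{-1}(I_j)=\mathcal J_{D'}^{m_j}I^*_j$ with $\nu_{D'}(I^*_j)=0$, and then take minima over $j$. The only difference is that the paper simply quotes the key fact $\nu_{D'}(\pi^{-1}(I_j))=\nu_Y(I_j)$ as a standard property of the strict transform, whereas you verify it by an explicit initial-form computation at the generic point of $Y$ in a blow-up chart; that verification is sound.
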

 \begin{proof}
 	Write $\mathcal L=\{(I_j,d_j)\}_{j=1}^k$ and let $\mathcal J_{D'}$ the ideal sheaf defining the exceptional divisor $D'$. Recall that the transformed list $\mathcal L'=\{(I'_j,d_j)\}_{j=1}^k$ is given by
 	$$
 	I'_j=\mathcal J_{D'}^{-d_j}\pi^{-1}(I_j), \quad j=1,2,\ldots,k.
 	$$
 	Put $m_j=\nu_Y(I_j)$. The strict transform $I^*_j \subset \mathcal O_{M'}$ satisfies that
 	$$
 	\pi^{-1}(I_j)= \mathcal J_{D'}^{m_j}I^*_j,\quad \nu_{D'}(I^*_j)=0.
 	$$
 	We have $\nu_{D'}(\pi^{-1}(I_j))=m_j$, for all $j=1,2,\ldots,k$.
 	We conclude that $\nu_{D'}(I'_j)= \nu_Y(I_j)-d_j$. Hence, we have
 	$$
 	\nu_{D'}(I'_j)/d_j= (\nu_Y(I_j)/d_j)-1, \quad j=1,2,\ldots,k.
 	$$
 	Taking the minimal values, we get the equality $\delta_{D'}\mathcal M'=\delta_Y\mathcal M-1$.
 \end{proof}
 \subsection{Curve-Divisor Situation}
 \label{Curve-Divisor Situation}
 In this subsection, we introduce a cons\-truction, usually called ``Hironaka's trick'', that is useful for proving that the order is an invariant under the equivalence of idealistic spaces.

 A {\em curve-divisor situation} $(\mathcal M,X,D,P)$ is given by:
 \begin{enumerate}
 	\item An idealistic space $\mathcal M=(M,E,\mathcal L)$.
 	\item A non-singular closed irreducible curve $X\subset\operatorname{Sing}\mathcal M$.
 	\item A non-singular closed irreducible hypersurface $D\subset M$ having normal crossings with $E$ and with $X$, such that $P=X\cap D$ (note that $P$ is a singular point).
 \end{enumerate}
Consider a curve-divisor situation $(\mathcal M,X,D,P)$. We are going to define an $\mathcal M$-permissible infinite test system $\mathcal S_{\mathcal M,X,D,P}$ associated to the curve-divisor situation.

Write
$
\mathcal S_{\mathcal M,X,D,P}= \{(Y_{j-1},\sigma_{j})\}_{j=1}^\infty.
$
The morphisms $\sigma_j$ are only of blowing-up type and we give inductively the centers $Y_{j-1}$ as follows. If $j=1$, we put
 \begin{enumerate}
 	\item $Y_0=D$, if $D$ is a permissible center for $\mathcal M$.
 	\item $Y_0=\{P\}$, otherwise.
 \end{enumerate}
 We respectively denote by $\mathcal M'$, $X'$, $D'$ the transform of $\mathcal M$, the strict transform of $X$ and the exceptional divisor with respect to $\sigma_1$. Since $\delta_{X'}\mathcal M'=\delta_X\mathcal M$, the curve $X'$ is in the singular locus of $\mathcal M'$, hence we get a new curve-divisor situation $(\mathcal M',X',D',P')$, where $P'=X'\cap D'$. We proceed indefinitely in this way.

 \begin{lemma} \label{lema:ingl vaiacionexpcto2}
 	We have the following properties:
 	\begin{enumerate}
 		\item $\delta_P\mathcal M\geq \delta_X\mathcal M+\delta_D\mathcal M$.
 		\item If $\delta_P\mathcal M= \delta_X\mathcal M+\delta_D\mathcal M$, then $\delta_{P'}\mathcal M'= \delta_{X'}\mathcal M'+\delta_{D'}\mathcal M'$.
 	\end{enumerate}
 \end{lemma}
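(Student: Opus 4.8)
The plan is to reduce both statements to a local monomial computation at $P$ and then to track the three orders $\delta_{X'}\mathcal M'$, $\delta_{D'}\mathcal M'$, $\delta_{P'}\mathcal M'$ through the single blowing-up $\sigma_1$. Since $X$ is a smooth curve and $D$ a smooth hypersurface meeting transversally at $P$ (normal crossings), I would first choose local coordinates $(x_1,\dots,x_n)$ centered at $P$ with $D=\{x_1=0\}$ and $X=\{x_2=\cdots=x_n=0\}$. Writing $I_j=(f_j)$, $f_j=\sum_\alpha c_\alpha x^\alpha$, and setting $s=\alpha_1$, $t=\alpha_2+\cdots+\alpha_n$ for each exponent, the three multiplicities are read off the support as $\nu_P f_j=\min(s+t)$, $\nu_D f_j=\min s$, $\nu_X f_j=\min t$. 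For the generic orders along $X$ and $D$ one checks that a coefficient power series is nonzero exactly when it is nonzero at a generic point of the axis, so these generic orders are indeed computed from the expansion at $P$; this is a minor technical point I would dispatch first.

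For item (1), the single-ideal inequality $\nu_P f_j\ge \nu_X f_j+\nu_D f_j$ is immediate from $s+t\ge \min s+\min t$ term by term. Passing to the list is then the formal ``minimum of sums dominates sum of minima'' argument: dividing by $d_j$ and using $\nu_P f_j/d_j\ge \nu_X f_j/d_j+\nu_D f_j/d_j\ge \delta_X\mathcal M+\delta_D\mathcal M$ for every $j$, and taking the minimum over $j$, gives $\delta_P\mathcal M\ge\delta_X\mathcal M+\delta_D\mathcal M$.

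For item (2), the decisive observation is that the hypothesis forces a single distinguished index $j_0$ realizing all three orders at once. Choosing $j_0$ with $\delta_P\mathcal M=\nu_P f_{j_0}/d_{j_0}$ and reading the chain $\delta_P=\nu_P f_{j_0}/d_{j_0}\ge \nu_X f_{j_0}/d_{j_0}+\nu_D f_{j_0}/d_{j_0}\ge\delta_X+\delta_D=\delta_P$ as equalities yields $\nu_X f_{j_0}/d_{j_0}=\delta_X$, $\nu_D f_{j_0}/d_{j_0}=\delta_D$, and the term-level equality $\nu_P f_{j_0}=\nu_X f_{j_0}+\nu_D f_{j_0}$. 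This last equality is the key structural fact: since $s\ge\nu_D f_{j_0}$ and $t\ge\nu_X f_{j_0}$ on every monomial, every monomial of $f_{j_0}$ realizing $\nu_P f_{j_0}$ (those with $s+t=\nu_P f_{j_0}$) must leave no slack, hence simultaneously satisfy $s=\nu_D f_{j_0}$ and $t=\nu_X f_{j_0}$.

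With this in hand I would split into the two cases of $\sigma_1$. I take $\delta_{X'}\mathcal M'=\delta_X\mathcal M$ from the construction, and for $\delta_{D'}\mathcal M'$ I invoke Proposition~\ref{prop:ingl explosionexpcto}: it gives $\delta_{D'}\mathcal M'=\delta_D\mathcal M-1$ when $Y_0=D$ and $\delta_{D'}\mathcal M'=\delta_P\mathcal M-1$ when $Y_0=\{P\}$. So the only quantity left to compute is $\delta_{P'}\mathcal M'$. When $Y_0=D$ the blowing-up is an isomorphism and the controlled transform is division by $x_1^{d_j}$, so $\nu_{P'}f'_j=\nu_P f_j-d_j$ and $\delta_{P'}\mathcal M'=\delta_P\mathcal M-1$; combining with the two formulas above, $\delta_{P'}=\delta_{X'}+\delta_{D'}$ holds precisely because $\delta_P=\delta_X+\delta_D$. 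When $Y_0=\{P\}$ I would work in the chart $x_1=x_1'$, $x_i=x_1'y_i$ $(i\ge 2)$, where $D'=\{x_1'=0\}$, $X'=\{y_2=\cdots=y_n=0\}$, and $P'$ is the origin; there $f'_j=(x_1')^{-d_j}\pi^* f_j$ has order $\min_\alpha((s+t)+t)-d_j$ at $P'$. The hard part is evaluating this minimum for the distinguished index: by the structural fact, a monomial realizing $\nu_P f_{j_0}$ also realizes $\nu_X f_{j_0}$, hence realizes $\min((s+t)+t)=\nu_P f_{j_0}+\nu_X f_{j_0}$, giving $\delta_{P'}\mathcal M'\le \delta_P\mathcal M+\delta_X\mathcal M-1$. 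The reverse inequality is exactly item (1) applied to the transformed curve-divisor situation $(\mathcal M',X',D',P')$, so $\delta_{P'}\mathcal M'=\delta_P\mathcal M+\delta_X\mathcal M-1=\delta_{X'}\mathcal M'+\delta_{D'}\mathcal M'$, as required. The whole difficulty is thus concentrated in the no-slack compatibility of the minimizing monomials in the point-blow-up case.
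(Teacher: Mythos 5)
Your proposal is correct, and its logical skeleton is the same as the paper's: you reduce statement (2) to a single distinguished marked ideal by forcing the inequality chain from (1) into equalities, you derive an upper bound for $\delta_{P'}$ from that distinguished ideal, and you convert the bound into the desired equality by applying (1) to the transformed curve-divisor situation $(\mathcal M',X',D',P')$ --- which is exactly the paper's concluding step. Where you genuinely diverge is in how the two multiplicity computations are carried out. The paper never fixes coordinates: for (1) it factors $I=\mathcal J_D^m J$ with $\nu_D J=0$ and uses $\nu_P J\ge \nu_X J=\nu_X I$; for the blow-up step it shows that the equality hypothesis forces $\nu_P J=\nu_X J$ (the cofactor $J$ is equimultiple along $X$) and then tracks the strict transform $J'$ via $\nu_{X'}J'=\nu_X J\le \nu_{P'}J'\le \nu_P J$, together with the factorization $I'=\mathcal J_{D'}^{m'}J'$ at $P'$. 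You instead work in adapted coordinates and read everything off monomial supports: your ``no-slack'' structural fact (every monomial realizing $\nu_P f_{j_0}$ has $s=\nu_D f_{j_0}$ and $t=\nu_X f_{j_0}$) is precisely the paper's equimultiplicity $\nu_P J=\nu_X J$ written in coordinates, and your chart computation of $\nu_{P'}f'_{j_0}$ replaces the strict-transform argument (in a final write-up you should add the one-line remark that the pullback sends distinct monomials to distinct monomials, so no cancellation can lower the order). Your route is more elementary and self-contained, at the price of fixing coordinates and of the genericity argument identifying $\nu_X f_j$ with $\min t$, which you correctly flag; the paper's route is intrinsic, matching the coordinate-free style it uses throughout. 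One small economy on your side: you only need the upper bound for $\nu_{P'}f'_{j_0}$, whereas the paper proves the full single-ideal equality \eqref{eq:ingl reducionaunideal} before concluding --- both suffice, since the reverse inequality is supplied by (1) in either case.
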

 \begin{proof}
 	Let  $(I,d)$ be a marked ideal of $\mathcal L$ with $\delta_P\mathcal L=\nu_PI/d$. Let us denote $m=\nu_DI$ and write $I=\mathcal J_D^mJ$, where $\mathcal J_D$ defines $D$; hence $\nu_DJ=0$. Note that $m/d\geq\delta_D\mathcal L$. On the other hand, we have
 	$$
 	\delta_P\mathcal L=\nu_PI/d=\nu_PJ/d+ m/d\geq \nu_PJ/d+\delta_D\mathcal L.
 	$$
 	Since  $X\not\subset D$ we have that $\nu_X(\mathcal J_D)=0$ and $\nu_XI=\nu_XJ$. Moreover, since $P\in X$, we see that $\nu_PJ\geq \nu_XJ=\nu_XI$. Then, we conclude
 	$$
 	\delta_P\mathcal L=\nu_PI/d=\nu_PJ/d +m/d\geq \nu_XI/d+\delta_D\mathcal L\geq\delta_X\mathcal L+ \delta_D\mathcal L.
 	$$
 	This shows (1).
 	Let us see (2). If the center of $\sigma_1$ is $D$, the morphism $\sigma_1$ is the identity and we have
 	$$
 	\delta_{P'}\mathcal L'=\delta_{P}\mathcal L-1,\quad  \delta_{X'}\mathcal L'=\delta_X\mathcal L,\quad \delta_{D'}\mathcal L'=\delta_{D}\mathcal L-1.
 	$$
 	Now, property (2) is straightforward. Assume now that  $\sigma_1$ is the blowing-up with center $P$. Let us consider the list $\widetilde{\mathcal L}=\{(I,d)\}\subset \mathcal L$ given by the only marked ideal $(I,d)$. We know that
 	$$
 	\delta_P \widetilde{\mathcal L}=\delta_P{\mathcal L},
 	\quad
 	\delta_X \widetilde{\mathcal L}\geq \delta_X {\mathcal L},
 	\quad
 	\delta_D \widetilde{\mathcal L}\geq \delta_D {\mathcal L}.
 	$$
 	By hypothesis, the equality   $\delta_P\mathcal L= \delta_X\mathcal L+\delta_D\mathcal L$ holds. Then, we have that $\delta_P\widetilde{\mathcal L}\leq
 	\delta_X\widetilde{\mathcal L}+\delta_D\widetilde{\mathcal L}$. Applying Statement (1), it follows that
 	$$
 	\delta_P\widetilde{\mathcal L}=
 	\delta_X\widetilde{\mathcal L}+\delta_D\widetilde{\mathcal L},\quad
 	\delta_X\widetilde{\mathcal L}=\delta_X{\mathcal L},\quad \delta_D\widetilde{\mathcal L}=\delta_D{\mathcal L}.
 	$$
 	Now, assume that the following equality holds
 	\begin{equation} \label{eq:ingl reducionaunideal}
 		\delta_{P'}\widetilde{\mathcal L}'=
 		\delta_{X'}\widetilde{\mathcal L}'+\delta_{D'}\widetilde{\mathcal L}'.
 	\end{equation}
 	Let us see how to end the proof of Statement (2). Recall that
 	$$
 	\delta_{D'}\widetilde{\mathcal L}'=\delta_P\widetilde{\mathcal L}-1,\;
 	\delta_{D'}{\mathcal L}'=\delta_P{\mathcal L}-1,\;
 	\delta_{X'}\mathcal L'=\delta_{X}\mathcal L,\;
 	\delta_{X'}\widetilde{\mathcal L}'=\delta_X\widetilde{\mathcal L}.
 	$$
 	Moreover, since $\widetilde{\mathcal L}'\subset\mathcal L'$, we have that $\delta_{P'}\mathcal L'\leq \delta_{P'}\widetilde{\mathcal L}'$. We conclude that
 	\begin{eqnarray*}
 		\delta_{P'}\mathcal L'&\leq& \delta_{P'}\widetilde{\mathcal L}'=
 		\delta_{X'}\widetilde{\mathcal L}'+\delta_{D'}\widetilde{\mathcal L}'=\\
 		&=&\delta_X\widetilde{\mathcal L}+\delta_P\widetilde{\mathcal L}-1=\\
 		&=& \delta_{X}{\mathcal L}+\delta_P{\mathcal L}-1=
 		\delta_{X'}{\mathcal L}'+\delta_{D'}{\mathcal L}'.
 	\end{eqnarray*}
 	By Statement (1), we have $\delta_{P'}\mathcal L'=\delta_{X'}{\mathcal L}'+\delta_{D'}{\mathcal L}'$.
 	
 	Now, it is enough to show the equality in Equation \eqref{eq:ingl reducionaunideal}.
 	Following the proof of Statement (1), the fact  $\delta_P\widetilde{\mathcal L}=
 	\delta_X\widetilde{\mathcal L}+\delta_D\widetilde{\mathcal L}$ implies that $\nu_XJ=\nu_PJ$. That is, the ideal $J$ is equimultiple along $X$ around $P$.
 	Denote by $J'$ the strict transform of  $J$  by $\sigma_1$. We have that $\nu_{P'}J'\leq \nu_PJ$. Since $P'\in X'$, we also have that $\nu_{P'}J'\geq \nu_{X'}J'$. Combining these properties with the facts that $\nu_{X'}J'=\nu_XJ$ and $\nu_PJ=\nu_XJ$, we conclude that
 	$$
 	\nu_{P'}J'=\nu_PJ=\nu_XJ=\nu_{X'}J'.
 	$$
 	Recall that the strict transform of $I$ coincides with the strict transform of $J$, at the point $P'$, since the strict transform of $D$ does not go through $P'$. Then, we can write the controlled transform  $I'$ of $I$ at $P'$ as $I'=\mathcal J_{D'}^{m'}J'$, where $m'=\nu_{D'}I'$. This implies that
 	$$
 	d\cdot\delta_{X'}\widetilde{\mathcal L}'=\nu_{X'}I'=\nu_{X'}J'+m'\cdot\nu_{X'}\mathcal J_{D'}=\nu_{X'}J'=\nu_{P'}J',
 	$$
 	since $\nu_{X'}\mathcal J_{D'}=0$, in view of the fact that $X'$ is not contained in $D'$. Finally, we have that
 	$$
 	\delta_{P'}\widetilde{\mathcal L}'=\nu_{P'}I'/d=\nu_{P'}J'/d+\nu_{D'}I'/d=\delta_{X'}\widetilde{\mathcal L}'+\delta_{D'}\widetilde{\mathcal L}'.
 	$$
 	This ends the proof.
 \end{proof}
\begin{remark}
\label{obs:ingl coincidenciasistcurvadivisor}
Assume that $\mathcal M_\alpha$ y $\mathcal M_\beta$ are two equivalent idealistic spaces over $(M,E)$. Assume that $(\mathcal M_\alpha,X,D,P)$ is a curve-divisor situation. Then $(\mathcal M_\beta,X,D,P)$ is also a curve-divisor situation and we have that the two associated infinite test systems coincide, that is
 	$$
 	\mathcal S_{\mathcal M_\alpha,X,D,P}=\mathcal S_{\mathcal M_\beta,X,D,P}.
 	$$
 \end{remark}

 \subsection{Invariance under Equivalence}
 \label{Invariance under Equivalence of Idealistic Spaces}
 In this subsection we give a proof of the following statement:

 \begin{proposition} \label{prop:ingl expctoexpideal}
 	Let $\mathcal M_\alpha$ and $\mathcal M_\beta$ be two equivalent idealistic spaces over the ambient $(M,E)$. Let $S$ be their common singular locus. For any $P\in S$, we have that $\delta_P\mathcal M_\alpha=\delta_P\mathcal M_\beta$.
 \end{proposition}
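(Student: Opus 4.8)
The plan is to recover the order $\delta_P\mathcal M_\ast$ from the combinatorial shape of the infinite test system $\mathcal S_{\mathcal M_\ast,X,D,P}$ attached to a well-chosen curve-divisor situation, and then to use that this shape is forced to be the same for $\mathcal M_\alpha$ and $\mathcal M_\beta$. Fix $P\in S$ and set up a curve-divisor situation: take a non-singular irreducible curve $X\subset S$ through $P$ and a non-singular hypersurface $D$ through $P$ with normal crossings with $E$ and with $X$. Because $\mathcal M_\alpha$ and $\mathcal M_\beta$ share the singular locus $S$, both $(\mathcal M_\alpha,X,D,P)$ and $(\mathcal M_\beta,X,D,P)$ are curve-divisor situations, and by Remark \ref{obs:ingl coincidenciasistcurvadivisor} the two associated infinite test systems literally coincide. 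At every stage the center is the current divisor $D_j$ when it is permissible and the point $P_j=X_j\cap D_j$ otherwise; since the transform of an equivalent pair is again an equivalent pair, the two spaces have the same singular locus at every stage, so these divisor-versus-point decisions agree for $\alpha$ and $\beta$ with no extra hypothesis.

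The order enters through the numerical invariant $\delta_{D_j}$ of the current divisor. By Proposition \ref{prop:ingl explosionexpcto}, a permissible blowing-up of $D_j$ drops $\delta_{D_j}$ by one, while a blowing-up of the point $P_j$ produces an exceptional divisor with $\delta_{D_{j+1}}=\delta_{P_j}-1$; in particular, after the very first point blowing-up one has $\delta_{D_1}=\delta_P-1$, so invariance of the order at $P$ is equivalent to invariance of the generic order along the divisor $D_1$ for the equivalent transforms. The integer part of a divisor order is read off at once, as the number of times the divisor can be successively blown up as a permissible center, a datum that only depends on the common singular loci. The subtle content is the fractional part, and here Lemma \ref{lema:ingl vaiacionexpcto2} is the decisive device: choosing $X$ generic inside the equimultiple stratum through a suitable point forces the equality $\delta_P\mathcal M=\delta_X\mathcal M+\delta_D\mathcal M$, and Lemma \ref{lema:ingl vaiacionexpcto2}(2) propagates it along the whole test system. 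In this equality regime the relation $\delta_{P_j}=\delta_X+\delta_{D_j}$ holds throughout, and the decrement rule of Proposition \ref{prop:ingl explosionexpcto} turns the sequence $\{\delta_{D_j}\}$ into an explicit one-dimensional dynamics, namely subtract $1$ when $\delta_{D_j}\geq1$ and add $\delta_X-1$ otherwise, whose decisions are governed solely by permissibility. The resulting point-versus-divisor pattern is then a Sturmian-type sequence determined by the single rational number $\delta_P$, and since this pattern is common to $\mathcal M_\alpha$ and $\mathcal M_\beta$, their orders coincide.

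I expect two points to carry the real difficulty. The first is the faithful recovery of the rational value $\delta_P$ from the decision pattern: this is a Euclidean-type reading of the dynamics above and genuinely needs $\delta_X>1$ to make the pattern non-degenerate, since for $\delta_X=1$ the dynamics is stationary and blind to the fractional part. The second is the case where the maximal-order locus through $P$ is zero-dimensional, so that no curve $X$ with $\delta_X=\delta_P$ is available to force the initial equality; this I would resolve by the reduction noted above, descending to the exceptional divisor $D_1$ obtained after one point blowing-up, where $\delta_{D_1}=\delta_P-1$ and the relevant stratum acquires positive dimension, so that the whole argument closes by an induction on the dimension of the admissible centers.
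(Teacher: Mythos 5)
There is a genuine gap, and it sits precisely where the paper's proof makes its one essential move. Your recursion $a_{j+1}=a_j-1$ (divisor step) and $a_{j+1}=a_j+e-1$ (point step) is only valid in the equality regime $\delta_P\mathcal M=\delta_X\mathcal M+\delta_D\mathcal M$ of Lemma \ref{lema:ingl vaiacionexpcto2}, and you need that regime \emph{for both} $\mathcal M_\alpha$ and $\mathcal M_\beta$: without it, Proposition \ref{prop:ingl explosionexpcto} only gives $a^\beta_{j+1}=e^\beta_j-1$ together with the inequality $e^\beta_j\geq e^\beta+a^\beta_j$, so the $\beta$-sequence is not determined by $e^\beta$ and the divisor-versus-point pattern, and the common pattern cannot pin down $e^\beta$. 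Choosing $X$ ``generic inside the equimultiple stratum'' secures the equality only for the space whose stratum you used to choose $X$; the top-order stratum of the other space is a priori a different set, and its coincidence with the first is essentially the statement being proved (equivalence gives equality of singular loci, but not of the finer stratification by order). This circularity also infects your fallback for the zero-dimensional case: after blowing up $P$ one has $\delta_{D_1}=\delta_P-1$, but when $1\leq\delta_P<2$ the exceptional divisor is not contained in the singular locus, so no curve of the required kind inside $D_1$ exists either, and when $\delta_P\geq 2$ the same circularity reappears one level down. Note also that the zero-dimensional case is not exotic: for $\mathcal L=\{((x^2+y^3)\mathcal O_{\mathbb C^2,0},2)\}$ over $((\mathbb C^2,0),\emptyset)$ the singular locus is the single point $0$, so there is no curve $X\subset S$ at all.

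The paper removes both obstructions at once with a device your proposal omits: before any blowing-up, it applies a projection over the first factor $\sigma_1:(M\times(\mathbb C,0),E\times(\mathbb C,0))\to(M,E)$ and takes $X_1=\{P\}\times(\mathbb C,0)$, $D_1=M\times\{0\}$, $P_1=(P,0)$. Since the ideals are pulled back, the order is constant along the vertical curve, so $\delta_{X_1}\mathcal M_{*,1}=\delta_{P_1}\mathcal M_{*,1}=\delta_P\mathcal M_*$ and $\delta_{D_1}\mathcal M_{*,1}=0$ hold automatically and simultaneously for $*=\alpha,\beta$; the equality regime of Lemma \ref{lema:ingl vaiacionexpcto2} is thus in force for both spaces with no genericity choices and no hypothesis on the dimension of any stratum, and Remark \ref{obs:ingl coincidenciasistcurvadivisor} applies to the transforms, which remain equivalent under the open projection. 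From that point on your analysis coincides with the paper's: both sequences obey their own rules, the decisions agree, so if $e^\alpha>e^\beta$ then $a^\alpha_j-a^\beta_j$ is positive and non-decreasing, and at the first index $j_0$ with $a^\alpha_{j_0}=0$ one gets $a^\beta_{j_0}<0$, which is absurd. Your worry about $\delta_X=1$ also evaporates in this setting: if $e^\beta=1$ the $\beta$-sequence is stationary at $0$, while for $e^\alpha>1$ the $\alpha$-sequence eventually reaches a value $\geq 1$ and forces a divisor decision that the $\beta$-side cannot match, so the degenerate pattern is consistent only with order exactly one.
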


 The proof of Proposition \ref{prop:ingl expctoexpideal} is based in the construction of a particular curve-divisor situation, obtained after performing a projection over the first factor, as follows.

 Take an idealistic space $\mathcal M$ over $(M,E)$ and a point $P\in \operatorname{Sing}\mathcal M$. Let $\sigma_1$ be the projection over the first factor
 $$
 \sigma_1:(M_1,E_1)=(M\times(\mathbb C,0),E\times (\mathbb C,0))\rightarrow (M,E).
 $$
 Denote by $\mathcal M_1$ the transform of $\mathcal M$ by $\sigma_1$ and let us put
 $$
 X_1=\{P\}\times (\mathbb C,0),\;
 D_1=M\times(\mathbb C,0),\;
 P_1=(P, 0).
 $$
We have a curve-divisor situation $(\mathcal M_1,X_1,D_1,P_1)$. Note that
\begin{equation*} \label{eq:ingl equisuno}
 \delta_{X_1}\mathcal M_1=\delta_{P_1}\mathcal M_{1}=\delta_P\mathcal M\geq 1, \quad \delta_{D_1}\mathcal M_1=0.
\end{equation*}
 In particular $\delta_{P_1}\mathcal M_{1}=\delta_{X_1}\mathcal M_{1}+\delta_{D_1}\mathcal M_1$.

 Consider the test system
 $\mathcal S_{\mathcal M_1,X_1,D_1,P_1}=\{(Y_{j-1},\sigma_j)\}_{j=2}^\infty
 $
  and denote by $(\mathcal M_j,X_j,D_j,P_j)$ the transformed curve-divisor situations, for $j\geq 2$.

 Let us make a computation of orders, by an extensive application of Proposition \ref{prop:ingl explosionexpcto} and Lemma \ref{lema:ingl vaiacionexpcto2}. In order to simplify notations, we write $e=\delta_P\mathcal M=\delta_{X_1}\mathcal M_1$ and
 $$
 a_j=\delta_{D_j}\mathcal M_{j}, \quad
 e_j=\delta_{P_j}\mathcal M_j,\quad  j\geq 1.
 $$
The first remark is that $e=e_1=\delta_{X_1}\mathcal M_1=\delta_{X_j}\mathcal M_j$, for all $j\geq 2$. We also know that  $a_1=0$. By Lemma \ref{lema:ingl vaiacionexpcto2}, we get the relations
 \begin{equation} \label{eq:ingl hipcruzamientos}
 	e_j=e+a_j, \quad j\geq 1.
 \end{equation}
 We are going to describe the sequence $\{a_j\}_{j=2}^\infty$ recurrently from the rational number $e\geq 1$. If $j=2$, by Proposition \ref{prop:ingl explosionexpcto}, we have that
 $$
 a_2=e-1,
 $$
 Assume $j> 2$. If $a_j\geq 1$, then $D_j$ is contained in the singular locus of $\mathcal M_j$ and we have $Y_{j}=D_j$. In this case, we see that
 \begin{equation}
 	\label{eq:ingl variacioncto1}
 	a_{j+1}=a_j-1.
 \end{equation}
 If $a_j<1$, the divisor $D_j$ is not in the singular locus and $Y_j=\{P_j\}$. By Proposition  \ref{prop:ingl explosionexpcto}, we have  $a_{j+1}=e_j-1$; in view of Equation \eqref{eq:ingl hipcruzamientos}, we get
 \begin{equation}
 	\label{eq:ingl variacioncto2}
 	a_{j+1}=a_j+e-1.
 \end{equation}
 Thus, the sequence $\{a_j\}_{j=2}^\infty$ is inductively obtained from $e$ as follows:
 \begin{enumerate}[a)]
 	\item $a_2=e-1$.
 	\item If $a_j\geq 1$, then $a_{j+1}=a_j-1$.
 	\item If $a_j<1$, then $a_{j+1}=a_j+e-1$
 \end{enumerate}

 Since the values $a_j \geq 0$ are rational numbers having a common denominator, a diophantine computation shows that there is a first index $j_0\geq 2$ such that $a_{j_0}=0$.

Now, we can conclude the proof of Proposition \ref{prop:ingl expctoexpideal}. Since $\mathcal M_\alpha$ and $\mathcal M_\beta$ are equivalent, we have that $\mathcal M_{\alpha,1}$ is equivalent to $\mathcal M_{\beta,1}$ and hence the infinite test systems $\mathcal S_{\mathcal M_{\alpha,1},X_1,D_1,P_1}$ and $\mathcal S_{\mathcal M_{\beta,1},X_1,D_1,P_1}$ coincide, in view of Remark \ref{obs:ingl coincidenciasistcurvadivisor}. In particular, we have that
 $$
 a_j^\alpha\geq 1\Leftrightarrow a_j^\beta\geq 1,\quad j\geq 2.
 $$
 In other words, the construction of the sequences $\{a^\alpha_j\}$ and $\{a^\beta_j\}$ follows the same steps, starting with $e^\alpha=\delta_P\mathcal M_\alpha$ and $e^\beta=\delta_P\mathcal M_\beta$, respectively. We have to prove that $e^\alpha=e^\beta$. Assume by contradiction that $e^\alpha>e^\beta$. We have that $a^\alpha_j>a^\beta_j$, for all $j\geq 2$. Now, choosing $j_0\geq 2$ such that $a^\alpha_{j_0}=0$, we conclude that
 $a^\beta_{j_0}<0$, which is impossible.

\section{Order for Idealistic Exponents and $e$-Flowers}
Let $\mathcal A=\{\mathcal M_\alpha\}_{\alpha\in \Lambda}$ be an idealistic atlas over $(M,E)$. Given a point  $P\in \operatorname{Sing}\mathcal A$, we define the {\em order $\delta_P\mathcal A$} by
$$
\delta_P\mathcal A=\delta_P\mathcal M_\alpha,
$$
where $(M_\alpha,E_\alpha)$ is the ambient space of an idealistic chart $\mathcal M_\alpha$ such that $P\in M_\alpha$. In view of the Proposition \ref{prop:ingl expctoexpideal}, the definition does not depend  on the particular idealistic chart $\mathcal M_\alpha$ such that $P\in M_\alpha$. On the other hand, since $P\in \operatorname{Sing}(\mathcal M_\alpha)$, we have that $\delta_P\mathcal A\geq 1$.

Given an idealistic exponent $\mathcal E$ over $(M,E)$ and a point $P\in \operatorname{Sing}\mathcal E$, the {\em order $\delta_P\mathcal E$} is $\delta_P\mathcal E=\delta_P\mathcal A$, where $\mathcal A$ is any atlas defining $\mathcal E$.

Consider now an immersed idealistic $e$-space $\mathcal V=(M,E,N,\mathcal L)$ and a point $P\in \operatorname{Sing}\mathcal V$. We define the {\em order $\delta_P\mathcal V$} to be
$$
\delta_P\mathcal V=\delta_P\mathcal N,\quad \mathcal N=(N,E\vert_N,\mathcal L).
$$
The reader can verify that the arguments in Subsections \ref{Curve-Divisor Situation} and  \ref{Order Invariance under Equivalence of Idealistic Spaces} work in a similar way for immersed idealistic spaces, with fixed dimension $e$. More precisely, we have:
\begin{proposition}
\label{prop:ingl ordensumergido}
Let $\mathcal V_\alpha$ y $\mathcal V_\beta$ be two equivalent immersed idealistic spaces over $(M,E)$ of the same dimension $e$. For any point $P$ in the common singular locus
$\operatorname{Sing}\mathcal V_\alpha=\operatorname{Sing}\mathcal V_\beta$, we have that
$\delta_P\mathcal V_\alpha=\delta_P\mathcal V_\beta$.
\end{proposition}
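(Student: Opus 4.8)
The plan is to transplant the curve--divisor argument of Subsections~\ref{Curve-Divisor Situation} and~\ref{Invariance under Equivalence of Idealistic Spaces} to the immersed setting, paying attention to the fact that the two ambient subspaces need not agree. Write $\mathcal V_\alpha=(M,E,N_\alpha,\mathcal L_\alpha)$ and $\mathcal V_\beta=(M,E,N_\beta,\mathcal L_\beta)$, both of dimension $e$, and let $S=\operatorname{Sing}\mathcal V_\alpha=\operatorname{Sing}\mathcal V_\beta\subset N_\alpha\cap N_\beta$. Fix $P\in S$ and set $e^\alpha=\delta_P\mathcal V_\alpha$, $e^\beta=\delta_P\mathcal V_\beta$; the goal is $e^\alpha=e^\beta$. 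First I would perform a projection over the first factor $\sigma_1\colon(M\times(\mathbb C,0),E\times(\mathbb C,0))\to(M,E)$, exactly as in the non-immersed proof. This transforms each $\mathcal V_\bullet$ into an immersed space over $N_\bullet\times(\mathbb C,0)$ (of dimension $e+1$) with unchanged order, and it manufactures the curve $X_1=\{P\}\times(\mathbb C,0)$, the hypersurfaces $D_1^\alpha=N_\alpha\times\{0\}$, $D_1^\beta=N_\beta\times\{0\}$, and the point $P_1=(P,0)$. Since the transformed marked ideals are pulled back from the base, one has $\delta_{D_1}\mathcal N_1=0$ and $\delta_{X_1}\mathcal N_1=\delta_{P_1}\mathcal N_1=\delta_P\mathcal V\geq 1$ for both $\mathcal V_\alpha$ and $\mathcal V_\beta$, so we start from curve--divisor situations satisfying $\delta_{P_1}\mathcal N_1=\delta_{X_1}\mathcal N_1+\delta_{D_1}\mathcal N_1$.

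I would then build the associated infinite test systems and, applying Proposition~\ref{prop:ingl explosionexpcto} and Lemma~\ref{lema:ingl vaiacionexpcto2} verbatim, obtain the sequences $a_j^\alpha=\delta_{D_j^\alpha}\mathcal N_{\alpha,j}$ and $a_j^\beta=\delta_{D_j^\beta}\mathcal N_{\beta,j}$ governed by the same recursion (a)--(c), driven respectively by $e^\alpha$ and $e^\beta$; recall that the divisor $D_j$ only changes at point blowing-ups, being always carried by the exceptional divisor of the latest such blowing-up. The diophantine argument producing a first index $j_0$ with $a_{j_0}=0$ is identical, so, as in Proposition~\ref{prop:ingl expctoexpideal}, everything reduces to showing that the two patterns coincide, namely $a_j^\alpha\geq 1\Leftrightarrow a_j^\beta\geq 1$ for every $j$.

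The main obstacle is exactly here, and it is the one genuinely new feature of the immersed case: unlike in Remark~\ref{obs:ingl coincidenciasistcurvadivisor}, where the curve and divisor are literally the same subsets for $\mathcal M_\alpha$ and $\mathcal M_\beta$, now $D_j^\alpha\subset N_{\alpha,j}$ and $D_j^\beta\subset N_{\beta,j}$ are a priori different subsets of the ambient space, so one cannot directly invoke the coincidence of test systems. I would resolve this by an induction on $j$ proving simultaneously that the patterns agree up to step $j$ and that at every step the centres used for $\alpha$ and for $\beta$ coincide, so that a single ambient test system serves both. The point centres agree because $P_j=X_j\cap\mathcal E$ is the intersection of the common curve $X_j$ (contained in both $N_{\alpha,j}$ and $N_{\beta,j}$) with the common ambient exceptional divisor $\mathcal E$. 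For the divisor steps, the key is a dimension-and-irreducibility argument: whenever $a_j^\alpha\geq 1$ one has $D_j^\alpha\subset S_j\subset N_{\beta,j}$, and since $D_j^\alpha$ is the trace on $N_{\alpha,j}$ of $\mathcal E$, it follows that $D_j^\alpha\subset\mathcal E\cap N_{\beta,j}=D_j^\beta$; as $D_j^\alpha$ and $D_j^\beta$ are both irreducible of the same dimension $e$ --- this is where the hypothesis $\dim N_\alpha=\dim N_\beta=e$ is indispensable --- we conclude $D_j^\alpha=D_j^\beta$, whence $a_j^\beta=\delta_{D_j^\alpha}\mathcal N_{\beta,j}\geq 1$ because $D_j^\alpha\subset S_j=\operatorname{Sing}\mathcal N_{\beta,j}$. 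By symmetry $a_j^\alpha\geq 1\Leftrightarrow a_j^\beta\geq 1$, which closes the induction: the common centre lets us continue with one ambient test system, along which $\mathcal V_{\alpha,j}$ and $\mathcal V_{\beta,j}$ remain equivalent and hence keep the common singular locus $S_j$.

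Finally, once $a_j^\alpha\geq 1\Leftrightarrow a_j^\beta\geq 1$ is known for all $j$, the conclusion is the one in Proposition~\ref{prop:ingl expctoexpideal}: were $e^\alpha>e^\beta$, then $a_j^\alpha>a_j^\beta$ for all $j\geq 2$, and choosing $j_0$ with $a_{j_0}^\alpha=0$ would force $a_{j_0}^\beta<0$, which is impossible. Hence $e^\alpha=e^\beta$, that is $\delta_P\mathcal V_\alpha=\delta_P\mathcal V_\beta$.
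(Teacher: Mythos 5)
Your proposal is correct and takes exactly the route the paper intends: the paper's own proof is a one-line delegation to the reader (``the arguments in the curve--divisor and invariance subsections work in a similar way for immersed idealistic spaces, with fixed dimension $e$''), and your write-up is precisely that verification, including the one genuinely new point --- that the a priori distinct divisors $D_j^\alpha\subset N_{\alpha,j}$ and $D_j^\beta\subset N_{\beta,j}$ coincide whenever one of them lies in the common singular locus, forced by irreducibility and equality of dimensions, which is exactly where the hypothesis $\dim N_\alpha=\dim N_\beta=e$ enters (consistent with the paper's counterexample for unequal dimensions). One bookkeeping remark: after a blow-up whose centre is the common divisor $D_j$ (which, for $e<\dim M$, is \emph{not} the identity on the ambient space), the new divisors $D_{j+1}^{\alpha}$, $D_{j+1}^{\beta}$ are the traces on $N_{\alpha,j+1}$, $N_{\beta,j+1}$ of the exceptional divisor $\pi^{-1}(D_j)$ of that very blow-up, not of the strict transform of the exceptional divisor of the latest \emph{point} blow-up as your parenthetical suggests; so the common ambient hypersurface $\mathcal E$ in your containment argument must be taken to be the exceptional divisor of the most recent blow-up of either type (or $M\times\{0\}$ initially), and with that reading your induction is sound.
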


\begin{remark} An example that the equality of the dimension is necessary in the statement of Proposition \ref{prop:ingl ordensumergido} is the following one. Take   $\mathcal V_\alpha$ and  $\mathcal V_\beta$ over  $(M,E)=((\mathbb C^2,0), \emptyset)$, given by
	$N_\alpha=M$, $N_{\beta}=(y=0)$, with  $\mathcal L_\alpha=\{\mathcal I_\alpha\}$, $\mathcal L_\beta=\{\mathcal I_\beta\}$, where:
	$$
	\mathcal I_\alpha=\left((y^2-x^3)\mathcal O_{\mathbb C^2,0},2\right),\quad \mathcal I_\beta= \left((x^3\mathcal O_{\mathbb C,0},2)\right).
	$$
By Maximal Contact Theory \cite{Aro-H-V}, we know that $\mathcal V_\alpha$ and $\mathcal V_\beta$ are equivalent. Nevertheless, we have that $\delta_0\mathcal V_\alpha=1$ and $\delta_0\mathcal V_\beta=3/2$.
\end{remark}

Thus, we can define the {\em order for immersed idealistic $e$-atlases and for idealistic $e$-flowers}, but this is not possible for general immersed idealistic atlases and  idealistic flowers.
\\
\strut\vfill\pagebreak
\part{Guide for the Reduction of Singularities}
\label{Guide for the Reduction of Singularities}
Here we give a quick guide for the proof of the existence of reduction of singularities for idealistic $e$-flowers. The original result (Theorem \ref{teo:Ingl redsing}) we want to prove is the following one:

\begin{theorem}
\label{th: redsingidealisticspaces} Given an ambient space $(M,E)$,
there is a reduction of singularities for any idealistic space $\mathcal M$ over $(M,E)$.
\end{theorem}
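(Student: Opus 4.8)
The plan is to deduce Theorem~\ref{th: redsingidealisticspaces} from the main result, Theorem~\ref{teo:ingl redsingflowers}, which asserts the existence of reduction of singularities for idealistic $e$-flowers. The whole architecture of the paper has been arranged so that idealistic spaces sit at the very bottom of a tower of progressively more general objects --- idealistic spaces, idealistic atlases, idealistic exponents, immersed idealistic $e$-spaces, immersed idealistic $e$-atlases, and finally idealistic $e$-flowers --- and the reduction of singularities for the most general class forces it for all the ones below. So the proof is essentially a chain of ``promotions'' of a given idealistic space into an idealistic $n$-flower, where $n=\dim M$.

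\emph{First step: view $\mathcal M$ as an immersed idealistic $n$-space.} Given an idealistic space $\mathcal M=(M,E,\mathcal L)$, I would take $N=M$, which is a transverse $n$-dimensional closed ambient subspace $(M,E,M)$ of itself with $E\vert_M=E$. Then $\mathcal V=(M,E,M,\mathcal L)$ is an immersed idealistic $n$-space over $(M,E)$, and its singular locus, permissible centers, and transforms under permissible blowing-ups and open projections coincide tautologically with those of $\mathcal M$ --- this is precisely the remark in Section~\ref{Immersed Idealistic Spaces} that ``when $e=\dim M$, we get the original situation of idealistic spaces.'' In particular any reduction of singularities for $\mathcal V$ is literally a reduction of singularities for $\mathcal M$.

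\emph{Second step: promote to an $n$-flower.} A single immersed idealistic $n$-space $\mathcal V$ defines an immersed idealistic $n$-atlas $\mathcal P=\{\mathcal V\}$ over $(M,E)$, taking the trivial one-element open covering $\{M\}$ (the compatibility condition is vacuous since there is only one chart and $\mathcal V\vert_M=\mathcal V$). The equivalence class of $\mathcal P$ is then an idealistic $n$-flower $\mathcal F=\mathcal F_{\mathcal M}$ over $(M,E)$. By construction $\operatorname{Sing}\mathcal F=\operatorname{Sing}\mathcal V=\operatorname{Sing}\mathcal M$, the permissible centers of $\mathcal F$ are exactly those of $\mathcal M$, and the transform of $\mathcal F$ under a permissible blowing-up is the flower attached to the transform of $\mathcal M$. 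This is exactly the passage described in the section on Reduction of Singularities, where any object $\mathcal O$ in one of the listed classes ``defines in a proper way an idealistic $e$-flower $\mathcal F_{\mathcal O}$ and any reduction of singularities of $\mathcal F_{\mathcal O}$ induces a reduction of singularities of $\mathcal O$ in its own class.''

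\emph{Third step: apply the main theorem and descend.} By Theorem~\ref{teo:ingl redsingflowers}, there is a reduction of singularities for $\mathcal F$, i.e.\ a finite composition of permissible blowing-ups $\sigma:(M',E')\to(M,E)$ such that the transform $\mathcal F'$ has $\operatorname{Sing}\mathcal F'=\emptyset$. Since at each stage the permissible centers and the singular locus of the flower agree with those of the corresponding transform of $\mathcal M$, the same sequence $\sigma$ is a permissible sequence of blowing-ups for $\mathcal M$, and the transform $\mathcal M'$ satisfies $\operatorname{Sing}\mathcal M'=\operatorname{Sing}\mathcal F'=\emptyset$. This is the desired reduction of singularities for $\mathcal M$. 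The only genuine content of the argument, and hence the main obstacle, is entirely deferred: it lies in proving Theorem~\ref{teo:ingl redsingflowers} itself, which the paper carries out by induction on $e$ via the Maximal Contact implication ``reduction for $(e-1)$-flowers $\Rightarrow$ reduction for $e$-flowers.'' The deduction of Theorem~\ref{th: redsingidealisticspaces} from it is purely formal, amounting to checking that each promotion preserves singular loci, permissible centers, and transforms, all of which hold by the definitions set up in Part~\ref{Objects and Statements}.
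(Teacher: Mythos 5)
Your proposal is correct and follows essentially the same route as the paper: the paper deduces Theorem~\ref{th: redsingidealisticspaces} from the main Theorem~\ref{teo:ingl redsingflowers} via the chain of promotions (idealistic space $\to$ immersed idealistic $n$-space with $N=M$ $\to$ single-chart immersed idealistic $n$-atlas $\to$ idealistic $n$-flower $\mathcal F_{\mathcal M}$), exactly as you spell out, with the descent justified by the observation that ``any reduction of singularities of $\mathcal F_{\mathcal O}$ induces a reduction of singularities of $\mathcal O$ in its own class.'' Your write-up merely makes explicit the formal checks (transversality of $(M,E,M)$, vacuous compatibility, agreement of singular loci and permissible centers) that the paper leaves implicit in its implication diagram.
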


This result is a consequence of the following one:
\begin{theorem}
\label{th: redsingidealisticatlas} Given an ambient space $(M,E)$,
there is a reduction of singularities for any idealistic atlas  $\mathcal A$ over $(M,E)$.
\end{theorem}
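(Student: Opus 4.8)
The plan is to realize the idealistic atlas $\mathcal A$ as a special, full-dimensional idealistic flower and then to invoke the main Theorem~\ref{teo:ingl redsingflowers}. Write $n=\dim M$ and $\mathcal A=\{\mathcal M_\alpha\}_{\alpha\in\Lambda}$ with $\mathcal M_\alpha=(M_\alpha,E_\alpha,\mathcal L_\alpha)$. The key observation is that taking $N_\alpha=M_\alpha$ furnishes a \emph{trivial immersion}: the subset $M_\alpha\subset M_\alpha$ is purely $n$-dimensional, non-singular, closed, has normal crossings with $E_\alpha$, and is transverse, since no proper (codimension one) component of $E_\alpha$ can contain the whole $M_\alpha$; hence $(M_\alpha,E_\alpha,M_\alpha)$ is a transverse closed ambient subspace with $E_\alpha|_{M_\alpha}=E_\alpha$. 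Each chart then yields an immersed idealistic $n$-chart
$$
\mathcal V_\alpha=(M_\alpha,E_\alpha,M_\alpha,\mathcal L_\alpha),
$$
whose underlying idealistic space $(M_\alpha,E_\alpha|_{M_\alpha},\mathcal L_\alpha)$ is exactly $\mathcal M_\alpha$.

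First I would verify that $\mathcal P_{\mathcal A}=\{\mathcal V_\alpha\}_{\alpha\in\Lambda}$ is an immersed idealistic $n$-atlas. The covering condition is inherited from $\mathcal A$. For compatibility, the restriction of $\mathcal V_\alpha$ to $M_{\alpha\beta}$ again carries the trivial immersion $N=M_{\alpha\beta}$, and likewise for $\mathcal V_\beta$, so both $\mathcal V_{\alpha\beta}$ and $\mathcal V_{\beta\alpha}$ are supported by the same $N=M_{\alpha\beta}$. By the remark at the end of Section~\ref{Immersed Idealistic Spaces} (the case $N_\alpha=N_\beta$), these immersed charts are equivalent precisely when the non-immersed idealistic spaces $\mathcal M_{\alpha\beta}$ and $\mathcal M_{\beta\alpha}$ are equivalent, which holds because $\mathcal A$ is an atlas. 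Thus $\mathcal P_{\mathcal A}$ is a genuine immersed idealistic $n$-atlas, and its equivalence class is an idealistic $n$-flower $\mathcal F_{\mathcal A}$ over $(M,E)$.

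The crucial, and essentially formal, step is to check that every operation entering a reduction of singularities is literally the same for $\mathcal A$ and for $\mathcal F_{\mathcal A}$. Under the trivial immersion $N_\alpha=M_\alpha$ one has $\operatorname{Sing}\mathcal V_\alpha=\operatorname{Sing}\mathcal M_\alpha$, the permissible centers coincide, and the transforms agree: for a projection over the first factor $N_\alpha'=M_\alpha\times(\mathbb C^m,0)=M_\alpha'$, while for a blowing-up with permissible center $Y$ the strict transform of $N_\alpha=M_\alpha$ is all of $M_\alpha'$, so the induced map $\bar\pi$ equals $\pi$ and the controlled transform of $\mathcal L_\alpha$ is computed identically. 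Consequently $\mathcal A$ and $\mathcal F_{\mathcal A}$ share the same singular locus, the same permissible centers and the same permissible test systems, so any finite sequence of permissible blowing-ups resolving one resolves the other.

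Granting these identifications, I would finish by applying Theorem~\ref{teo:ingl redsingflowers} to $\mathcal F_{\mathcal A}$ with $e=n$: it yields a finite composition of permissible blowing-ups $\pi\colon(M',E')\to(M,E)$ with $\operatorname{Sing}\mathcal F_{\mathcal A}'=\emptyset$, which, by the previous paragraph, is a reduction of singularities of $\mathcal A$. I expect no genuine obstacle within this argument itself; the identifications are routine once the trivial immersion is set up. The real difficulty is entirely deferred to Theorem~\ref{teo:ingl redsingflowers}, whose proof proceeds by induction on the dimension $e$ via Maximal Contact Theory and the projection onto maximal contact hypersurfaces.
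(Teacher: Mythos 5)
Your proposal is correct and is essentially the paper's own argument: the paper likewise realizes an idealistic atlas as a full-dimensional ($e=\dim M$) idealistic flower via the trivial immersion $N_\alpha=M_\alpha$ and then invokes Theorem~\ref{teo:ingl redsingflowers}, as encoded in the implication diagram of Part~\ref{Guide for the Reduction of Singularities}. Your write-up simply makes explicit the routine identifications (singular loci, permissible centers, test systems, transforms) that the paper leaves implicit.
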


Now, Theorem \ref{th: redsingidealisticatlas} is equivalent to the next one:
\begin{theorem}
\label{th: redsingidealisticexponents} Given an ambient space $(M,E)$,
there is a reduction of singularities for any idealistic exponent  $\mathcal E$ over $(M,E)$.
\end{theorem}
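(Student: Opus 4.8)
The plan is to establish the stated equivalence by proving the two implications separately, both of which are essentially formal consequences of the fact that an idealistic exponent is, by definition, an equivalence class of idealistic atlases, together with the list of concepts already declared to be invariant under the equivalence relation on atlases (singular locus, permissible centers, transformations by blowing-ups of permissible centers, and the very notion of existence of reduction of singularities).

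First I would prove that Theorem \ref{th: redsingidealisticatlas} implies Theorem \ref{th: redsingidealisticexponents}. Given an idealistic exponent $\mathcal E$ over $(M,E)$, I choose any idealistic atlas $\mathcal A$ in the equivalence class $\mathcal E$. By Theorem \ref{th: redsingidealisticatlas} there is a finite composition $\pi:(M',E')\to (M,E)$ of permissible blowing-ups such that the transform $\mathcal A'$ satisfies $\operatorname{Sing}\mathcal A'=\emptyset$. Since at each step a center is permissible for the atlas if and only if it is permissible for its equivalence class, and since the transform of a representative of $\mathcal E$ by a permissible blowing-up represents the transform of $\mathcal E$, the same sequence $\pi$ is $\mathcal E$-permissible; moreover the transform $\mathcal E'$ is represented by $\mathcal A'$, so that $\operatorname{Sing}\mathcal E'=\operatorname{Sing}\mathcal A'=\emptyset$. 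Thus $\pi$ is a reduction of singularities for $\mathcal E$.

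Next I would prove the converse. Given an idealistic atlas $\mathcal A$, let $\mathcal E$ be the idealistic exponent it defines. By Theorem \ref{th: redsingidealisticexponents} there is a finite composition $\pi$ of $\mathcal E$-permissible blowing-ups with $\operatorname{Sing}\mathcal E'=\emptyset$. Reading the same construction back at the level of the chosen atlas, each center is permissible for $\mathcal A$ and, inductively, the transform of $\mathcal A$ by each partial sequence represents the corresponding transform of $\mathcal E$. Hence $\mathcal A'$ represents $\mathcal E'$ and $\operatorname{Sing}\mathcal A'=\operatorname{Sing}\mathcal E'=\emptyset$, so that $\pi$ is a reduction of singularities for $\mathcal A$.

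I do not expect a genuine obstacle here: the statement is close to a tautology once one has the invariance of the singular locus and of the permissible-center and transform operations under equivalence. The only point requiring care is to carry the identification ``the transform of a representative represents the transform of the class'' along the whole finite sequence of blowing-ups, which follows by induction on the length of the sequence from the definition of the transform of an exponent and from Proposition \ref{prop:ingl equivalencia de atlas}.
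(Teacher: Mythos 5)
Your proof is correct and follows essentially the same route as the paper: Theorem \ref{th: redsingidealisticexponents} is deduced from Theorem \ref{th: redsingidealisticatlas} via the formal equivalence of the two statements, which the paper asserts without detail (``Now, Theorem \ref{th: redsingidealisticatlas} is equivalent to the next one'') and which you spell out using the invariance of singular loci, permissible centers and transforms under the equivalence of atlases, deferring all substantive work to the atlas theorem exactly as the paper does. Note only that for the statement at hand just the implication from the atlas theorem to the exponent theorem is needed; the converse direction you prove is the other half of the paper's asserted equivalence and is harmless extra content.
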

We also have the immersed statements:

\begin{theorem}
\label{th: redsingimmersedidealisticspaces} Given an ambient space $(M,E)$ and a natural number $e$ with $1\leq e\leq \dim M$,
there is a reduction of singularities for any immersed idealistic $e$-space $\mathcal V$ over $(M,E)$.
\end{theorem}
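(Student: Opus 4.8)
The plan is to reduce the immersed statement to the non-immersed reduction of singularities already recorded as Theorem \ref{th: redsingidealisticspaces}. The key observation is that every datum relevant to the reduction of singularities of an immersed idealistic $e$-space $\mathcal V=(M,E,N,\mathcal L)$ is intrinsically carried by the idealistic space $\mathcal N=(N,E\vert_N,\mathcal L)$ over the $e$-dimensional ambient space $(N,E\vert_N)$. Indeed, by definition $\operatorname{Sing}\mathcal V=\operatorname{Sing}\mathcal N$, the permissible centers of $\mathcal V$ are exactly the permissible centers of $\mathcal N$, and, as recalled in Section \ref{Immersed Idealistic Spaces}, a blowing-up of $M$ with a center $Y\subset N$ induces the blowing-up $\bar\pi\colon N'\to N$ of $N$ with center $Y$, with $N'$ again a transverse closed ambient subspace, the transform of $\mathcal V$ being governed by the transform of $\mathcal N$ under $\bar\pi$.

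First I would apply Theorem \ref{th: redsingidealisticspaces} to the idealistic space $\mathcal N$ over $(N,E\vert_N)$. This provides a finite sequence of permissible blowing-ups $\bar\pi_i\colon(N_i,F_i)\to(N_{i-1},F_{i-1})$, where $(N_0,F_0)=(N,E\vert_N)$ and the centers $Y_{i-1}\subset N_{i-1}$ are contained in $\operatorname{Sing}\mathcal N_{i-1}$, whose composite $\bar\sigma$ makes the transform $\mathcal N'$ satisfy $\operatorname{Sing}\mathcal N'=\emptyset$. Since $\dim N=e$, this is a legitimate instance of the non-immersed theorem and introduces no circularity.

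Then I would lift this sequence to $M$, one blowing-up at a time. At the $i$-th step the center $Y_{i-1}$ is permissible for $\mathcal N_{i-1}$, hence for the immersed idealistic $e$-space $\mathcal V_{i-1}$; in particular it has normal crossings with the ambient divisor $E_{i-1}$ in $M_{i-1}$, so the blowing-up $\pi_i\colon(M_i,E_i)\to(M_{i-1},E_{i-1})$ of $M_{i-1}$ with center $Y_{i-1}$ is a permissible blowing-up of the ambient space. By the construction of the transform of an immersed idealistic space, the strict transform $N_i$ of $N_{i-1}$ is transverse, the induced blowing-up of $N_{i-1}$ coincides with $\bar\pi_i$, and the transform $\mathcal V_i=(M_i,E_i,N_i,\mathcal L_i)$ is such that $(N_i,E_i\vert_{N_i},\mathcal L_i)$ is the transform $\mathcal N_i$ of $\mathcal N_{i-1}$ by $\bar\pi_i$. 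Composing the $\pi_i$ yields $\sigma\colon(M',E')\to(M,E)$, a finite composition of permissible blowing-ups, with $\operatorname{Sing}\mathcal V'=\operatorname{Sing}\mathcal N'=\emptyset$, which is the desired reduction of singularities.

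The main point requiring care is the identification $F_i=E_i\vert_{N_i}$ at every step, which is what makes the lifted ambient sequence restrict on $N$ to exactly the reduction of singularities supplied by Theorem \ref{th: redsingidealisticspaces}. This rests on the transversality of each $N_{i-1}$ to the components of $E_{i-1}$: transversality yields $E_{i-1}\vert_{N_{i-1}}=E_{i-1}\cap N_{i-1}$ together with the compatibility of exceptional divisors under strict transform, and its preservation along the sequence, asserted in Section \ref{Immersed Idealistic Spaces}, is precisely what allows the induction to run. This bookkeeping of the divisors is the only genuine obstacle; the rest is a transcription of the non-immersed reduction into the ambient space.
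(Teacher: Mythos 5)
Your proof is correct, but it takes a genuinely different route from the paper's. The paper deduces Theorem \ref{th: redsingimmersedidealisticspaces} by going \emph{up} in generality: $\mathcal V$ is viewed as a one-chart immersed idealistic $e$-atlas, hence defines an idealistic $e$-flower over the same $n$-dimensional ambient space $(M,E)$, and a reduction of singularities of that flower --- supplied by Theorem \ref{teo:ingl redsingflowers}, the result actually proven by induction on $e$ --- induces one for $\mathcal V$; this is the horizontal implication Th\ref{th: redsingimmersedidealisticatlas} $\Rightarrow$ Th\ref{th: redsingimmersedidealisticspaces} in the diagram of Part \ref{Guide for the Reduction of Singularities}. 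You instead go \emph{down} in dimension: you pass to the intrinsic idealistic space $\mathcal N=(N,E\vert_N,\mathcal L)$, apply the non-immersed Theorem \ref{th: redsingidealisticspaces} over the $e$-dimensional ambient space $(N,E\vert_N)$, and lift the resulting blow-up sequence to $(M,E)$. Your lifting step is sound, and indeed it is already built into the paper's definitions: in Section \ref{Immersed Idealistic Spaces}, the transform of $\mathcal V$ by a permissible blowing-up $\pi$ of $(M,E)$ is \emph{defined} through the induced blowing-up $\bar\pi:(N',E'\vert_{N'})\rightarrow(N,E\vert_N)$ of the intrinsic ambient space, with transversality of $(M',E',N')$ asserted there; so the divisor identification $F_i=E_i\vert_{N_i}$ that you flag as the delicate point is precisely the content of that definition rather than something to be re-proved. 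Your appeal to Theorem \ref{th: redsingidealisticspaces} is also non-circular within the paper's logic: that theorem is reached from the flower theorem along the path Th\ref{th: redsingimmersedidealisticatlas} $\Rightarrow$ Th\ref{th: redsingidealisticatlas} $\Rightarrow$ Th\ref{th: redsingidealisticspaces}, which does not pass through the statement you are proving. As for what each approach buys: the paper's is shorter and uniform (all six statements of the guide fall out of the single flower theorem with no geometric verification at all), while yours makes explicit that the immersed statement carries no content beyond the intrinsic $e$-dimensional one together with the definitional compatibility of ambient and intrinsic blow-ups, and it invokes resolution only in the lower dimension $e$ rather than machinery over the full ambient space; both, of course, ultimately rest on Theorem \ref{teo:ingl redsingflowers}.
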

\begin{theorem}
\label{th: redsingimmersedidealisticatlas} Given an ambient space $(M,E)$ and a natural number $e$ with $1\leq e\leq \dim M$,
there is a reduction of singularities for any immersed idealistic $e$-atlas $\mathcal P$ over $(M,E)$.
\end{theorem}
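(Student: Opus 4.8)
The plan is to deduce this statement directly from the reduction of singularities for idealistic $e$-flowers, Theorem~\ref{teo:ingl redsingflowers}, exactly as announced in the section on Reduction of Singularities: every object in our list ``defines in a proper way'' an idealistic $e$-flower, and a reduction of singularities of that flower induces one of the object in its own class. Concretely, an immersed idealistic $e$-atlas $\mathcal P$ over $(M,E)$ is, by definition, a representative of its own equivalence class, and this class is precisely an idealistic $e$-flower $\mathcal F=[\mathcal P]$. So there is nothing to construct by hand here; the whole geometric content has been packaged into Theorem~\ref{teo:ingl redsingflowers}, whose proof proceeds later by induction on the dimension $e$.

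First I would set $\mathcal F=[\mathcal P]$ and apply Theorem~\ref{teo:ingl redsingflowers} to obtain a morphism $\pi:(M',E')\rightarrow(M,E)$ that is a finite composition of permissible blowing-ups and satisfies $\operatorname{Sing}\mathcal F'=\emptyset$, where $\mathcal F'$ is the transform of $\mathcal F$ by $\pi$. It then remains to transfer this $\pi$ back to $\mathcal P$. The key point is that $\pi$, read as a test system with all centers non-empty and no open projections, is $\mathcal F$-permissible; since $\mathcal P$ and $\mathcal F$ have by definition the same permissible test systems, $\pi$ is $\mathcal P$-permissible as well. Moreover the transform $\mathcal P'$ of $\mathcal P$ by $\pi$ is again a representative of the transformed flower $\mathcal F'$, so that $\operatorname{Sing}\mathcal P'=\operatorname{Sing}\mathcal F'=\emptyset$. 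Hence $\pi$ is a reduction of singularities for $\mathcal P$.

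The only thing that requires care --- and which I regard as the ``main obstacle'', although it is ultimately formal --- is checking that the notion of \emph{reduction of singularities} is genuinely invariant under the equivalence of immersed idealistic $e$-atlases, i.e.\ that it depends only on the flower $\mathcal F$ and not on the chosen representative $\mathcal P$. This rests on two facts already built into the framework: equivalence is defined as having the same permissible test systems, and the transform by a permissible test system of a representative of $\mathcal F$ is a representative of the corresponding transform of $\mathcal F$, with the same (well-defined) singular locus. Both are consequences of the local character of permissibility and of equivalence. Finally, the same argument applied to a one-chart atlas recovers Theorem~\ref{th: redsingimmersedidealisticspaces} for immersed idealistic $e$-spaces, so this reduction closes the immersed part of the guide and leaves only Theorem~\ref{teo:ingl redsingflowers} to be proved by the induction developed in the subsequent parts.
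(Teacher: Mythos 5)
Your proposal is correct and follows exactly the paper's own route: the paper treats Theorem~\ref{th: redsingimmersedidealisticatlas} as formally equivalent to Theorem~\ref{teo:ingl redsingflowers} (an immersed idealistic $e$-atlas $\mathcal P$ defines the $e$-flower $\mathcal F=[\mathcal P]$, and a reduction of singularities of $\mathcal F$ induces one of $\mathcal P$ in its own class), with the real content deferred to the inductive proof for flowers. Your added verification that permissibility, transforms, and the singular locus depend only on the equivalence class is precisely the ``in view of the description of the objects and the transformations'' step that the paper leaves implicit.
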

\begin{theorem} (see Theorem \ref{teo:ingl redsingflowers})
\label{th: redsingimmersedidealisticflowers} Given an ambient space $(M,E)$ and a natural number $e$ with $1\leq e\leq \dim M$,
there is a reduction of singularities for any idealistic $e$-flower $\mathcal F$ over $(M,E)$.
\end{theorem}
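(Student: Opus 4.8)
The plan is to argue by induction on the dimension $e$, establishing the basic implication that reduction of singularities for $(e-1)$-flowers yields reduction of singularities for $e$-flowers. The base case $e=0$ is immediate: an immersed idealistic $0$-space has a $0$-dimensional ambient subspace $N$, so the idealistic space it carries over $N$ has singular locus of dimension strictly smaller than $0$, that is, empty. Hence every $0$-flower has empty singular locus and the identity already realizes its reduction of singularities. For the inductive step I assume reduction of singularities for every $(e-1)$-flower over every ambient space, and I seek to deduce it for an arbitrary idealistic $e$-flower $\mathcal F$ over $(M,E)$.

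First I would reduce to the \emph{adjusted and reduced} case. By Proposition \ref{prop:ingl ordensumergido} the Hironaka order $\delta_P\mathcal F$ is a well-defined invariant of the $e$-flower at each $P\in\operatorname{Sing}\mathcal F$, and it satisfies $\delta_P\mathcal F\geq 1$. Using Proposition \ref{prop:ingl explosionexpcto}, which records that the order drops by one along the exceptional divisor of a permissible blowing-up, the plan is to run a combinatorial desingularization governed by the polyhedra attached to the exceptional components of $E$ in each chart. This monomial process lowers the maximal order and isolates its monomial part until the order equals exactly one (\emph{adjusted}) and the singular locus contains no hypersurface of the ambient $e$-subspaces (\emph{reduced}); termination of the purely monomial steps is granted by a diophantine argument of the same type as the one carried out in Subsection \ref{Invariance under Equivalence of Idealistic Spaces}.

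Once $\mathcal F$ is adjusted and reduced, the heart of the argument is to produce an idealistic $(e-1)$-flower $\mathcal H$ over the same ambient space $(M,E)$ having \emph{maximal contact} with $\mathcal F$, that is, sharing exactly the same permissible test systems. Locally, after setting the divisor momentarily aside, a maximal contact hypersurface is obtained from a local generator of order one by the classical Tschirnhaus transformation. To organize and glue these local hypersurfaces I would employ the projection procedure built on the \emph{projecting axis} inspired by Panazzolo \cite{Pan}, projecting the adjusted and reduced immersed data onto the local hypersurfaces. The resulting local immersed idealistic $(e-1)$-charts are mutually compatible precisely because they inherit the singular locus and the permissible test systems of $\mathcal F$; by the dictionary of Proposition \ref{porp:ingl apilamientosenterminosdeexponentes} they then assemble into a well-defined idealistic $(e-1)$-flower $\mathcal H$, the globalization being supplied for free by the omnipresent ambient space $(M,E)$. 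After pushing the old components of $E$ away from the singular locus, the flower $\mathcal H$ has maximal contact with $\mathcal F$.

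To conclude, I apply the induction hypothesis to $\mathcal H$, obtaining a finite composition of permissible blowing-ups for which $\operatorname{Sing}\mathcal H$ becomes empty. Since $\mathcal H$ and $\mathcal F$ share the same permissible test systems, this very composition is permissible for $\mathcal F$ and makes $\operatorname{Sing}\mathcal F$ empty, which is the sought reduction of singularities. The main obstacle I expect is the globalization step: maximal contact hypersurfaces are genuinely local, and the delicate point is to check that the projected local $(e-1)$-charts are truly compatible as an immersed idealistic atlas and that the projecting axis yields a projection coherent from chart to chart. This is exactly the difficulty that the notion of idealistic flower is designed to absorb, so that compatibility should reduce to the already-established local character of equivalence and of permissible test systems (Propositions \ref{prop:ingl caracterlocalequivalencia} and \ref{prop:ingl sispruebapermitidos}). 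A secondary technical point will be to ensure that the adjusted-and-reduced reduction and the subsequent projection behave well under whole test systems, not merely under a single blowing-up.
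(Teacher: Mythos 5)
Your plan reproduces the paper's overall architecture --- induction on $e$ (your base case at $e=0$ is fine, and even slightly cleaner than the paper's direct verification at $e=1$), reduction to the adjusted and reduced case, local maximal contact hypersurfaces via Tschirnhaus once the old divisor components are off the singular locus, projection via projecting axes, gluing into an $(e-1)$-flower, and transfer of its reduction of singularities back to $\mathcal F$ through equality of permissible test systems --- and your account of the maximal contact step and of the final transfer is essentially the paper's. The genuine gap is in your first step. You claim that a ``combinatorial desingularization governed by the polyhedra attached to the exceptional components of $E$'', terminating by a diophantine argument, makes $\mathcal F$ adjusted and reduced. No purely monomial process can do this. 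Take $E=\emptyset$ and the single chart given by the list $\{(x^2,1),(y^2,1)\}$ over $((\mathbb C^2,0),\emptyset)$: the singular locus is the origin, the order there equals $2$, and there are no components of $E$ whatsoever, so your combinatorial process has literally nothing to blow up. Moreover, the diophantine computation you invoke from Subsection \ref{Invariance under Equivalence of Idealistic Spaces} is the engine of the Hironaka trick, i.e.\ of the proof that the order is invariant under equivalence; it is not a termination argument for any desingularization process. The combinatorial input the paper actually uses is Hironaka's game, solved by Spivakovsky (Proposition \ref{prop:redsinglogspaces}), and it applies only to monomial flowers.

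The correct logical structure is intertwined, not sequential, and adjustedness is the expensive step rather than a preprocessing. In the paper, given a $d$-normalized atlas $\mathcal P$ with logarithmic factor $Z$ and co-factorial order $m=\mu_Z\mathcal P\geq 1$, one forms the auxiliary atlas $\mathcal P^{Z,m}$ by adjoining the co-factors $(J_{\alpha,j},m)$; this atlas is adjusted by construction (Proposition \ref{prop:ingl elajusteesajustado}), one applies to it the full statement RedSing($e$, adjusted) --- not combinatorics --- and the commutativity of Remark \ref{rk:commutativityexplosiones} shows that the co-factorial order drops; only the terminal case $\mu_Z\mathcal P=0$ is monomial (Proposition \ref{prop: ingl cofactorcero}) and is settled by Hironaka's game. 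In turn, RedSing($e$, adjusted) is reduced to the adjusted-reduced case by isolating the $(e-1)$-dimensional components of the singular locus --- non-singular precisely because the flower is adjusted (Proposition \ref{prop: aislar L}) --- and putting them in normal crossings with $E$, a step that itself consumes the induction hypothesis RedSing($e-1$) (Proposition \ref{prop: Normal crossings for a non-singular closed subspace}). A similar remark applies to a point you pass over quickly: separating the old components of $E$ from the singular locus is achieved by projecting $\mathcal F$ over each component of $E$ and applying RedSing($e-1$) to the projected $(e-1)$-flower (Proposition \ref{prop:separer viejas components}); it is not free either. So the dependencies run as in Equations \eqref{eq:ingl contacto maximal}, \eqref{eq:ingl reduccion al caso reducido} and \eqref{eq:ingl reduccion al caso ajustado}: RedSing($e-1$) $\Rightarrow$ RedSing($e$, adjusted-reduced) $\Rightarrow$ RedSing($e$, adjusted) $\Rightarrow$ (together with the monomial case) RedSing($e$). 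Your plan, which spends the induction hypothesis only on the maximal contact step and treats the passage to the adjusted-reduced case as combinatorial preprocessing, cannot be carried out as stated.
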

In view of the developments of the concepts in Part \ref{Objects and Statements}, we have the following implications:
$$
\begin{array}{ccccc}
\text{Th\ref{th: redsingimmersedidealisticflowers}}&\Leftrightarrow&
\text{Th\ref{th: redsingimmersedidealisticatlas}}&\Rightarrow&
\text{Th\ref{th: redsingimmersedidealisticspaces}}\\
\Downarrow&&\Downarrow&&\Downarrow\\
\text{Th\ref{th: redsingidealisticexponents}}&\Leftrightarrow&
\text{Th\ref{th: redsingidealisticatlas}}&\Rightarrow&
\text{Th\ref{th: redsingidealisticspaces}}\\
\end{array}
$$
We have to prove Theorem \ref{th: redsingimmersedidealisticflowers} or, equivalently, Theorem \ref{th: redsingimmersedidealisticatlas}.

Thanks to the behavior of the order presented in Part
\ref{The order of Idealistic Exponents and Equidimensional Flowers}, we can define {\em adjusted idealistic $e$-flowers} to be the ones such that the order in all the singular points is exactly equal to one. We also consider {\em reduced idealistic $e$-flowers}, given by the property that the singular locus has dimension less than or equal to $e-2$. In our general procedure, we need as well the concept of {\em monomial idealistic $e$-flower} to be developed further, that implies the existence of a very ``combinatorial'' reduction of singularities.

Let us see how to organize the proof of Theorem \ref{th: redsingimmersedidealisticflowers}. Consider the following statements:

\begin{description}
\item[\rm RedSing($e$)] {\em The idealistic $e$-flowers have reduction of singularities.}
\item [\rm RedSing(monomial)]{\em  The monomial idealistic $t$-flowers have reduction of singularities, for any $t\geq 1$.}
\item[\rm RedSing($e$, adjusted)] {\em The adjusted idealistic $e$-flowers have reduction of singularities.}
\item[\rm RedSing($e$, adjusted-reduced)]{\em The adjusted and reduced idealistic $e$-flowers have reduction of singularities.}
\end{description}
The verification that RedSing($1$) holds is straightforward. Our objective is to prove the induction step
\begin{equation}
\label{eq: inductionstep}
\text{\rm RedSing($e-1$)}\Rightarrow \text{\rm RedSing($e$)}.
\end{equation}
We do it in several steps:
\begin{description}
\item[Monomial case] \rm RedSing(monomial) holds.
\item[Reduction to the Adjusted Case] This statement corresponds to the following implication
    \begin{equation}
\label{eq:ingl reduccion al caso ajustado}
\left.
\begin{array}{l}
\text{RedSing($e$, adjusted)}\\\text{RedSing(monomial)}
\end{array}
\right\}
\Rightarrow \text{RedSing($e$)}.
\end{equation}
\item[Reduction to the Adjusted-Reduced Case] This statement corresponds to the following implication
    \begin{equation}
\label{eq:ingl reduccion al caso reducido}
\left.
\begin{array}{l}
\text{RedSing($e-1$)}\\
\text{RedSing($e$, adjusted-reduced)}
\end{array}
\right\}
\Rightarrow \text{RedSing($e$, adjusted)}.
\end{equation}

\item[The Adjusted-Reduced Case] This statement corresponds to the following implication
\begin{equation}
\label{eq:ingl contacto maximal}
\text{RedSing($e-1$)}\Rightarrow \text{RedSing($e$, adjusted-reduced)}.
\end{equation}
\end{description}
The statement \eqref{eq:ingl contacto maximal} is proven in terms of Maximal Contact Theory \cite{Aro-H-V}. The classical difficulty for the globalization of the maximal contact is the reason for introducing idealistic flowers, instead of working simply with idealistic exponents.
\\
\strut\vfill\pagebreak

\part{First Reductions}
In this part, we prove the statements ``Monomial Case'', ``Reduction to the Adjusted Case'' and ``Reduction to the Adjusted-Reduced Case'' presented in
Part \ref{Guide for the Reduction of Singularities}.

\section{Monomial Idealistic $e$-Flowers}
Let us consider an ambient space $(M,E)$. We say that a sheaf $Z$ of principal ideals on $M$ is {\em logarithmic for $(M,E)$} if it is of the form
\begin{equation}
\label{eq:ingl hazlogaritmico}
\textstyle Z=\prod_D\mathcal J_D^{a_D},\quad a_D\in {\mathbb Z}_{\geq 0},
\end{equation}
where $D$ run over  the irreducible components of $E$ and  $\mathcal J_D$ is the ideal sheaf of $D$. A {\em logarithmic idealistic space with assigned multiplicity $d$ over $(M,E)$} is any idealistic space $\mathcal Z$ of the form
$$
\mathcal Z=(M,E,\mathcal L_\mathcal Z=\{(Z,d)\}),
$$
where $Z$ is a logarithmic sheaf for $(M,E)$.

Let $(M,E,N)$ be a transverse ambient subspace of $(M,E)$ and let $Z$ be a logarithmic sheaf. Note that we can define the restriction of $\mathcal L_\mathcal Z$ to $N$ just by taking
$\mathcal L_\mathcal Z\vert_N=\{(Z\vert_N,d)\}$, where $Z\vert_N$ is the restriction of the principal ideal sheaf $Z$ to $N$. In this way we obtain an immersed idealistic space $\mathcal Z\vert_N$ given by
$$
\mathcal Z\vert_N=(M,E,N,\mathcal L_\mathcal Z\vert_N).
$$

\begin{definition}
\label{def: monomial flower}
Let $\mathcal P=\{\mathcal V_\alpha\}_{\alpha\in \Lambda}$ be an immersed idealistic $e$-atlas over $(M,E)$, where $\mathcal V_\alpha=(M_\alpha,E_\alpha,N_\alpha,\mathcal L_\alpha)$. We say that $\mathcal P$ is {\em monomial}, or {\em quasi-ordinary}, if there is a logarithmic idealistic space $\mathcal Z$ over $(M,E)$ such that, for any $\alpha\in \Lambda$, the immersed idealistic $e$-spaces $\mathcal V_\alpha$ and $\mathcal Z\vert_{N_\alpha}$ are equivalent. An idealistic $e$-flower $\mathcal F$ is {\em monomial} or {\em quasi-ordinary} if it contains a monomial immersed idealistic $e$-atlas.
\end{definition}

\begin{remark}
\label{rk: pzeta}
If $\mathcal P$ is monomial, then the family
$
\mathcal P_\mathcal Z=
\{\mathcal Z\vert_{N_\alpha}\}_{\alpha\in \Lambda}$
is an immersed idealistic $e$-atlas equivalent to $\mathcal P$, and conversely.
\end{remark}

The reduction of singularities of logarithmic idealistic spaces is of combinatorial nature and well-known. It is a direct consequence of Hironaka's Game, solved by Spivakovsky \cite{Spi}. The precise statement we need is the following one:
\begin{proposition}
\label{prop:redsinglogspaces}
 Let $\mathcal Z=(M,E,\{(Z,d)\})$ be a logarithmic idealistic space over $(M,E)$. There is a reduction of singularities for $\mathcal Z$ obtained by the composition of a finite sequence of permissible blowing-ups, where the centers are connected components of intersections of the irreducible components of the divisor $E$ in the support of $Z$.
\end{proposition}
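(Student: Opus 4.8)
The plan is to reduce the statement to the purely combinatorial \emph{Hironaka polyhedra game} and to invoke its solution by Spivakovsky \cite{Spi}. First I would put $\mathcal Z$ in local normal form. Around a point $P\in M$, choose coordinates $(x_1,\ldots,x_n)$ so that the irreducible components of $E$ through $P$ are the hyperplanes $(x_i=0)$, $i=1,\ldots,r$; then $Z$ is locally generated by the monomial $x_1^{a_1}\cdots x_r^{a_r}$, where $a_i=a_{D_i}$ is the multiplicity of $D_i=(x_i=0)$. For a nearby point $Q$ one has $\nu_Q Z=\sum_{i\,:\,x_i(Q)=0}a_i$, so that, locally, $\operatorname{Sing}\mathcal Z$ is the union of the coordinate strata $\bigcap_{i\in S}(x_i=0)$ with $\sum_{i\in S}a_i\geq d$. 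Hence the permissible centers that are intersections of components of $E$ are exactly the connected components of the strata $\bigcap_{D\in S}D$ (with $S$ contained in the support of $Z$) satisfying $\sum_{D\in S}a_D\geq d$; such strata are non-singular, have normal crossings with $E$, and, being connected components of intersections of the fixed divisor $E$, are \emph{global} data attached to $(M,E)$ and the exponents $(a_D)$.

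Next I would record the effect of one such blowing-up. Let $Y=\bigcap_{i\in S}(x_i=0)$ with $\sum_{i\in S}a_i\geq d$ and let $\pi$ be the blowing-up with center $Y$. In the chart where an index $k\in S$ is retained (so $x_i=x_k x_i'$ for $i\in S\setminus\{k\}$, and $(x_k=0)$ is the exceptional divisor $D'$), a direct computation of the controlled transform $Z'=\mathcal J_{D'}^{-d}\pi^{-1}(Z)$ gives
\[
Z'= x_k^{(\sum_{i\in S}a_i)-d}\,\prod_{i\notin S}x_i^{a_i}\,\prod_{i\in S\setminus\{k\}}(x_i')^{a_i}.
\]
Thus $Z'$ is again a logarithmic sheaf for $(M',E')$, the new exceptional component $D'$ carrying multiplicity $(\sum_{i\in S}a_i)-d$. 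Therefore the class of logarithmic idealistic spaces is stable under these blowing-ups, and the whole process is governed only by the exponents $a_D$ and the threshold $d$.

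This is exactly one round of Hironaka's polyhedra game. I would encode the local situation at a singular point by the point $v=(a_D/d)_{D\ni P}$ in the positive orthant, with winning threshold $1$, so that singularity means $|v|=\sum_D a_D/d\geq 1$. Player A's admissible choice of a subset $S$ (the condition $\min\sum_{i\in S}v_i\geq 1$) is precisely the choice of a permissible center $\bigcap_{D\in S}D$ (the condition $\sum_{i\in S}a_i\geq d$); Player B's choice of an index $k\in S$ is precisely the choice of chart of the blowing-up; and the transformation rule $v'_k=(\sum_{i\in S}v_i)-1$ matches the normalized multiplicity of $D'$. Player A wins, i.e.\ reaches $|v'|<1$, exactly when the transformed order drops below $d$ at the corresponding point, and since Player B ranges over all charts, a winning strategy for A means that the order drops below $d$ at \emph{every} point of the blown-up space, that is $\operatorname{Sing}=\emptyset$. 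By Spivakovsky's theorem \cite{Spi}, Player A has a winning strategy, and the number of rounds is bounded in terms of $v$ and $d$. Since the strategy depends only on the combinatorial datum $(E,(a_D),d)$, which is global and locally constant along the strata of $E$, the centers it dictates are globally well-defined connected components of intersections of components of $E$, and they assemble into a single finite sequence of permissible blowing-ups after which $\operatorname{Sing}\mathcal Z'=\emptyset$.

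The genuinely hard part, the termination of this combinatorial process, is exactly the content of Hironaka's game and is supplied by \cite{Spi}; what remains on our side is the routine verification of the controlled-transform formula above and the bookkeeping ensuring that the local winning plays glue into one global finite sequence of blowing-ups with the prescribed centers. Let me stress that, in contrast with the general situation treated in the rest of the paper, here globalization is immediate precisely because both the singular locus and the permissible centers are combinatorially determined by $(M,E)$ and the exponents $(a_D)$, hence canonically global.
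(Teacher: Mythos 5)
Your proposal follows exactly the route the paper itself takes: the paper's proof of this proposition is simply a citation to \cite[Proposition 3]{Mol}, with the preceding text noting that the result is a direct consequence of Hironaka's game as solved by Spivakovsky \cite{Spi}, which is precisely the reduction you carry out. Your local normal form, controlled-transform computation, and game encoding fill in details the paper delegates to the reference (including the globalization bookkeeping, which you correctly identify as routine in this combinatorial setting), so the two arguments are essentially the same.
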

\begin{proof}
See, for example \cite[Proposition 3]{Mol}.
\end{proof}
\begin{corollary}
There is a reduction of singularities for any monomial idealistic $e$-flower $\mathcal F$.
\end{corollary}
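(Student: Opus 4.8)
The plan is to transfer the combinatorial reduction of the single global logarithmic space $\mathcal Z$ to the flower, exploiting that the charts of a monomial atlas are transverse restrictions of $\mathcal Z$. First I would replace $\mathcal F$ by the equivalent atlas $\mathcal P_\mathcal Z=\{\mathcal Z|_{N_\alpha}\}_{\alpha\in\Lambda}$ provided by Remark \ref{rk: pzeta}; since a reduction of singularities of $\mathcal P_\mathcal Z$ is a reduction of singularities of $\mathcal F$, it suffices to resolve $\mathcal P_\mathcal Z$. The whole datum is now governed by the single logarithmic idealistic space $\mathcal Z=(M,E,\{(Z,d)\})$ over $(M,E)$, to which Proposition \ref{prop:redsinglogspaces} applies: there is a finite sequence of blowing-ups with combinatorial centers $Y_{j-1}$ (connected components of intersections of components of the successive divisors in the support of $Z$) resolving $\mathcal Z$ over $M$.

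The key point is that the order of a logarithmic sheaf is preserved under restriction to a transverse subspace: for $P\in N$ one has $\nu_P(Z|_N)=\sum_{D\ni P}a_D=\nu_PZ$, whence $\operatorname{Sing}(\mathcal Z|_N)=(\operatorname{Sing}\mathcal Z)\cap N$. Moreover, since $N$ is transverse to $E$ it has normal crossings with each combinatorial center $Y=\bigcap_{D\in S}D$, so that $Y\cap N$ is again a combinatorial center for $E|_N$, and the strict transform of $N$ under a blowing-up of $M$ depends only on the trace of the center on $N$: blowing up $M$ along $Y$ induces on $N$ exactly the blowing-up with center $Y\cap N$. Combined with the fact that the logarithmic controlled transform commutes with transverse restriction, this shows that at every stage the transform of a chart $\mathcal Z|_{N_\alpha}$ equals the restriction of the transformed $\mathcal Z$ to the corresponding strict transform of $N_\alpha$, \emph{regardless of which global center over $M$ induces that trace}.

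Next I would build the reduction of $\mathcal P_\mathcal Z$. At each step I restrict the combinatorial center $Y_{j-1}$ of the $\mathcal Z$-reduction to the strict transforms of the $N_\alpha$. Each local piece $Y_{j-1}\cap N_\alpha$ is non-singular, has normal crossings with the divisor, and is contained in $\operatorname{Sing}(\mathcal Z|_{N_\alpha})=\operatorname{Sing}\mathcal P_\mathcal Z\cap M_\alpha$, hence is permissible for the chart. Because these local pieces all lie in the globally defined singular locus, and because on an overlap $M_{\alpha\beta}$ one has $\operatorname{Sing}\mathcal P_\mathcal Z\subset N_\alpha\cap N_\beta$, the pieces $Y_{j-1}\cap N_\alpha$ and $Y_{j-1}\cap N_\beta$ agree there; they therefore glue into a global non-singular center $\widehat Y_{j-1}$ with normal crossings with $E$ and with $\widehat Y_{j-1}\cap M_\alpha=Y_{j-1}\cap N_\alpha$. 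By the local characterization of permissible centers for flowers, each $\widehat Y_{j-1}$ is permissible, so the resulting sequence of blowing-ups over $(M,E)$ is an $\mathcal F$-permissible test system, and by the trace-invariance above it induces on every chart the same transformation as the $\mathcal Z$-reduction.

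Finally, the commutation established above gives, after the whole sequence, that the charts of the transformed flower are exactly $\mathcal Z^{\mathrm{fin}}|_{N_\alpha^{\mathrm{fin}}}$, where $\mathcal Z^{\mathrm{fin}}$ is the resolved logarithmic space with $\operatorname{Sing}\mathcal Z^{\mathrm{fin}}=\emptyset$; hence $\operatorname{Sing}\mathcal F^{\mathrm{fin}}=\bigcup_\alpha\big(\operatorname{Sing}\mathcal Z^{\mathrm{fin}}\cap N_\alpha^{\mathrm{fin}}\big)=\emptyset$, which is the desired reduction of singularities. \textbf{The main obstacle} is precisely this transfer mechanism: one must check carefully that restricting the single global combinatorial reduction of $\mathcal Z$ to the various transverse subspaces $N_\alpha$ produces centers that are simultaneously permissible on every chart and compatible on the overlaps, so that they glue to honest global permissible centers for $\mathcal F$; this is where the transversality of the $N_\alpha$ to $E$ and the purely combinatorial, hence restriction-invariant, nature of the reduction are both essential.
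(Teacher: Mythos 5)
Your proposal is correct, and it rests on the same two pillars as the paper's own proof: the passage from $\mathcal F$ to the equivalent atlas $\mathcal P_{\mathcal Z}$ of Remark \ref{rk: pzeta}, and the combinatorial resolution of the logarithmic space $\mathcal Z$ given by Proposition \ref{prop:redsinglogspaces}, transferred to the charts through the transversality of the $N_\alpha$ to $E$. The one genuine divergence is the choice of ambient centers. The paper blows up the combinatorial centers $Y$ of the $\mathcal Z$-resolution themselves: these are permissible for $\mathcal Z$, and the content of the proof is that each such blow-up restricts, on the strict transform of every $N_\alpha$, to the blow-up with the chart-permissible center $Y\cap N_\alpha$, together with the commutativity of controlled transform and transverse restriction. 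You instead glue the traces $Y\cap N_\alpha$ into a global center $\widehat Y$ and blow up $\widehat Y$. What your variant buys is strict conformity with the paper's definition of reduction of singularities: $\widehat Y\subset \operatorname{Sing}\mathcal F$, so your sequence is literally a sequence of $\mathcal F$-permissible blow-ups, whereas the paper's center $Y$ is a full stratum of $E$, generally not contained in $\bigcup_\alpha N_\alpha$, hence permissible for $\mathcal Z$ but not, in the strict sense of the definitions, for $\mathcal F$. What it costs is exactly the bookkeeping you single out as the main obstacle: blowing up $\widehat Y$ and blowing up $Y$ produce different ambient spaces, so one must verify that the induced transformation of each chart depends only on the trace of the center on $N_\alpha$, in order that after each step the charts remain the restrictions of the transforms of $\mathcal Z$ along its own resolution and the next trace can be formed; the paper avoids this identification issue entirely by keeping the ambient modification equal to the $\mathcal Z$-resolution. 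Your supporting computations are sound: the gluing of traces works because $\operatorname{Sing}\mathcal P_{\mathcal Z}\cap M_{\alpha\beta}\subset N_\alpha\cap N_\beta$ forces the pieces to agree on overlaps, and the equality $\nu_P(Z\vert_{N_\alpha})=\nu_P Z$ is the same transversality argument the paper uses in Subsection \ref{Logarithmic factors}.
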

\begin{proof}
Let $\mathcal Z$ be a logarithmic idealistic space associated to $\mathcal F$.
The blowing-ups of the reduction of singularities of $\mathcal Z$ induce a reduction of singularities for $\mathcal F$ as follows. Assume that $Y$ is a permissible center for $\mathcal Z$ and let $\pi:(M',E')\rightarrow (M,E)$ be the blowing-up with center $Y$. Up to taking an open subset of $M$, we can assume that there is a particular immersed idealistic $e$-space
$$
\mathcal Z\vert_N=(M,E,N,\mathcal L_\mathcal Z\vert_N)
$$
belonging to $\mathcal F$.  Since $N$ has normal crossings with $E$ and $Y$ is intersection of components of $E$ not containing $N$, the restriction of $\pi$ to the strict transform $N'$ of $N$ is the blowing-up
$$
\bar\pi:(N',E'\vert_{N'})\rightarrow (N,E\vert_N)
$$
with center $Y\cap N$, that is permissible for $(M,E,N,\mathcal L_\mathcal Z\vert_N)$. The necessary commutativity properties hold and we obtain a reduction of singularities for $\mathcal F$.
\end{proof}

\section{Adjusted Idealistic $e$-Flowers}

Let us consider an ambient space $(M,E)$ and an idealistic $e$-flower $\mathcal F$ over $(M,E)$. We recall that $\mathcal F$ is adjusted if $\delta_P\mathcal F=1$, for any $P\in \operatorname{Sing}\mathcal F$.

We have the following stability result:
\begin{proposition}
\label{prop:ingl estabilidadajustado}
Let $\mathcal F$ be an adjusted idealistic $e$-flower over $(M,E)$. Consider a  test system $\mathcal S$ that is permissible for $\mathcal F$ and let $\mathcal F'$ the transform of $\mathcal F$ by $\mathcal S$. Then, the idealistic $e$-flower $\mathcal F'$ is adjusted.
\end{proposition}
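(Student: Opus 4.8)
The plan is to prove the statement by induction on the length of $\mathcal S$, so that everything reduces to the stability of the adjusted condition under a single elementary transformation. Since the order and the property of being adjusted are pointwise conditions on $\operatorname{Sing}\mathcal F$, and since both the singular locus and the order are computed on the charts of an immersed idealistic $e$-atlas representing $\mathcal F$ (the agreement on overlaps being guaranteed by Proposition \ref{prop:ingl ordensumergido}), I would first reduce to a single immersed idealistic chart $\mathcal V=(M,E,N,\mathcal L)$, and then to the underlying idealistic space $\mathcal N=(N,E\vert_N,\mathcal L)$ over $N$, on which the restricted elementary transformation acts either as an open projection (or open restriction) or as a blowing-up with a permissible center $Y\subset N$. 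Recall that the transform of an $e$-flower is again an $e$-flower, so the order of $\mathcal F'$ makes sense. Write $\mathcal L=\{(I_j,d_j)\}_{j=1}^k$.

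The case of an open projection is immediate: pulling back a principal ideal by a smooth projection does not change its multiplicity, so the order is preserved at corresponding points while $\operatorname{Sing}\mathcal N'=\operatorname{Sing}\mathcal N\times(\mathbb C^m,0)$, whence adjustedness is preserved; open restrictions are trivial. The heart of the matter is the blowing-up $\pi:N'\to N$ with center $Y\subset\operatorname{Sing}\mathcal N$ and exceptional divisor $D'$. Let $Q\in\operatorname{Sing}\mathcal N'$ and $P=\pi(Q)$. If $Q\notin D'$, then $\pi$ is an isomorphism near $Q$, the controlled transform agrees there with the total transform $\pi^{-1}(I_{j})$, so $\delta_Q\mathcal N'=\delta_P\mathcal N$; since $Q$ is singular, $P\in\operatorname{Sing}\mathcal N$ and the adjusted hypothesis gives $\delta_Q\mathcal N'=1$. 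Suppose now $Q\in D'$, so $P\in Y\subset\operatorname{Sing}\mathcal N$ and, because $\mathcal F$ is adjusted, $\delta_P\mathcal N=1$. Hence there is an index $j_0$ with $\nu_P I_{j_0}=d_{j_0}$. Permissibility of $Y$ forces $\nu_Y I_{j_0}\geq d_{j_0}$, while upper semicontinuity of the order along $Y$ (with $P\in Y$) gives $\nu_Y I_{j_0}\leq \nu_P I_{j_0}=d_{j_0}$; therefore $\nu_Y I_{j_0}=d_{j_0}$, that is, $I_{j_0}$ is equimultiple along $Y$ with multiplicity exactly $d_{j_0}$.

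In this equimultiple situation the controlled transform $I'_{j_0}=\mathcal J_{D'}^{-d_{j_0}}\pi^{-1}(I_{j_0})$ coincides with the strict transform, and the classical non-increase of the order of a strict transform under the blowing-up of a smooth center along which the ideal is equimultiple gives $\nu_Q I'_{j_0}\leq \nu_P I_{j_0}=d_{j_0}$. Consequently $\delta_Q\mathcal N'\leq \nu_Q I'_{j_0}/d_{j_0}\leq 1$, and since $Q$ is singular we obtain $\delta_Q\mathcal N'=1$. This shows that $\mathcal N'$ is adjusted, hence so is each chart of $\mathcal F'$, and therefore $\mathcal F'$ itself; iterating along the length of $\mathcal S$ (the transform at each stage being an adjusted $e$-flower) completes the argument. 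I expect the only genuinely technical point to be the equimultiple non-increase $\nu_Q I'_{j_0}\leq d_{j_0}$: it is the standard tangent-cone computation underlying Maximal Contact Theory (the order drops strictly off the directrix and can persist equal to $d_{j_0}$ only along it), and it is precisely the local input that Proposition \ref{prop:ingl explosionexpcto} does not by itself supply, since that proposition controls only the generic order $\delta_{D'}$ along the exceptional divisor and not the order at the special points $Q\in D'$.
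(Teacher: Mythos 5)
Your proposal is correct and follows essentially the same route as the paper: the paper's (very terse) proof also disposes of open projections as straightforward and then invokes precisely your key fact, namely the stability of the multiplicity of a hypersurface under blowing-up with an equimultiple center, which is what your choice of the index $j_0$ with $\nu_P I_{j_0}=d_{j_0}$ and the semicontinuity argument along $Y$ make precise. Your write-up simply supplies the chart-by-chart reduction and the equimultiplicity verification that the paper leaves to the reader.
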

\begin{proof}
The statement for open projections is straightforward. In the case of permissible blowing-ups, the result is a direct consequence of the stability of the multiplicity of an hypersurface under blowing-up with equimultiple centers.
\end{proof}

\subsection{Logarithmic Factors}\label{Logarithmic factors}
Let $(M,E)$ be an ambient space and consider an immersed idealistic $e$-atlas $\mathcal P=
\{
\mathcal V_\alpha
\}_{\alpha\in \Lambda}
$
over $(M,E)$, where
\begin{equation}\label{eq:normalized atlas}
\mathcal V_\alpha=(M_\alpha,E_\alpha,N_\alpha,\mathcal L_\alpha), \quad
\mathcal L_\alpha=\{(I_{\alpha,j},d_{\alpha j})\}_{j=1}^{k_\alpha}.
\end{equation}
We say that $\mathcal P$ is {\em $d$-normalized} if   $d=d_{\alpha j}$, for any $\alpha\in \Lambda$, $j=1,2,\ldots,k_\alpha$.

\begin{remark} Any immersed idealistic $e$-atlas over $(M,E)$ is equivalent to a normalized one, see Subsection
\ref{ingl Ejemplos de espacios idealisticos equivalentes}.
\end{remark}

Assume now that $\mathcal P$ is $d$-normalized. A  {\em logarithmic factor $Z$ for $\mathcal P$} is a logarithmic sheaf $Z$ for $(M,E)$
such that there is a factorization
$$
I_{\alpha,j}=Z\vert_{N_\alpha}\cdot J_{\alpha,j}
$$
of principal ideal sheaves on $N_\alpha$, for any $\alpha\in \Lambda$ and $j=1,2,\ldots,k_\alpha$.

Let us note that there is at least one logarithmic factor given by the ideal sheaf $Z=\mathcal O_{M}$, which has empty support.

\begin{definition}
Let $\mathcal P$ be a $d$-normalized immersed idealistic $e$-atlas over $(M,E)$ and consider a logarithmic factor $Z$ for $\mathcal P$. The {\em co-factorial order $\mu_Z\mathcal P$} is given by
$$
\mu_Z\mathcal P=\max\{d\delta_P\mathcal P-\nu_PZ;\;  P\in \operatorname{Sing}\mathcal P\}.
$$
When $\operatorname{Sing}\mathcal P=\emptyset$, we put $\mu_Z\mathcal P=-\infty$.
\end{definition}

For any $P\in N_\alpha$, we have that $\nu_P(Z\vert_{N_\alpha})=\nu_PZ$, since $N_\alpha$ has normal crossings with $E$ and it is not locally contained in the support of $Z$. In particular, we have that
\begin{equation}
\label{eq:muuve}
d\delta_P\mathcal P-\nu_PZ=d\delta_P\mathcal V_\alpha-\nu_P(Z\vert_{N_\alpha}),
\end{equation}
when $P\in \operatorname{Sing}\mathcal P$. We conclude that $\mu_Z\mathcal P\in \mathbb Z_{\geq 0}\cup\{-\infty\}$. The condition $\mu_{Z}\mathcal P=-\infty$ means exactly that $\operatorname{Sing}\mathcal P=\emptyset$.

\begin{proposition}
\label{prop: ingl cofactorcero} Let $\mathcal P$ be a $d$-normalized immersed idealistic $e$-atlas over $(M,E)$. Assume that $Z$ is a logarithmic factor for $\mathcal P$ such that
$\mu_Z\mathcal P=0$.  Then $\mathcal P$ is monomial.
\end{proposition}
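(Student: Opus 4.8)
The plan is to exhibit explicitly the logarithmic idealistic space that witnesses that $\mathcal P$ is monomial, namely $\mathcal Z=(M,E,\{(Z,d)\})$, built from the given logarithmic factor $Z$ and the normalization multiplicity $d$. By Definition \ref{def: monomial flower} it suffices to prove that $\mathcal V_\alpha$ and $\mathcal Z\vert_{N_\alpha}$ are equivalent for every $\alpha\in\Lambda$. Since these two immersed idealistic $e$-spaces share the same ambient subspace $N_\alpha$, the Remark comparing equivalence of immersed spaces supported on a common $N$ reduces the problem to the equivalence of the (non-immersed) idealistic spaces $(N_\alpha,E\vert_{N_\alpha},\mathcal L_\alpha)$ and $(N_\alpha,E\vert_{N_\alpha},\{(Z',d)\})$, where I abbreviate $Z'=Z\vert_{N_\alpha}$ and $\mathcal L_\alpha=\{(I_{\alpha,j},d)\}_{j=1}^{k_\alpha}$ with $I_{\alpha,j}=Z'\cdot J_{\alpha,j}$.

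First I would unwind the hypothesis $\mu_Z\mathcal P=0$. From the factorization one gets $\nu_P I_{\alpha,j}=\nu_P Z'+\nu_P J_{\alpha,j}\geq \nu_P Z'$, and since $\mathcal P$ is $d$-normalized, $d\,\delta_P\mathcal P=\min_j\nu_P I_{\alpha,j}$; hence, using \eqref{eq:muuve}, the quantity $d\,\delta_P\mathcal P-\nu_P Z'$ is nonnegative at every singular point, so $\mu_Z\mathcal P=0$ forces it to vanish on all of $\operatorname{Sing}\mathcal P$. Concretely, at each $P\in\operatorname{Sing}\mathcal L_\alpha$ some cofactor $J_{\alpha,j}$ is a unit, and then $\nu_P Z'=\nu_P I_{\alpha,j}\geq d$; in particular $\operatorname{Sing}\mathcal L_\alpha\subseteq\operatorname{Sing}(Z',d)$.

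The comparison of permissible test systems I would route through the intermediate list $\mathcal L_\alpha^{+}=\{(Z',d)\}\cup\mathcal L_\alpha$. The equivalence of $\{(Z',d)\}$ with $\mathcal L_\alpha^{+}$ is immediate from the ``Redundant marked ideals'' example, since $I_{\alpha,j}\subseteq Z'$ and all multiplicities equal $d$; so by transitivity it only remains to prove that $\mathcal L_\alpha$ is equivalent to $\mathcal L_\alpha^{+}$. One inclusion of permissible test systems is purely formal: transforms are taken term by term, so $(\mathcal L_\alpha)_\ell\subseteq(\mathcal L_\alpha^{+})_\ell$ at every stage $\ell$, whence $\operatorname{Sing}(\mathcal L_\alpha^{+})_\ell\subseteq\operatorname{Sing}(\mathcal L_\alpha)_\ell$ and permissibility for the larger list implies permissibility for $\mathcal L_\alpha$.

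The reverse inclusion is the heart of the matter and the step I expect to be the main obstacle: I must show that the cofactorial condition propagates along any $\mathcal L_\alpha$-permissible test system, so that $\operatorname{Sing}(\mathcal L_\alpha)_\ell\subseteq\operatorname{Sing}(Z',d)_\ell$ at every stage, which forces equality of singular loci and hence of permissible centers throughout. For a permissible blowing-up $\pi$ with centre $Y\subseteq\operatorname{Sing}\mathcal L_\alpha\subseteq\operatorname{Sing}(Z',d)$, I would compute the controlled transforms and obtain $I_{\alpha,j}'=(Z')'\cdot\pi^{-1}J_{\alpha,j}$, with $(Z')'$ again a genuine logarithmic sheaf for the new divisor (this uses $\nu_Y Z'\geq d$). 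Now $\pi(\operatorname{Sing}\mathcal L_\alpha')\subseteq\operatorname{Sing}\mathcal L_\alpha$ because the centre is permissible and $\pi$ is an isomorphism away from $Y$; and the total transform of a unit is a unit, so over each singular point of $\mathcal L_\alpha'$ some $\pi^{-1}J_{\alpha,j}$ is still a unit. Combined with the automatic inequality $d\,\delta_{P'}\mathcal L_\alpha'\geq\nu_{P'}(Z')'$ coming from the factorization, this shows the cofactorial condition is preserved, hence $\operatorname{Sing}\mathcal L_\alpha'\subseteq\operatorname{Sing}(Z',d)'$; the analogous, easier check handles projections over the first factor. Iterating along the test system completes the missing inclusion, yielding the equivalence $\mathcal L_\alpha\sim\{(Z',d)\}$ and therefore the monomiality of $\mathcal P$.
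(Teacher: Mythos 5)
Your proof is correct, but it takes a genuinely different route from the paper's. Both arguments begin identically: exhibit $\mathcal Z=(M,E,\{(Z,d)\})$, reduce to the chartwise equivalence $\mathcal V_\alpha\sim\mathcal Z\vert_{N_\alpha}$, and exploit that $\mu_Z\mathcal P=0$ forces some cofactor $J_{\alpha,j_0}$ to be a unit at every singular point. The divergence is in how that equivalence is proved. The paper localizes: by the local character of equivalence (Proposition \ref{prop:ingl caracterlocalequivalencia}) it suffices to compare germs at each $P\in N_\alpha$; at non-singular points both spaces are locally non-singular, and at singular points the unit cofactor gives the literal equality of germs $(I_{\alpha,j_0})_P=(Z\vert_{N_\alpha})_P$, so the redundant-ideals example of Subsection \ref{ingl Ejemplos de espacios idealisticos equivalentes} yields both inclusions of permissible test systems in one stroke. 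You instead stay global: you insert the augmented list $\mathcal L_\alpha^{+}=\{(Z\vert_{N_\alpha},d)\}\cup\mathcal L_\alpha$, get $\{(Z\vert_{N_\alpha},d)\}\sim\mathcal L_\alpha^{+}$ from the redundant-ideals example, obtain one inclusion formally, and prove the hard inclusion by induction along an arbitrary $\mathcal L_\alpha$-permissible test system, propagating the factorization with a still-logarithmic controlled transform of $Z\vert_{N_\alpha}$ (which indeed needs $\nu_Y(Z\vert_{N_\alpha})\geq d$, as you note) together with the unit cofactor, via $\pi\left(\operatorname{Sing}\mathcal L_\alpha'\right)\subset\operatorname{Sing}\mathcal L_\alpha$. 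All of these steps are sound. What the paper's route buys is brevity: localization turns the statement into a pointwise triviality. What your route buys is independence from the localization proposition, at the price of an explicit stability computation under blowing-ups; that computation is essentially the commutativity the paper develops separately for the adjustment $\mathcal P^{Z,m}$ (Remark \ref{rk:commutativityexplosiones}), with your total-transform cofactors playing the role of the controlled transforms there, so your argument anticipates machinery the paper only sets up in the following subsection.
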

\begin{proof} Denote $\mathcal Z=(M,E,\{(Z,d)\})$ and for any $\alpha\in \Lambda$, denote
		$$
	\mathcal Z\vert_{N_\alpha}=
	(M_\alpha,E_\alpha,N_\alpha,\{(Z\vert_{N_\alpha},d)\}).
	$$
By Remark \ref{rk: pzeta},
it is enough to prove that the family
$
\mathcal P_\mathcal Z=
\{\mathcal Z\vert_{N_\alpha}\}_{\alpha\in \Lambda}
$
is an immersed idealistic $e$-atlas that is equivalent to $\mathcal P$.
	
Consider an immersed idealistic $e$-chart
	$$\mathcal V_\alpha=(M_\alpha,E_\alpha,N_\alpha,\mathcal L_{\alpha}=\{(I_{\alpha,j},d)\}_{j=1}^{k_\alpha})
	$$
	belonging to  $\mathcal P$. Let us prove that $\mathcal V_\alpha$ is equivalent to $\mathcal Z\vert_{N_\alpha}$.
	By the local  character of the equivalence, it is enough to do it locally at any point $P\in N_\alpha$ (see Proposition
	\ref{prop:ingl caracterlocalequivalencia}).
	
	Assume  that $P\notin \operatorname{Sing}\mathcal V_\alpha$.
There is a $j_0$ such that
	$$
	d>\nu_P(I_{\alpha,j_0})=\nu_PZ+\nu_PJ_{\alpha,j_0}\geq \nu_PZ.
	$$
 Then $P\notin \operatorname{Sing}\mathcal Z\vert_{N_\alpha}$ and  we are done.

 Assume that $P\in \operatorname{Sing}\mathcal V_\alpha$. Since $\mu_Z\mathcal P=0$,
 we have the equality of germs
	$
	(I_{\alpha,j_0})_P=(Z\vert_{N_\alpha})_P
	$, for an index $j_0$.
	This implies that, locally at $P$, the immersed idealistic spaces $\mathcal V_\alpha$ and
	$
	\mathcal Z\vert_{N_\alpha}
	$
	are equivalent, see the examples with redundant ideals in Subsection \ref{ingl Ejemplos de espacios idealisticos equivalentes}.
\end{proof}

\begin{corollary} If $\mu_Z\mathcal P=0$, then $\mathcal P$ has a reduction of singularities.
\end{corollary}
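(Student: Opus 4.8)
The plan is to chain together the two results that immediately precede this statement, so the argument is very short. First I would invoke Proposition \ref{prop: ingl cofactorcero}: by hypothesis $\mu_Z\mathcal P=0$ for the logarithmic factor $Z$, and that proposition concludes directly that the $d$-normalized immersed idealistic $e$-atlas $\mathcal P$ is monomial.

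Next I would pass from the atlas to the idealistic $e$-flower it determines. By Definition \ref{def: monomial flower}, an idealistic $e$-flower is monomial precisely when it contains a monomial immersed idealistic $e$-atlas; hence the flower $\mathcal F$ represented by $\mathcal P$ is monomial. Applying the preceding Corollary on the reduction of singularities of monomial idealistic $e$-flowers (whose proof rests on the combinatorial Proposition \ref{prop:redsinglogspaces}), we obtain a reduction of singularities for $\mathcal F$, realized by a finite composition of permissible blowing-ups.

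Finally I would transfer this reduction of singularities back to $\mathcal P$ itself. Since $\mathcal P$ is one of the atlases defining $\mathcal F$, and both the singular locus and the transforms are invariant under the equivalence of atlases, the same sequence of permissible blowing-ups that desingularizes $\mathcal F$ desingularizes $\mathcal P$. This is exactly the principle stated in the section on Reduction of Singularities, that a reduction of singularities of the associated $e$-flower induces one for the underlying object in its own class.

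There is essentially no obstacle here: all the substantive work has already been done in Proposition \ref{prop: ingl cofactorcero} and in the combinatorial desingularization of logarithmic idealistic spaces. The only point requiring a little care is the bookkeeping of the last step, namely checking that a monomial atlas genuinely yields a monomial flower and that desingularizing the flower coincides with desingularizing $\mathcal P$; but this is purely definitional and follows from the compatibility of the transforms with the equivalence relation established earlier.
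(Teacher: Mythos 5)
Your proposal is correct and follows exactly the route the paper intends: Proposition \ref{prop: ingl cofactorcero} makes $\mathcal P$ monomial, hence the idealistic $e$-flower it defines is monomial by Definition \ref{def: monomial flower}, and the corollary on reduction of singularities for monomial $e$-flowers (via Proposition \ref{prop:redsinglogspaces}) together with the general principle that desingularizing $\mathcal F_{\mathcal O}$ desingularizes $\mathcal O$ in its own class finishes the argument. This is precisely why the paper states the corollary without proof.
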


\subsection{Adjustment by Logarithmic Factors}
Consider a $d$-normalized immersed idealistic $e$-atlas $\mathcal P=
\{
\mathcal V_\alpha
\}_{\alpha\in \Lambda}
$
over $(M,E)$ as in Equation \eqref{eq:normalized atlas} and take a logarithmic factor   $Z$ for $\mathcal P$.
Take an integer number $m\geq 1$ with $m\geq\mu_Z\mathcal P$ and denote by
$\mathcal P^{Z,m}=\{ \mathcal V_\alpha^{Z,m}\}_{\alpha\in \Lambda}$ the family of immersed idealistic $e$-charts
$$
\mathcal V_\alpha^{Z,m}=(M_\alpha,E_\alpha,N_\alpha,\mathcal L_\alpha^{Z,m}), \quad \mathcal L_\alpha^{Z,m}=\mathcal L_\alpha\cup \{(J_{\alpha,j},m)\}_{j=1}^{k_\alpha}.
$$
Recall that $I_{\alpha,j}=Z\vert_{N_\alpha}J_{\alpha,j}$.
\begin{remark}
\label{rk:muzetalugarsingular}	
We have that
$\operatorname{Sing}(\mathcal V_\alpha^{Z,m})\subset \operatorname{Sing}(\mathcal V_\alpha)$.
The property
$$m>\mu_{Z}\mathcal P$$ holds if and only if  $\operatorname{Sing}(\mathcal V_\alpha^{Z,m})=\emptyset$, for any $\alpha\in \Lambda$. In particular, if  $m=\mu_Z\mathcal P$, there is an index $\alpha\in \Lambda$ such that  $\operatorname{Sing}(\mathcal V_\alpha^{Z,m})\ne\emptyset$.
\end{remark}

 The objective of this subsection is to prove the following result:
\begin{proposition}
\label{prop:ingl elajusteesajustado}
The family
$\mathcal P^{Z,m}=\{\mathcal V_{\alpha}^{ Z,m}\}_{\alpha\in \Lambda}$ is an adjusted immersed idealistic $e$-atlas over $(M,E)$.
\end{proposition}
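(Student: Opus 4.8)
The plan is to establish separately the two assertions hidden in the statement: first that $\mathcal P^{Z,m}$ satisfies the compatibility condition, so that it is genuinely an immersed idealistic $e$-atlas, and second that it is adjusted. Each chart $\mathcal V_\alpha^{Z,m}$ is visibly an $e$-dimensional immersed idealistic space over $(M_\alpha,E_\alpha)$, with the same transverse subspace $N_\alpha$ and a non-empty list (it still contains all the $(I_{\alpha,j},d)$), so only the compatibility on overlaps and the control of the order along the singular locus remain to be verified.

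For the adjustedness I would argue directly with the order, chart by chart. Fix $\alpha$ and a point $P\in\operatorname{Sing}\mathcal V_\alpha^{Z,m}$. Since adding marked ideals can only shrink the singular locus, we have $P\in\operatorname{Sing}\mathcal V_\alpha$ and hence $\delta_P\mathcal V_\alpha\geq 1$. Using the factorization $I_{\alpha,j}=Z\vert_{N_\alpha}J_{\alpha,j}$ together with $\nu_P(Z\vert_{N_\alpha})=\nu_PZ$ (transversality, as recorded in \eqref{eq:muuve}), one finds $\min_j\nu_PJ_{\alpha,j}=d\delta_P\mathcal V_\alpha-\nu_PZ$. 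Now $\delta_P\mathcal P=\delta_P\mathcal V_\alpha$, and the definition of the co-factorial order together with $m\geq\mu_Z\mathcal P$ yields $\min_j\nu_PJ_{\alpha,j}\leq\mu_Z\mathcal P\leq m$, so that $\min_j(\nu_PJ_{\alpha,j}/m)\leq 1$. Since $\delta_P\mathcal V_\alpha^{Z,m}=\min\{\delta_P\mathcal V_\alpha,\ \min_j(\nu_PJ_{\alpha,j}/m)\}$ is $\geq 1$ (the point being singular) and $\leq 1$ by the previous inequality, it equals $1$, which is exactly the adjusted condition; this also recovers the dichotomy of Remark \ref{rk:muzetalugarsingular}.

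The compatibility is the delicate point. By the local character of the equivalence (the immersed analogue of Proposition \ref{prop:ingl caracterlocalequivalencia}), it suffices to check that for every $P\in M_{\alpha\beta}$ the germs of $\mathcal V_\alpha^{Z,m}$ and $\mathcal V_\beta^{Z,m}$ at $P$ are equivalent. This reduces the problem to a \emph{co-factor lemma}: whenever two $d$-normalized immersed idealistic $e$-spaces $\mathcal V$ and $\mathcal V'$ are equivalent and $Z$ is a common logarithmic factor, then $\mathcal V^{Z,m}$ and $\mathcal V'^{Z,m}$ are equivalent. I would prove this by passing to the idealistic-exponent description (Proposition \ref{porp:ingl apilamientosenterminosdeexponentes}) and checking that the passage $\mathcal V\mapsto\mathcal V^{Z,m}$ is an \emph{intrinsic} operation: dividing each marked ideal by the globally prescribed sheaf $Z$ and re-marking the quotients with the fixed multiplicity $m$ depends only on the idealistic data and on $Z$, not on the chosen representative, hence it is compatible with the equivalence. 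The redundant-ideal computation of Subsection \ref{ingl Ejemplos de espacios idealisticos equivalentes} and the change-of-codimension property of Subsection \ref{subsec: change of codimension} are the tools that let one compare the two sides.

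The main obstacle is precisely this last comparison: two equivalent immersed idealistic $e$-spaces need \emph{not} share the same transverse subspace, so $J_{\alpha,j}$ and $J_{\beta,j}$ live on different ambient subspaces $N_\alpha$ and $N_\beta$ and cannot be matched directly. Overcoming it requires the Maximal Contact machinery that identifies the two idealistic exponents through the projection realizing the equivalence, together with the observation that division by the transverse global factor $Z$ commutes with that identification. Once the co-factor lemma is secured, compatibility holds on each overlap $M_{\alpha\beta}$ and, combined with the order computation above, shows that $\mathcal P^{Z,m}$ is an adjusted immersed idealistic $e$-atlas over $(M,E)$.
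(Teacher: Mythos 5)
Your first half is fine: the chart-by-chart order computation showing $\delta_P\mathcal V_\alpha^{Z,m}=1$ at every singular point is correct, and it honestly fills in what the paper dispatches with ``comes from the definition of $\mu_Z\mathcal P$''. The genuine gap is in the compatibility half, which is the real content of the proposition. Your ``co-factor lemma'' (if $\mathcal V$ and $\mathcal V'$ are equivalent with common logarithmic factor $Z$, then $\mathcal V^{Z,m}$ and $\mathcal V'^{Z,m}$ are equivalent) is exactly the statement that has to be proved, and the argument you offer for it is circular: the operation $\mathcal V\mapsto\mathcal V^{Z,m}$ is defined on a representative (a list of marked ideals living on a chosen transverse subspace), and declaring it ``intrinsic, depending only on the idealistic data and on $Z$, not on the chosen representative'' is a restatement of the desired conclusion, not a proof. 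Passing to idealistic exponents via Proposition \ref{porp:ingl apilamientosenterminosdeexponentes} cannot help, because an exponent is by definition an equivalence class and the question is precisely whether the operation descends to classes. The appeal to ``the Maximal Contact machinery that identifies the two idealistic exponents through the projection realizing the equivalence'' is also not available: equivalence of immersed idealistic spaces is defined by equality of permissible test systems and is not realized by any projection; moreover, the projection machinery of Part \ref{Projections of Idealistic Exponents} is built only for \emph{adjusted and reduced} objects, whereas the charts $\mathcal V_\alpha$ here are arbitrary $d$-normalized ones, so invoking it at this stage is both unjustified and out of order in the logical structure of the paper.

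What actually closes the gap --- and this is the paper's route --- is to exploit the step-by-step structure of permissible test systems instead of any pointwise identification of the lists. First one proves (Lemma \ref{lema: singularlocus factor}) that
$$
M_{\alpha\beta}\cap\operatorname{Sing}(\mathcal V_\alpha^{Z,m})=M_{\alpha\beta}\cap\operatorname{Sing}(\mathcal V_\beta^{Z,m}),
$$
using only the invariance of the order under equivalence in fixed dimension $e$ (Proposition \ref{prop:ingl ordensumergido}) together with the numerical observation that $d\delta_P\mathcal V_\alpha-\nu_PZ=m$ at any point $P\in\operatorname{Sing}(\mathcal V_\alpha^{Z,m})$; one never compares $J_{\alpha,j}$ with $J_{\beta,j}$, so the obstacle you identified ($N_\alpha\neq N_\beta$) simply does not arise. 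Second, this situation is stable under one permissible transformation: a center $Y\subset S^{Z,m}$ is simultaneously permissible for both charts (it lies in $\operatorname{Sing}\mathcal V_\alpha=\operatorname{Sing}\mathcal V_\beta$), and for the blowing-up $\pi$ with center $Y$ one has the commutativity $(\mathcal V^{Z,m})'=(\mathcal V')^{Z',m}$ with the transformed logarithmic factor $Z'=\mathcal J_{D'}^{m-d}\pi^{-1}Z$, where $D'=\pi^{-1}(Y)$ (and trivially for open projections). Since $\mathcal V'_\alpha$ and $\mathcal V'_\beta$ are again equivalent with common logarithmic factor $Z'$, induction on the length of a test system shows that $\mathcal V_\alpha^{Z,m}$ and $\mathcal V_\beta^{Z,m}$ admit exactly the same permissible test systems, i.e.\ they are equivalent on $M_{\alpha\beta}$. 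Without this same-singular-locus-plus-commutativity induction, your proposal does not establish the compatibility condition.
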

The fact that $\mathcal P^{Z,m}$ is adjusted comes from the definition of $\mu_Z\mathcal P$. It remains to show that $\mathcal P^{Z,m}$ is an immersed idealistic $e$-atlas over $(M,E)$.

Recall the notation $M_{\alpha\beta}=M_\alpha\cap M_\beta$ and  denote $S_\alpha^{Z,m}=\operatorname{Sing}(\mathcal V_\alpha^{Z,m})$, for any pair of indices $\alpha,\beta\in\Lambda$.

\begin{lemma}
	\label{lema: singularlocus factor}
	$
	M_{\alpha\beta}\cap S_\alpha^{Z,m}=
	M_{\alpha\beta}\cap S_\beta^{Z,m}
	$.
\end{lemma}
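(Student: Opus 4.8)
The plan is to describe the two singular loci $S_\alpha^{Z,m}$ and $S_\beta^{Z,m}$ pointwise and to check that, on the overlap $M_{\alpha\beta}$, their defining conditions are intrinsic to the common equivalence class. First I would unwind the definition of the singular locus. Since $\mathcal L_\alpha^{Z,m}=\mathcal L_\alpha\cup\{(J_{\alpha,j},m)\}_{j=1}^{k_\alpha}$ and the singular locus of an idealistic space is the intersection of the singular loci of its marked ideals, a point $P\in N_\alpha$ lies in $S_\alpha^{Z,m}$ if and only if $P\in S_\alpha=\operatorname{Sing}\mathcal V_\alpha$ and $\nu_P J_{\alpha,j}\geq m$ for every $j$. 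Using the factorization $I_{\alpha,j}=Z\vert_{N_\alpha}J_{\alpha,j}$ together with the equality $\nu_P(Z\vert_{N_\alpha})=\nu_PZ$ recalled before Equation \eqref{eq:muuve}, the condition $\nu_P J_{\alpha,j}\geq m$ for all $j$ reads $\min_j\nu_P I_{\alpha,j}-\nu_P Z\geq m$, that is $d\,\delta_P\mathcal V_\alpha-\nu_P Z\geq m$, because $\mathcal P$ is $d$-normalized and hence $\delta_P\mathcal V_\alpha=\min_j\nu_PI_{\alpha,j}/d$. Thus
\[
P\in S_\alpha^{Z,m}\iff P\in S_\alpha\ \text{ and }\ d\,\delta_P\mathcal V_\alpha-\nu_P Z\geq m,
\]
and symmetrically for $\beta$.

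Next I would restrict to $P\in M_{\alpha\beta}$ and use the compatibility of the atlas $\mathcal P$. By definition, $\mathcal V_{\alpha\beta}=\mathcal V_\alpha\vert_{M_{\alpha\beta}}$ and $\mathcal V_{\beta\alpha}=\mathcal V_\beta\vert_{M_{\alpha\beta}}$ are equivalent immersed idealistic $e$-spaces, so in particular they share the same singular locus; thus $S_\alpha\cap M_{\alpha\beta}=S_\beta\cap M_{\alpha\beta}$. For $P$ in this common singular locus, both $\delta_P\mathcal V_\alpha$ and $\delta_P\mathcal V_\beta$ are defined, and here is the essential point: since $\mathcal V_{\alpha\beta}$ and $\mathcal V_{\beta\alpha}$ have the \emph{same} dimension $e$, Proposition \ref{prop:ingl ordensumergido} applies and gives $\delta_P\mathcal V_\alpha=\delta_P\mathcal V_\beta$. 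Moreover $Z$ is a logarithmic sheaf globally defined on $M$, so $\nu_P Z$ does not depend on the chart. Combining these facts, the inequality $d\,\delta_P\mathcal V_\alpha-\nu_P Z\geq m$ holds if and only if $d\,\delta_P\mathcal V_\beta-\nu_P Z\geq m$, and therefore $P\in S_\alpha^{Z,m}\iff P\in S_\beta^{Z,m}$ for every $P\in M_{\alpha\beta}$. Points of $M_{\alpha\beta}$ lying outside the common singular locus belong to neither set, so the equality of the two traces follows.

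The step I expect to be delicate is precisely the invariance of the order under equivalence, which is why Proposition \ref{prop:ingl ordensumergido} must be invoked and not merely the coincidence of the singular loci. This invariance fails for equivalent immersed idealistic spaces of different dimensions, as the Remark following that Proposition shows, so it is crucial that we are working inside an $e$-atlas, where all the charts share the common dimension $e$; this is exactly what licenses the transfer of the condition $d\,\delta_P\mathcal V_\alpha-\nu_P Z\geq m$ from the chart $\alpha$ to the chart $\beta$. Everything else reduces to pointwise bookkeeping with multiplicities and to the fact that the logarithmic factor $Z$ is one and the same object over all the charts.
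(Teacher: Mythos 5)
Your proof is correct and follows essentially the same route as the paper: both arguments reduce the claim to the factorization $\nu_P J_{\cdot,j}=\nu_P I_{\cdot,j}-\nu_P Z$, the coincidence of singular loci of equivalent charts, and—crucially—the invariance of the order under equivalence for immersed idealistic spaces of the \emph{same} dimension (Proposition \ref{prop:ingl ordensumergido}), which gives $\delta_P\mathcal V_\alpha=\delta_P\mathcal V_\beta$ on the overlap. The only cosmetic difference is that the paper records the equality $d\delta_P\mathcal V_\alpha-\nu_PZ=m$ (using $m\geq\mu_Z\mathcal P$) where you keep just the inequality $\geq m$, which is all that is actually needed.
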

\begin{proof} Consider a point $P\in 	M_{\alpha\beta}\cap S_\alpha^{Z,m}$. We have to show that
$$
\delta_P(\mathcal V_\beta)\geq 1,\quad \nu_P(J_{\beta,j})\geq m,\, j=1,2, \ldots,k_\beta.	
$$
Since $\delta_P(\mathcal V_\alpha)\geq 1$ and $\delta_P(\mathcal V_\beta)=\delta_P(\mathcal V_\alpha)$, then
$\delta_P(\mathcal V_\beta)\geq 1$.

On the other hand, we have that
 $$
d\delta_P\mathcal V_\alpha-\nu_PZ= m.
$$
Indeed, if $
d\delta_P\mathcal V_\alpha-\nu_PZ< m
$ we get $P\notin S_\alpha^{Z,m}$, which is not possible.
Now,  for any $j=1,2,\ldots,k_\beta$, we have
$$
\nu_P(J_{\beta,j})= \nu_P(I_{\beta,j})-\nu_PZ\geq
d\delta_P\mathcal V_\beta-\nu_PZ=
d\delta_P\mathcal V_\alpha-\nu_PZ= m.
$$
We conclude that $P\in S_\beta^{Z,m}$, as desired.
\end{proof}
In order to prove Proposition \ref{prop:ingl elajusteesajustado}, up to restrict ourselves to $M_{\alpha\beta}$,  it is enough to consider the particular case when
$$
\Lambda=\{\alpha,\beta\},\quad (M,E)=(M_\alpha,E_\alpha)=(M_\beta,E_\beta).
$$
Let us prove that $\mathcal V_\alpha^{Z,m}$ and $\mathcal V_\beta^{Z,m}$ are equivalent.
In view of Lemma 	\ref{lema: singularlocus factor}, we have that
$$
S^{Z,m}=
 \operatorname{Sing}(\mathcal V_\alpha^{Z,m})= \operatorname{Sing}(\mathcal V_\beta^{Z,m}).
$$
Now, it is enough to show that this situation repeats under open projections and permissible blowing-ups.  More precisely, let
$$
\sigma: (M',E')\rightarrow (M,E)
$$
be either an open projection or a blowing-up with a center $Y\subset S^{Z,m}$ that is a  non-singular closed analytic subspace of $M$  having normal crossings with $E$. We have to find a logarithmic factor $Z'$ for the transform  $\mathcal P'$ such that $\mu_{Z'}\mathcal P'\leq m$ and the following commutativity property holds:
\begin{equation}
	\label{eq:commuttivityblowingup}
(\mathcal V_\alpha^{Z,m})'=
(\mathcal V_\alpha')^{Z',m},\quad
(\mathcal V_\beta^{Z,m})'= (\mathcal V_\beta')^{Z',m}.
\end{equation}
Since $\mathcal V_\alpha'$ and $\mathcal V'_\beta$ are equivalent, the situation repeats, as desired.

The construction of $Z'$ in the case of an open projection is straightforward. In the case of a blowing-up, we take
$$
Z'=\mathcal J_{D'}^{m-d}\pi^{-1}Z, \quad D'=\pi^{-1}(Y).
$$
 This ends the proof of Proposition \ref{prop:ingl elajusteesajustado}.
\begin{remark}
\label{rk:commutativityexplosiones} The commutativity expressed in Equation  \eqref{eq:commuttivityblowingup} extends to $d$-normalized immersed idealistic $e$-atlas, in the sense that we have
\begin{equation}
	\label{eq:commuttivityblowingupatlas}
	(\mathcal P^{Z,m})'= (\mathcal P')^{Z',m},
\end{equation}
for the transforms under an open projection or a blowing-up with center that is permissible for $\mathcal P^{Z,m}$.
\end{remark}
\subsection{Reduction to the Adjusted Case} In this subsection, we give a proof of the statement corresponding to Equation
\eqref{eq:ingl reduccion al caso ajustado}.  We state the result as follows:
\begin{proposition}
	Let $\mathcal F$ be an idealistic $e$-flower over the ambient space $(M,E)$. Assume that the following statements are true:
	\begin{itemize}
		\item[a)] The monomial idealistic $e$-flowers have reduction of singularities.
		\item[b)] The adjusted idealistic $e$-flowers have reduction of singularities.
	\end{itemize}
Then $\mathcal F$ has reduction of singularities.
	\end{proposition}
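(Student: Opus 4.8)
The plan is to deduce the result from the two hypotheses by an induction on the co-factorial order. First I would fix a convenient representative: since reduction of singularities depends only on the equivalence class, I replace $\mathcal F$ by a $d$-normalized immersed idealistic $e$-atlas $\mathcal P=\{\mathcal V_\alpha\}_{\alpha\in\Lambda}$ (possible after normalization) and choose a logarithmic factor $Z$ for $\mathcal P$, for instance $Z=\mathcal O_M$, which always exists. I then set $m=\mu_Z\mathcal P\in\mathbb Z_{\geq0}\cup\{-\infty\}$, and the whole argument is an induction on the nonnegative integer $m$, ranging over all pairs $(\mathcal P,Z)$ formed by a $d$-normalized atlas together with a logarithmic factor for it.

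The two base cases absorb the two hypotheses directly. If $m=-\infty$, then $\operatorname{Sing}\mathcal P=\emptyset$ and there is nothing to prove. If $m=0$, then by Proposition \ref{prop: ingl cofactorcero} the atlas $\mathcal P$ is monomial, and hypothesis (a) furnishes a reduction of singularities.

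For the inductive step I assume $m\geq1$ and form the adjusted atlas $\mathcal P^{Z,m}$, which is an adjusted immersed idealistic $e$-atlas by Proposition \ref{prop:ingl elajusteesajustado}. Hypothesis (b) then provides a finite sequence $\pi$ of permissible blowing-ups reducing the singularities of $\mathcal P^{Z,m}$. The crucial observation is that this same $\pi$ is permissible for $\mathcal P$ itself: at each stage the center lies in the singular locus of the current transform of $\mathcal P^{Z,m}$, which by Remark \ref{rk:muzetalugarsingular} is contained in the singular locus of the corresponding transform of $\mathcal P$, while by the commutativity $(\mathcal P^{Z,m})'=(\mathcal P')^{Z',m}$ of Remark \ref{rk:commutativityexplosiones} the two transforms evolve coherently. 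Writing $\mathcal P'$ and $Z'$ for the transforms of $\mathcal P$ and $Z$ under $\pi$, the vanishing $\operatorname{Sing}\big((\mathcal P')^{Z',m}\big)=\operatorname{Sing}\big((\mathcal P^{Z,m})'\big)=\emptyset$ forces, again by Remark \ref{rk:muzetalugarsingular}, the strict inequality $\mu_{Z'}\mathcal P'<m$.

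Finally, I would apply the induction hypothesis to the pair $(\mathcal P',Z')$, whose co-factorial order is at most $m-1$, obtaining a reduction of singularities of $\mathcal P'$; composing it with $\pi$ yields one for $\mathcal P$, hence for $\mathcal F$. The only place where anything beyond bookkeeping happens, and thus the main point to justify carefully, is the permissibility transfer together with the commutativity $(\mathcal P^{Z,m})'=(\mathcal P')^{Z',m}$: these are exactly what guarantee both that a reduction obtained for the adjusted atlas is a legitimate sequence of permissible blowing-ups for $\mathcal P$ and that it strictly lowers $\mu_Z\mathcal P$, so that the induction necessarily terminates at one of the two base cases.
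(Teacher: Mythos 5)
Your proposal is correct and follows essentially the same argument as the paper: fix a $d$-normalized atlas $\mathcal P$ in $\mathcal F$ with a logarithmic factor $Z$, induct on $\mu_Z\mathcal P$, settle the base case via Proposition \ref{prop: ingl cofactorcero} and hypothesis (a), and in the inductive step apply hypothesis (b) to the adjusted atlas $\mathcal P^{Z,m}$, transferring the blowing-up sequence back to $\mathcal P$ via $\operatorname{Sing}(\mathcal P^{Z,m})\subset\operatorname{Sing}(\mathcal P)$ and the commutativity $(\mathcal P^{Z,m})'=(\mathcal P')^{Z',m}$ of Remark \ref{rk:commutativityexplosiones}, which forces $\mu_{Z'}\mathcal P'<m$. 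Your added care (the $m=-\infty$ case, the existence of $Z=\mathcal O_M$, and viewing the induction as ranging over pairs $(\mathcal P,Z)$) only makes explicit what the paper leaves implicit.
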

\begin{proof}
Take a $d$-normalized immersed idealistic $e$-atlas $\mathcal P$ belonging to $\mathcal F$. Let us see that $\mathcal P$ has reduction of singularities.
Fix a logarithmic factor $Z$ for $\mathcal P$. We do induction on the co-factorial order $\mu_Z\mathcal P$. If $\mu_Z\mathcal P=0$, we are done, since $\mathcal P$ is monomial by  Proposition \ref{prop: ingl cofactorcero}.

Assume that $\mu_Z\mathcal P=m\geq 1$. We know that $\mathcal P^{Z,m}$ is an adjusted immersed idealistic $e$-atlas over $(M,E)$ such that
$$
\emptyset\ne \operatorname{Sing}(\mathcal P^{Z,m})\subset \operatorname{Sing}(\mathcal P).
$$
The permissible centers for $\mathcal P^{Z,m}$ are also permissible for $\mathcal P$. In view of our hypothesis, there is a sequence $\{\pi_i\}_{i=1}^p$ of permissible blowing-ups for $\mathcal P^{Z,m}$,
 such that
$$
\operatorname{Sing}\left((\mathcal P^{Z,m})^{(p)}\right)=\emptyset.
$$
By the commutativity in Remark \ref{rk:commutativityexplosiones}, the centers of $\pi_i$ are permissible for the successive transforms of $\mathcal P$ and we have that
$$
(\mathcal P^{Z,m})^{(p)}=
(\mathcal P^{(p)})^{Z^{(p)},m}.
$$
Hence  $\operatorname{Sing}(\mathcal P^{(p)})^{Z^{(p)},m}=\emptyset$ and thus $\mu_{Z^{(p)}}\mathcal P^{(p)}<m$. We end by the induction hypothesis.
\end{proof}

\section{Reduction to the Adjusted-Reduced Case}
Here we provide a proof of the reduction to the adjusted-reduced case, corresponding to Equation \eqref{eq:ingl reduccion al caso reducido}. We state the result as follows:
\begin{proposition}
	\label{prop:reduction a adjustedreduced}
	Let $\mathcal F$ be an adjusted idealistic $e$-flower over the ambient space $(M,E)$. Assume that the following statements are true:
	\begin{itemize}
		\item[a)] The idealistic $(e-1)$-flowers have reduction of singularities.
		\item[b)] The adjusted-reduced idealistic $e$-flowers have reduction of singularities.
	\end{itemize}
	Then $\mathcal F$ has reduction of singularities.
\end{proposition}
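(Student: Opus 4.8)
The plan is to strip off the top-dimensional part of the singular locus and then invoke hypothesis b). Fix a $d$-normalized immersed idealistic $e$-atlas $\mathcal P=\{\mathcal V_\alpha\}_{\alpha\in\Lambda}$ representing $\mathcal F$, with $\mathcal V_\alpha=(M_\alpha,E_\alpha,N_\alpha,\mathcal L_\alpha)$ and $\mathcal L_\alpha=\{(I_{\alpha,j},d)\}_{j=1}^{k_\alpha}$, and let $W\subset\operatorname{Sing}\mathcal F$ be the union of the irreducible components of dimension $e-1$; being reduced means exactly $W=\emptyset$. The first step, of local nature, is to show that the adjustedness of $\mathcal F$ forces $W$ to be, in each chart, a smooth hypersurface of $N_\alpha$ whose components are pairwise disjoint. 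Indeed, at a point $P$ of an $(e-1)$-dimensional component choose $(I_{\alpha,j_0},d)$ with $\nu_P I_{\alpha,j_0}=d$, which exists since $\delta_P\mathcal F=1$. Near $P$ the set $\operatorname{Sing}\mathcal F$ is contained in the order-$d$ locus of the principal ideal $I_{\alpha,j_0}$, and a singular point or a crossing of that locus would make the divisor of $I_{\alpha,j_0}$ contain two branches of multiplicity $\geq d$, forcing $\nu_P I_{\alpha,j_0}\geq 2d$ and contradicting $\nu_P I_{\alpha,j_0}=d$. Hence that divisor is $g^d$ with $g$ smooth, and $\operatorname{Sing}\mathcal F$ coincides near $P$ with the smooth hypersurface $\{g=0\}$.

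Granting $W$ smooth, the decisive transformation is the blowing-up of a connected component of $W$, which has codimension one inside the ambient subspaces $N_\alpha$. Such a center is an isomorphism of the subspace, but by the transformation rules its effect on the marked ideals is to divide by $\mathcal J_W^{d}$ and to incorporate $W$ into the divisor. Since the generic order of $\mathcal F$ along that component is $1$, Proposition \ref{prop:ingl explosionexpcto} gives $\delta_{D'}\mathcal F'=0$ along the new divisor $D'$, so $W$ leaves the singular locus; by Proposition \ref{prop:ingl estabilidadajustado} the transform stays adjusted, and the same order computation shows that no new $(e-1)$-dimensional singular component can be created. Thus, \emph{once $W$ is a permissible center}, blowing up each of its components lowers $\dim\operatorname{Sing}\mathcal F$ to at most $e-2$; the resulting flower is adjusted and reduced, and hypothesis b) finishes the argument.

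The whole difficulty is therefore concentrated in making $W$ permissible, that is, in putting the smooth hypersurface $W$ in normal crossings position with $E$; its failure locus $B\subset W$ has dimension at most $e-2$. This is where hypothesis a) enters. I would attach to the situation an auxiliary idealistic $(e-1)$-flower $\mathcal H$ that detects $B$, arranged so that $\operatorname{Sing}\mathcal H=B$, and apply RedSing($e-1$) to it. The blow-ups resolving $\mathcal H$ have centers contained in $B\subset\operatorname{Sing}\mathcal F$, hence are permissible for $\mathcal F$; they preserve adjustedness by Proposition \ref{prop:ingl estabilidadajustado} and, having centers of dimension $\leq e-2$ along which $\delta=1$, produce no new top-dimensional singular component by Proposition \ref{prop:ingl explosionexpcto}; and their net effect is to carry the strict transform of $W$ into normal crossings with the transformed divisor. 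After this the (transform of) $W$ is permissible and the previous paragraph applies, so the process is finite.

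The main obstacle I anticipate is the coherent construction of $\mathcal H$ and the verification that it genuinely globalizes. The subtlety is that $W$ itself, being possibly tangent to a component of $E$, is \emph{not} a legitimate transverse ambient $(e-1)$-subspace, so $\mathcal H$ cannot simply be supported on $W$; instead one must encode the tangency of $W$ with the components of $E$ as an idealistic datum on genuinely transverse $(e-1)$-subspaces, check the compatibility condition on the overlaps so that the local pieces glue into a bona fide immersed idealistic $(e-1)$-atlas, and confirm that the order bookkeeping behind the ``no new top-dimensional component'' claim is consistent with the chart-independence of the order guaranteed by Proposition \ref{prop:ingl ordensumergido}. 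This globalization is precisely what the $e$-flower formalism is designed to make automatic, so I expect it to succeed, but it is the delicate heart of the proof.
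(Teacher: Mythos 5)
Your first two steps reproduce the paper's argument: adjustedness forces each $(e-1)$-dimensional component $L$ of $\operatorname{Sing}\mathcal F$ to be equimultiple for some marked ideal, so that locally $I_{j_0}=\mathcal J_{L}^{d_{j_0}}$ and $L$ is smooth and isolated in the singular locus (this is the paper's Proposition \ref{prop: aislar L}); and once $L$ has normal crossings with $E$, a single blow-up centered at $L$ removes it from the singular locus. The genuine gap is in your third step, which you yourself flag as ``the delicate heart'': you never construct the auxiliary $(e-1)$-dimensional object $\mathcal H$, and the two things you do assert about it are problematic. First, ``centers contained in $B\subset\operatorname{Sing}\mathcal F$, hence permissible for $\mathcal F$'' is false as stated: permissibility requires the center to be non-singular \emph{and} to have normal crossings with $E$, and normal crossings with $E$ is precisely the property you are trying to create by these blow-ups, so it cannot be taken for granted for their centers --- this is a circularity, not a formality. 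Second, your plan to encode the tangency on ``genuinely transverse $(e-1)$-subspaces'' of $(M,E)$ is a harder route than necessary, and it is unclear it can be carried out, since near the tangency locus there is no natural transverse $(e-1)$-subspace carrying the relevant data.

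The paper resolves both difficulties by a construction your proposal does not contain: it treats $L$ itself as the support of a \emph{new abstract ambient space}, not as a subspace of $(M,E)$. Writing $E=E^*\cup D\cup F$, with $F$ the components containing $L$, $D$ the remaining components such that $D\cup F$ has normal crossings with $L$, and $E^*$ the ``bad'' components, it forms the idealistic space $\mathcal N=\left(L,\, L\cap D,\, \{(I_{E^*_i}\vert_L,1)\}_{i\in B}\right)$ over the $(e-1)$-dimensional ambient space $(L,L\cap D)$, whose singular locus is $E^*_B\cap L$ for a maximal intersection set $B$. Hypothesis a) applies to idealistic $(e-1)$-flowers over \emph{any} ambient space, in particular over $(L,L\cap D)$, so $\mathcal N$ can be resolved; and the whole point of the choice of $E^*$, $D$, $F$ is that every center $Y$ permissible for $\mathcal N$ is then proved (using $Y\subset E^*_B$, $Y\cap E^*_i=\emptyset$ for $i\notin B$, $Y\subset F_i$ for each component $F_i$ of $F$, and normal crossings with $L\cap D$) to have normal crossings with the full divisor $E$ in $M$, which is what makes the corresponding blow-ups of $(M,E)$ legitimate and permissible for $\mathcal F$. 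A lexicographic induction on the invariant $(s,t)$ recording the deepest intersections of $E^*$ with $L$, together with the commutativity $\mathcal N'=\mathcal N_{M',E',(E^*)',L',B}$, terminates the process. Without this construction, or a substitute for it, your argument does not close.
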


Let us recall that an idealistic $e$-flower $\mathcal F$ is called to be {\em reduced} if $\dim\operatorname{Sing}\mathcal F\leq e-2$. We have the following stability result:
	\begin{proposition}
		Let $\mathcal F$ be an idealistic $e$-flower over the ambient space $(M,E)$. Consider a permissible test system $\mathcal S$ for $\mathcal F$ and denote by $\mathcal F'$ the transform of $\mathcal F$ by $\mathcal S$. If $\mathcal F$ is adjusted and reduced, then $\mathcal F'$ is also adjusted and reduced.
	\end{proposition}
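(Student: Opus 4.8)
The plan is to split the statement into its two assertions and to observe that the adjusted part is already available: it is exactly Proposition \ref{prop:ingl estabilidadajustado}. So the whole content lies in showing that the property of being \emph{reduced} is stable. Since a permissible test system is a finite composition of open projections and permissible blowing-ups, and since adjustedness propagates at each intermediate step (again by Proposition \ref{prop:ingl estabilidadajustado}), I would argue by induction on the length of $\mathcal S$ and reduce to a single elementary transformation $\sigma:(M',E')\to (M,E)$. The invariant I track along the way is $\dim\operatorname{Sing}\le (\text{dimension of the flower})-2$; I will check that this difference of dimensions is preserved by each elementary step, which is precisely the meaning of reducedness for the (possibly dimension-shifted) transform.

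For an open projection over the first factor there is nothing subtle: the ambient subspace $N$ and the singular locus acquire the same factor, since $N'=N\times(\mathbb C^m,0)$ and $\operatorname{Sing}\mathcal V'=(\operatorname{Sing}\mathcal V)\times(\mathbb C^m,0)$. Hence both the flower dimension and $\dim\operatorname{Sing}$ increase by $m$, and the inequality $\dim\operatorname{Sing}\le (\dim N)-2$ is unchanged.

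The real work is the blowing-up case, and I would carry it out locally on a chart $\mathcal V=(M,E,N,\mathcal L)$ by passing to the non-immersed idealistic space $\mathcal N=(N,E|_N,\mathcal L)$ and the induced blowing-up $\bar\pi:N'\to N$ with permissible center $Y\subset\operatorname{Sing}\mathcal V\subset N$; here $N'$ has the same dimension $e$ as $N$. Suppose, aiming at a contradiction, that $\operatorname{Sing}\mathcal V'$ has an irreducible component $C$ of dimension $e-1$, that is, a hypersurface of $N'$. Two cases arise, according to whether $C$ is the exceptional divisor $D'=\bar\pi^{-1}(Y)$ or not. If $C\ne D'$, then $C$ is the strict transform of its image $\bar\pi(C)\subset N$, which has dimension $e-1$ and lies in $\operatorname{Sing}\mathcal V$ (away from $D'$ the map $\bar\pi$ is an isomorphism and the controlled transform agrees with the pullback, so the order is preserved); this contradicts $\dim\operatorname{Sing}\mathcal V\le e-2$. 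If $C=D'$, I rule it out by a generic order computation: since $\mathcal F$ is adjusted and $Y\subset\operatorname{Sing}\mathcal F$, the generic order satisfies $\delta_Y\mathcal N=1$ (the generic order equals the order at a general point of $Y$, which is $1$ by adjustedness), and Proposition \ref{prop:ingl explosionexpcto} gives
$$
\delta_{D'}\mathcal N'=\delta_Y\mathcal N-1=0<1,
$$
so the generic point of $D'$ is not singular and $D'\not\subset\operatorname{Sing}\mathcal V'$. Both cases being impossible, $\dim\operatorname{Sing}\mathcal V'\le e-2$, and gluing the charts yields the global statement. When $Y$ is reducible one argues componentwise, the exceptional divisor over each irreducible component being irreducible of dimension $e-1$.

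The main obstacle is precisely this dichotomy: one must be certain that the only way a new hypersurface could enter the singular locus is as the exceptional divisor, and that adjustedness forces the order to drop strictly below $1$ along $D'$. Establishing $\delta_Y\mathcal N=1$ cleanly (identifying the generic order with the order at a general point of the permissible center) and invoking the order-drop formula of Proposition \ref{prop:ingl explosionexpcto} is the crux; once this is in place, the reduced condition is seen to be self-reproducing, exactly as needed to run the induction over the steps of $\mathcal S$.
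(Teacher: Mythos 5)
Your proposal is correct and takes essentially the same route as the paper: the paper's one-line proof rests exactly on the fact that blowing up an equimultiple (here: adjusted, hence order-one) center leaves the exceptional divisor outside the locus of maximal multiplicity, which is precisely your Case 2 computation $\delta_{D'}\mathcal N'=\delta_Y\mathcal N-1=0$ via Proposition \ref{prop:ingl explosionexpcto}. The rest of your argument (stability of adjustedness from Proposition \ref{prop:ingl estabilidadajustado}, the open-projection step, the strict-transform components in Case 1, and chart-by-chart gluing) is exactly the detail the paper explicitly leaves to the reader, and you supply it correctly.
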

\begin{proof} The result is consequence of the fact that when we blow-up a center that is equimultiple for a hypersurface, then the exceptional divisor is not in the locus of maximal multiplicity. We leave the details to the reader.
\end{proof}

Let us start the proof of Proposition \ref{prop:reduction a adjustedreduced}. Take an adjusted idealistic $e$-flower $\mathcal F$ over $(M,E)$. If $\dim\operatorname{Sing}\mathcal F\leq e-2$, we are done. On the other hand we know that $\dim\operatorname{Sing}\mathcal F\leq e-1$, hence we can assume that  $\dim\operatorname{Sing}\mathcal F= e-1$. Let $L_1,L_2,\ldots,L_s$ be the irreducible components of dimension $\dim L_i=e-1$ of  $\operatorname{Sing}\mathcal F$. We reason by induction on the number $s$. Consider the following result:
\begin{proposition}
	\label{prop: aislar L}
Assume that $s\geq 1$. There is an open set $U\subset M$ such that
$
U\cap \operatorname{Sing}\mathcal F=L_1
$
and $L_1$ is a non-singular closed analytic subset $L_1\subset M$.
\end{proposition}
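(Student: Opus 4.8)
The plan is to argue locally and pointwise, reducing everything to a multiplicity computation inside a single immersed chart. Fix a point $P_0\in L_1$ and choose a chart $\mathcal V_\alpha=(M_\alpha,E_\alpha,N_\alpha,\mathcal L_\alpha)$ of a representative atlas with $P_0\in M_\alpha$; since $L_1\cap M_\alpha\subset\operatorname{Sing}\mathcal V_\alpha\subset N_\alpha$, I may work entirely on the $e$-dimensional ambient $(N_\alpha,E_\alpha|_{N_\alpha})$. By the normalization example I may assume $\mathcal L_\alpha=\{(I_j,d)\}_{j=1}^{k}$ is $d$-normalized, so that $\delta_P\mathcal V_\alpha=\min_j\nu_P(I_j)/d$; this changes neither the singular locus nor the order. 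The crucial initial observation is that, because $\mathcal F$ is adjusted, $\operatorname{Sing}\mathcal F=\{\delta\geq 1\}=\{\delta=1\}$ is exactly the locus where the order attains its maximal value $1$; equivalently $\min_j\nu_P(I_j)=d$ at every singular point.

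The heart of the argument is a local structure statement at $P_0$. Since $\delta_{P_0}=1$, there is an index $j_1$ with $\nu_{P_0}(I_{j_1})=d$; write $I_{j_1}=(f)$. Because $L_1\subset\operatorname{Sing}\mathcal F\subset\operatorname{Sing}(I_{j_1},d)$, the generic order of $f$ along $L_1$ satisfies $\nu_{L_1}(f)\geq d$, while upper semicontinuity of the multiplicity together with $P_0\in L_1$ gives $d=\nu_{P_0}(f)\geq\nu_{L_1}(f)\geq d$. Hence $f$ is equimultiple of multiplicity exactly $d$ along $L_1$ near $P_0$, and $L_1\subset\{f=0\}$. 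As $\{f=0\}$ is pure of codimension one and $L_1$ is irreducible of dimension $e-1$, the set $L_1$ must coincide, near $P_0$, with one irreducible component of $\{f=0\}$. I would then factor $f=\prod_i g_i^{m_i}$ into irreducibles in the regular (hence factorial) local ring $\mathcal O_{N_\alpha,P_0}$, with $L_1=\{g_1=0\}$ and $m_1=\nu_{L_1}(f)=d$. Evaluating multiplicities at $P_0$ yields $d=\nu_{P_0}(f)=\sum_i m_i\,\nu_{P_0}(g_i)\geq d\,\nu_{P_0}(g_1)$, which forces $\nu_{P_0}(g_1)=1$ and $\nu_{P_0}(g_i)=0$ for all $i\neq 1$.

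Two consequences follow, and together they are precisely the assertion. First, $\nu_{P_0}(g_1)=1$ means $g_1$ is a regular parameter, so $L_1=\{g_1=0\}$ is non-singular at $P_0$; as $P_0$ was arbitrary, $L_1$ is a non-singular closed analytic subset of $M$. Second, $\nu_{P_0}(g_i)=0$ for $i\neq 1$ says $L_1$ is the only component of $\{f=0\}$ through $P_0$, so the germ of $\{f=0\}$, and hence of $\operatorname{Sing}\mathcal F$, at $P_0$ coincides with the germ of $L_1$. Since every other irreducible component $L'$ of $\operatorname{Sing}\mathcal F$ is contained in $\{f=0\}$, no such $L'$ can pass through $P_0$ without having its germ inside the irreducible germ of $L_1$, which would contradict maximality of components. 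Thus $L_1$ is disjoint from $Z:=\overline{\operatorname{Sing}\mathcal F\setminus L_1}$, the union of the remaining (finitely many) components; taking $U=M\setminus Z$, which is open and contains $L_1$, gives $U\cap\operatorname{Sing}\mathcal F=L_1$.

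I expect the main obstacle to be the isolation part, namely getting $U\cap\operatorname{Sing}\mathcal F=L_1$ rather than merely an open dense subset of $L_1$: a priori $L_1$ could meet lower- or equal-dimensional components, and indeed the statement fails for general, non-adjusted flowers. The adjusted hypothesis is exactly what removes this, since it forces the realizing factor $g_1$ to occur with the full assigned multiplicity $d$ and to be smooth at every point of $L_1$, leaving no room there for any other branch of $\{f=0\}$ and hence for any other component of the singular locus. The two technical points I would state with care are the equimultiplicity step (squeezing $\nu_{L_1}(f)=\nu_{P_0}(f)=d$ by upper semicontinuity) and the use of unique factorization in $\mathcal O_{N_\alpha,P_0}$; the rest is formal.
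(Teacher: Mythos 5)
Your proof is correct and follows essentially the same route as the paper: adjustedness produces a marked ideal whose generic multiplicity along $L_1$ is squeezed (by upper semicontinuity) to exactly its assigned multiplicity, and since $L_1$ is a hypersurface of the $e$-dimensional ambient $N_\alpha$, this forces the ideal to be locally the $d$-th power of a reduced smooth equation of $L_1$, whence $\operatorname{Sing}\mathcal F$ coincides with $L_1$ near each of its points and is non-singular there. The only differences are cosmetic: you normalize the multiplicities first, you justify via unique factorization in $\mathcal O_{N_\alpha,P_0}$ the step the paper asserts directly (namely $I_{j_0}=\mathcal J_{L_1\cap U}^{\,d_{j_0}}$ locally), and you make explicit the final isolation step $U=M\setminus Z$ that the paper leaves implicit.
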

\begin{proof} Take a point $P\in L_1$. It is enough to show that $\operatorname{Sing}\mathcal F$ is non-singular at $P$. Take an immersed idealistic $e$-chart
	$$
	\mathcal V=(U, E\cap U, N,\mathcal L),\quad   \mathcal L=\{(I_j,d_j)\}_{j=1}^k,
	$$
	belonging to $\mathcal F$, with $P\in U$. We have that $\nu_{L_1\cap U}(I_j)\geq d_j$, for any $j=1,2,\ldots,k$. Since $\mathcal F$ is adjusted, there is an index $j_0$ such that
	$$
	\nu_PI_{j_0}=\nu_{L_1\cap U}I_{j_0}=d_{j_0}.
	$$
	This means that $L_1\cap U$ is $d_{j_0}$-equimultiple for $I_{j_0}$. Noting that $L_1\cap U$ is a hypersurface in $N$, we have that
	$$
	I_{j_0}=(\mathcal J_{L_1\cap U})^{d_{j_0}},
	$$
	(locally at $P$, that is, up to make $U$ smaller) where $\mathcal J_{L_1\cap U}$ is the ideal sheaf of $\mathcal O_N$ defining $L_1\cap U$. We conclude that the singular locus $\operatorname{Sing}\mathcal F$  is equal to $L_1\cap U$, near $P$, and it is non-singular.
\end{proof}
As a consequence of Proposition \ref{prop: aislar L} and in view of the induction hypothesis on $s$, we can restrict ourselves to the case when
$$
\operatorname{Sing}\mathcal F=L,
$$
where $L$ is a non-singular irreducible $(e-1)$-dimensional closed analytic subset of $M$. If $L$ has normal crossings with $E$, a single blowing-up with center $L$ makes $L$ to disappear from the singular locus and we are done. Now, it suffices to obtain the property that $L$ has normal crossings with $E$ by means of blowing-ups with centers contained in $L$. This is solved in next subsection.
\subsection{Normal Crossings for a Non-Singular Closed Subspace} The result we present here is what we need for the end of the proof of Proposition \ref{prop:reduction a adjustedreduced}:
\begin{proposition}
\label{prop: Normal crossings for a non-singular closed subspace}
	Consider an $n$-dimensional ambient space $(M,E)$ and a closed non-singular subset $L\subset M$ of dimension $e-1<n$. Let us assume that any idealistic $(e-1)$-flower has reduction of singularities.
	Then there is a finite sequence of blowing-ups
	\begin{equation}
		\label{eq:sequence}
	(M,E)\leftarrow (M_1,E_1)\leftarrow \cdots \leftarrow (M_k,E_k)
	\end{equation}
	of ambient spaces with centers contained in the successive strict transforms of $L$, in such a way that the last strict transform $L_k$ has normal crossings with $E_k$.
\end{proposition}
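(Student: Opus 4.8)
The plan is to reduce the statement to a reduction of singularities of a suitable idealistic $(e-1)$-flower supported on $L$, and then to lift the resolving blow-ups to the ambient space $(M,E)$. The guiding principle is that the locus $Z=Z(L,E)\subseteq L$ where $L$ fails to have normal crossings with $E$ is a proper closed analytic subset, and that $Z$ is exactly the singular locus of an idealistic object living on the $(e-1)$-dimensional space $L$; resolving that object drives $Z$ to the empty set.

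First I would make precise the non--normal-crossings locus. Components of $E$ that locally contain $L$ never obstruct normal crossings, so only the components $E_i$ with $L\not\subseteq E_i$ matter; for such a component the obstruction at $P\in L$ is the failure of the restricted equation $f_i\vert_L$ to have a non-vanishing differential, together with the failure of these restrictions to cross normally on $L$. I would encode this data as an idealistic $(e-1)$-flower $\mathcal H$: in a chart around $P$, the flower $\mathcal H$ is built from marked ideals attached to the $f_i\vert_L$, chosen so that $\operatorname{Sing}\mathcal H=Z$. The key point that circumvents the apparent circularity (that $L$ is not yet a valid transverse ambient subspace of $(M,E)$) is that $\mathcal H$ is defined intrinsically on $L$ regarded as its own $(e-1)$-dimensional ambient space, with $E$ recorded through the marked ideals rather than as the ambient divisor; compatibility of the charts, and hence that $\mathcal H$ is a genuine flower, follows from the local character of equivalence in Proposition \ref{prop:ingl caracterlocalequivalencia}.

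Next I would apply the hypothesis \textrm{RedSing}$(e-1)$ to $\mathcal H$, obtaining a finite sequence of permissible blow-ups with centers $Y_j\subseteq\operatorname{Sing}\mathcal H_j\subseteq L_j$. Each such center is non-singular and, by the construction of $\mathcal H$, has normal crossings with $E$ in $M$; since $Y_j\subseteq L_j$ with both non-singular, the blow-up of $(M_j,E_j)$ along $Y_j$ has strict transform $L_{j+1}$ equal to the intrinsic blow-up of $L_j$ along $Y_j$, so the ambient sequence \eqref{eq:sequence} realizes the resolution of $\mathcal H$. I would then verify a commutativity statement, in the spirit of Remark \ref{rk:commutativityexplosiones}, asserting that the transform of $\mathcal H$ under such a blow-up again detects the non--normal-crossings locus of $L_{j+1}$ with the new divisor $E_{j+1}$. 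Granting this, once the resolution reaches $\operatorname{Sing}\mathcal H_k=\emptyset$ we conclude $Z(L_k,E_k)=\emptyset$, that is, $L_k$ has normal crossings with $E_k$.

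The main obstacle is the construction of $\mathcal H$ together with its commutativity under transformation. On one hand the marked ideals must be arranged so that $\operatorname{Sing}\mathcal H$ is exactly the non--normal-crossings locus and not, say, the ordinary normal-crossing corners of $E\vert_L$; a naive product of the $f_i\vert_L$ fails, so the detection has to be organized component by component (or stratum by stratum), treating the components containing $L$ separately. On the other hand, because blowing up enlarges the divisor, one must ensure that resolving the contact with one family of components does not destroy the normal crossings already arranged with the exceptional components, and that the produced centers retain normal crossings with the full $E_j$ at every stage; controlling this bookkeeping is where the real work lies.
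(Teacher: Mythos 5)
Your plan has the right skeleton---apply the hypothesis on $(e-1)$-flowers to an idealistic object living on $L$, lift the resolving blow-ups to $(M,E)$, and invoke a commutativity statement---but the actual content of the proof is precisely the part you defer. You never construct $\mathcal H$, and you concede this in your last paragraph (``the detection has to be organized component by component\dots controlling this bookkeeping is where the real work lies''). Two of the deferred points are not bookkeeping but genuine obstructions to your design. First, you want $\operatorname{Sing}\mathcal H$ to be \emph{exactly} the non-normal-crossings locus $Z(L,E)$; then every permissible center for $\mathcal H$ lies inside $Z$, i.e.\ exactly at points where $L$ and $E$ fail to cross normally---which is precisely where normal crossings of a center with $E$ in $M$ is most in doubt. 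Your claim that the centers have normal crossings with $E$ ``by the construction of $\mathcal H$'' is circular, since no construction is given; and permissibility for an object whose ambient space is $L$ itself, with $E$ ``recorded through the marked ideals rather than as the ambient divisor'', imposes no condition whatsoever relating the center to $E$ inside $M$. Second, the transforms of $\mathcal H$ are dictated by the flower formalism (controlled transforms, dividing by the assigned multiplicities along the exceptional divisor); you need these controlled transforms to again compute the non-normal-crossings locus of $L_{j+1}$ with the enlarged divisor $E_{j+1}$, and nothing guarantees that marked ideals with this stability property exist.

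The paper's proof sidesteps exactly the difficulty you ran into by \emph{not} attempting to detect $Z(L,E)$. It splits $E=E^*\cup D\cup F$, where $F$ is the union of components containing $L$, $D$ is a union of components crossing $L$ normally (and this is where all new exceptional components are placed), and $E^*$ consists of the remaining ``bad'' components; the goal becomes the stronger one of separating $L$ from $E^*$ entirely. For a subset $B$ of bad components meeting $L$ with $\#B$ maximal, it takes the idealistic space $\mathcal N=\bigl(L,\,L\cap D,\,\{(I_{E^*_i}\vert_L,1)\}_{i\in B}\bigr)$, whose singular locus is $E^*_B\cap L$---in general much larger than $Z(L,E)$. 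Maximality of $B$ forces any permissible center $Y$ for $\mathcal N$ to avoid the other bad components and to lie inside every $E^*_i$ with $i\in B$ and inside every component of $F$; combined with normal crossings with $L\cap D$, this yields normal crossings of $Y$ with all of $E$---the very point your proposal asserts without proof. Commutativity is then automatic, because the controlled transform with assigned multiplicity $1$ of $I_{E^*_i}\vert_L$ is the ideal of the intersection of the strict transform of $E^*_i$ with the strict transform of $L$. Finiteness comes from an outer lexicographic induction on $(s,t)$, the maximal number of bad components meeting $L$ simultaneously and the number of such maximal sets. If you want to rescue your write-up, the shortest repair is to abandon the ``singular locus equals non-normal-crossings locus'' design and adopt this separation scheme.
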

\begin{proof} Let us consider  $L$ as the support of a new ambient space. Write $E=E^*\cup D\cup F$, where  $F$ is the union of components of $E$ containing $L$ and the divisor $D$ is union of other components of $E$, in such a way that $F\cup D$ has normal crossings with $L$.
Let us write $E^*=\cup_{i\in A}E^*_i$ the decomposition of $E^*$ into its irreducible components. For any $B\subset A$, let us denote $E^*_B=\bigcap_{i\in B}E^*_i$.
Consider the set
$$
\Sigma=\{B\subset A;\; E^*_B\cap L\ne\emptyset\}.
$$
Let $s=\max\{\#B;\; B\in \Sigma\}$,  $\Sigma_s=\{B\in \Sigma;\; \#B=s\}$ and $t=\#\Sigma_s$. If $s=0$, we are done, since then $L\cap E^*=\emptyset$. We reason by induction on the lexicographical invariant $(s,t)$. Take $B\in \Sigma_s$ and consider the list of marked ideals in $L$ given by
$$
\mathcal L_{M,E^*,L,B}=\{(I_{E^*_i}\vert_L, 1)\}_{i\in B}.
$$
Take the idealistic space $\mathcal N$ over $(L,L\cap D)$ defined as
$$
\mathcal N=
\mathcal N_{M,E,E^*,L,B}=(L,L\cap D,\mathcal L_{M,E^*,L,B}).
$$
Note that $\operatorname{Sing}\mathcal N=E^*_B\cap L$.

Let $Y$ be a permissible center for $\mathcal N$. We have that:
	\begin{enumerate}
		\item $Y\subset L$ is non-singular.
		\item $Y\subset F_i$, for each irreducible component of  $F$.
		\item $Y$ has normal crossings with  $L\cap D$.
		\item $Y$ is equimultiple for $I_{E^*}$, where $I_{E^*}$ is the ideal sheaf of $\mathcal O_M$ defining $E^*$. More precisely, we have that $Y\subset E^*_B$ and for any index $i\in A\setminus B$, we have that $E^*_i\cap Y=\emptyset$.
	\end{enumerate}
We conclude that $Y$ is equimultiple for
	$I_{F\cup E^*}=I_{E^*}I_F$ and it has normal crossings with  $D$. Then, we have that $Y$ has normal crossings with $E$. Thus, we can perform the blowing-up of ambient spaces
$$
\pi_Y:(M',E')\rightarrow (M,E)
$$
centered at $Y$, with $E'=(E^*)'\cup \tilde D \cup F'$, where $\tilde D=\pi_Y^{-1}(D\cup Y)$ and $(E^*)'$, $F'$ are the respective strict transforms of $E^*$, $F$. If  $L'$ is the strict transform of $L$, the restriction
$$
\bar\pi_Y: (L',L'\cap \tilde D)\rightarrow (L,L\cap D),
$$
is the blowing-up of $(L,L\cap D)$ with center $Y$.

Let $\mathcal N'$ be the transform of $\mathcal N$ by $\bar\pi_Y$. If $\operatorname{Sing}(\mathcal N')=\emptyset$, the new inva\-riant $(s',t')$ is strictly smaller than $(s,t)$ and we are done by induction.
If  $\operatorname{Sing}(\mathcal N')\ne\emptyset$, then
 $(s',t')=(s,t)$, we have that
$A'=A$, $\Sigma'=\Sigma$
and the following  commutativity property holds:
$$
\mathcal N'=\mathcal N_{M',E',(E^*)',L',B}.
$$
Now, we end by performing a reduction of singularities of $\mathcal N$.
\end{proof}

\strut\vfill\pagebreak
\part{Projections of Idealistic Exponents}
\label{Projections of Idealistic Exponents}
Let $\mathcal E$ be an adjusted and reduced idealistic exponent over an $n$-dimensional ambient space $(M,E)$ and consider a  hypersurface $(M,E,H)$. In this part, we introduce a procedure for obtaining a new idealistic exponent $\operatorname{pr}_{H}\mathcal E$ over $(H, E\vert_H)$ that we call {\em the projection of $\mathcal E$ over $(M,E,H)$} whose main properties are the following ones:
\begin{itemize}
	\item $
	H\cap \operatorname{Sing}\mathcal E= \operatorname{Sing}(\operatorname{pr}_{H}\mathcal E)
	$.
	\item A closed analytic subset $Y\subset H$ is a permissible center for $\operatorname{pr}_{H}\mathcal E$ if and only if it is a permissible center for $\mathcal E$.
	\item $
	(\operatorname{pr}_H\mathcal E)'= \operatorname{pr}_{H'}(\mathcal E')
	$, for the transforms under a morphism
	that is either an open projection or a blowing-up with permissible center $Y$ for $\mathcal E$, with $Y\subset H$.
\end{itemize}
Let us note that the assignment $\left((M,E,H), \mathcal E\right)\mapsto \operatorname{pr}_H\mathcal E$ is necessarily unique if it exists. Indeed, we deduce from the above properties that if we have two assignments $\operatorname{pr}$ and $\widetilde{\operatorname{pr}}$, then $\operatorname{pr}_H\mathcal E=\widetilde{\operatorname{pr}}_H\mathcal E$, since they have the same permissible test systems.

Let us also note that $(M,E,H)$ may be transverse or not. Thus, there is a union $E^*$ of irreducible components of $E$ such that $E\vert_H=E^*\cap H$.

We end this part with the construction of the projection of an idea\-listic  $e$-flower over $(M,E,H)$ in the particular case that $H$ is a disjoint union of irreducible components of $E$, with the same properties as above.

Once  the projections are established, the existence of a reduction of singularities of $\operatorname{pr}_H\mathcal E$ implies the existence of a morphism $$\sigma:(M',E')\rightarrow (M,E)$$
that is a composition of a finite sequence of permissible blowing-ups for $\mathcal E$ in such a way that $H'\cap \operatorname{Sing}\mathcal E'=\emptyset$, where $H'$ is the strict transform of $H$ by $\sigma$. This is a key observation for the Maximal Contact Theory.

The projections are done with the help of {\em projecting axes}. We have built these structures inspired in a part of the work of Panazzolo in \cite{Pan}.

\section{Projecting Axes}
The construction of projecting axes is done ``around $H$'' instead of considering ``the whole ambient space''. More precisely, we will consider the open set $(M_H,E_H)$ of $(M,E)$ defined as follows. Recalling that $M$ is a germ over the compact set $K\subset M$, we define $M_H$ to be the germ of $M$ over the compact set $K\cap H$ and $E_H$ to be the germ of $E$ over the compact set $E\cap K\cap H$.

 Denote by
$\Theta_{M}[\log E^*]$ the sheaf
 of germs of vector fields over $M$ that are tangent to $E^*$.
A {\em projecting chart for  $(M,E,H)$} is a pair  $$
\mathfrak c=(U,\xi),
$$ where $U$ is an open set of $M_H$ and $\xi\in \Theta_{M}[\log E^*](U)$ is a non-singular vector field transverse to $H\cap U$ at every point. Two projecting charts $\mathfrak c_1=(U_1,\xi_1)$ and $\mathfrak c_2=(U_2,\xi_2)$  for $(M,E,H)$ are {\em compatible} if there is a unit $u_{12}$, defined in $U_{12}=U_1\cap U_2$, such that
$$
\xi_2\vert_{U_{12}}=u_{12}\xi_1\vert_{U_{12}},\quad \xi_1\vert_{U_{12}}(u_{12})=0.
$$
Note that we  ask $u_{12}$ to be a first integral of $\xi_1\vert_{U_{12}}$. Automatically, we have that $u_{12}$ is also a first integral of  $\xi_2\vert_{U_{12}}$.

Given $P\in H$, there is always a projecting chart  $\mathfrak c=(U,\xi)$, with $P\in U$. It is enough to take local coordinates $\mathbf x,z$ around $P$ defined on  $U$ and adapted to $E$, such that $H\cap U=(z=0)$ and
$$\xi=\partial/\partial z.$$
Conversely, given a projecting chart  $\mathfrak c=(U,\xi)$ and a point $P\in H\cap U$, there are local coordinates $\mathbf x,z$ around $P$ defined on an open set $V\subset U$ that are adapted to $E$, such that $H\cap U=(z=0)$ and $\xi=\partial/\partial z$.

This suggests the following definition:
\begin{definition} A {\em rectified projecting chart  for $(M,E,H)$} is a data $(\mathfrak c, \mathbf x,z)$, where $\mathfrak c=(U,\xi)$ is a projecting chart  for $(M,E,H)$ and $\mathbf x,z$ are coordinate functions defined on $U$, adapted to $E$, such that $H\cap U=(z=0)$ and that $\xi=\partial/\partial z$.  A given projecting chart $\mathfrak c$  for $(M,E,H)$ is {\em recti\-fiable} if there are coordinates $\mathbf x,z$ such that $(\mathfrak c,\mathbf x,z )$ is a rectified projecting chart  for $(M,E,H)$.
\end{definition}

We define a {\em projecting atlas $\mathfrak a$  for $(M,E,H)$} to be a finite family
$\mathfrak a=\{\mathfrak c_\alpha\}_{\alpha\in \Lambda}
$,
 such that the $\mathfrak c_\alpha$ are two by two compatible projecting charts  for $(M,E,H)$, whose definition domains cover $H$; equivalently, the definition domains cover $M_H$.
\begin{definition}
\label{def:projectingaxes}
	Two projecting atlases $\mathfrak a_1$ and $\mathfrak a_2$ of $(M,E)$ over the hypersurface  $(M,E,H)$ are {\em compatible} if their union $\mathfrak a_1\cup\mathfrak a_2$ is also a projecting atlas  for $(M,E,H)$.
	The compatibility classes of projecting atlases are called {\em projecting axes  for $(M,E,H)$}. Given a projecting axis $\mathfrak E$ and a projecting chart $\mathfrak c=(U,\xi)$  for $(M,E,H)$, we say that {\em $\mathfrak c$ belongs to $\mathfrak E$} if $\mathfrak c$ belongs to some of the projecting atlases defining $\mathfrak E$.
\end{definition}

Given a projecting atlas $\mathfrak a$, there is another compatible projecting atlas $\widetilde{\mathfrak a}$ such that all the charts in $\widetilde{\mathfrak a}$ are rectifiable. In particular, there is always an atlas composed of rectifiable charts among the atlases defining a given projecting axis.

\begin{remark}	
\label{obs:ingl localizaciondeejesenH}
A projecting axis  for $(M,E,H)$ is exactly the same object as a
projecting axis for $(M_H,E_H,H)$.
\end{remark}
\subsection{First Integrals of Projecting Axes}
Let $\mathfrak E$ be a projecting axis  for $(M,E,H)$. We denote by $\mathcal O_{M_H}$ the sheaf of germs of holomorphic functions of $M_H$, that is $\mathcal O_{M_H}=\mathcal O_{M}\vert_{M_H}$. Let us build the
{\em sheaf of first integrals $\operatorname{Int}\negthinspace\mathfrak E$ of $\mathfrak E$}. For each open subset $V\subset M_H$, we define $\operatorname{Int}\negthinspace\mathfrak E(V)$ to be the subring
$$
\operatorname{Int}\negthinspace\mathfrak E(V)\subset \mathcal O_{M_H}(V)=\mathcal O_M(V)
$$
whose elements are the holomorphic functions $h$ defined in $V$ satisfying the following equivalent properties:
\begin{enumerate}
\item[a)] Given a point $P\in V$, there is a projecting chart $\mathfrak c=(U,\xi)$ belonging to  $\mathfrak E$ such that $P\in U\subset V$ and $\xi(h\vert_U)=0$.
\item[b)] Given a projecting chart $\mathfrak c=(U,\xi)$ belonging to  $\mathfrak E$ such that $U\subset V$, we have that $\xi(h\vert_U)=0$.
\end{enumerate}
The sheaf $\operatorname{Int}\negthinspace\mathfrak E$ is a subsheaf of rings of $\mathcal O_{M_H}$. In particular, we have that $\mathcal O_{M_H}$ is a $\operatorname{Int}\negthinspace\mathfrak E$-module.

\begin{remark} Let $(\mathfrak c, \mathbf x,z)$ be a rectified
 projecting chart  for $(M,E,H)$ such that $\mathfrak c=(U,\xi)$ belongs to a given projecting axis $\mathfrak E$  for $(M,E,H)$. A function $h\in \mathcal O_M(U)$ is a first integral for $\mathfrak E$ if and only if $\partial h/\partial z=0$.
\end{remark}

\subsection{Local Nature of Projecting Axes.}
Let  $\mathfrak c=(U,\xi)$ be a projec\-ting chart  for $(M,E,H)$. The
{\em restriction $\mathfrak c\vert_{V}$ of $\mathfrak c$ to an open set $V$ of $M_H$} is given in a natural way by $$
\mathfrak c\vert_{V}=(U\cap V,\xi\vert_{U\cap V}).
$$
It is a projecting chart for $(V,E\cap V,H\cap V)$.

Let $\mathfrak c_1$ and $\mathfrak c_2$ be two projecting charts  for $(M,E,H)$. The following properties are equivalent:
\begin{enumerate}
\item The charts $\mathfrak c_1$ and $\mathfrak c_2$ are compatible.
\item For any open set $V\subset M_H$, the restrictions $\mathfrak c_1\vert_V$ and $\mathfrak c_2\vert_V$ are compatible.
\item  The restrictions $\mathfrak c_1\vert_V$ and $\mathfrak c_2\vert_V$ are compatible for the open sets $V\subset M_H$ belonging to an open cover of $M_H$.
\end{enumerate}

Consider a projecting atlas $\mathfrak a=\{\mathfrak c_\alpha\}_{\alpha\in \Lambda}$   for $(M,E,H)$.
The {\em restriction $\mathfrak a\vert_{V}$ of $\mathfrak a$ to an open set $V$ of $M_H$}  is given by
$$
\mathfrak a\vert_{V}=\{\mathfrak c_\alpha\vert_{V}\}_{\alpha\in \Lambda}.
$$
It is a projecting atlas for $(V,E\cap V,H\cap V)$. Moreover, if $\mathfrak a_1$ and $\mathfrak a_2$ are two compatible projecting atlases  for $(M,E,H)$, their restrictions $\mathfrak a_1\vert_{V}$ and $\mathfrak a_2\vert_{V}$ are also compatible. This allows us to define without ambiguity the {\em restriction $\mathfrak E\vert_V$} to $V$ of a projecting axis $\mathfrak E$  for $(M,E,H)$. It is, of course, a projecting axis for $(V,E\cap V,H\cap V)$.

The following ``gluing'' result, concerning the local nature of pro\-jecting axes, follows directly from the local nature of the compatibility of charts:
\begin{lemma} \label{lema:ingl pegado de ejes}
Consider an open covering $\{U_\beta\}_{\beta\in B}$ of $M_H$. Assume that for each $\beta\in B$, there is a projecting axis $\mathfrak E_\beta$ for $(U_\beta, E\cap U_\beta, H\cap U_\beta)$. If the equality
	$$
	\mathfrak E_\beta\vert_{U_{\beta\gamma}}=  \mathfrak E_\gamma\vert_{U_{\beta\gamma}},\quad U_{\beta\gamma}=U_\beta\cap U_\gamma,
	$$
	holds for any $\beta,\gamma\in B$, there is a unique projecting axis $\mathfrak E$  for $(M,E,H)$ such that $\mathfrak E\vert_{U_\beta}=\mathfrak E_\beta$, for all $\beta\in B$.
\end{lemma}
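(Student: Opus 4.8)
The plan is to run the standard sheaf-theoretic gluing argument, the only genuinely geometric ingredients being the local nature of the compatibility of charts established just above and the finiteness condition built into the notion of projecting atlas. Before anything else I would record the elementary fact that \emph{any two projecting charts belonging to the same projecting axis are compatible}: if $\mathfrak c_1$ and $\mathfrak c_2$ belong to $\mathfrak E$, they lie in atlases $\mathfrak a_1$, $\mathfrak a_2$ defining $\mathfrak E$; these are compatible, so $\mathfrak a_1\cup\mathfrak a_2$ is again a projecting atlas and hence $\mathfrak c_1,\mathfrak c_2$ are compatible. This observation is precisely what converts the cocycle hypothesis $\mathfrak E_\beta\vert_{U_{\beta\gamma}}=\mathfrak E_\gamma\vert_{U_{\beta\gamma}}$ into compatibility of individual charts.

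For uniqueness, suppose $\mathfrak E$ and $\mathfrak E'$ both restrict to $\mathfrak E_\beta$ on each $U_\beta$. Choosing defining atlases $\mathfrak a$, $\mathfrak a'$, it suffices to see that every $\mathfrak c\in\mathfrak a$ and every $\mathfrak c'\in\mathfrak a'$ are compatible. Restricting to $U_\beta$, both $\mathfrak c\vert_{U_\beta}$ and $\mathfrak c'\vert_{U_\beta}$ belong to $\mathfrak E\vert_{U_\beta}=\mathfrak E_\beta=\mathfrak E'\vert_{U_\beta}$, hence are compatible by the opening remark; since $\{U_\beta\}$ covers $M_H$, the local character of compatibility of charts (property (3) above) gives compatibility of $\mathfrak c$ and $\mathfrak c'$. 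Thus $\mathfrak a\cup\mathfrak a'$ is a projecting atlas and $\mathfrak E=\mathfrak E'$. This argument is valid over any ambient space, so I may reuse it over each $U_\beta$ in the existence part.

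For existence I would first reduce to a finite index set: since $M_H$ is a germ over the compact $K\cap H$, I replace $B$ by a finite subset $B_0$ with $\{U_\beta\}_{\beta\in B_0}$ still covering $M_H$. For each $\beta\in B_0$ choose a (finite) projecting atlas $\mathfrak a_\beta=\{\mathfrak c^\beta_i\}_{i\in I_\beta}$ defining $\mathfrak E_\beta$, with $\mathfrak c^\beta_i=(U^\beta_i,\xi^\beta_i)$ and $U^\beta_i\subset U_\beta$, and set $\mathfrak a=\bigcup_{\beta\in B_0}\mathfrak a_\beta$. This is a finite family of charts for $(M,E,H)$ whose domains cover $M_H$. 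The heart of the argument is that $\mathfrak a$ is a projecting atlas: charts of a single $\mathfrak a_\beta$ are compatible, while for $\mathfrak c^\beta_i$ and $\mathfrak c^\gamma_j$ from different indices the common domain $U^\beta_i\cap U^\gamma_j$ lies inside $U_{\beta\gamma}$, where the hypothesis $\mathfrak E_\beta\vert_{U_{\beta\gamma}}=\mathfrak E_\gamma\vert_{U_{\beta\gamma}}$ places both restrictions into one and the same projecting axis; they are therefore compatible by the opening remark, and that compatibility is exactly the relation demanded on $U^\beta_i\cap U^\gamma_j$.

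Let $\mathfrak E$ be the projecting axis defined by $\mathfrak a$. To check $\mathfrak E\vert_{U_\beta}=\mathfrak E_\beta$ for $\beta\in B_0$, I note that $\mathfrak a_\beta$ is a covering sub-atlas of $\mathfrak a\vert_{U_\beta}$ (restriction to $U_\beta$ leaves each $\mathfrak c^\beta_i$ unchanged, as $U^\beta_i\subset U_\beta$); their union is the larger atlas, hence a projecting atlas, so the two define the same axis and $\mathfrak E\vert_{U_\beta}=\mathfrak E_\beta$. Finally, for an arbitrary $\beta\in B$ possibly outside $B_0$, the sets $\{U_\beta\cap U_\gamma\}_{\gamma\in B_0}$ cover $U_\beta$, and on each of them $\mathfrak E\vert_{U_\beta\cap U_\gamma}=\mathfrak E_\gamma\vert_{U_\beta\cap U_\gamma}=\mathfrak E_\beta\vert_{U_\beta\cap U_\gamma}$ by the cocycle hypothesis; the uniqueness already proven, applied over $U_\beta$, then yields $\mathfrak E\vert_{U_\beta}=\mathfrak E_\beta$. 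The step I expect to require the most care is the compatibility of charts coming from different members of the cover: everything hinges on observing that their overlap is contained in $U_{\beta\gamma}$, so that the gluing hypothesis can be invoked there, together with keeping the finiteness of atlases honest through the compactness reduction.
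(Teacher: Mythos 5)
Your proof is correct and takes exactly the route the paper intends: the paper states this lemma without proof, asserting that it ``follows directly from the local nature of the compatibility of charts,'' and your argument is precisely that gluing argument carried out in full (charts belonging to a common axis are pairwise compatible, uniqueness via the local criterion for compatibility of charts, existence by uniting defining atlases over the cover, and transfer to indices outside the chosen subcover by the uniqueness already proved). Your explicit reduction to a finite subcover, needed to keep the atlases finite, is a detail the paper leaves implicit but is handled correctly via compactness of the germification compact $K\cap H$.
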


\subsection{Projections over the First Factor and Projecting Axes}
Consider a projection over the first factor
$$
\sigma:(M',E')=(M\times(\mathbb C^m,0), E\times(\mathbb C^m,0))\rightarrow (M,E)
$$
and put $H'=\sigma^{-1}(H)$.  Note that $M'_{H'}=\sigma^{-1}(M_{H})$ and that $(M',E',H')$ is a hypersurface of $(M',E')$.  Recall that we have the functions
$$
\omega_i:M'\rightarrow (\mathbb C,0),\quad i=1,2,\ldots,m,
$$
obtained by composition of
$M\times(\mathbb C^m,0)\rightarrow (\mathbb C^m,0)$ with the $i$-th coordinate function of $(\mathbb C^m,0)$.

Let $\mathfrak c=(U,\xi)$ be a projecting chart  for $(M,E,H)$. The {\em transform $\sigma^{-1}\negthinspace\mathfrak c=(U', \xi')$ of $\mathfrak c$ by $\sigma$} is obtained  by putting $U'=\sigma^{-1}(U)$ and by taking $\xi'$ to be the unique vector field over $U'$ satisfying
\begin{enumerate}
\item $
(d\sigma)\circ \xi'=\xi\circ \sigma
$. (The vector fields $\xi$ and $\xi'$ are $\sigma$-related).
\item $\xi'(\omega_i)=0$, for all $ i=1,2,\ldots,m$.
\end{enumerate}
We have that $\sigma^{-1}\negthinspace\mathfrak c$ is a projecting chart for $(M',E',H')$. If $\mathfrak c_1$ and $\mathfrak c_2$  are two compatible projecting charts, then, the transformed charts $\sigma^{-1}\negthinspace\mathfrak c_1$ and $\sigma^{-1}\negthinspace\mathfrak c_2$
are also compatible. In this way, we define the {\em transform $\sigma^{-1}\mathfrak a$ of a projecting atlas $\mathfrak a$}  as well as the
{\em transform $\sigma^{-1}\negthinspace\mathfrak E$ of a projecting axis $\mathfrak E$  for $(M,E,H)$}. We obtain, respectively, a projecting atlas and a projecting axis for $(M',E',H')$.
\begin{remark}
Let $\mathfrak E'=\sigma^{-1}\negthinspace\mathfrak E$. The sheaf of first integrals $\operatorname{Int}\negthinspace\mathfrak E'$ is related with $\operatorname{Int}\negthinspace\mathfrak E$ as follows. Consider an open subset $V'\subset M'_{H'}$. Note that $V'$ is of the form $V'=U\times (\mathbb C^m,0)$. A function $h'$ defined over $V'$ is a first integral for $\mathfrak E'$ if and only if it factorizes through a first integral $h$ of $\mathfrak E$ defined in $U$, that is $h'=h\circ \sigma$.
\end{remark}
\subsection{Blowing-up Projecting Axes}
Before studying the transformation of a projecting axis by a blowing-up, let us consider the following lemma:
\begin{lemma}
	\label{lema:ingl determinacionejesporabiertodenso}
	Let $\mathfrak E_1$ and $\mathfrak E_2$ be two projecting axis  for $(M,E,H)$. Assume that there is a closed analytic subset $Z\subset M_H$ of codimension greater than or equal to one, such that $\mathfrak E_1\vert_{M_H\setminus Z}= \mathfrak E_2\vert_{M_H\setminus Z}$. Then $\mathfrak E_1=\mathfrak E_2$.
\end{lemma}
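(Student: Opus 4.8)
The plan is to reduce the statement to a purely local problem by means of the gluing Lemma~\ref{lema:ingl pegado de ejes}, and then to extend the transition unit across $Z$ by an elementary density argument combined with the non-vanishing of the vector fields involved. Since $\mathfrak E_1$ and $\mathfrak E_2$ are projecting axes for $(M,E,H)$, Lemma~\ref{lema:ingl pegado de ejes} tells us it is enough to produce an open covering of $M_H$ on each member $V$ of which $\mathfrak E_1\vert_V=\mathfrak E_2\vert_V$. On the open set $M_H\setminus Z$ this equality is the hypothesis, so the whole matter is local around a point $P\in Z$.

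Fix then $P\in Z$ and choose projecting charts $\mathfrak c_1=(U_1,\xi_1)$ belonging to $\mathfrak E_1$ and $\mathfrak c_2=(U_2,\xi_2)$ belonging to $\mathfrak E_2$, both with $P$ in their domain; such charts exist because the defining atlases of $\mathfrak E_1$ and $\mathfrak E_2$ cover $M_H$. Put $W=U_1\cap U_2$. On the open set $W\setminus Z$ the two axes coincide, so the restrictions of $\mathfrak c_1$ and $\mathfrak c_2$ belong to a common axis there and are hence compatible: there is a unit $u$ on $W\setminus Z$ with $\xi_2=u\,\xi_1$ and $\xi_1(u)=0$. The key point is that $Z$ has codimension at least one, so $W\setminus Z$ is dense in $W$; I will use this density to continue $u$ holomorphically across $Z$.

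Writing $\xi_1=\sum_j p_j\,\partial/\partial y_j$ and $\xi_2=\sum_j q_j\,\partial/\partial y_j$ in local coordinates near $P$, I choose an index $j_0$ with $p_{j_0}(P)\neq 0$, which exists because $\xi_1$ is non-singular, and I shrink the domain to an open set $V\subset W$ on which $p_{j_0}$ is a unit. Then $\tilde u:=q_{j_0}/p_{j_0}$ is holomorphic on $V$ and agrees with $u$ on the dense set $V\setminus Z$. By continuity the identities $q_j=\tilde u\,p_j$ hold on all of $V$ for every $j$, whence $\xi_2=\tilde u\,\xi_1$ on $V$; similarly $\xi_1(\tilde u)=0$ on $V$, since it vanishes on $V\setminus Z$. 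Moreover $\tilde u$ is a unit: were $\tilde u(P)=0$, the relations $q_j=\tilde u\,p_j$ would force $\xi_2(P)=0$, contradicting that $\xi_2$ is non-singular. Thus $\mathfrak c_1\vert_V$ and $\mathfrak c_2\vert_V$ are compatible, so each of the one-chart atlases $\{\mathfrak c_1\vert_V\}$ and $\{\mathfrak c_2\vert_V\}$ represents the same axis, giving $\mathfrak E_1\vert_V=\mathfrak E_2\vert_V$ and completing the required covering.

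I expect the only delicate step to be this extension of the transition unit across $Z$: one must simultaneously exploit the density of $W\setminus Z$, coming from $\operatorname{codim}Z\geq 1$, to propagate both $u$ and the relation $\xi_1(u)=0$ by continuity, and the non-vanishing of $\xi_1$ and $\xi_2$ to guarantee that the continued function $\tilde u$ does not develop a zero on $Z$ and so remains a unit. Everything else is the formal reduction to a local statement provided by the local nature of projecting axes.
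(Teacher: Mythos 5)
Your proof is correct, and its analytic heart --- continuing the transition unit across $Z$ by density of the complement of a codimension $\geq 1$ analytic set, and using the non-singularity of the vector fields both to keep the continued function a unit and to propagate the first-integral condition --- is exactly the mechanism of the paper's proof; but your global organization is genuinely different. The paper never localizes: it takes \emph{arbitrary} charts $\mathfrak c_1=(U_1,\xi_1)$ of $\mathfrak E_1$ and $\mathfrak c_2=(U_2,\xi_2)$ of $\mathfrak E_2$ and proves they are compatible on the whole intersection $U=U_1\cap U_2$, which immediately yields $\mathfrak E_1=\mathfrak E_2$ (every atlas of one is then compatible with every atlas of the other) without any appeal to the gluing Lemma~\ref{lema:ingl pegado de ejes}. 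In place of your coordinate quotient $\tilde u=q_{j_0}/p_{j_0}$, the paper observes that the proportionality locus $A=\{P\in U;\ \xi_{2,P}=f_P\xi_{1,P}\ \text{for some } f_P\in\mathcal O_{M,P}\}$ is the closed analytic subset of $U$ cut out by $\xi_1\wedge\xi_2=0$; since $A\supset U\setminus Z$, it equals $U$, and the (unique) local factors $f_P$ are units that glue to the required global unit $u$, after which $\xi_1(u)=0$ is extended across $Z$ exactly as you do. So your route buys elementariness --- a quotient of components after shrinking --- at the cost of an extra globalization step through the gluing lemma, while the paper's route buys a coordinate-free argument valid at once on the full chart intersection. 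One wording fix in your argument: for the compatibility of $\mathfrak c_1\vert_V$ and $\mathfrak c_2\vert_V$ you need $\tilde u$ to be non-vanishing at \emph{every} point of $V$, not only at $P$; your own argument gives precisely this (a zero of $\tilde u$ at any $Q\in V$ would force $\xi_2(Q)=0$, against the definition of a projecting chart), so it suffices to state it at all points of $V$.
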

\begin{proof}
	It is enough to show that any two projecting charts $\mathfrak c_1=(U_1,\xi_1)$ and $\mathfrak c_2=(U_2,\xi_2)$ of $\mathfrak E_1$ and $\mathfrak E_2$, respectively, are compatible.
	
	Write $U=U_1\cap U_2$. If $U=\emptyset$, there is nothing to prove. Assume that $U\ne\emptyset$ and denote $\overline{\mathfrak c}_i={\mathfrak c}_i\vert_U=(U,\bar\xi_i)$, for $i=1,2$. We need to show the existence of a unit $u\in \mathcal O_{M}(U)$ satisfying that $\bar\xi_2=u\bar\xi_1$, with $\bar\xi_1(u)=0$. Consider the subset
	$$
	A=\{P\in U; \; \text{ there is  } f_P\in \mathcal O_{M,P} \;\text{ such that } \xi_{2,P}=f_P\xi_{1,P}\}.
	$$
	Let us see that $A$ is a closed analytic subset of $U$. Recalling that $\bar\xi_1$ and $\bar\xi_2$ are non-singular vector fields, the set $A\subset U$ is defined by the equation
	$
	\bar\xi_1\wedge\bar\xi_2=0
	$.
Hence $A\subset U$ is a closed analytic subset. Let us write $W=U\setminus Z$. Since $\mathfrak c_1$ and $\mathfrak c_2$ are compatible charts in $W$, {we have that $W\subset A\subset U$, and hence $A=U$.} Then, given $P\in U$, we have the relation
	$$
	\xi_{2,P}=f_P\xi_{1,P}.
	$$
	Once again, since they are non-singular vector fields, we conclude that the germs $f_P$ are unique and they are units. These germs are ``glued'' in a unit $u\in \mathcal O_M(U)$ such that $u_P=f_P$, for all $P\in U$. Hence $\bar\xi_2=u\bar\xi_1$.
	
	On the other hand, the compatibility of $\mathfrak c_1$ and $\mathfrak c_2$ in $W$ implies that
	$$
	\bar\xi_1(u)=0, \text{ over } W.
	$$
	Again, since $\bar\xi_1(u)=0$ defines a closed analytic subset of $U$, we have that $\bar\xi_1(u)=0$, in the whole $U$. We conclude that $\mathfrak c_1$ are $\mathfrak c_2$ compatible charts in $U$ as desired.
\end{proof}

Let $Y$ be a non-singular irreducible closed analytic subset of $H$ having normal crossings with $E$. Assume that the codimension of $Y$ in $H$ is grater than or equal to one and thus $Y$ does not coincide with any connected component of $H$. Let us perform the blowing-up
$$
\pi:(M',E')\rightarrow (M,E)
$$
centered at $Y$ and denote by $H'$ the strict transform of $H$ by $\pi$. Then $(M',E',H')$ is a hypersurface of $(M',E')$ and $\pi$ induces a blowing-up
$$
\bar\pi:(H',E'\vert_{H'})\rightarrow (H,E\vert_H).
$$
Let us note that $M'_{H'}\subset \pi^{-1}(M_H)$ and that $\pi$ induces an identification
between $M'_{H'}\setminus \pi^{-1}(Y)$ and $\pi(M'_{H'})\setminus Y$, as a consequence of the identification between $M'\setminus \pi^{-1}(Y)$ and $M\setminus Y$.

\begin{proposition}
\label{prop:transformofaxisbyblowingup}
	Let $\mathfrak E$ be a projecting axis  for $(M,E,H)$. There is a unique projecting axis $\mathfrak E'$ for $(M',E',H')$ such that
	$$
	\mathfrak E'\vert_{M'_{H'}\setminus \pi^{-1}(Y)}= \mathfrak E\vert_{\pi(M'_{H'})\setminus Y},
	$$
	where we have taken the identification $M'\setminus \pi^{-1}(Y)\rightarrow M\setminus Y$ induced by the blowing-up $\pi$.
\end{proposition}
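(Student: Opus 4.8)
The plan is to establish uniqueness first, since it is immediate from the density lemma, and then to produce $\mathfrak E'$ by gluing a local construction along the exceptional divisor to the axis that is already forced off that divisor. For uniqueness, observe that $\pi^{-1}(Y)\cap M'_{H'}$ is contained in the exceptional divisor $D'$ of $\pi$, hence it is a closed analytic subset of $M'_{H'}$ of codimension greater than or equal to one. Therefore any two projecting axes for $(M',E',H')$ that both restrict to $\mathfrak E\vert_{\pi(M'_{H'})\setminus Y}$ on $M'_{H'}\setminus\pi^{-1}(Y)$ agree outside a codimension $\geq 1$ closed analytic subset and so coincide, by Lemma~\ref{lema:ingl determinacionejesporabiertodenso}. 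This shows that the prescribed boundary behaviour determines $\mathfrak E'$ completely, so only existence remains.

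For existence I would cover $M'_{H'}$ by the open set $W_0=M'_{H'}\setminus\pi^{-1}(Y)$, on which the transported axis $\mathfrak E\vert_{\pi(M'_{H'})\setminus Y}$ is already a projecting axis through the isomorphism $M'\setminus\pi^{-1}(Y)\cong M\setminus Y$, together with small open sets $V_P$ covering the fibre $\pi^{-1}(Y)\cap H'$ over each $P\in Y$. The local construction goes through a rectified chart: I choose a rectifiable chart $\mathfrak c=(U,\xi)$ of $\mathfrak E$ with $P\in U$ and coordinates $\mathbf x=(x_1,\dots,x_{n-1}),z$ adapted to $E$, with $H\cap U=(z=0)$ and $\xi=\partial/\partial z$; since $Y$ is non-singular, contained in $H$, and has normal crossings with $E$, I may arrange $Y\cap U=(x_1=\cdots=x_r=0,\,z=0)$. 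In each standard affine chart of the blowing-up of $U$ along $Y\cap U$ that meets $H'$, say the one with $z=x_iz'$ and $x_j=x_ix_j'$, the strict transform is $H'=(z'=0)$, and I take the transformed chart to be $\mathfrak c'=(U',\xi')$ with $\xi'=\partial/\partial z'$. Here $\xi'$ is non-singular, transverse to $H'$, and tangent to the relevant logarithmic divisor $(E^*)'$ of $(M',E',H')$ (this being the standard behaviour of a logarithmic vector field under a blowing-up whose centre has normal crossings with the divisor), so $\mathfrak c'$ is a genuine projecting chart.

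The key computation is that off the exceptional divisor $D'=(x_i=0)$ one has $\xi'=\partial/\partial z'=x_i\,\hat\xi$, where $\hat\xi$ denotes the transport of $\xi$ under the isomorphism $M'\setminus D'\cong M\setminus Y$; moreover $x_i$ is a unit there and a first integral of $\hat\xi$, since $\xi(x_i)=\partial x_i/\partial z=0$. Thus $\mathfrak c'$ is compatible with $\mathfrak E\vert_{\pi(M'_{H'})\setminus Y}$ on $U'\setminus D'$. Applying the same observation to two different affine charts, and to two different choices of rectified chart and adapted coordinates, shows that all the transformed charts agree with the transported axis on the dense open complement of $D'$; hence, by Lemma~\ref{lema:ingl determinacionejesporabiertodenso}, they are two-by-two compatible and assemble into a well-defined local projecting axis $\mathfrak E_{V_P}$ on a neighbourhood $V_P$ of the fibre over $P$, independent of the choices and restricting to the transported axis on $V_P\setminus\pi^{-1}(Y)$.

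Finally, all overlaps match: on $V_P\cap W_0$ both pieces equal the transported axis, while on $V_P\cap V_Q$ both restrict to the transported axis off the codimension $\geq 1$ set $D'$ and therefore coincide there by Lemma~\ref{lema:ingl determinacionejesporabiertodenso}. The gluing Lemma~\ref{lema:ingl pegado de ejes} then yields a projecting axis $\mathfrak E'$ for $(M',E',H')$ which by construction satisfies $\mathfrak E'\vert_{M'_{H'}\setminus\pi^{-1}(Y)}=\mathfrak E\vert_{\pi(M'_{H'})\setminus Y}$, as required. I expect the main obstacle to be the local coordinate computation, namely verifying that $\partial/\partial z'$ is the correct logarithmic transform (non-singular, transverse to $H'$, tangent to $(E^*)'$) and that it reproduces the prescribed axis off $D'$; once this is secured, the coherence across affine charts and across choices is forced by the density lemma rather than by any further calculation.
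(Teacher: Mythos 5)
Your proposal is correct and follows essentially the same route as the paper's proof: uniqueness via Lemma~\ref{lema:ingl determinacionejesporabiertodenso}, and existence via rectified charts adapted to $Y$, the key computation that $\partial/\partial z'=x_i\hat\xi$ with $x_i$ a unit first integral off the exceptional divisor (the paper phrases this as lifting the chart $(U\setminus(x_i=0),\,x_i\xi)$ of $\mathfrak E$ to $\partial/\partial z'$), coherence of the resulting charts by the density lemma, and assembly by the gluing Lemma~\ref{lema:ingl pegado de ejes}. The only real difference is bookkeeping: the paper glues over the pullbacks $\pi^{-1}(U)$ of a covering of $M_H$, invoking uniqueness on overlaps, instead of using $M'_{H'}\setminus\pi^{-1}(Y)$ as a separate piece of the covering, which fits the germ-over-a-compact framework a bit more cleanly.
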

\begin{proof}
	Uniqueness is a direct consequence of Lemma \ref{lema:ingl determinacionejesporabiertodenso}. Let us see the existence. In order to do it, we are going to prove the existence of a covering of $M_H$ by open subsets $U \subset M_H$ with the following property:
	\begin{quote}
		There is a projecting axis $\mathfrak E'$ for  $(U^\star,E^\star, H^\star)$
		such that
		\begin{equation} \label{eq:ingl pesadilla}
			\mathfrak E'\vert_{U^\star_{H^\star}\setminus \pi^{-1}(Y)}=
			\mathfrak E\vert_{\pi(U^\star_{H^\star})\setminus Y},
		\end{equation}
		where $U^\star=\pi^{-1}(U)$ and $E^\star$, $H^\star$ are the corres\-ponding intersections of $E'$, $H'$ with $U^\star$.
	\end{quote}
	Assume this result is proved and take two of these open subsets $U_\beta$ and $U_\gamma$. By invoking the uniqueness, we have the equality
$$
\mathfrak E'_\beta\vert_{U^\star_{\beta\gamma}}=\mathfrak E'_\gamma\vert_{U^\star_{\beta\gamma}},
\quad U^\star_{\beta\gamma}=U^\star_\beta\cap U^\star_\gamma.
$$
 Now, we obtain $\mathfrak E'$ by application of the gluing Lemma \ref{lema:ingl pegado de ejes}.
	
	 Around each point of $H$, we can choose a rectified projecting chart $(\mathfrak c=(U,\xi), \mathbf x,z)$ of $\mathfrak E$ with the additional condition that if $Y\cap U\ne\emptyset$, then
$$
Y\cap U=(z=0)\cap (x_i=0;\; i=1,2,\ldots,t),
$$
where $t\geq 1$ is the codimension of $Y$ in $H$.
In this way, we cover $M_H$ by the domains of that charts. It remains to prove that given one of that charts $(\mathfrak c=(U,\xi), \mathbf x,z)$, there is a projecting axis $\mathfrak E'$ for $(U^\star,E^\star, H^\star)$ satisfying to the property stated in Equation \eqref{eq:ingl pesadilla}.

	 Let us consider $U^\star_i =U^\star\setminus L^\star_i$, where $L^\star_i$ is the strict transform of $(x_i=0)$ by $\pi$, for any $i=1,2,\ldots,t$. Note that
	$$
	\textstyle H^\star \subset U^\star_{H^\star} \subset \widetilde{U}, \, \text{ where } \widetilde{U}=\bigcup_{i=1}^tU^\star_i.
	$$
	Let us write $V_i=U\setminus (x_i=0)$ and $W= \cup_{i=1}^tV_i$. We have the equality
	$$
	\pi(U^\star_i\setminus \pi^{-1}(Y))=V_i,\quad i=1,2,\ldots,t.
	$$
	Then, we have $\textstyle \pi(\widetilde{U}\setminus \pi^{-1}(Y))= W$. Since $ U^\star_{H^\star}\subset \widetilde{U}$, it is enough to prove the existence of a projecting axis  $\mathfrak E'$ defined in $\widetilde{U}$ satisfying that
	$$
	\mathfrak E'\vert_{\widetilde{U}\setminus \pi^{-1}(Y)}=\mathfrak E\vert_W,
	$$
	where we have taken the identification $\widetilde{U}\setminus \pi^{-1}(Y)\rightarrow W$ induced by the blowing-up $\pi$ outside of $\pi^{-1}(Y)$.

In order to obtain $\mathfrak E'$, we are going to define $\mathfrak E'_i$ on $U^\star_i$  by
\begin{equation}
\label{eq:pegadoejesblowingup}
\mathfrak E'_i\vert_{U^\star_i\setminus\pi^{-1}(Y)}=\mathfrak E\vert_{V_i}, \quad  i=1,2,\ldots,t.
\end{equation}
Indeed, in view of Lemma \ref{lema:ingl determinacionejesporabiertodenso}, the property in Equation \eqref{eq:pegadoejesblowingup} implies that
$$
\mathfrak E'_i\vert_{U^\star_i\cap U^\star_j}=\mathfrak E'_j\vert_{U^\star_i\cap U^\star_j},\quad i,j\in \{1,2,\ldots,t\}
$$
and  we obtain the desired $\mathfrak E'$ by gluing the $\mathfrak E'_i$, for $i=1,2,\ldots,t$.

Let us define the projecting axis $\mathfrak E'_i$. Consider the vector field $$\xi_i=x_i\xi,$$ defined in $U$, and the projecting chart $\mathfrak c_i=(V_i,\xi_i\vert_{V_i})$. Since $x_i$ is a first integral and a unit in $V_i$, the chart $\mathfrak c_i$ belongs to the axis $\mathfrak E$. Thus, the chart $\mathfrak c_i$ defines exactly $\mathfrak E\vert_{V_i}$. The vector field $\xi_i$ lifts by $\pi$ to a unique vector field $\xi_i'$ over $U_i^\star$, that is written in appropriate coordinates as
$$
\xi_i'=\partial/\partial z',
$$
where $H^\star\cap U^\star_i=(z'=0)$. This allows us to define $\mathfrak E'_i$ from the projecting chart $(U^\star_i,\xi'_i)$.
\end{proof}

 The definition of  {\em the transform $\mathfrak E'$ of $\mathfrak E$ by the blowing-up $\pi$} follows straightforward from
 Proposition \ref{prop:transformofaxisbyblowingup}.

\section{Projections of Idealistic Spaces}
Consider a hypersurface $(M,E,H)$ of an ambient space $(M,E)$. Let us take an adjusted and  reduced idealistic space $\mathcal M$ of $(M,E)$.

In this section, we construct projections of $\mathcal M$ over $(M,E,H)$ asso\-ciated to projecting axes and projectable generators of the marked ideals of $\mathcal M$. These constructions are compatible with the equivalence of idea\-listic spaces, with open projections and with permissible blowing-ups whose centers are contained in $H$. Thanks to these properties, in the next section,  we will be able to build the projection over $(M,E,H)$ for adjusted-reduced idealistic exponents of $(M,E)$.
\subsection{Projecting Systems. Local Nature}
\label{Projecting systems}
Before giving the definition of projecting systems, we need some concepts concerning only a
 projecting axis $\mathfrak E$ for $(M,E,H)$.

We say that a sheaf $\mathbf J\subset \mathcal O_{M_H}$ is an {\em $\mathfrak E$-projectable module over $M_H$} if it is a locally principal sub-module of $\mathcal O_{M_H}$ over $\operatorname{Int}\negthinspace{\mathfrak E}$. This means that there is an open covering $\{U_\beta\}_{\beta\in B}$ of $M_H$, jointly with holomorphic functions $F_\beta\in \mathcal O_M(U_\beta)$, satisfying the following properties:
\begin{enumerate}
	\item The germ of $F_\beta$ is non-zero at each point.
	\item For each open subset $V$ of $M_H$ and any index $\beta\in B$, we have
	$$
	\mathbf J(V\cap U_\beta)= \operatorname{Int}\negthinspace{\mathfrak E}(V\cap U_\beta)\cdot F_\beta\vert_{V\cap U_\beta}\subset \mathcal O_M(V\cap U_\beta).
	$$
\end{enumerate}
Note that, for each pair of indices $\beta,\gamma\in B$, we have that
$$
F_\gamma\vert_{U_\beta\cap U_\gamma}=u_{\beta\gamma}F_\gamma\vert_{U_\beta\cap U_\gamma},
$$
where $u_{\beta\gamma}\in \operatorname{Int}\negthinspace{\mathfrak E}(U_\beta\cap U_\gamma)$ is a first integral that is also a unit; in particular, it is also a unit in $\mathcal O_M(U_\beta\cap U_\gamma)$.

\begin{definition}
\label{def:projectablegenerator}
Let us consider a locally principal ideal sheaf $I\subset\mathcal O_{M_H}$. An {\em $\mathfrak E$-projectable generator $\mathbf J$ of $I$} is any  $\mathfrak E$-projectable module $\mathbf J$, with $\mathbf J\subset I$, that generates $I$ as $\mathcal O_{M_H}$-ideal sheaf.
\end{definition}

 In local terms, if $\mathbf J$ is given by a family $\{(U_\beta,F_\beta)\}_{\beta \in B}$ of open sets $U_\beta$ and functions $F_\beta\in \mathcal O_M(U_\beta)$ as before, the ideal $I(U_\beta)\subset \mathcal O_M(U_\beta)$  is generated by $F_\beta$, for each $\beta\in B$.

Now, we can define the projecting systems:
\begin{definition} \label{def:ingl sistemaproyectante}
	Let $\mathcal M=(M,E,\mathcal L)$ be an adjusted-reduced idealistic space. A {\em projecting system $\mathfrak S$ for $(\mathcal M,H)$} is a pair $\mathfrak S=(\mathfrak E,\mathfrak J)$ satisfying the following properties:
	\begin{enumerate}
		\item $\mathfrak E$ is a projecting axis for $(M,E,H)$.
		\item If $\mathcal L=\{(I_j,d_j)\}_{j=1}^k$, then $\mathfrak J$ is a list $\mathfrak J=\{\mathbf J_j\}_{j=1}^k$, where each $\mathbf J_j$ is an $\mathfrak E$-projectable generator of $I_j|_{M_H}$.
	\end{enumerate}
\end{definition}
Given a open set $V$ of $M_H$, the restriction $\mathfrak S\vert_V$ of a projecting system $\mathfrak S=(\mathfrak E,\mathfrak J)$ is naturally given by
$$
\mathfrak S\vert_V= (\mathfrak E\vert_V,\mathfrak J\vert_V),\quad \mathfrak J\vert_V=\{\mathbf J_j\vert_V\}_{j=1}^k.
$$
It is a projecting system for $(\mathcal M\vert_V, H\cap V)$.

We have local determination and  local gluing procedures for projecting systems, since the corresponding properties hold for $\mathcal M$, $\mathfrak E$ and $\mathfrak J$.

\subsection{Projections over the First Factor and Projecting Systems}
Consider a projection over the first factor
$$
\sigma:(M',E')=(M\times (\mathbb C^m,0), E\times (\mathbb C^m,0))\to (M,E)
$$
and an idealistic space
 $\mathcal M=(M,E,\mathcal L)$, with $\mathcal L=\{(I_j,d_j)\}_{j=1}^k$. Take  a {projecting system $\mathfrak S=(\mathfrak E,\mathfrak J)$  for $(\mathcal M,H)$.
The {\em transform $\mathfrak S'$ of $\mathfrak S$ by $\sigma$} will be denoted by
$$
\mathfrak S'=\left(\mathfrak E',\; \mathfrak J'=\{\mathbf J'_j\}_{j=1}^k\right),
$$
where  $\mathfrak E'$ is the transform of  $\mathfrak E$ by $\sigma$.

Let us detail which are the $\mathfrak E'$-projectable generators $\mathbf J'_j$ of $I'_j$.
Take an open subset  $U'$ of $M'_{H'}$. Since we are working with the germified space $(\mathbb C^m,0)$, we know that $U'=U\times (\mathbb C^m,0)$, where $U\subset M_H$ is an open subset. Recall that $I'_j(U')$ is given by
$$
I'_j(U')=\{f\circ\sigma;\; f\in I_j(U)\}\cdot \mathcal O_{M'}(U'),
$$
which is an ideal of the ring $O_{M'}(U')$. On the other hand, we define
$$
\mathbf J'_j(U')=\{f\circ\sigma;\; f\in \mathbf J_j(U)\}\cdot \operatorname{Int}\negthinspace\mathfrak E'(U').
$$
Since $\operatorname{Int}\negthinspace\mathfrak E'(U')=\{h\circ\sigma;\; h\in \operatorname{Int}\negthinspace\mathfrak E(U)\}$, we see directly that
$$
\mathbf J'_j(U')=\{f\circ\sigma;\; f\in \mathbf J_j(U)\}.
$$
In this way, we obtain the projecting system $\mathfrak S'$ for  $(\mathcal M',H')$, where $\mathcal M'$ is the transform of $\mathcal M$ by $\sigma$ and $H'=H\times(\mathbb C^m,0)$.

\subsection{Blowing-up Projecting Systems}
\label{ingl Transformacion de sistemas proyectantes por explosion de centros permitidos}
Let $Y\subset H$ be a permissible center for $\mathcal M$. Since $\mathcal M$ is reduced, we know that $Y$ has codimension greater than or equal to two in $M$, hence it has codimension greater than or equal to one in $H$. Let us consider the blowing-up
$$
\pi:(M',E')\rightarrow (M,E)
$$
centered at $Y$. As we know, the morphism $\pi$ induces the blowing-up
$$
\bar\pi:(H',E'\vert_{H'})\rightarrow (H,E\vert_H)
$$
centered at $Y$, where $H'$ is the strict transform of $H$ by $\pi$ and the divisor $E'\vert_{H'}$ coincides with  $\bar\pi^{-1}(E\vert_H\cup Y)$.

Let us define the {\em transform $\mathfrak S'$ of $\mathfrak S$ by $\pi$}. We put
$
	\mathfrak S'=(\mathfrak E', \mathfrak J')
$,
where $\mathfrak E'$ is the transform of $\mathfrak E$ by $\pi$. It remains to
 describe $\mathfrak J'=\{\mathbf J'_j\}_{j=1}^k$.
 Accordingly to Definition \ref{def:ingl sistemaproyectante}, we consider the morphism
$$
\sigma:(M'_{H'},E'_{H'})\rightarrow (M,E)
$$
given as composition of the open inclusion $M'_{H'}\subset M'$ with the blowing-up $\pi$.
Let $\mathcal J_{\pi^{-1}(Y)}$ be the ideal sheaf in $M'_{H'}$ defining the exceptional divisor $\pi^{-1}(Y)\cap M'_{H'}$. We denote
$$
{\boldsymbol{\mathcal J}}_{\pi^{-1}(Y)}=
{\mathcal J}_{\pi^{-1}(Y)}\cap \operatorname{Int}\negthinspace{\mathfrak E'}.
$$
Noting that $\mathcal J_{\pi^{-1}(Y),P'}$ is generated by a first integral of $\mathfrak E'$ at the points $P'$ in $M'_{H'}$, we see  that $\boldsymbol{\mathcal J}_{\pi^{-1}(Y)}$ is an $\mathfrak E'$-projectable generator of $\mathcal J_{\pi^{-1}(Y)}$. Consider now the $\mathfrak E$-projectable generator $\mathbf J_j$ of $I_j$ in $\mathfrak J$. Recall that
$$
\sigma^{-1}I_j=I_j'\mathcal J_{\pi^{-1}(Y)}^{d_j}.
$$
Denote by $\sigma^*\mathbf J_j$ the sheaf of $\operatorname{Int}(\mathfrak E')$-modules generated by $f\circ \sigma$, where $f$ varies over the sections of $\mathbf J_j$. In terms of germs, given a point $P'\in M'_{H'}$ and $P=\sigma(P')$, we have that
$$
(\sigma^*\mathbf J_j)_{P'}=\{f\circ\sigma;\; f\in \mathbf J_{j,P}\}\cdot \operatorname{Int}\negthinspace\mathfrak E'_{P'}.
$$
Then, we have that $\sigma^*\mathbf J_j$ is an $\mathfrak E'$-projectable generator of $\sigma^{-1}I_j$. Since $\sigma^{-1}I_j$ is divisible by $\mathcal J_{\pi^{-1}(Y)}^{d_j}$, then $\sigma^*\mathbf J_j$ is divisible by $\boldsymbol{\mathcal J}_{\pi^{-1}(Y)}^{d_j}$. In this way, we obtain an $\mathfrak E'$-projectable generator $\mathbf J'_j$ of $I'_j$ given by the relation
$$
\sigma^*\mathbf J_j=\mathbf J'_j\cdot \boldsymbol{\mathcal J}_{\pi^{-1}(Y)}^{d_j}.
$$
Thus, we obtain a projecting system $\mathfrak S'$ for $(\mathcal M', H')$, where $\mathcal M'$ is the transform of $\mathcal M$ by $\sigma$ and $H'$ is the strict transform of $H$.
\begin{remark}
\label{rk: coordinates}
Let us show how are described these objects in terms of coordinates. Fix two points $P\in Y$ and $P'\in H'\cap \pi^{-1}(P)$. We can choose a rectified projecting chart $(\mathfrak c=(U,\xi),\mathbf x,z)$ around $P$ such that
$$
Y\cap U=(z=0)\cap (x_1=x_2=\cdots=x_t=0).
$$
Moreover, we can assume that the blowing-up $\pi$ is given at $P'$ in coordinates $\mathbf x',z'$ by the relations $x_1=x'_1$, $z=x'_1z'$,
$$
x_s=x'_1(x'_s+\lambda_s), \quad  \lambda_s\in\mathbb C,\quad s=2,3,\ldots,t
$$
and $x_s=x_s'$, for $s=t+1,t+2,\ldots,n-1$, where $n=\dim M$. We know that
$\mathbf J_j=\{hF_j;\partial h/\partial z=0\}$, where $F_j$ generates $I_j$ at $P$. The ideal $I'_j$ is generated at $P'$ by
$$
F'_j= (x'_1)^{-d_j}(F_j\circ \sigma).
$$
The transformed projecting axis is given by $\partial/\partial z'$ in the coordinates $\mathbf x',z'$ and the $\mathfrak E'$-projectable generator of $I'_j$ is $\mathbf J'_j=\{h'F'_j;\;\partial h'/\partial z'=0\}$.
\end{remark}

\subsection{Projected Space of a Projecting System}
Let us consider a projecting system $\mathfrak S=(\mathfrak E,\mathfrak J)$  for $(\mathcal M,H)$. In this subsection we construct an idealistic space
$$
\mathfrak G_{\mathcal M, H}=\left(H,E\vert_H, \mathcal N\right),
\quad  \mathcal N=\{(N_{js},d_j-s)\}_{1\leq j\leq k,\, 0\leq s\leq d_j-1},
$$
of $(H,E\vert_H)$ that we call the {\em projected space of $\mathcal M$ by $\mathfrak S$ over $(M,E,H)$}.

Let us define the ideal sheaves $N_{js}\subset \mathcal O_H$. Take a projecting chart $\mathfrak c=(U,\xi)$ belonging to the projecting axis $\mathfrak E$. For each pair $j,s$, let
$$
\xi^{s}(\mathbf J_j)
$$
be
the iterated $s$-fold application of $\xi$ to the $\mathfrak E$-projectable generator $\mathbf J_j$ of $I_j$. Making a local computation for a section $hF_j$ of $\mathbf J_j$, we have that
$$
\xi(hF_j)=h\xi F_j, \quad h\in \operatorname{Int}\negthinspace\mathfrak E.
$$
We conclude that $\xi^{s}(\mathbf J_j)$ is an $\mathfrak E\vert_U$-projectable module. We define $N_{js}\vert_{U\cap H}$ to be the ideal sheaf of $\mathcal O_H\vert_{H\cap U}$ generated by the restriction
$$
\xi^{s}(\mathbf J_j)\vert_{H\cap U}
$$
of $\xi^{s}(\mathbf J_j)$ to $H\cap U$. This definition is compatible with the corresponding to the other projecting charts in the intersection of the domains. We define the ideal sheaves $N_{js}$ by a gluing procedure from the $N_{js}\vert_{U\cap H}$.

Before continuing with the properties of these objects, let us see how are the ideals $N_{js}$ in appropriated local coordinates. Consider a rectified projecting chart $(\mathfrak c=(U,\xi), \mathbf x,z)$ belonging to the axis $\mathfrak E$.  Assume also that the ideals $I_j\vert_U$ are generated by functions $F_j\in \mathcal O_M(U)$. We can write
\begin{equation}
\label{eq:ingl descomposicionparalaproyeccion}
\textstyle	F_j=\sum_{s=0}^\infty G_{js}(\mathbf x)z^{s}.
\end{equation}
Then, each ideal sheaf $N_{js}\vert_{H\cap U}$ is generated by $G_{js}(\mathbf x)$.

\begin{remark}
The assumption that the idealistic space $\mathcal M$ is reduced guaranties that not all the ideals $N_{js}$ are zero. Indeed, in terms of equations, if all that ideals are zero, we get that $z=0$ must be in the singular locus.
\end{remark}
A basic property of the projected space $\mathfrak S_{\mathcal M,H}$ is the following one:

\begin{proposition}
	\label{prop:ingl lugarsingular de la proyeccion}
	The singular locus of $\mathfrak S_{\mathcal M,H}$ is the restriction of the singular locus of $\mathcal M$ to $H$, that is
	$$
	\operatorname{Sing}(\mathfrak S_{\mathcal M,H})= H \cap \operatorname{Sing} \mathcal M.
	$$
\end{proposition}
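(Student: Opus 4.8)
The plan is to reduce the claimed equality of closed analytic subsets of $H$ to a pointwise computation of multiplicities, exploiting that $\operatorname{Sing}(\mathfrak S_{\mathcal M,H})=\operatorname{Sing}\mathcal N=\bigcap_{j,s}\operatorname{Sing}(N_{js},d_j-s)$ and that $H\cap\operatorname{Sing}\mathcal M$ is exactly the set of points $P\in H$ with $\nu_P I_j\ge d_j$ for every $j$. First I would fix a point $P\in H$ and choose a rectified projecting chart $(\mathfrak c=(U,\xi),\mathbf x,z)$ of $\mathfrak E$ around $P$, so that $H\cap U=(z=0)$, $\xi=\partial/\partial z$, and each $I_j$ is generated on $U$ by a function $F_j$ with the expansion \eqref{eq:ingl descomposicionparalaproyeccion}; recall that then $N_{js}\vert_{H\cap U}$ is generated by $G_{js}(\mathbf x)$.

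The heart of the argument is the order formula
\[
\nu_P I_j=\min_{s\ge 0}\bigl(\nu_P N_{js}+s\bigr),
\]
holding at every $P\in H$. It follows from the elementary fact that the multiplicity in $\mathcal O_{M,P}$ of a power series $F_j=\sum_s G_{js}(\mathbf x)z^s$ equals $\min_s(\nu_P G_{js}+s)$, since monomials of distinct $z$-degree cannot cancel; here $\nu_P G_{js}$ denotes the multiplicity of $G_{js}$ in $\mathcal O_{H,P}$, which coincides with $\nu_P N_{js}$ because $N_{js}$ is generated by $G_{js}$ on $H$. The left-hand side $\nu_P I_j$ and the ideals $N_{js}$ being intrinsically defined, the local expression is independent of the chosen rectified chart.

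With this formula both inclusions are immediate. If $P\in H\cap\operatorname{Sing}\mathcal M$, then $\nu_P I_j\ge d_j$ gives $\nu_P N_{js}+s\ge d_j$, hence $\nu_P N_{js}\ge d_j-s$ for all admissible pairs $(j,s)$, so that $P\in\operatorname{Sing}\mathcal N$. Conversely, if $P\in\operatorname{Sing}\mathcal N$, then $\nu_P N_{js}\ge d_j-s$ for $0\le s\le d_j-1$, whereas for $s\ge d_j$ one has trivially $\nu_P N_{js}+s\ge s\ge d_j$; taking the minimum over all $s$ yields $\nu_P I_j\ge d_j$ for every $j$, so $P\in\operatorname{Sing}\mathcal M$. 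Since all statements are about points of $H$, this proves the equality.

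I do not expect a serious obstacle here: the only genuine content is the order formula above, which is elementary, together with the bookkeeping over the admissible range $0\le s\le d_j-1$ matching the marked multiplicities $d_j-s$ of $\mathcal N$. The reduced hypothesis on $\mathcal M$ is what guarantees that the $N_{js}$ are not all zero, so that $\mathcal N$ is a legitimate idealistic space, while the independence of the computation from the chart is automatic from the intrinsic nature of the objects involved.
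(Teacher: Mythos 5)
Your proof is correct and follows the same route as the paper, whose entire proof is ``Follows directly by taking local equations and coordinates'': you simply make explicit the computation the authors leave implicit, namely the order formula $\nu_P I_j=\min_s\bigl(\nu_P N_{js}+s\bigr)$ in a rectified projecting chart and the observation that the range $s\ge d_j$ contributes nothing. No gap; this is exactly the intended argument, written out in full.
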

\begin{proof}
	Follows directly by taking local equations and coordinates.
\end{proof}
\begin{remark}
\label{rk:centrospermitidos}
In the above situation, consider a non-singular closed analytic subset $Y\subset H$. Then we have that $Y$ has normal crossings with $E$ if and only if $Y$ has normal crossings with $E\vert_H$, inside $H$. In the case that $(M,E,H)$ is a transverse hypersurface, the observation is straightforward. If $(M,E,H)$ is not transverse, then $H$ coincides locally with an irreducible component of $E$, that is, we locally have that $E=E^*\cup H$, the normal crossings property between $Y$ and $E\vert_H=E^*\cap H$ and the fact that $Y\subset H$ assure the normal crossings property with $E$.

As a consequence, the permissible centers for $\mathfrak S_{\mathcal M,H}$ coincide with the permissible centers for $\mathcal M$ that are contained in $H$.
\end{remark}

\section{Commutativity and Equivalence}
\label{Resultados de conmutatividad y equivalencia}

Assume that we have a projecting system $\mathfrak S=(\mathfrak E,\mathfrak J)$  for $(\mathcal M,H)$. Consider a morphism
$$
\sigma:(M',E')\rightarrow (M,E)
$$
that is either an open projection or the blowing-up of $(M,E)$ with center $Y$ permissible for $\mathcal M$ and such that $Y\subset H$. Since $\mathcal M$ is adjusted-reduced, the codimension of $Y$ in $M$ is greater than or equal to two, hence its codimension in $H$ is greater than or equal to one. In both cases, we have induced a morphism
$$
\bar\sigma:(H',E'\vert_{H'})\rightarrow (H,E\vert_H),
$$
that is an open projection or a blowing-up centered at $Y$, respectively. Note that in the case of a blowing-up, the center $Y$ is also permissible for the projected space $\mathfrak S_{\mathcal M,H}$, in view of Proposition \ref{prop:ingl lugarsingular de la proyeccion}.

Let $\mathfrak S'$ be the transform of $\mathfrak S$ by $\sigma$ and let $(\mathfrak S_{\mathcal M,H})'$ be the transform by $\bar \sigma$ of the projected space $\mathfrak S_{\mathcal M,H}$. Denote by $\mathfrak G'$, $\mathcal M'$ and $H'$ the transform of $\mathfrak G$, $\mathcal M$ and the strict transform of $H$ by $\sigma$, respectively.
\begin{proposition}
\label{prop: commutativityofprojections}
	We have that $(\mathfrak S_{\mathcal M,H})'=\mathfrak S'_{\mathcal M',H'}$.
\end{proposition}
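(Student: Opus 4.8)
The plan is to observe that both $(\mathfrak S_{\mathcal M,H})'$ and $\mathfrak S'_{\mathcal M',H'}$ are idealistic spaces over $(H',E'\vert_{H'})$ whose lists of marked ideals carry exactly the same assigned multiplicities $\{d_j-s\}$: the controlled transform preserves assigned multiplicities, and the projected space of $\mathfrak S'$ assigns $d_j-s$ by construction. Hence it suffices to prove, for each pair $(j,s)$, the equality of the corresponding ideal sheaves on $H'$. Since the formation of the projected space, the transforms of projecting systems, and the transforms of idealistic spaces are all defined locally and are compatible with gluing (as recorded in Subsection~\ref{Projecting systems} and the constructions preceding the statement), I would verify this equality in a rectified projecting chart, using the coordinate descriptions of Remark~\ref{rk: coordinates} and Equation~\eqref{eq:ingl descomposicionparalaproyeccion}. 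According to the nature of $\sigma$, the induced morphism $\bar\sigma$ is an open projection or the blowing-up of $(H,E\vert_H)$ with center $Y$; in the blowing-up case $Y$ has codimension $\geq 1$ in $H$ because $\mathcal M$ is reduced, so $\bar\sigma$ is a genuine blowing-up, with $Y$ permissible for $\mathfrak S_{\mathcal M,H}$ by Proposition~\ref{prop:ingl lugarsingular de la proyeccion}.

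For an open projection over the first factor the verification is immediate. In a rectified chart the transformed axis $\mathfrak E'$ is still given by $\partial/\partial z$, the added coordinates $\omega_i$ being first integrals, and a generator $F_j$ of $I_j$ transforms to $F_j\circ\sigma$, which is independent of the $\omega_i$. Thus the decomposition in Equation~\eqref{eq:ingl descomposicionparalaproyeccion} is unchanged, its coefficients $G_{js}(\mathbf x)$ now being read over $H'=H\times(\mathbb C^m,0)$, and the resulting projected ideal is exactly $\bar\sigma^{-1}N_{js}$, which is precisely the transform of $N_{js}$ under $\bar\sigma$. So the two projected spaces coincide.

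The substantial case is the blowing-up, which I would handle with the coordinates of Remark~\ref{rk: coordinates}. Fixing $P\in Y$ and $P'\in H'\cap\pi^{-1}(P)$, write $F_j=\sum_s G_{js}(\mathbf x)z^s$, so that $N_{js}$ is generated near $\bar\sigma(P')$ by $G_{js}$. On the left-hand side, the controlled transform of $N_{js}$ by $\bar\sigma$ with assigned multiplicity $d_j-s$ is generated near $P'$ by $(x_1')^{-(d_j-s)}(G_{js}\circ\bar\sigma)$, the exceptional divisor of $\bar\sigma$ inside $H'$ being $(x_1'=0)$. For the right-hand side I substitute $z=x_1'z'$ and $x_s=x_1'(x_s'+\lambda_s)$ (and $x_s=x_s'$ otherwise) into $F_j$, obtaining $F_j\circ\sigma=\sum_s (G_{js}\circ\bar\sigma)(x_1')^s(z')^s$; hence the generator $F'_j=(x_1')^{-d_j}(F_j\circ\sigma)$ of $I'_j$ expands as $F'_j=\sum_s\bigl[(x_1')^{s-d_j}(G_{js}\circ\bar\sigma)\bigr](z')^s$. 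By Equation~\eqref{eq:ingl descomposicionparalaproyeccion} applied to $\mathfrak S'$, the projected ideal of $\mathfrak S'_{\mathcal M',H'}$ attached to $(j,s)$ is generated by $(x_1')^{s-d_j}(G_{js}\circ\bar\sigma)$, which is exactly the generator found on the left-hand side. Therefore the ideal sheaves agree for all $(j,s)$.

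The main obstacle is purely the bookkeeping of the exceptional factors: one must check that the factor $(x_1')^{-(d_j-s)}$ arising from the controlled transform of the projected ideal on the left coincides with the factor $(x_1')^{s-d_j}$ produced on the right by the relation $z=x_1'z'$ together with the global normalization $(x_1')^{-d_j}$ in the definition of $F'_j$; the trivial identity $s-d_j=-(d_j-s)$ is what reconciles the two descriptions. It then remains only to note that the computation is independent of the chosen rectified chart and of the point $P'$, which follows from the local-determination and gluing properties already established for projecting axes, projectable generators and projected ideals. Combined with Proposition~\ref{prop:ingl lugarsingular de la proyeccion}, this yields the stated equality $(\mathfrak S_{\mathcal M,H})'=\mathfrak S'_{\mathcal M',H'}$ globally over $(H',E'\vert_{H'})$.
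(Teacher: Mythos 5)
Your proposal is correct and takes essentially the same route as the paper's proof: both reduce, after disposing of the open-projection case, to the coordinate computation of Remark \ref{rk: coordinates}, expanding $F'_j=(x_1')^{-d_j}(F_j\circ\sigma)$ in powers of $z'$ and matching the coefficients $(x_1')^{s-d_j}(G_{js}\circ\bar\sigma)$ with the controlled transforms of the projected ideals $N_{js}$. The only cosmetic difference is that the paper justifies the divisibility of $G_{js}\circ\bar\sigma$ by $(x_1')^{d_j-s}$ directly from $Y\subset\operatorname{Sing}\mathcal M$, while you obtain the same divisibility from the permissibility of $Y$ for $\mathfrak S_{\mathcal M,H}$ via Proposition \ref{prop:ingl lugarsingular de la proyeccion}.
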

 \begin{proof}
 We use the notations and computations done in Subsection \ref{Projecting systems}.

Let $V$ be a non-empty subset of $M_H$. The commutativity property for the restriction to $V$ is written as
\begin{equation}
	\label{eq:ingl conmutatividadderestriccionaunabierto}
\mathfrak S_{\mathcal M,H}\vert_V=(\mathfrak S\vert_V)_{\mathcal M\vert_V,H\cap V}.
\end{equation}
It follows directly from the definitions. In the case that $\sigma$ is a projection over the first factor, the commutativity property is also deduced automatically from the definitions.

Assume that $\sigma$ is a  permissible blowing-up for $\mathcal M$, centered at $Y \subset H$. The commutativity property can be checked in local coordinates. Let us fix two points $P\in Y$ and $P'\in H'\cap \pi^{-1}(P)$, a rectified projecting chart $(\mathfrak c=(U,\xi),\mathbf x,z)$ belonging to $\mathfrak E$, centered at $P$, and such that
$$
Y\cap U=(z=0)\cap (x_1=x_2=\cdots=x_t=0).
$$
Moreover, let us assume without loss of generality that $\sigma$ is given at $P'$ in coordinates $\mathbf x',z'$ as in Remark \ref{rk: coordinates}.

We know that
$\mathbf J_j=\{hF_j;\; \partial h/\partial z=0\}$, where $F_j$ generates $I_j$. Write
$
\textstyle F_j=\sum_{s=0}^\infty G_{sj}(\mathbf x)z^s
$.
Since $Y$ belongs to the singular locus of $\mathcal M$, we can write
$$
G_{sj}(\mathbf x)=(x'_1)^{d_j-s}G'_{sj}(\mathbf x'),
$$
for the indices $s\leq d_j$. The ideal $I'_j$ is generated by
$$
\textstyle F'_j= (x'_1)^{-d_j}F_j(\mathbf x,z)=\sum_{s=0}^{d_j-1}G'_{sj}(\mathbf x')(z')^{s}+(z')^{d_j}\tilde{F}_j.
$$
The transform $\mathfrak c'$ of the chart $\mathfrak c$ belongs to the transformed axis $\mathfrak E'$ and is given by $\partial/\partial z'$ in the coordinates $\mathbf x',z'$. The $\mathfrak E'$-projectable generator of $I'_j$ is $\mathbf J'_j=\{h'F'_j;\;\partial h'/\partial z'=0\}$, locally at $P'$.
The projected space $\mathfrak S_{\mathcal M,H}$ is given (locally) by the list
$$
\mathcal N=\{(G_{sj}\mathcal O_{H,P},d_j-s)\}_{0\leq j\leq k, 0 \leq s\leq d_j-1},
$$
and the projected space of $\mathfrak S'_{\mathcal M',H'}$ coincides with the transform of $\mathcal N$ and it is given by the list
$$
\mathcal N'=\{(G'_{sj}\mathcal O_{H',P'},d_j-s)\}_{0\leq j\leq k, 0 \leq s\leq d_j-1}.
$$
Thus, we obtain the desired commutativity property.
\end{proof}

\subsection{Basic Properties of the Projections}
\label{Basic Properties of the Projections} Let us summarize here the results in Proposition
\ref{prop:ingl lugarsingular de la proyeccion}, Proposition \ref{prop: commutativityofprojections}  and
Remark
\ref{rk:centrospermitidos}, stated for any adjusted and reduced idealistic space $\mathcal M$ of $(M,E)$ and any hypersurface $(M,E,H)$:
\begin{itemize}
	\item $\operatorname{Sing}(\mathfrak S_{\mathcal M,H})=H\cap \operatorname{Sing \mathcal M}$.
	\item The permissible centers for $\mathcal M$ contained in $H$ coincide with the permissible centers of $\mathfrak S_{\mathcal M,H}$.
	\item  $(\mathfrak S_{\mathcal M,H})'=\mathfrak S'_{\mathcal M',H'}$, for the transforms under open projections and blowing-up with permissible centers.
\end{itemize}
 As a first consequence of these properties we obtain the following equivalence result:

\begin{proposition}
\label{prop:equivalenciadeproyecciones}
	Assume that $\mathcal M_\alpha$ and $\mathcal M_\beta$ are two equivalent idealistic spaces over $(M,E)$ and let $\mathfrak S^\alpha$ and $\mathfrak S^\beta$ be two projecting systems for $(\mathcal M_\alpha,H)$ and $(\mathcal M_\beta,H)$, respectively.
Then $\mathfrak S^\alpha_{\mathcal M_\alpha,H}$ and $\mathfrak S^\beta_{\mathcal M_\beta,H}$ are equivalent  idealistic spaces over $(H,E\vert_H)$.
\end{proposition}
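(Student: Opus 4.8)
The plan is to prove the two projected spaces equivalent by showing directly, from the definition of equivalence, that they have exactly the same permissible test systems over $(H,E\vert_H)$. The central device will be a \emph{lifting} of test systems from $(H,E\vert_H)$ to $(M,E)$ whose permissibility is governed purely by $\mathcal M$, independently of the chosen projecting system. Note that $\mathcal M_\alpha$ and $\mathcal M_\beta$ are automatically adjusted and reduced, since they admit projecting systems (Definition \ref{def:ingl sistemaproyectante}).

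First I would attach to any test system $\mathcal T=\{(Y_{j-1},\tau_j)\}_{j=1}^{k}$ over $(H,E\vert_H)$ a canonical lift $\widehat{\mathcal T}$ over $(M,E)$, defined inductively: an open projection step of $\mathcal T$ is mirrored by the corresponding open projection of the ambient space $M$, and a blowing-up step with center $Y_{j-1}\subset H_{j-1}$ is lifted to the blowing-up of $M_{j-1}$ with the \emph{same} center $Y_{j-1}$, where $H_{j-1}$ is realized as the strict transform of $H$ inside $M_{j-1}$. This is well posed: at each stage the strict transform of $H$ is again a hypersurface of the transformed ambient space, and by Remark \ref{rk:centrospermitidos} a subset $Y\subset H_{j-1}$ has normal crossings with the divisor of $M_{j-1}$ if and only if it has normal crossings with the induced divisor of $H_{j-1}$. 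Hence $\widehat{\mathcal T}$ is a genuine test system over $(M,E)$ and, by construction, all its centers lie in the successive strict transforms of $H$.

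The key lemma I would then establish is: \emph{for any adjusted and reduced idealistic space $\mathcal M$, any hypersurface $(M,E,H)$ and any projecting system $\mathfrak S$ for $(\mathcal M,H)$, a test system $\mathcal T$ over $(H,E\vert_H)$ is permissible for the projected space $\mathfrak S_{\mathcal M,H}$ if and only if its lift $\widehat{\mathcal T}$ is permissible for $\mathcal M$}. The proof is an induction on the length $k$ of $\mathcal T$, the length-zero case being trivial. For the inductive step, applying the commutativity of Proposition \ref{prop: commutativityofprojections} along the truncation $\mathcal T^{k-1}$ gives $(\mathfrak S_{\mathcal M,H})^{(k-1)}=\mathfrak S^{(k-1)}_{\mathcal M^{(k-1)},H^{(k-1)}}$, so that the projected space after $k-1$ steps is again the projection of the transform $\mathcal M^{(k-1)}$. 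The final step is then controlled by the Basic Properties collected in Subsection \ref{Basic Properties of the Projections}: open projections preserve permissibility on both sides, while for a blowing-up the center $Y_{k-1}\subset H^{(k-1)}$ is permissible for $\mathfrak S^{(k-1)}_{\mathcal M^{(k-1)},H^{(k-1)}}$ exactly when it is a permissible center for $\mathcal M^{(k-1)}$ contained in $H^{(k-1)}$, which (since $Y_{k-1}\subset H^{(k-1)}$ already) is precisely the permissibility of $\widehat{\mathcal T}$ for $\mathcal M$. The decisive feature of this characterization is that its right-hand side, permissibility of $\widehat{\mathcal T}$ for $\mathcal M$, makes no reference to the projecting system $\mathfrak S$.

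Finally I would conclude. The lift $\widehat{\mathcal T}$ depends only on $\mathcal T$ and on $H$, hence it is the same test system over $(M,E)$ whether we test it against $\mathcal M_\alpha$ or $\mathcal M_\beta$. Since $\mathcal M_\alpha$ and $\mathcal M_\beta$ are equivalent, they have the same permissible test systems, so $\widehat{\mathcal T}$ is permissible for $\mathcal M_\alpha$ if and only if it is permissible for $\mathcal M_\beta$. Chaining the key lemma for $\mathfrak S^\alpha$ and for $\mathfrak S^\beta$ then yields that $\mathcal T$ is permissible for $\mathfrak S^\alpha_{\mathcal M_\alpha,H}$ if and only if it is permissible for $\mathfrak S^\beta_{\mathcal M_\beta,H}$; as $\mathcal T$ is arbitrary, the two projected spaces share all permissible test systems and are therefore equivalent. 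I expect the main obstacle to be bookkeeping rather than conceptual: verifying that the lift is well defined through the open-projection steps (restrictions to open subsets and projections over the first factor), matching open subsets of $M_{j-1}$ with those of $H_{j-1}$ and tracking the possible drop of length, together with the inductive use of the restriction-commutativity recorded in Equation \eqref{eq:ingl conmutatividadderestriccionaunabierto}. The geometric heart of the argument is entirely contained in the Basic Properties already proved.
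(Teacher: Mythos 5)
Your proposal is correct and takes essentially the approach the paper intends: the paper presents Proposition \ref{prop:equivalenciadeproyecciones} as a direct consequence of the Basic Properties of Subsection \ref{Basic Properties of the Projections}, and your lift $\mathcal T\mapsto\widehat{\mathcal T}$ of test systems from $(H,E\vert_H)$ to $(M,E)$, combined with induction on the length using Proposition \ref{prop: commutativityofprojections} and the coincidence of permissible centers, is exactly the fleshed-out form of that argument, with the decisive point (the lift is independent of the projecting system) correctly identified. The bookkeeping you flag (lifting open inclusions and first-factor projections, germ conventions, stability of the adjusted-reduced property along the induction) is routine and hides no gap.
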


\subsection{Projection of Idealistic Exponents}
Consider an adjusted and reduced idealistic exponent $\mathcal E$ over $(M,E)$.
In this subsection we construct an idealistic exponent
$\operatorname{pr}_{H}\mathcal E$
 over $(H,E\vert_H)$, that we call the {\em projected idealistic exponent of $\mathcal E$ over $(M,E,H)$}, satisfying the three properties stated in the beginning of this Part \ref{Projections of Idealistic Exponents}.

Take a point $P\in H$. By using suitable equations we can construct a rectifiable projecting chart
$$
\mathfrak c_{P}=(U_P,\xi_P)
$$
for $(M,E,H)$, where $P\in U_P\subset M_{H}$. Reducing the size of $U_P$, we also find an idealistic chart $\mathcal U_P=(U_P,E\cap U_P,\mathcal L_P)$ of $\mathcal E$ in such a way that the marked ideals of $\mathcal L_P$ are generated by global functions that have a decomposition as in Equation \eqref{eq:ingl descomposicionparalaproyeccion}. Finally, using the equations, we obtain a projecting system
$
\mathfrak S^P=(\mathfrak E_P,\mathfrak J_P)
$
for $(\mathcal U_P, H\cap U_P)$. Choosing a finite open covering of $M_H$ by open subsets $\{U_\alpha\}_{\alpha\in \Lambda}$, among the $U_P$, we get a finite family of projecting systems
$
\mathfrak S^\alpha
$
for $(\mathcal U_\alpha, H_\alpha)$,
where $H_\alpha=H\cap U_\alpha$ and each $\mathcal U_\alpha$ is an idealistic chart of $\mathcal E$, defined in $U_\alpha$.

Given two indices $\alpha,\beta\in \Lambda$, we have that
$
\mathcal U_{\alpha\beta}$
and
$
\mathcal U_{\beta\alpha}
$
are equivalent, since $\mathcal U_\alpha$ and $\mathcal U_\beta$ belong to $\mathcal E$ (we take the usual notations). Consider now the collection of idealistic charts
$$
\mathfrak S^\alpha_{\mathcal U_\alpha,H_\alpha}=\left(H_\alpha,E_\alpha\vert_{H_\alpha},\mathcal N_\alpha\right),\quad \alpha\in \Lambda
$$
In view of Proposition
\ref{prop:equivalenciadeproyecciones}, we conclude that the family $\{\mathfrak S^\alpha_{\mathcal U_\alpha,H_\alpha}\}_{\alpha\in\Lambda}$ is an idealistic atlas over $(H,E\vert_H)$. Even more, any other idealistic atlas obtained by this procedure is equivalent to it. In this way we define without ambiguity the projected idealistic exponent  $\operatorname{pr}_H\mathcal E$ of $\mathcal E$ over $(M,E,H)$.

We obtain the three required properties from the case of a single idealistic space presented in Subsection \ref{Basic Properties of the Projections}.

\section{Projections of $e$-Flowers  over the Divisor}
\label{Projection of an idealistic immersed exp-atlas over a component of the divisor}

Let $(M,E)$ be an ambient space and let $F$ be an irreducible component of the divisor $E$, thus,  we have a hypersurface  $(M,E,F)$.
Consider an adjusted and reduced $e$-flower $\mathcal F$ over $(M,E)$. In this section we build the {\em projection $\operatorname{pr}_F\mathcal F$}. It will be an idealistic $(e-1)$-flower over
$(F,E\vert_F)$, satisfying the three basic properties:
\begin{itemize}
\item $\operatorname{Sing}(\operatorname{pr}_F{\mathcal F})= F\cap \operatorname{Sing}{\mathcal F}$.
    \item The permissible centers of $\mathcal F$ contained in $F$ are exactly the permissible centers of $\operatorname{pr}_F{\mathcal F}$.
    \item We have the commutativity property $\operatorname{pr}_{F'}({\mathcal F'})=(\operatorname{pr}_F{\mathcal F})'$ for the transforms under an open projection or a blowing-up with a permissible center contained in $F$.
\end{itemize}
The uniqueness is assured. The existence comes from the case of idealistic exponents as we detail now.
Let us consider an immersed exp-idealistic $e$-atlas $\mathcal Q$ belonging to $\mathcal F$ given by
$$
\mathcal Q=\{
\mathcal W_\alpha
\}_{\alpha\in \Lambda},
\quad \mathcal W_\alpha=(M_\alpha,E_\alpha,N_\alpha,\mathcal E_\alpha).
$$
Take an index $\alpha\in \Lambda$, put  $F_\alpha=F\cap M_\alpha$. Recall that $N_\alpha$ is transverse to $F$ and hence
$$
(N_\alpha, E_\alpha\vert_{N_\alpha},F_\alpha\cap N_\alpha),\quad E_\alpha\vert_{N_\alpha}=E_\alpha\cap N_\alpha,
$$
is a hypersurface of $(N_\alpha,E_\alpha\vert_{N_\alpha})$. Note that $F_\alpha\cap N_\alpha$ is a disjoint union of irreducible components of $E_\alpha\vert_{N_\alpha}$. Recalling that $\mathcal Q$ is adjusted and reduced, we can project $\mathcal E_\alpha$ over $(N_\alpha, E_\alpha\vert_{N_\alpha},F_\alpha\cap N_\alpha)$ to obtain an $(e-1)$-dimensional idealistic exponent
$\operatorname{pr}_{F_\alpha\cap N_\alpha}{\mathcal E}_{\alpha}$ over
$(F_\alpha\cap N_\alpha, E_\alpha\vert_{F_\alpha\cap N_\alpha})$ and hence an immersed $(e-1)$-dimensional idealistic exponent
$$
\widetilde{\mathcal W}_\alpha=(F_\alpha, E_\alpha\vert_{F_\alpha}, F_\alpha\cap N_\alpha,\operatorname{pr}_{F_\alpha\cap N_\alpha}{\mathcal E}_{\alpha} ).
$$
Consider the following proposition, that can be proved by a systematic use of the three basic properties of the projections of idealistic exponents:
\begin{proposition}
\label{prop:porjectionofimmersedexpatlas}
The family $\widetilde{\mathcal Q}=\{\widetilde{\mathcal W}_\alpha\}_{\alpha\in \Lambda}$ is an $(e-1)$-dimensional immersed exp-idealistic atlas over the ambient space $(F,E\vert_F)$.
Moreover, the following properties hold:
\begin{enumerate}
	\item $\operatorname{Sing}(\widetilde{\mathcal Q})=F\cap \operatorname{Sing}({\mathcal Q})$.
	\item The permissible centers for $\widetilde{\mathcal Q}$ are exactly the  permissible centers  for $\mathcal Q$ contained in $F$.
	\item We have the commutativity $
	\widetilde{\mathcal Q'}=\widetilde{\mathcal Q}'$ under open projections and blowing-ps with permissible centers contained in $F$.
	\end{enumerate}
\end{proposition}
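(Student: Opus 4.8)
The plan is to reduce everything to the three basic properties of the projection $\operatorname{pr}_H$ for a single adjusted-reduced idealistic exponent, collected in Subsection \ref{Basic Properties of the Projections}, and then to glue over the covering. First I would check the structural requirements. For each $\alpha$, since $N_\alpha$ is transverse to the components of $E_\alpha$ and $F_\alpha\cap N_\alpha$ is a hypersurface of $N_\alpha$ of dimension $e-1$ (indeed a disjoint union of irreducible components of $E_\alpha\vert_{N_\alpha}$), the subset $F_\alpha\cap N_\alpha$ is a transverse closed ambient $(e-1)$-subspace of $(F_\alpha,E_\alpha\vert_{F_\alpha})$. Hence $\widetilde{\mathcal W}_\alpha$ is a genuine $(e-1)$-dimensional immersed idealistic exponent over $(F_\alpha,E_\alpha\vert_{F_\alpha})$, and $\operatorname{pr}_{F_\alpha\cap N_\alpha}\mathcal E_\alpha$ is well defined because this is an admissible (non-transverse) hypersurface of $(N_\alpha,E_\alpha\vert_{N_\alpha})$. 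Since $\{(M_\alpha,E_\alpha)\}_{\alpha}$ covers $(M,E)$, the family $\{(F_\alpha,E_\alpha\vert_{F_\alpha})\}_{\alpha}$ covers $(F,E\vert_F)$.

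Next I would record the single-chart versions of (1)--(3). Applying the basic properties to $\mathcal E_\alpha$ over $(N_\alpha,E_\alpha\vert_{N_\alpha})$ and the hypersurface $(N_\alpha,E_\alpha\vert_{N_\alpha},F_\alpha\cap N_\alpha)$, together with $\operatorname{Sing}\mathcal E_\alpha\subset N_\alpha$, Proposition \ref{prop:ingl lugarsingular de la proyeccion} gives $\operatorname{Sing}\widetilde{\mathcal W}_\alpha=F_\alpha\cap\operatorname{Sing}\mathcal W_\alpha$; Remark \ref{rk:centrospermitidos} identifies the permissible centers of $\widetilde{\mathcal W}_\alpha$ with the permissible centers of $\mathcal W_\alpha$ contained in $F_\alpha$; and Proposition \ref{prop: commutativityofprojections} yields the commutativity $\widetilde{\mathcal W_\alpha'}=(\widetilde{\mathcal W}_\alpha)'$ under open projections and permissible blowing-ups with center in $F$.

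The main step is the compatibility condition, namely that $\widetilde{\mathcal W}_\alpha\vert_{F_{\alpha\beta}}$ and $\widetilde{\mathcal W}_\beta\vert_{F_{\alpha\beta}}$ are equivalent. The hard part is that $N_\alpha$ and $N_\beta$ need not coincide, so the two projected exponents are supported on the different subspaces $F\cap N_\alpha$ and $F\cap N_\beta$, and Proposition \ref{prop:equivalenciadeproyecciones} cannot be applied verbatim. I would circumvent this by comparing permissible test systems, working over the ambient spaces $M$ and $F$ rather than over the $N$'s, which makes the difference between $N_\alpha$ and $N_\beta$ irrelevant. Since $\mathcal W_\alpha\vert_{M_{\alpha\beta}}$ and $\mathcal W_\beta\vert_{M_{\alpha\beta}}$ belong to $\mathcal Q$, they are equivalent over $(M_{\alpha\beta},E_{\alpha\beta})$ and hence share the same permissible test systems; in particular they agree on those whose centers lie in the successive strict transforms of $F$. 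By the single-chart properties above, such a test system over $M$ with centers in $F$ is permissible for $\mathcal W_\alpha$ exactly when the induced test system over $F$ is permissible for $\widetilde{\mathcal W}_\alpha$, and the commutativity propagates this correspondence through the whole sequence by induction on the length. The same dictionary holds for $\beta$, so the two projected charts have the same permissible test systems and are equivalent. By Proposition \ref{prop:ingl equivalencia de atlas sumergidos} this proves that $\widetilde{\mathcal Q}$ is an immersed exp-idealistic $(e-1)$-atlas over $(F,E\vert_F)$.

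Finally, properties (1), (2) and (3) for $\widetilde{\mathcal Q}$ follow by gluing the single-chart statements over the covering $\{F_\alpha\}$, using $M_\alpha\cap\operatorname{Sing}\mathcal Q=\operatorname{Sing}\mathcal W_\alpha$ and the local nature of singular loci, permissible centers and transforms already established for immersed idealistic atlases. I expect the only genuine obstacle to be the bookkeeping in the inductive test-system argument of the compatibility step: keeping track that at each stage the center lies in the current strict transform of $F$ and has normal crossings both in $M$ and in $F$ (Remark \ref{rk:centrospermitidos}), so that the lift of a test system over $F$ to a test system over $M$ is legitimate. The remaining verifications are routine consequences of the three basic properties.
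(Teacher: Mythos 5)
Your proposal is correct and is essentially the paper's own argument: the paper does not write out a proof, stating only that the proposition ``can be proved by a systematic use of the three basic properties of the projections of idealistic exponents,'' and your write-up is precisely that systematic use, carried out chart by chart and then glued. In particular, you correctly identify the one genuine subtlety --- that $N_\alpha$ and $N_\beta$ may differ on overlaps, so Proposition \ref{prop:equivalenciadeproyecciones} does not apply verbatim --- and your resolution via the test-system dictionary (lifting test systems over $F$ to test systems over $M$ with centers in the strict transforms of $F$, using the basic properties (2) and (3) inductively on the length) is exactly what the intended ``systematic use'' amounts to.
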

Now, it suffices to define $\operatorname{pr}_F\mathcal F$ to be the idealistic $(e-1)$-flower  over $(F,E\vert_F)$ given by the atlas $\widetilde{\mathcal Q}$.
\strut
\vfill
\pagebreak

\part{Maximal Contact}
In this part, we end the proof of Theorem \ref{th: redsingimmersedidealisticflowers}. In view of the results in the previous parts, it is enough to prove the statement  corresponding to the adjusted-reduced case, presented in Equation
\eqref{eq:ingl contacto maximal}  of Part
\ref{Guide for the Reduction of Singularities}. More precisely, we have to prove the following statement:
\begin{proposition}
	\label{prop: adjustedreducedcase}
	Assume that all the idealistic $(e-1)$-flowers have reduction of singularities. Consider an adjusted and reduced idealistic $e$-flower $\mathcal F$ over $(M,E)$. Then $\mathcal F$ has reduction of singularities.
\end{proposition}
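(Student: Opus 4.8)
The plan is to construct an idealistic $(e-1)$-flower $\mathcal H$ over $(M,E)$ having maximal contact with $\mathcal F$, that is, sharing with $\mathcal F$ the same permissible test systems. Granting such an $\mathcal H$, the conclusion is immediate: by the induction hypothesis RedSing($e-1$) there is a finite composition of permissible blowing-ups $\pi\colon (M',E')\to(M,E)$ with $\operatorname{Sing}\mathcal H'=\emptyset$; since this $\pi$ is a permissible test system for $\mathcal H$ and $\mathcal F$ has exactly the same permissible test systems, $\pi$ is permissible for $\mathcal F$ as well, and the transforms $\mathcal F'$ and $\mathcal H'$ are again equivalent, hence have equal singular locus. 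Therefore $\operatorname{Sing}\mathcal F'=\operatorname{Sing}\mathcal H'=\emptyset$ and $\pi$ is a reduction of singularities of $\mathcal F$. So the whole problem is the construction of $\mathcal H$.

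First I would put away the old components of the divisor from the singular locus. For each irreducible component $F$ of $E$, the projection of $e$-flowers over a divisor component provides the idealistic $(e-1)$-flower $\operatorname{pr}_F\mathcal F$ over $(F,E|_F)$ with $\operatorname{Sing}(\operatorname{pr}_F\mathcal F)=F\cap\operatorname{Sing}\mathcal F$, with coinciding permissible centers contained in $F$, and commuting with transforms (Proposition \ref{prop:porjectionofimmersedexpatlas}). By RedSing($e-1$) the flower $\operatorname{pr}_F\mathcal F$ admits a reduction of singularities; lifting it through these three properties yields permissible blowing-ups for $\mathcal F$ after which the strict transform of $F$ no longer meets the singular locus. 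Iterating over the finitely many components of $E$, and using the stability of the adjusted–reduced condition under permissible transforms (Proposition \ref{prop:ingl estabilidadajustado}), I reduce to the case where $\operatorname{Sing}\mathcal F$ is disjoint from the old divisor, the only components possibly meeting the singular locus being the new (exceptional) ones.

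Next comes the local maximal contact. Represent $\mathcal F$ by an immersed exp-idealistic $e$-atlas $\{\mathcal W_\alpha^e=(M_\alpha,E_\alpha,N_\alpha,\mathcal E_\alpha)\}_{\alpha\in\Lambda}$. Working inside the $e$-dimensional ambient $(N_\alpha,E_\alpha|_{N_\alpha})$, whose old divisor is now disjoint from $\operatorname{Sing}\mathcal E_\alpha$, I would apply the classical Tschirnhaus transformation \cite{Aro-H-V}: because $\mathcal E_\alpha$ is adjusted ($\delta=1$) and reduced ($\dim\operatorname{Sing}\le e-2$), a local generator realizing $\delta=1$ produces a coordinate $z$ whose zero set $H_\alpha=(z=0)$ is a hypersurface of maximal contact, transverse to the divisor, with projecting axis $\xi=\partial/\partial z$ tangent to $E^*$. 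Forming the projecting system and its projected idealistic exponent $\operatorname{pr}_{H_\alpha}\mathcal E_\alpha$ over $H_\alpha$, the basic properties of the projections give $\operatorname{Sing}(\operatorname{pr}_{H_\alpha}\mathcal E_\alpha)=H_\alpha\cap\operatorname{Sing}\mathcal E_\alpha=\operatorname{Sing}\mathcal E_\alpha$ (the last equality being maximal contact, $\operatorname{Sing}\mathcal E_\alpha\subset H_\alpha$), coincidence of permissible centers, and commutativity with all permissible transforms. These are exactly the data that make the immersed $(e-1)$-exponent $(N_\alpha,E_\alpha|_{N_\alpha},H_\alpha,\operatorname{pr}_{H_\alpha}\mathcal E_\alpha)$ equivalent to $\mathcal E_\alpha$ inside $N_\alpha$; by the change-of-codimension result of Subsection \ref{subsec: change of codimension} this equivalence persists when the ambient is enlarged from $N_\alpha$ to $M_\alpha$, so the immersed $(e-1)$-exponent $\mathcal W_\alpha=(M_\alpha,E_\alpha,H_\alpha,\operatorname{pr}_{H_\alpha}\mathcal E_\alpha)$ is equivalent to $\mathcal W_\alpha^e$.

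Finally I would glue. On an overlap one has the chain $\mathcal W_\alpha|_{M_{\alpha\beta}}\sim\mathcal W_\alpha^e|_{M_{\alpha\beta}}\sim\mathcal W_\beta^e|_{M_{\alpha\beta}}\sim\mathcal W_\beta|_{M_{\alpha\beta}}$, where the middle equivalence is the compatibility of the atlas $\{\mathcal W_\alpha^e\}$ and the outer ones are the local equivalences just obtained; by transitivity $\mathcal W_\alpha$ and $\mathcal W_\beta$ are compatible, so $\mathcal Q=\{\mathcal W_\alpha\}_{\alpha\in\Lambda}$ is an immersed exp-idealistic $(e-1)$-atlas. The chartwise equivalences, together with the local character of permissibility (Proposition \ref{prop:ingl equivalencia de atlas sumergidos}), show that $\mathcal Q$ and $\{\mathcal W_\alpha^e\}$ have the same permissible test systems; hence the $(e-1)$-flower $\mathcal H=[\mathcal Q]$ has maximal contact with $\mathcal F$, completing the construction. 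The decisive feature making the gluing possible is that equivalent immersed exponents may be carried by different subspaces — here the local maximal contact hypersurfaces $H_\alpha$ need not agree on overlaps — which is precisely the flexibility the flower formalism is designed to supply. I expect the main obstacle to be the local Tschirnhaus step in the presence of the divisor, namely producing a maximal contact hypersurface that is simultaneously transverse to the divisor and carries an admissible projecting axis tangent to $E^*$, together with guaranteeing termination of the preliminary removal of the old divisor.
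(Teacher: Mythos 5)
Your inventory of ingredients is the right one, and your first and last paragraphs (reduction to finding a maximal contact $(e-1)$-flower via Remark \ref{rk: florescontactomaximal}, the gluing of the charts $(M_\alpha,E_\alpha,H_\alpha,\operatorname{pr}_{H_\alpha}\mathcal E_\alpha)$ by chains of equivalences and change of codimension) coincide with the paper's argument (Proposition \ref{prop: maximalcontact} and Proposition \ref{prop:maximalcontacwithdivisor}). But the order of your two main steps is inverted with respect to the paper, and this is not a stylistic difference: it opens a genuine gap, the very one you flag in your last sentence without closing it. You blow up first to separate the old components of $E$ from the singular locus, and only afterwards apply Tschirnhaus to produce the local hypersurfaces $H_\alpha$. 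After the separation, the divisor meeting $\operatorname{Sing}\mathcal F'$ consists of the new exceptional components $D'$, which are not empty; your assertion that the Tschirnhaus hypersurface $H_\alpha=(z=0)$ is ``transverse to the divisor'' is exactly what fails in general. Proposition \ref{prop:maximalcontact sin divisor} is stated, and is only true in this naive form, for an ambient space $(M,\emptyset)$: Weierstrass preparation plus completing the $d$-th power gives no control on the position of $(z=0)$ relative to a pre-existing normal crossings divisor. Without that transversality, $(M_\alpha,E_\alpha,H_\alpha)$ is not a transverse ambient subspace, so neither the projection $\operatorname{pr}_{H_\alpha}\mathcal E_\alpha$ nor the immersed $(e-1)$-chart built from it is even defined. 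This is the classical globalization obstacle of maximal contact theory, not a removable technicality.

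The paper resolves it by reversing your two steps. It first constructs the system $\mathfrak H$ of maximal contact hypersurfaces associated to the splitting $E=E^*\cup D$ with $D=\emptyset$: the Tschirnhaus construction is carried out as if the divisor were empty, so no transversality with $E$ is required at that stage. Only then are the separation blowing-ups of Proposition \ref{prop:separer viejas components} performed, and $\mathfrak H$ is transported along them by strict transform: since every permissible center lies in the singular locus, hence inside each $H_\alpha$, the strict transforms $H'_\alpha$ automatically have normal crossings with the newly created exceptional components, all of which are placed in $D'$; so the transformed family is again a system of maximal contact hypersurfaces for the transformed splitting. Restricting to a neighbourhood $U$ of $\operatorname{Sing}\mathcal F'$ with $U\cap {E'}^*=\emptyset$, one lands precisely in the hypotheses of Proposition \ref{prop:maximalcontacwithdivisor} (splitting with $E^*=\emptyset$ and $D=E'\cap U$), and from there the gluing you describe goes through verbatim. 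To repair your proof, construct the local maximal contact hypersurfaces \emph{before} the separation step and carry them through the blowing-ups as strict transforms, instead of trying to produce them afterwards in the presence of the exceptional divisor.
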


We start by recalling the following definition of {\em maximal contact} in terms of idealistic flowers:

\begin{definition}
	\label{def:contactomaximalflowers}
	Let $\mathcal F$ be an idealistic $e$-flower over an ambient space $(M,E)$. We say that an idealistic $(e-1)$-flower $\mathcal H$ over $(M,E)$ has  {\em maximal contact with $\mathcal F$} if $\mathcal F$ and $\mathcal H$ are equivalent as idealistic flowers.
\end{definition}

Let us also recall that being equivalent means that the two idealistic flowers $\mathcal F$ and $\mathcal H$ have the same permissible test systems.

\begin{remark}
	\label{rk: florescontactomaximal}
	The following ones are direct consequences of the definition: \begin{enumerate}
		\item  If $\mathcal H_1$ and $\mathcal H_2$ have maximal contact with $\mathcal F$, then $\mathcal H_1=\mathcal H_2$.
		\item If there is $\mathcal H$ having maximal contact with $\mathcal F$, then $\mathcal F$ is adjusted and reduced. It is reduced since the codimension of the singular locus is greater than or equal to $e-2$. Let us see that it is adjusted. If the order at a point is greater than one, we can find a permissible test system that produces a codimension $n-e+1$ singular locus for $\mathcal F$, hence for $\mathcal H$, by a curve-divisor procedure.
		\item Any reduction of singularities of $\mathcal H$ induces a reduction of singularities of $\mathcal F$.
	\end{enumerate}
\end{remark}

Thus, in order to obtain a proof of Proposition \ref{prop: adjustedreducedcase}, and hence a proof of Theorem \ref{th: redsingimmersedidealisticflowers}, it is enough to prove the following statement:

\begin{proposition}
	\label{prop: maximalcontact}
	Assume that all the idealistic $(e-1)$-flowers have reduction of singularities.
	Consider an adjusted and reduced idealistic $e$-flower $\mathcal F$ over $(M,E)$. We can find a morphism
	$$
	\sigma:(M',E')\rightarrow (M,E),
	$$
	composition of a finite sequence of permissible blowing-ups such that there is an idea\-listic $(e-1)$-flower $\mathcal H'$ over $(M',E')$  having maximal  contact with  the transform $\mathcal F'$ of $\mathcal F$ by $\sigma$.
\end{proposition}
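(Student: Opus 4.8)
The plan is to build the idealistic $(e-1)$-flower $\mathcal H'$ out of \emph{local} maximal contact hypersurfaces, projecting the transform of $\mathcal F$ onto them with the machinery of Part~\ref{Projections of Idealistic Exponents}, and to let the flower formalism absorb the classical non-uniqueness of the maximal contact. Concretely, I would represent $\mathcal F$ by an immersed exp-idealistic $e$-atlas $\mathcal Q=\{\mathcal W_\alpha\}_{\alpha\in\Lambda}$, with $\mathcal W_\alpha=(M_\alpha,E_\alpha,N_\alpha,\mathcal E_\alpha)$ and each $\mathcal E_\alpha$ adjusted and reduced, using the bijection of Proposition~\ref{porp:ingl apilamientosenterminosdeexponentes}. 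In each chart I will produce a maximal contact hypersurface $H_\alpha\subset N_\alpha$, form the projected exponent $\operatorname{pr}_{H_\alpha}\mathcal E_\alpha$ over $(H_\alpha,E_\alpha\vert_{H_\alpha})$, and assemble the immersed $(e-1)$-dimensional charts $\mathcal W^H_\alpha=(M_\alpha,E_\alpha,H_\alpha,\operatorname{pr}_{H_\alpha}\mathcal E_\alpha)$ into an atlas defining $\mathcal H'$. The whole argument is driven by the three basic properties collected in Subsection~\ref{Basic Properties of the Projections}.

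The blow-ups composing $\sigma$ come from a first step: putting the old components of $E$ away from the singular locus, so that the Tschirnhaus construction below can be carried out without interference from the preexisting divisor. For each irreducible component $F$ of the starting divisor I would form the projected $(e-1)$-flower $\operatorname{pr}_F\mathcal F$ of Section~\ref{Projection of an idealistic immersed exp-atlas over a component of the divisor}; by the induction hypothesis $\mathrm{RedSing}(e-1)$ it admits a reduction of singularities, and by the commutativity and permissibility properties of Proposition~\ref{prop:porjectionofimmersedexpatlas} the corresponding sequence of blow-ups lifts to $\mathcal F$ and yields $F'\cap\operatorname{Sing}\mathcal F'=\emptyset$ — this is exactly the ``key observation'' stated at the beginning of Part~\ref{Projections of Idealistic Exponents}. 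Iterating over the finitely many old components, and noting that once a component is cleared it stays disjoint from the singular locus because every later permissible center lies in $\operatorname{Sing}\mathcal F'$, I reduce to the situation where $\operatorname{Sing}\mathcal F'$ meets only new (exceptional) components of $E'$. The transform $\mathcal F'$ remains adjusted and reduced by the stability results of the previous parts, and from now on I work over $(M',E')$, so that $\sigma$ is precisely this sequence of blow-ups and no further ones are needed.

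Next comes the local construction. Near a singular point only exceptional components of the divisor survive, and these are respected by the projecting axis, which is built from vector fields in $\Theta_{M}[\log E^*]$; this is what lets the classical Tschirnhaus transformation produce locally, and with empty proper divisor on it, a maximal contact hypersurface $H_\alpha\subset N_\alpha$ satisfying $\operatorname{Sing}\mathcal E_\alpha\subset H_\alpha$ and having normal crossings with $E_\alpha\vert_{N_\alpha}$. Projecting, Proposition~\ref{prop:ingl lugarsingular de la proyeccion} gives $\operatorname{Sing}(\operatorname{pr}_{H_\alpha}\mathcal E_\alpha)=H_\alpha\cap\operatorname{Sing}\mathcal E_\alpha=\operatorname{Sing}\mathcal E_\alpha$, the permissible centers on $H_\alpha$ coincide with those of $\mathcal E_\alpha$, and Proposition~\ref{prop: commutativityofprojections} supplies the commutativity with open projections and permissible blow-ups. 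Hence each $\mathcal W^H_\alpha$ has the same singular locus, the same permissible centers and the same transforms as $\mathcal W_\alpha$; by induction on the length of a test system the two then have identical permissible test systems, which is what the maximal contact condition of Definition~\ref{def:contactomaximalflowers} and Remark~\ref{rk: florescontactomaximal} requires.

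The hard part will be the \emph{gluing}: the hypersurfaces $H_\alpha$ are genuinely non-unique and need not agree on the overlaps $M_{\alpha\beta}$, so the charts $\mathcal W^H_\alpha$ are not restrictions of a single object and one must verify directly the compatibility condition of an immersed atlas. This is exactly the point for which the flower language was introduced, and it is resolved by Proposition~\ref{prop:equivalenciadeproyecciones}: over $M_{\alpha\beta}$ the exponents $\mathcal E_\alpha$ and $\mathcal E_\beta$ are equivalent, hence their projections $\operatorname{pr}_{H_\alpha}\mathcal E_\alpha$ and $\operatorname{pr}_{H_\beta}\mathcal E_\beta$ are equivalent idealistic exponents even though $H_\alpha\neq H_\beta$, and the change-of-codimension statement of Subsection~\ref{subsec: change of codimension} upgrades this to the equivalence of the immersed charts $\mathcal W^H_\alpha$ and $\mathcal W^H_\beta$ over $M_{\alpha\beta}$. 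Thus $\{\mathcal W^H_\alpha\}_{\alpha\in\Lambda}$ satisfies the compatibility property, defines an immersed exp-idealistic $(e-1)$-atlas and hence an $(e-1)$-flower $\mathcal H'$ over $(M',E')$; together with the identity of permissible test systems established above, this $\mathcal H'$ has maximal contact with $\mathcal F'$, which is what we want to prove.
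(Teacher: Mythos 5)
Your architecture (clear the old components via $\operatorname{pr}_F$ and RedSing($e-1$), produce local maximal contact hypersurfaces by Tschirnhaus, project onto them, and let the flower formalism absorb the non-uniqueness) is the paper's architecture, but you have inverted the order of the two main steps, and the inversion creates a genuine gap. In the paper, the system of maximal contact hypersurfaces $\mathfrak H$ is constructed \emph{first}, on the original $(M,E)$, relative to the splitting $E=E^*\cup D$ with $D=\emptyset$: since the maximal contact condition in a system only involves the $D$-part of the divisor, Proposition \ref{prop:maximalcontact sin divisor} (which requires an \emph{empty} divisor) literally applies. The hypersurfaces are then carried through $\sigma$ by strict transform, and it is the stability of maximal contact under permissible test systems that guarantees that the transported hypersurfaces have normal crossings with (and are transverse to) the exceptional components accumulating in $D'$; only then is Proposition \ref{prop:maximalcontacwithdivisor} invoked over $U$. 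You instead blow up first and then construct $H_\alpha$ from scratch near the singular points of $\mathcal F'$, claiming that Tschirnhaus automatically yields a hypersurface with normal crossings with the surviving exceptional components. That claim is not covered by Proposition \ref{prop:maximalcontact sin divisor}, and it is not a local, intrinsic fact: for the adjusted and reduced chart given by $(x^2+y^5,\,2)$ over $((\mathbb C^2,0),\,(x=0))$, the only maximal contact curve is $(x=0)$ itself, so no maximal contact hypersurface transverse to the divisor exists at all. Hence your construction genuinely needs the hypothesis that the components through the singular points are exceptional components of permissible blow-ups, and turning that into a proof is the nontrivial classical statement that, at points where the order has not dropped, Weierstrass preparation can be performed in a direction tangent to all exceptional components (transversality of the directrix to the exceptional locus) --- a theorem the paper deliberately avoids by its construct-first, transport-after ordering. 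Your appeal to $\Theta_{M}[\log E^*]$ is a non sequitur here: the logarithmic condition on projecting axes concerns the projection $\operatorname{pr}_H$, not the production of $H$.

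A secondary inaccuracy lies in the gluing: you invoke Proposition \ref{prop:equivalenciadeproyecciones}, but that proposition compares projections of two equivalent idealistic spaces onto the \emph{same} hypersurface $H$; it says nothing about $\operatorname{pr}_{H_\alpha}\mathcal E_\alpha$ versus $\operatorname{pr}_{H_\beta}\mathcal E_\beta$ when $H_\alpha\ne H_\beta$ (these even live over different ambient spaces, so ``equivalent idealistic exponents'' is not defined for them). The correct gluing, as in the proof of Proposition \ref{prop:maximalcontacwithdivisor}, goes through the chartwise equivalences $\mathcal W_\alpha\sim\widetilde{\mathcal W}_\alpha$ furnished by Remark \ref{rk:expidealisticodelcontactomaximal} together with the change of codimension of Subsection \ref{subsec: change of codimension}, and then transitivity over $M_{\alpha\beta}$. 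This part of your argument is repairable along those lines; the ordering problem above is the real gap.
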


We are going to prove Proposition \ref{prop: maximalcontact} in several steps.
The first step is to show how to separate the ``old components of the divisor'' from the singular locus. The second step is to find ``maximal contact hypersurfaces'', obtained from Tschirnhaus' coordinate changes, when we have an empty divisor. In this way, we are done in the special case when  $E=\emptyset$. Thanks to the first step, after finitely many permissible blowing-ups, we eliminate the old components and using the hypersurfaces obtained for the case of an empty divisor, we get the idealistic $(e-1)$-flower $\mathcal H'$ that gives maximal contact with $\mathcal F'$.

\section{Separating Old Components}
Here we  separate ``old components'' of the divisor from the singular locus. To manage the idea of ``old component'', we consider {\em splittings}
$$
E=E^*\cup D,
$$
where both $E^*$ and $D$ are unions of disjoint sets of irreducible components of $E$. The divisor $E^*$ will stand for the ``old components''.

Assume that $\pi:(M',E')\to (M,E)$ is the blowing-up of $(M,E)$ with a center $Y$ having normal crossings with $E$. We know that
$$
E'=\pi^{-1}(E\cup Y).
$$
The \emph{transformed splitting} $E'={E'}^* \cup D'$ of $E=E^*\cup D$
is given by
taking ${E'}^*$ to be the strict transform of $E^*$ by $\pi$ and $D'=\pi^{-1}(D\cup Y)$.

If we have an open projection $\sigma:(M',E')\rightarrow (M,E)$, the {\em transformed splitting} is
$
E'={E'}^*\cup D'
$,
where ${E'}^*=\sigma^{-1}(E^*)$ and $D'=\sigma^{-1}(D)$.

\begin{proposition}
	\label{prop:separer viejas components}
	Consider an adjusted and reduced idealistic $e$-flower $\mathcal F$ over $(M,E)$. Take a splitting $E=E^*\cup D$. Assume that any idealistic $(e-1)$-flower has reduction of singularities.
	There is a composition $\sigma:(M',E')\rightarrow (M,E)$ of a finite sequence of permissible blowing-ups such that
	$$
	{E'}^*\cap \operatorname{Sing}\mathcal F'=\emptyset,
	$$
	where $\mathcal F'$ is the transform of $\mathcal F$ by $\sigma$ and $E'={E'}^*\cup D'$ is the transformed splitting of $E=E^*\cup D$ by $\sigma$.
\end{proposition}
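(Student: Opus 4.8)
The plan is to remove the old components one at a time, using the projection $\operatorname{pr}_F\mathcal F$ of an adjusted and reduced $e$-flower over an irreducible component $F$ of the divisor, built in Section \ref{Projection of an idealistic immersed exp-atlas over a component of the divisor}, together with the standing hypothesis that every $(e-1)$-flower admits reduction of singularities. Write $E^*=F_1\cup F_2\cup\cdots\cup F_r$ as the disjoint union of its irreducible components. It is enough to establish two facts and then iterate: (a) a \emph{single-component separation}, producing permissible blowing-ups after which the strict transform of a prescribed old component meets the singular locus in the empty set; and (b) a \emph{preservation} property, ensuring that separating one component never spoils the components already separated. Throughout I would use that the adjusted and reduced character is stable under permissible test systems, so that every intermediate flower is again adjusted and reduced and the projection construction keeps applying.

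For the single-component step, let $F$ be (the current strict transform of) one of the $F_i$; it is an irreducible component of the current divisor, so the projected $(e-1)$-flower $\operatorname{pr}_F\mathcal F$ over $(F,E\vert_F)$ is defined. By the inductive hypothesis it has a reduction of singularities, that is, a finite sequence of permissible blowing-ups of $(F,E\vert_F)$ making its singular locus empty. By the second basic property of the projection (see Proposition \ref{prop:porjectionofimmersedexpatlas} and Remark \ref{rk:centrospermitidos}), each of these centers is a permissible center for $\mathcal F$ contained in $F$ and has normal crossings with $E$ in $M$; hence I can lift the whole sequence to permissible blowing-ups of $(M,E)$. The commutativity $(\operatorname{pr}_F\mathcal F)'=\operatorname{pr}_{F'}(\mathcal F')$ makes the projected flower transform in step with $\mathcal F$, so that after the last blowing-up the first basic property yields
$$
F'\cap\operatorname{Sing}\mathcal F'=\operatorname{Sing}(\operatorname{pr}_{F'}\mathcal F')=\emptyset,
$$
where $F'$ is the strict transform of $F$.

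The hard part will be the preservation property, which is exactly why I treat the components in succession rather than all at once. Suppose $F_j'\cap\operatorname{Sing}\mathcal F'=\emptyset$ for an already-separated component. Any permissible center $Y$ arising in a later step is, by definition of permissibility, contained in $\operatorname{Sing}\mathcal F'$, hence disjoint from $F_j'$; therefore the corresponding blowing-up is an isomorphism in a neighbourhood of $F_j'$, identifying the strict transform of $F_j'$ with $F_j'$ and the nearby part of $\operatorname{Sing}\mathcal F''$ with that of $\operatorname{Sing}\mathcal F'$, so the emptiness of the intersection persists. Applying the single-component separation to $F_1,F_2,\ldots,F_r$ in turn and invoking preservation at each stage, the composite morphism $\sigma$ satisfies
$$
{E'}^*\cap\operatorname{Sing}\mathcal F'=\bigcup_{i=1}^r\bigl(F_i'\cap\operatorname{Sing}\mathcal F'\bigr)=\emptyset,
$$
since ${E'}^*$ is by construction the union of the strict transforms $F_i'$ of the old components. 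This is the desired conclusion.
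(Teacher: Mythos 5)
Your proof is correct and follows essentially the same route as the paper: project $\mathcal F$ over a single old component $F$, apply the inductive hypothesis to $\operatorname{pr}_F\mathcal F$, and use the three basic properties (singular locus, permissible centers, commutativity) to lift the reduction of singularities and conclude $F'\cap\operatorname{Sing}\mathcal F'=\emptyset$. The only difference is one of detail: the paper simply asserts ``it is enough to deal with the case when $E^*=F$ is a single component,'' whereas you make the underlying iteration explicit by proving the preservation property (permissible centers lie in the singular locus, hence are disjoint from already-separated components, so those blowing-ups are isomorphisms near them), which is exactly the justification the paper leaves implicit.
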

\begin{proof}
It is enough to deal with the case when $E^*=F$ is a single component of the exceptional divisor. Let $\operatorname{pr}_F\mathcal F$ be the projection of $\mathcal F$ over $F$ as constructed in
Section
\ref{Projection of an idealistic immersed exp-atlas over a component of the divisor}. Thanks to  our hypothesis on the existence of reduction of singularities for idealistic $(e-1)$-flowers, we can take a reduction of singularities of $\operatorname{pr}_F\mathcal F$.
In view of the three properties stated in Section
\ref{Projection of an idealistic immersed exp-atlas over a component of the divisor}, this reduction of singularities allows us to obtain the situation $F'\cap \operatorname{Sing}\mathcal F'=\emptyset$, as desired.
\end{proof}

\section{Maximal Contact Hypersurfaces}

Maximal contact hypersurfaces are given by the following definition:
\begin{definition}
	\label{def:ingl contactomaximal}
	Let $\mathcal E$ be an idealistic exponent over $(M,E)$ and let $(M,E,H)$ be a tranverse hypersurface of $(M,E)$. We say that $(M,E,H)$ has {\em ma\-ximal contact with $\mathcal E$} if for any $\mathcal E$-permissible test system $\mathcal S$, we have the following properties:
	\begin{enumerate}
		\item The singular locus $\operatorname{Sing}\mathcal E'$ of the transform $\mathcal E'$ of $\mathcal E$ by $\mathcal S$ has codimension greater than or equal to two.
		\item $\operatorname{Sing}\mathcal E'\subset H'$, where $H'$ is the strict transform of $H$ by $\mathcal S$.
	\end{enumerate}
\end{definition}
Note that the strict transform $(M',E',H')$ of $(M,E,H)$ under the $\mathcal E$-permissible test system $\mathcal S$ has also maximal contact with $\mathcal E'$.
\begin{remark}
By a similar argument to the one in Remark \ref{rk: florescontactomaximal}, we have that the idealistic exponent $\mathcal E$ is necessarily adjusted and reduced, otherwise there is no maximal contact hypersurface.
\end{remark}
\begin{remark}
\label{rk:expidealisticodelcontactomaximal}
Let $\mathcal E$ be an idealistic exponent over an $n$-dimensional ambient space $(M,E)$. Denote by $\mathcal F$ the idealistic $n$-flower over $(M,E)$ defined by the immersed exp-idealistic $n$-chart
	$$
	\mathcal W=(M,E, M,\mathcal E).
	$$
	Assume that $(M,E,H)$ is a hypersurface of $(M,E)$ having maximal contact with $\mathcal E$. We can project $\mathcal E$ over $(M,E,H)$ to obtain an idealistic exponent
	$$
	\widetilde{\mathcal E}=\operatorname{pr}_{H}(\mathcal E)
	$$
	over $(H,E\vert_H)$. This gives to us an immersed exp-idealistic $(n-1)$-chart
	$$
	\widetilde{\mathcal W}=(M,E, H,\widetilde{\mathcal E})
	$$
	that defines an idealistic $(n-1)$-flower $\widetilde{\mathcal F}$ over $(M,E)$. Then  $\mathcal F$ and $\widetilde{\mathcal F}$ are equivalent idealistic flowers. In other words, we have that $\widetilde{\mathcal F}$ has maximal contact with $\mathcal F$, see Definition \ref{def:contactomaximalflowers}.
\end{remark}

\begin{remark}
\label{rk:tschirnhaus}
	The following one is the basic example in the Theory of Maximal Contact. Take $M=(\mathbb C^n,0)$, with coordinates $\mathbf x,z$, such that  $E\subset\left(\prod_{i=1}^{n-1}x_i=0\right)$. Assume that the adjusted and reduced idealistic exponent $\mathcal E$ contains an idealistic chart with a list having the marked ideal
	$
	(f\mathcal O_{\mathbb C^n,0}, d),
	$
	where $\nu_0f=d$ and $f$
	is written as
	\begin{equation}
\label{eq:tschirnhaus}
	f=z^d+\sum_{i=2}^dg_i(\mathbf x)z^{d-i}.
	\end{equation}
	The hypersurface of maximal contact is $H=(z=0)$. The proof of this statement is founded in the classical behaviour of the Tschirnhaus form of $f$ given in Equation \eqref{eq:tschirnhaus}. More precisely, one sees immediately that the singular locus is contained in $z=0$; moreover, the Tschirnhaus form of $f$ is stable under open projections and permissible blowing-ups at the points of the singular locus.
\end{remark}

\subsection{Maximal Contact Without Divisor}In this subsection, we recall the basic result in the Theory of Maximal Contact:

\begin{proposition}
	\label{prop:maximalcontact sin divisor}
	Let $\mathcal E$ be an adjusted and reduced idealistic exponent over the ambient space $(M,\emptyset)$.  For any $P\in \operatorname{Sing}\mathcal E$, there are an open set $U\subset M$ with $P\in U$ and a closed hypersurface $(U,\emptyset,H)$ having maximal contact with $\mathcal E\vert_U$.
\end{proposition}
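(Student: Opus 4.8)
The plan is to reduce the statement to the classical Tschirnhaus situation recalled in Remark \ref{rk:tschirnhaus}, applied to a single, well-chosen marked ideal, and then to transfer the resulting maximal contact from that marked ideal to the whole idealistic exponent $\mathcal E$. First I would fix a local representative of $\mathcal E$: shrinking around $P$, choose an idealistic chart $\mathcal M_0=(U,\emptyset,\mathcal L)$ belonging to $\mathcal E$, with $\mathcal L=\{(I_j,d_j)\}$. Since $\mathcal E$ is adjusted we have $\delta_P\mathcal E=1$, so the minimum defining the order is attained: there is an index $j_0$ with $\nu_P I_{j_0}=d_{j_0}$. Put $d=d_{j_0}$ and let $f$ be a local generator of $I_{j_0}$ on $U$, so that $\nu_P f=d$. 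I will work with the auxiliary idealistic space $\mathcal M_f=(U,\emptyset,\{(f,d)\})$ consisting of this single marked ideal.

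Second I would put $f$ into Tschirnhaus form. As $\nu_P f=d$, the degree-$d$ part of $f$ at $P$ is nonzero; after a linear change of coordinates I may assume $f$ is $z$-regular of order $d$ in a coordinate $z$ completing a system $\mathbf x,z$ on $U$. By the Weierstrass Preparation Theorem, $f=u\cdot(z^d+a_1(\mathbf x)z^{d-1}+\cdots+a_d(\mathbf x))$ with $u$ a unit and $\nu_0 a_i\geq i$; since $u$ is a unit I may replace $f$ by the Weierstrass polynomial. The Tschirnhaus substitution $z\mapsto z-a_1(\mathbf x)/d$ then removes the $z^{d-1}$ term, so (after shrinking $U$)
\[
f=z^d+\sum_{i=2}^d g_i(\mathbf x)\,z^{d-i}.
\]
This is exactly the form required in Remark \ref{rk:tschirnhaus}, and I take $H=(z=0)$; since $E=\emptyset$, the hypersurface $(U,\emptyset,H)$ is automatically transverse.

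Third I would transfer maximal contact from $\mathcal M_f$ to $\mathcal E\vert_U$. By Remark \ref{rk:tschirnhaus}, the Tschirnhaus form is stable under open projections and permissible blowing-ups centered in the singular locus, so along every $\mathcal M_f$-permissible test system the singular locus of the transform of $\mathcal M_f$ remains contained in the strict transform of $H$; concretely, $\partial^{d-1}f/\partial z^{d-1}=d!\,z$, so every point of multiplicity $\geq d$ lies in $(z=0)$, and this persists after transformation. Now $(f,d)$ is one entry of $\mathcal L$, hence $\operatorname{Sing}\mathcal E\vert_U=\bigcap_j\operatorname{Sing}(I_j,d_j)\subseteq\operatorname{Sing}\mathcal M_f$, and the same inclusion is preserved under transforms, because the controlled transform of the full list still contains the controlled transform of $(f,d)$. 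Consequently every $\mathcal E\vert_U$-permissible test system $\mathcal S$ is $\mathcal M_f$-permissible (its centers lie in $\operatorname{Sing}\mathcal E'\subseteq\operatorname{Sing}\mathcal M_f'$), and therefore $\operatorname{Sing}(\mathcal E\vert_U)'\subseteq\operatorname{Sing}\mathcal M_f'\subseteq H'$, which is condition (2) of Definition \ref{def:ingl contactomaximal}. Condition (1), that $\operatorname{Sing}(\mathcal E\vert_U)'$ has codimension at least two, follows from the stability of the adjusted and reduced property under permissible test systems.

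The genuinely delicate point is the stability of the Tschirnhaus form, i.e.\ the assertion that $H=(z=0)$ keeps containing the high-multiplicity singular locus after arbitrary permissible blowing-ups and open projections; this is the classical core of Maximal Contact Theory and I would invoke it through Remark \ref{rk:tschirnhaus} rather than reprove it. The remaining novelty here is purely bookkeeping: passing from a single marked ideal, where the classical statement lives, to the idealistic exponent $\mathcal E$, using that the singular locus of a list is contained in the singular locus of any of its members and that this containment is respected by controlled transforms.
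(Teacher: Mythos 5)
Your proof is correct and takes essentially the same route as the paper's own proof: use adjustedness to find a marked ideal with $\nu_P I = d$, apply Weierstrass preparation and the Tschirnhaus substitution to obtain $f=z^d+\sum_{i=2}^d g_i(\mathbf x)z^{d-i}$, take $H=(z=0)$, and conclude via Remark \ref{rk:tschirnhaus}. The only difference is that you make explicit the transfer from the single marked ideal $(f,d)$ to the whole exponent (via the inclusion of singular loci preserved under controlled transforms), a step the paper absorbs into Remark \ref{rk:tschirnhaus} itself, which is stated for an exponent whose chart has a list \emph{containing} the marked ideal $(f,d)$.
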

\begin{proof} Up to take a smaller open subset if necessary, we can assume that there is an idealistic chart belonging to $\mathcal E$ of the form
	$$
	(M,\emptyset,\mathcal L),\quad \mathcal L=\{(I,d)\}\cup \{(I_j,d_j)\}_{j=2}^k.
	$$
	such that $\nu_PI=d$. In view of Weierstrass Preparation Theorem, we can choose local coordinates $\mathbf x,y$ around $P$ and a generator $f$ of $I_P\subset \mathcal O_{M,P}$ having the form
	$$
	f=y^d+\tilde g_1(\mathbf x)y^{d-1}+\tilde g_2(\mathbf x)y^{d-2}+\cdots+\tilde g_{d}(\mathbf x).
	$$
	Note that $\nu_P(\tilde g_i)\geq i$, for $i=1,2,\ldots,d$. Let us perform the coordinate change
	$$
	z=y+(1/d)\tilde g_1(\mathbf x)\quad  \text{(Tschirnhaus)}.
	$$
	Then, we write $f$ as $f=z^d+g_2(\mathbf x)z^{d-2}+\cdots+g_d(\mathbf x)$.
	Taking $H=(z=0)$ we obtain the maximal contact property, in view of Remark \ref{rk:tschirnhaus}.
\end{proof}
\subsection{Systems of Maximal Contact Hypersurfaces} Let $(M,E)$ be an ambient space and let us consider a splitting $E=E^*\cup D$ of  $E$ into two normal crossings divisors without common irreducible components. Let us consider an adjusted and reduced idealistic $e$-flower  $\mathcal F$ over $(M,E)$.

A {\em system $\mathfrak H$ of maximal contact hypersurfaces  for $\mathcal F$ associated to the splitting $E=E^*\cup D$} is a finite family
$$
\mathfrak H=\{(\mathcal V_\alpha, H_\alpha,D_\alpha)\}_{\alpha\in \Lambda}, \quad \mathcal V_\alpha=(M_\alpha, E_\alpha,N_\alpha,\mathcal L_\alpha),
$$
satisfying the following properties:
\begin{enumerate}
	\item The family $\mathcal P=\{\mathcal V_\alpha\}_{\alpha\in \Lambda}$ is an immersed idealistic $e$-atlas over $(M,E)$ belonging to $\mathcal F$.
	\item For any $\alpha\in \Lambda$, we have that
$D_\alpha=D\cap M_\alpha$ and thus there is a splitting  $E_\alpha=E_\alpha^*\cup D_\alpha$, induced by $E=E^*\cup D$.
	\item Each  $(N_\alpha, D_\alpha\vert_{N_\alpha}, H_\alpha)$ is a maximal contact hypersurface for the idealistic space $(N_\alpha, D_\alpha\vert_{N_\alpha}, \mathcal L_\alpha)$. (Let us recall that we have $D_\alpha\vert_{N_\alpha}=D_\alpha\cap N_\alpha$, since $(M_\alpha, D_\alpha, N_\alpha)$ is a transverse ambient subspace of $(M_\alpha, D_\alpha)$).
\end{enumerate}
The systems of maximal contact hypersurfaces for $\mathcal F$ associated to a splitting may be transformed by $\mathcal F$-permissible test systems in a natural way, to obtain a new system of maximal contact hypersurfaces for the transform of $\mathcal F$, associated to the transformed splitting. The maximal contact hypersurfaces are transformed by taking the strict transform and the immersed idealistic $e$-atlases are transformed as we have already seen in Section  \ref{Immersed Idealistic Spaces}.

\begin{remark}
Note that we take immersed idealistic spaces $\mathcal V_\alpha$ instead of immersed idealistic exponents. The reason is that an idealistic exponent over $(M,E)$ does not define an idealistic exponent over $(M,D)$. Nevertheless, an immersed idealistic space over $(M,E)$ does define an immersed idealistic space over $(M,D)$.
\end{remark}
\begin{proposition}
	\label{prop:maximalcontacwithdivisor}
	Let $\mathfrak H$ be a system of maximal contact hypersurfaces for  an idealistic $e$-flower $\mathcal F$ over $(M,E)$ associated to the splitting
	$$
	E=E^*\cup D, \quad E^*=\emptyset, \quad D=E.
	$$
	Let $\mathcal E_\alpha$ be the idealistic exponent over $(N_\alpha, E_\alpha|_{N_\alpha})$  defined by $\mathcal L_\alpha$ and let $\widetilde{\mathcal E}_\alpha$ be the projection of $\mathcal E_\alpha$ over the hypersurface $(N_\alpha, E_\alpha|_{N_\alpha}, {H_\alpha})$. Then, the family
	$$
	\mathcal Q_\mathfrak H=\{\widetilde{\mathcal W}_\alpha=(M_\alpha, E_\alpha, H_\alpha,\widetilde{\mathcal E}_\alpha)\}_{\alpha\in \Lambda}
	$$
	is an immersed exp-idealistic $(e-1)$-atlas over $(M,E)$ that defines an idealistic $(e-1)$-flower $\mathcal H$ over $(M,E)$ having maximal contact with $\mathcal F$.
\end{proposition}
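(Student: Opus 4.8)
The plan is to reduce the whole statement to a single local equivalence, proved by induction on the length of a test system, and then to glue. First, every $\widetilde{\mathcal W}_\alpha$ is well defined: since $\mathcal F$ is adjusted and reduced and $\mathcal P=\{\mathcal V_\alpha\}$ belongs to $\mathcal F$, the idealistic exponent $\mathcal E_\alpha$ over $(N_\alpha,E_\alpha|_{N_\alpha})$ defined by $\mathcal L_\alpha$ is itself adjusted (its order at each singular point equals $\delta_P\mathcal F=1$) and reduced ($\operatorname{Sing}\mathcal E_\alpha=\operatorname{Sing}\mathcal V_\alpha$ has codimension at least two in $N_\alpha$). Hence the projected idealistic exponent $\widetilde{\mathcal E}_\alpha=\operatorname{pr}_{H_\alpha}\mathcal E_\alpha$ over $(H_\alpha,E_\alpha|_{H_\alpha})$ is defined. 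Moreover $H_\alpha\subset N_\alpha$ is transverse to $E_\alpha|_{N_\alpha}$ and $N_\alpha$ is transverse to $E_\alpha$, so $(M_\alpha,E_\alpha,H_\alpha)$ is a transverse closed ambient subspace of dimension $e-1$, and $\widetilde{\mathcal W}_\alpha=(M_\alpha,E_\alpha,H_\alpha,\widetilde{\mathcal E}_\alpha)$ is a genuine immersed idealistic exponent of dimension $e-1$.

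The heart of the proof is the following local claim: for every $\alpha$, the immersed idealistic exponent $\widetilde{\mathcal W}_\alpha$ is equivalent to $\mathcal W_\alpha=(M_\alpha,E_\alpha,N_\alpha,\mathcal E_\alpha)$, the immersed exp-idealistic $e$-chart associated with $\mathcal V_\alpha$, which represents $\mathcal F$. Granting this, the Proposition follows formally. The compatibility of the family $\{\widetilde{\mathcal W}_\alpha\}$ comes from transitivity on $M_{\alpha\beta}$: restricting the equivalences $\widetilde{\mathcal W}_\alpha\sim\mathcal W_\alpha$ and $\widetilde{\mathcal W}_\beta\sim\mathcal W_\beta$ and using the compatibility of $\mathcal W_\alpha$ with $\mathcal W_\beta$ (both belong to $\mathcal F$), we get that $\widetilde{\mathcal W}_\alpha|_{M_{\alpha\beta}}$ and $\widetilde{\mathcal W}_\beta|_{M_{\beta\alpha}}$ are equivalent, so $\mathcal Q_{\mathfrak H}$ is an immersed exp-idealistic $(e-1)$-atlas. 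Likewise, because permissibility for an $e$-flower is tested chart by chart, a test system $\mathcal S$ over $(M,E)$ is permissible for $\mathcal F$ iff each $\mathcal S_{M_\alpha}$ is permissible for $\mathcal W_\alpha$, iff (by the local claim) each $\mathcal S_{M_\alpha}$ is permissible for $\widetilde{\mathcal W}_\alpha$, iff $\mathcal S$ is permissible for $\mathcal H$; hence $\mathcal F$ and $\mathcal H$ have the same permissible test systems and $\mathcal H$ has maximal contact with $\mathcal F$.

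To prove the local claim, I fix $\alpha$ and argue by induction on the length $k$ of a test system $\mathcal S$ over $(M_\alpha,E_\alpha)$, maintaining the invariant that after applying a permissible $\mathcal S^\ell$ the transforms are $\mathcal W_\alpha^{(\ell)}=(\,\cdots,N_\alpha^{(\ell)},\mathcal E_\alpha^{(\ell)})$ and $\widetilde{\mathcal W}_\alpha^{(\ell)}=(\,\cdots,H_\alpha^{(\ell)},\widetilde{\mathcal E}_\alpha^{(\ell)})$, with $H_\alpha^{(\ell)}$ of maximal contact for $\mathcal E_\alpha^{(\ell)}$ and $\widetilde{\mathcal E}_\alpha^{(\ell)}=\operatorname{pr}_{H_\alpha^{(\ell)}}\mathcal E_\alpha^{(\ell)}$. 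The crucial consequence of maximal contact is $\operatorname{Sing}\mathcal E_\alpha^{(\ell)}\subset H_\alpha^{(\ell)}$, whence by the first basic property of the projection $\operatorname{Sing}\widetilde{\mathcal E}_\alpha^{(\ell)}=H_\alpha^{(\ell)}\cap\operatorname{Sing}\mathcal E_\alpha^{(\ell)}=\operatorname{Sing}\mathcal E_\alpha^{(\ell)}$. Since a permissible center lies in its singular locus, every permissible center for $\mathcal E_\alpha^{(\ell)}$ is automatically contained in $H_\alpha^{(\ell)}$, and by the second basic property these coincide with the permissible centers of $\widetilde{\mathcal E}_\alpha^{(\ell)}$ (the normal crossings conditions matching via Remark \ref{rk:centrospermitidos}). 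Hence the $(\ell+1)$-st step is permissible for $\mathcal W_\alpha$ iff it is permissible for $\widetilde{\mathcal W}_\alpha$. When it is permissible — an open projection or a blow-up with center $Y\subset H_\alpha^{(\ell)}$ — the third basic property (commutativity of the projection with these morphisms) yields $\widetilde{\mathcal E}_\alpha^{(\ell+1)}=\operatorname{pr}_{H_\alpha^{(\ell+1)}}\mathcal E_\alpha^{(\ell+1)}$, and the remark following Definition \ref{def:ingl contactomaximal} guarantees that $H_\alpha^{(\ell+1)}$ still has maximal contact with $\mathcal E_\alpha^{(\ell+1)}$. This closes the induction and proves the claim.

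The main obstacle is exactly this propagation of the invariant across a blowing-up: one must know that the strict transform of a maximal contact hypersurface stays of maximal contact and that the projection commutes with the blow-up, for which the reduced hypothesis (forcing the centers to have codimension at least one in $H_\alpha$) and the three basic properties of the projection collected in Subsection \ref{Basic Properties of the Projections} are precisely what is required. Everything else — well-definedness of the charts, compatibility of the atlas, and the equivalence of the flowers — is then formal gluing once the local equivalence $\widetilde{\mathcal W}_\alpha\sim\mathcal W_\alpha$ is established.
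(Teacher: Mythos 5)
Your proof is correct and follows essentially the same route as the paper: the key local equivalence $\widetilde{\mathcal W}_\alpha \sim \mathcal W_\alpha$ followed by transitivity on the overlaps $M_{\alpha\beta}$ is exactly the paper's argument, which obtains that equivalence by citing Remark \ref{rk:expidealisticodelcontactomaximal} together with the change-of-codimension property of Subsection \ref{subsec: change of codimension}. Your induction on the length of a test system --- using the three basic properties of the projections and the stability of maximal contact hypersurfaces under permissible transformations --- is precisely the justification left implicit in that remark, so you have merely unpacked, correctly, what the paper invokes by reference.
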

\begin{proof}
	We know that the $e$-flower $\mathcal F$ is described by the immersed exp-idealistic $e$-atlas $\mathcal Q=\{\mathcal W_\alpha\}_{\alpha \in \Lambda}$, where
	$
	\mathcal W_\alpha=(M_\alpha, E_\alpha, N_\alpha,\mathcal E_\alpha).
	$
	In particular, we have the equivalence
	$$
	\mathcal W_{\alpha}|_{M_{\alpha\beta}} \sim \mathcal W_{\beta}|_{M_{\alpha\beta}}.
	$$
	In view of Remark \ref{rk:expidealisticodelcontactomaximal} and Subsection \ref{subsec: change of codimension}, we know that
	$$
	\mathcal W_{\alpha} \sim \widetilde{\mathcal W}_{\alpha}, \quad \mathcal W_{\beta} \sim \widetilde{\mathcal W}_{\beta}.
	$$
	Making the restriction to $M_{\alpha\beta}$ we conclude that $
	\widetilde{\mathcal W}_{\alpha}|_{M_{\alpha\beta}} \sim \widetilde{\mathcal W}_{\beta}|_{M_{\alpha\beta}}$. Hence $\mathcal Q_{\mathfrak H}$ is an immersed exp-idealistic $(e-1)$-atlas over $(M,E)$. Moreover, the
	equivalences $\mathcal W_{\alpha} \sim \widetilde{\mathcal W}_{\alpha}$ imply that $\mathcal F$ is equivalent to $\mathcal H$.
\end{proof}
\section{Conclusion}
Here we prove Proposition \ref{prop: maximalcontact}. This ends the proof of Theorem \ref{th: redsingimmersedidealisticflowers}.

Recall that we work under the induction assumption that the idealistic $(e-1)$-flowers have reduction of singularities. Consider an adjusted and reduced  idealistic $e$-flower $\mathcal F$  over an ambient space $(M,E)$.

By Proposition \ref{prop:maximalcontact sin divisor}, there is a system $\mathfrak H$ of maximal contact hypersurfaces  for $\mathcal F$ associated to the splitting $E=E^*\cup D$, with $D=\emptyset$.

By Proposition \ref{prop:separer viejas components}, there is a composition
$
\sigma:(M',E')\rightarrow (M,E)
$
of a finite sequence of $\mathcal F$-permissible blowing-ups such that
$$
{E'}^*\cap \operatorname{Sing}\mathcal F'=\emptyset.
$$
Take an open subset $U\subset M'$ containing the singular locus of $\mathcal F'$ such that $U\cap {E'}^*=\emptyset$.  Now, finding a maximal contact $(e-1)$-flower $\mathcal H'$ for $\mathcal F'$ is the same problem as finding such a maximal contact $(e-1)$-flower for $\mathcal F'|_U$.  Let $\mathfrak H'$ be transformed of $\mathfrak H$ by $\sigma$ and consider the restriction $\mathfrak H'|_U$. Since $U\cap {E'}^*=\emptyset$, we have that $E'\cap U=D'\cap U$. In this way, we are in the situation of Proposition \ref{prop:maximalcontacwithdivisor}, that provides the desired idealistic $(e-1)$-flower of maximal contact with $\mathcal F'|_U$. This ends the proof.

\bibliographystyle{plain}

\end{document}